\newtheorem{thm}{Theorem}[section]
\newtheorem{cor}[thm]{Corollary}
\newtheorem{lem}[thm]{Lemma}
\newtheorem{prop}[thm]{Proposition}
\theoremstyle{definition}
\newtheorem{defn}[thm]{Definition}
\newtheorem{example}[thm]{Example}
\theoremstyle{remark}
\newtheorem{rem}[thm]{Remark}
\numberwithin{equation}{section}
\newcommand{\Z}{\mathbb Z}
\newcommand{\C}{\mathbb C}
\newcommand{\R}{\mathbb R}
\newcommand{\Pro}{\mathbb P}
\newcommand{\Lef}{\mathbb{L} }
\font \rus= wncyr10
\newcommand{\sha}{\, \hbox{\rus x} \,}
\newcommand{\MT}{\mathcal{MT}}
\newcommand{\ZZ}{\mathcal{Z}}
\newcommand{\Real}{\mathrm{Re}}
\newcommand{\zetam}{\zeta^{ \mathfrak{m}}}
\newcommand{\lf}{\mathrm{lf}}
\newcommand{\Q}{\mathbb Q}
\newcommand{\s}{\mathbf{s}}
\newcommand{\To}{\longrightarrow}
\newcommand{\A}{\mathbb{A}}
\newcommand{\G}{\mathbb{G}}
\newcommand{\dR}{\mathfrak{dr}}
\newcommand{\Or}{\mathcal{O}}
\newcommand{\Res}{\mathrm{Res}}
\newcommand{\mm}{\mathfrak{m} }
\newcommand{\id}{\mathrm{id} }
\newcommand{\ren}{\mathrm{ren}}
\newcommand{\per}{\mathrm{per}}
\newcommand{\Pe}{\mathcal{P}}
\newcommand{\comp}{\mathrm{comp}}
\newcommand{\Zim} {\ZZ^{i, \mm}}
\newcommand{\Zio} {\ZZ^{i,\varpi}}
\newcommand{\Zjo} {\ZZ^{j,\varpi}}
\newcommand{\dch}{\mathrm{dch}}
\newcommand{\LP}{L}
\newcommand{\FL}{F\!L}
\begin{document}
\author{Francis Brown}
\address{All Souls College, Oxford, Oxford OX1 4AL, UK}
\email{francis.brown@all-souls.ox.ac.uk}
\author{Cl\'{e}ment Dupont}
\address{Institut Montpelli\'{e}rain Alexander Grothendieck, Universit\'{e} de Montpellier, CNRS, Montpellier, France}
\email{clement.dupont@umontpellier.fr}
\begin{title}[Motivic  Galois actions on  Lauricella hypergeometric functions ]{Lauricella hypergeometric functions,  unipotent fundamental groups  of the punctured Riemann sphere, and their motivic coactions}\end{title}
\maketitle

\begin{abstract}
The goal of this paper is to raise the possibility that there exists a meaningful theory of `motives' associated to certain hypergeometric integrals, viewed as functions of their parameters.  It goes beyond the classical theory of motives, but should be compatible with it.   Such a theory would explain a recent and surprising conjecture
arising in the context of scattering amplitudes  for a  motivic   Galois group action on Gauss' ${}_2F_1$ hypergeometric function, which we prove in this paper by direct means.  More generally, we  consider Lauricella hypergeometric functions and show on the one hand how the coefficients in their Taylor expansions  can be promoted, via the theory of motivic fundamental groups, to motivic multiple polylogarithms.  The latter are periods of ordinary motives and admit an  action of the usual motivic Galois group, which we call the `local' action. On the other hand, we define lifts of the full Lauricella functions as matrix coefficients in a Tannakian category of  twisted cohomology,  which  inherit an  action of the corresponding   Tannaka group.  We call this the `global' action. We prove that these two actions, local and global, are compatible with each other, even though they are defined in completely different ways.
The main technical tool is to prove that metabelian quotients of generalised Drinfeld associators on the punctured Riemann sphere are  hypergeometric functions. 
We also  study  single-valued versions of these hypergeometric functions, which may be of independent interest.
\end{abstract}

\section{Introduction}
Let $\Sigma=\{ \sigma_0, \sigma_1,\ldots, \sigma_n\}$ be distinct points in $\C$, where $\sigma_0=0$. 
In this paper, we study the Lauricella hypergeometric functions  with singularities in $\Sigma$ which are defined by
\begin{equation} \label{IntroLSigma} (L_{\Sigma})_{ij} \ =  \  -   s_j  \int_{0}^{\sigma_i} x^{s_0} \prod_{k=1}^n (1- x \sigma_k^{-1})^{s_k}  \, \frac{dx}{x-\sigma_j}  \ ,\quad \qquad \hbox{ for } 1\leq i, j \leq n  \ .
\end{equation}
The most familiar  examples are Euler's beta function $\beta(a,b)= \frac{\Gamma(a) \Gamma(b)}{\Gamma(a+b)}$ (see \S\ref{sect: introBeta}) and Gauss' hypergeometric function $F={}_2F_1$ (see \S\ref{section: 2F1}), which satisfies the integral formula
\begin{equation}\label{eq: intro 2F1 integral}
\beta(b,c-b) \, F( a,b,c;y ) =    \int_{0}^{1}  x^{b-1} (1-x)^{c-b-1} (1-yx)^{-a} dx
\end{equation}
whenever it converges. It can be expressed in terms of \eqref{IntroLSigma} for $\Sigma = \{0,1,y^{-1}\}$. \medskip

There are two possible  ways in which one might try to define  a `motivic' Galois group acting on these functions using Tannakian theory, by viewing them in one of the following ways.  \begin{enumerate}
\item[(G)] Globally,  for generic values of the  exponents $s_k \in \C$, where genericity means that the $s_k \notin \Z$ for each $k$ and $s_0+\cdots +s_n \notin \Z.$
For such generic $s_k$,  it is known \cite{aomotovanishing, DeligneMostow} how to interpret \eqref{IntroLSigma} as `periods'\footnote{These numbers are not periods in the sense of Kontsevich--Zagier \cite{kontsevichzagier} unless the exponents $s_i$ are rational.} of the cohomology of $X_{\Sigma} = \mathbb{A}^1\backslash \Sigma$ with coefficients in a  rank one algebraic vector bundle with integrable connection (or local system).   One can interpret these twisted cohomology groups as realisations of objects in a suitable Tannakian category, and hence interpret  them as representations of a `global' Tannaka group.

 \vspace{0.05in}
\item[(L)]  Locally, as formal power series in the $s_k$ around the non-generic point $s_0=\cdots =s_n=0$. Even though the integral in \eqref{IntroLSigma} is divergent at that point, the prefactor $s_j$ compensates the pole and \eqref{IntroLSigma} has a Taylor expansion in the $s_k$ at the origin. Its coefficients are generalised polylogarithms, which can be lifted  to periods of mixed Tate motives. They admit an action of the usual motivic Galois group, which acts term by term on coefficients in the series expansion. 
\end{enumerate}

As is customary in the Tannakian formalism,  it is easier to compute the \emph{motivic coaction} of the Hopf algebra of functions on the motivic Galois group, which is dual to the group action.    In this article we compute the global (G) and local (L) motivic coactions and prove that they are formally identical. This strongly suggests that there exists an underlying `Lauricella motive', \emph{viewed as a function of the} $s_k$, of which (G) and (L) are two different incarnations, or realisations. The realisation (G) is obtained by specialising $s_k$ to generic  complex numbers; the realisation (L) is obtained by expanding locally in the $s_k$. Such a theory  has a consequence for ordinary motives: it implies that the ordinary motivic Galois group acts on the coefficients of the power series expansion (L) in a uniform way. This is surprising since in general these coefficients are periods of unrelated  motives.
For example, in the case of the Euler beta function, it implies that the motivic Galois group acts  uniformly on all (motivic) zeta values of all weights (see \S \ref{sect: introBeta}).  For the general Lauricella function \eqref{IntroLSigma}, the main geometric object in the local framework (L) is the mixed Tate motivic fundamental groupoid of the punctured Riemann sphere $X_\Sigma$ with respect to suitable tangential basepoints. \medskip

The impetus for this work came from a remarkable conjecture \cite{abreuetaldiagrammatictwoloop, abreuetalpositive} arising in the study of dimensionally-regularised one-loop and two-loop  Feynman amplitudes in $4-2 \varepsilon$ spacetime dimensions, which  can be expressed in terms of hypergeometric functions \cite{ abreuetalalgebraicstructure, abreuetaldiagrammatichopf}. It  was observed experimentally that the  motivic coaction (L),  computed order-by-order in an $\varepsilon$-expansion of $F(n_1+ a_1 \varepsilon, n_2+ a_2 \varepsilon, n_3+ a_3 \varepsilon ;y )$,  where $n_1, n_2, n_3, a_1, a_2, a_3$ are integers, could, at least to low orders in $\varepsilon$, be succinctly  packaged into a coaction formula on the hypergeometric function itself with only two terms. We give a rigorous sense to these statements and prove a complete (global and local) coaction formula for the hypergeometric function.\medskip

A large part of this paper is also devoted to studying the single-valued versions of the integrals $(L_{\Sigma})_{ij}$, defined by the following complex integrals:
\begin{equation}\label{IntroLSigmasv}
(L_\Sigma^\s)_{ij} = \frac{s_j}{2\pi i}\iint_{\mathbb{C}}|z|^{2s_0}\prod_{k=1}^n|1-z\sigma_k^{-1}|^{2s_k}\left(\frac{d\overline{z}}{\overline{z}-\overline{\sigma_i}}-\frac{d\overline{z}}{\overline{z}}\right)\wedge\frac{dz}{z-\sigma_j} \; ,\qquad \hbox{ for } 1\leq i, j \leq n  \ .
\end{equation}
The prototype for such functions is the single-valued (or `complex') beta function
$$\beta^{\,\s}(a,b)=\frac{1}{2\pi i}\iint_{\mathbb{C}}|z|^{2a}|1-z|^{2b}\frac{d\overline{z}}{\overline{z}(1-\overline{z})}\wedge\frac{dz}{z(1-z)}  = \frac{\Gamma(a)\,\Gamma(b)\,\Gamma(1-a-b)}{\Gamma(a+b)\,\Gamma(1-a)\,\Gamma(1-b)}\ \cdot$$
They are closely related to constructions in conformal field theory (see \cite[(E)]{BPZ}, \cite[\S4]{KZ}, \cite{DotsenkoFateev}) and have been studied from the global point of view (G) in \cite{aomotoselberg, mimachiyoshida, mimachicomplex}. We recast them and their double copy formulae in the general framework of single-valued integration developed in \cite{BD1, BD2}. In particular we prove that they can also be interpreted as single-valued versions of Lauricella hypergeometric functions in the local sense (L), i.e., by applying the single-valued period homomorphism term by term to the coefficients in the series expansion. As a special case, we define and study two single-valued versions of the hypergeometric function \eqref{eq: intro 2F1 integral}, one of which may be new.\medskip

In summary, we prove that both the Galois coaction and single-valued period map coincide for (G) and (L), in other words, they `commute' with Taylor expanding at $s_0=\cdots = s_n=0$.

\subsection{Contents}
This   paper is in two parts, corresponding to the two points of view (G) and (L).
In the first part (G), we interpret the Lauricella function as a matrix coefficient in a Tannakian category  of Betti and de Rham realisations of cohomology with coefficients. 
To make this a little more precise, 
 consider the trivial  algebraic vector bundle  of rank one  on  $X_{\Sigma}$     with the integrable connection  
$$\nabla_{\underline{s}} = d + \sum_{k=0}^n  s_k \frac{dx}{x-\sigma_k} \ \cdot$$  
Let $\mathcal{L}_{\underline{s}}$ be the rank one local system 
 generated by $x^{-s_0} \prod_{k=1}^n (1-x \sigma_k^{-1})^{-s_k}$, which is a flat section of $\nabla_{\underline{s}}$. For generic $s_0,\ldots, s_n$,  integration defines  a canonical pairing between algebraic de Rham cohomology  and locally finite homology groups
$$ H^1_{\mathrm{dR}}(X_{\Sigma}, \nabla_{\underline{s}})  \qquad \hbox{ and }   \qquad H^{\lf}_1(X_{\Sigma}(\C), \mathcal{L}^{\vee}_{\underline{s}}) $$  
which are both of rank $n$. The period matrix, with respect to suitable bases, is exactly the $(n \times n)$ matrix  \eqref{IntroLSigma}. Its entries can be promoted to 
equivalence classes 
$$\left(\LP_{\Sigma}^{\mm}\right)_{ij} =  \Big[ M_{\Sigma} \ ,  \  \delta_i \otimes x^{s_0} \prod_{k=1}^n (1-x \sigma_k^{-1})^{s_k} \ ,  \  - s_j \,  d\log (x- \sigma_j) \Big]^{\mm}$$
of Betti--de Rham matrix coefficients (a.k.a. motivic periods), where $\delta_i$ is a path from $0$ to $\sigma_i$, and  $M_{\Sigma} $ is an object of a Tannakian category encoding the data of the Betti and de Rham cohomology together with the integration pairing.  They map to \eqref{IntroLSigma} under the period homomorphism:
 $$  \mathrm{per}  ( L^{\mm}_{\Sigma} )    = L_{\Sigma} \ .$$
We also define  de Rham versions of    $\left(\LP_{\Sigma}^{\mm}\right)_{ij}$ as equivalence classes  of  matrix coefficients as follows. 
Consider the (classes of the) logarithmic $1$-forms:
$$\nu_i = \frac{dx}{x-\sigma_i} - \frac{dx}{x}  \qquad \hbox{ for } 1\leq i \leq n\ ,    $$
with residues at $0$ and $\sigma_i$ only (see Remark \ref{rem: c zero} for an interpretation of these forms as de Rham `versions' of the paths $\delta_i$).
They define de Rham cohomology classes in the space $H^1_{\mathrm{dR}}(X_{\Sigma}, \nabla_{-\underline{s}})$, which is isomorphic, via  the de Rham intersection pairing, to the dual $H^1_{\mathrm{dR}}(X_{\Sigma}, \nabla_{\underline{s}})^{\vee}$.  Consider the de Rham--de Rham matrix coefficients (a.k.a. de Rham periods) 
$$\left(\LP_{\Sigma}^{\dR}\right)_{ij} =  \left[ M_{\Sigma} \ ,  \     \nu_i ,  \  - s_j \,  d\log (x- \sigma_j) \right]^{\dR}\ .$$
Comultiplication of matrix coefficients immediately implies a global coaction  formula  which takes the very simple matrix form:
\begin{equation} \label{introCoact}  \Delta\LP^{\mm}_{\Sigma} = \LP^{\mm}_{\Sigma} \otimes \LP^{\dR}_{\Sigma} \ .
\end{equation}
 The period map cannot be applied to the de Rham Lauricella functions, but one can replace them with variants $\widetilde{\LP}^{\dR}_{\Sigma}$ by passing to a slightly different Tannakian category (where a coaction formula similar to \eqref{introCoact} holds) which takes into account the real Frobenius, i.e., complex conjugation. This added feature yields a `single-valued period' homomorphism $\s$ which can be applied to de Rham periods, whenever all the $s_i$ are real\footnote{This is an unnatural technical assumption that is forced upon us by the fact that the parameters $s_i$ are not treated as formal variables (see \S \ref{par:variant formal s}), but formulae such as \eqref{introDC} and \eqref{introDCbis} remain valid for complex $s_i$.}. 
  We recover in this way the single-valued Lauricella hypergeometric functions \eqref{IntroLSigmasv}:
 $$ \s ( \widetilde{\LP}^{\dR}_{\Sigma}) =\LP^{\s}_{\Sigma}\ .$$
 This implies, by the definition of the single-valued period homomorphism, the identity
\begin{equation} 
\label{introDC}   L^{\s}_{\Sigma}(\underline{s})   =  L_{\overline{\Sigma}}(-\underline{s})^{-1}  L_{\Sigma}(\underline{s}) \end{equation}
where $\overline{\Sigma}=\{\overline{\sigma_0},\overline{\sigma_1},\ldots,\overline{\sigma_n}\}$, and $L_{\Sigma}(\underline{s})$ denotes the matrix \eqref{IntroLSigma} with  dependence on $\underline{s}=(s_0,\ldots,s_n)$ made explicit. After applying twisted period relations \cite{ChoMatsumoto}, \eqref{introDC} can be rewritten as a quadratic formula for single-valued Lauricella functions, which we call a \emph{double copy formula}:
\begin{equation}\label{introDCbis}
L^\s_\Sigma(\underline{s}) = \frac{1}{2\pi i}\, {}^tL_{\overline{\Sigma}}(\underline{s})\, I^{\mathrm{B}}_{\overline{\Sigma}}(-\underline{s})\, L_\Sigma(\underline{s})\ ,
\end{equation}
where $I^{\mathrm{B}}_{\overline{\Sigma}}(-\underline{s})$ is a `Betti intersection matrix' computed in terms of intersection numbers on $X_\Sigma(\mathbb{C})$.\medskip

In \S\ref{sect: Renorm}  we make the transition from the global to the local picture   and explain how to renormalise the integrals  \eqref{IntroLSigma}, following a similar procedure to \cite{BD2}, to  expose their poles in the $s_0,\ldots, s_n$ in a neighbourhood of the origin.   These poles are compensated by the prefactors $s_j$ in the definition \eqref{IntroLSigma}, yielding   Taylor  expansions  for both the functions $(L_{\Sigma})_{ij}$  (Proposition \ref{proprenorm}) and  their single-valued versions $(L^{\s}_{\Sigma})_{ij}$ (Proposition \ref{proprenormsv}). 

In the remaining, `local', part of the paper, we compute the periods, single-valued periods and motivic coaction order-by-order in the Taylor expansion with respect to the $s_i$.  For this, we  assume that the $\sigma_i $ lie in a number field $k\subset \C$ and work in the Tannakian category $\MT(k)$ of mixed Tate motives over $k$ \cite{delignegoncharov}. It has a canonical fiber functor $\varpi$.  Alternatively, one can avoid fixing the  $\sigma_i$ by working in the category of mixed Tate motives over the space of configurations $\Sigma$, which leads to identical formulae. 
The motivic torsor of paths
$\pi_1^{\mathrm{mot}}(X_{\Sigma}, t_0, -t_{i})$
where $t_0$ is the tangent vector $1$ at $0$, and $t_{i}$ is the tangent vector $\sigma_i$ at $\sigma_i$, is dual to an ind-object of $\MT(k)$ whose periods are regularised iterated integrals of logarithmic $1$-forms. Since it is a torsor over the motivic fundamental group based at $t_0$, one can define its metabelian (or double-commutator) quotient.  
It turns out that the periods of the latter are very closely related to generalised beta-integrals of the form \eqref{IntroLSigma}. To make this more precise, for any formal power series
$ F   \in R\langle\langle e_0,\ldots, e_n \rangle \rangle $
 in non-commuting variables $e_0,\ldots, e_n$ with coefficients in a commutative ring $R$,
consider its \emph{abelianisation} and  $j^{th}$ \emph{beta-quotient}
$$\overline{F} \quad \hbox{ and } \quad \overline{F_j}  \quad \in \quad  R[[s_0,\ldots, s_n]]$$
where $\overline{S}$ denotes 
 the  image  of a formal power series $S$ under the abelianisation  map  $e_k \mapsto s_k$, where the $s_k$ are commuting variables, and the $F_j$ are the unique power series such that
 $$F = F_{\varnothing} + F_0 e_0 + \cdots + F_n e_n \ ,$$
 where $F_{\varnothing} \in R$ is simply the constant term of $F$. 
The motivic torsor of paths from $t_0$ to $-t_{i}$ defines formal power series $\ZZ^{\mm, i}$, $\ZZ^{\varpi, i} $  in the $e_0,\ldots, e_n$
whose coefficients are motivic periods and (canonical) de Rham periods of $\MT(k)$ respectively\footnote{The reader should be warned that the superscripts $\mm$, $\varpi$, $\dR$, corresponding to different types of matrix coefficients in Tannakian categories, have two different interpretations in this paper: one in the `global' setting, and one is the `local' setting. This should not create any ambiguity.}, and whose abelianisations and beta-quotients are of interest. 
We assemble them into a matrix of formal power series in $s_0,\ldots,s_n$:
$$(\FL^{\mm}_{\Sigma})_{ij} =  \mathbf{1}_{i=j} \overline{\ZZ^{\mm, i}} -s_j  \overline{\ZZ_j^{\mm, i}}$$
for $1\leq i, j \leq n$,  (respectively   with $\mm$ replaced with $\varpi$). The following' result (Theorem \ref{thm: FL equals L} and Theorem \ref{thm: FL equals L sv}) states that these are indeed `local motivic lifts' of the expanded Lauricella functions (viewed as  power series in the $s_k$). This justifies the notation $FL_\Sigma$ where the letter $F$ stands for `formal'.

\begin{thm}  \label{introthmperandsv} 
We have the equalities of formal power series in $s_0,\ldots,s_n$:
\begin{eqnarray}
  (i) \quad  \mathrm{per} \, (\FL^{\mm}_{\Sigma}) & = &   \LP_{\Sigma}   \nonumber  \\
  (ii)  \qquad   \s \, (\FL^{\varpi}_{\Sigma})  &= &   \LP^{\s}_{\Sigma}  \ ,   \nonumber 
    \end{eqnarray} 
  where  $\per$ and  $\s$  are the period map and single-valued period map of $\MT(k)$ applied order by order to the coefficients in the expansion in the variables $s_k$.  
\end{thm}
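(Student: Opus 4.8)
The plan is to prove (i) and (ii) in parallel, by reducing each to a single \emph{abelianisation} computation for the generating series of regularised iterated integrals on $X_\Sigma$ and then invoking the renormalisation results of \S\ref{sect: Renorm}. Write $G_i(x)=\mathrm{Pexp}\!\int_{t_0}^{x}\big(\sum_{k=0}^{n} e_k\,\tfrac{dx}{x-\sigma_k}\big)$ for the group-like generating series of iterated integrals from the tangential basepoint $t_0$, so that, by Deligne--Goncharov \cite{delignegoncharov}, $\ZZ^{\mm,i}$ and $\ZZ^{\varpi,i}$ are the motivic and canonical de Rham avatars of the regularised value $G_i(-t_i)$. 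Since $\per$ and $\s$ are ring homomorphisms applied coefficient-by-coefficient to the power series in the $s_k$, it is enough to show: $(\ast)$ that $\per(\ZZ^{\mm,i})$ is the generating series of the usual shuffle-regularised iterated integrals of the forms $\tfrac{dx}{x-\sigma_k}$ along a path from $0$ to $\sigma_i$; and $(\ast\ast)$ that $\s(\ZZ^{\varpi,i})$ is the regularised value at $-t_i$ of the generating series $G_i^{\s}$ of single-valued (Brown) polylogarithms in the sense of \cite{BD1,BD2}. Both are standard identifications of the period, resp.\ single-valued period, of the motivic fundamental groupoid.

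The key step is then the abelianisation. Because $G_i$ satisfies the Knizhnik--Zamolodchikov-type equation $dG_i=G_i\cdot\big(\sum_k e_k\,\tfrac{dx}{x-\sigma_k}\big)$, passing to commuting variables $e_k\mapsto s_k$ turns this into the scalar ODE $d\overline{G_i}=\overline{G_i}\cdot\big(\sum_k s_k\,\tfrac{dx}{x-\sigma_k}\big)$, which integrates to $\overline{G_i}(x)=c_i(\underline s)\,x^{s_0}\prod_{k=1}^{n}(1-x\sigma_k^{-1})^{s_k}$; matching the tangential normalisation at $t_0$ forces $c_i(\underline s)=1$. Writing $G_i(-t_i)=G_{i,\varnothing}+\sum_j G_{i,j}\,e_j$ with $G_{i,j}=\int_{t_0}^{-t_i}G_i(x)\,\tfrac{dx}{x-\sigma_j}$, one reads off
\[
\overline{G_{i,j}}=\int_{t_0}^{-t_i}x^{s_0}\prod_{k=1}^{n}(1-x\sigma_k^{-1})^{s_k}\,\frac{dx}{x-\sigma_j}
\]
as a regularised integral, so that $\mathbf 1_{i=j}\,\overline{G_i}-s_j\,\overline{G_{i,j}}$ is the regularised version of the entry $(L_\Sigma)_{ij}$ of \eqref{IntroLSigma}, the diagonal term $\mathbf 1_{i=j}\,\overline{G_i}$ being precisely the comparison constant between shuffle regularisation and the analytic renormalisation at the endpoint $\sigma_i$. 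Feeding this into Proposition \ref{proprenorm}, which identifies the analytically renormalised integral with the genuine Taylor expansion of $(L_\Sigma)_{ij}$, yields part (i).

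Part (ii) follows by running the same argument on the single-valued side. By \cite{BD1,BD2}, $\s(\ZZ^{\varpi,i})$ is the regularised value $G_i^{\s}(-t_i)$, where $G_i^{\s}$ satisfies the holomorphic KZ equation together with its antiholomorphic conjugate, so its abelianisation satisfies $d\overline{G_i^{\s}}=\overline{G_i^{\s}}\cdot\sum_k s_k\big(\tfrac{dz}{z-\sigma_k}+\tfrac{d\overline z}{\overline z-\overline{\sigma_k}}\big)$, hence $\overline{G_i^{\s}}(z)=|z|^{2s_0}\prod_{k=1}^{n}|1-z\sigma_k^{-1}|^{2s_k}$ once the normalisation at $t_0$ is fixed. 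Reading off the $e_j$-component and unravelling the definition of the single-valued period in \cite{BD1,BD2} (with the $1$-form $\nu_i$ playing the role of the de Rham avatar of the path from $0$ to $\sigma_i$, cf.\ Remark \ref{rem: c zero}) produces exactly the complex integral \eqref{IntroLSigmasv} defining $(L^{\s}_\Sigma)_{ij}$; the comparison of regularisations now uses Proposition \ref{proprenormsv}. Assembling the $(i,j)$ entries gives $\s(\FL^{\varpi}_\Sigma)=\LP^{\s}_\Sigma$.

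The main obstacle I anticipate is not the abelianisation identity, which is an ODE computation, but the careful bookkeeping of regularisations: one must match, term by term in the $s_k$, the shuffle (tangential-basepoint) regularisation built into $G_i$ and $G_i^{\s}$ with the analytic renormalisation of the divergent integrals \eqref{IntroLSigma} and \eqref{IntroLSigmasv} carried out in \S\ref{sect: Renorm}, controlling simultaneously the logarithmic divergences at $x=0$ and at $x=\sigma_i$ and verifying that the resulting regularisation constants are exactly the diagonal terms $\mathbf 1_{i=j}\,\overline{\ZZ^{\mm,i}}$, resp.\ $\mathbf 1_{i=j}\,\overline{\ZZ^{\varpi,i}}$; in the single-valued case this additionally requires that abelianisation commute with the single-valued integration pairing of \cite{BD1,BD2}, which is where the forms $\nu_i$ and the surface integral over $\mathbb C$ enter.
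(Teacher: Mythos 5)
For part (i) your route is essentially the paper's: the abelianised KZ/transport computation is in substance the second proof of Proposition \ref{lem: ConvergentZi}, and the comparison with the analytic renormalisation is Proposition \ref{proprenorm} together with Proposition \ref{prop: taylor series LSigma}. Two caveats, though. First, the step you relegate to ``bookkeeping of regularisations'' --- that the tangential-basepoint regularised integral $\overline{\mathcal{Z}^i_j}$ coincides term by term with the Taylor series of the convergent renormalised integral $\int_{\delta_i}\Omega_j^{\ren_i}$ --- is not a formality; it is exactly the content of Proposition \ref{lem: ConvergentZi}, which the paper proves twice, and your proposal only names it as an anticipated obstacle rather than supplying the argument. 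Second, fixing the constant $c_i(\underline{s})=1$ at $t_0$ does not by itself pin down $\overline{G_i}$ at the other endpoint: you also need the admissibility of $\gamma_i$ (Definition \ref{defi: gamma i admissible}, equivalently $\int_{\gamma_i}\omega_i=0$, Lemma \ref{lem: iterated integrals length one}) and the compatibility of branches between $\gamma_i$ and $\delta_i$; without these the diagonal term acquires spurious factors $e^{2\pi i m s_i}$ and the identification with $L_\Sigma$ fails.

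The genuine gap is in part (ii). Your mechanism --- a ``single-valued KZ equation'' for $G_i^{\s}$, abelianise, then ``read off the $e_j$-component'' --- does not work as stated. The single-valued series is schematically $\widetilde{G}_i(\overline{z})\,G_i(z)$ where the antiholomorphic factor is written in a \emph{conjugated} alphabet (cf.\ the letters $e_k'=\mathcal{Z}^{k,\varpi}e_k(\mathcal{Z}^{k,\varpi})^{-1}$); after full abelianisation this is harmless, but the beta quotient requires isolating words ending in $e_j$ \emph{before} abelianising, and those words receive contributions from the antiholomorphic factor as well. So the $e_j$-component is not obtained by integrating $\overline{G_i^{\s}}\cdot\omega_j$ along anything, and no line-integral ODE manipulation can produce the double integral \eqref{IntroLSigmasv}. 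In the paper this identification is the actual work of \S7: one needs Beilinson's cosimplicial model of the torsor of paths (only available for \emph{finite} basepoints) together with the map $c_0^\vee$ and the single-valued integration theorem of \cite{BD1} to prove Proposition \ref{prop: beta quotient generic iterated integrals sv}, and then a composition-of-paths limit with the asymptotics of Lemma \ref{lem: infinitesimal sv iterated integral} and the $\mathrm{Reg}_{\tau\to 0}$ procedure of Proposition \ref{thmsvZios} to reach the tangential basepoints, before matching with the renormalised integral of Proposition \ref{proprenormsv}. Your sentence ``unravelling the definition of the single-valued period \ldots produces exactly the complex integral'' conceals precisely this chain; you should either import these results explicitly or supply an argument for why the generating series of single-valued periods of ${}_0\Pi_i$, beta-quotiented, is computed by the stated surface integral.
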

In the special case  $\Sigma=\{0,1\}$, part $(i)$ of the theorem reduces to Drinfeld's well-known computation of the metabelian quotient of the Drinfeld associator in terms of Euler's beta function. 

Having therefore established that $FL^\mm_\Sigma$ is a local motivic lift of the Lauricella functions viewed as power series, we then prove that its local motivic coaction is given by the same formula as the global motivic coaction \eqref{introCoact} of the global motivic lift $L^\mm_\Sigma$.

\begin{thm}\label{thm: intro thm coaction local}  The (local) motivic coaction on the formal power series  $\FL^{\mm}_{\Sigma}$ reads: 
\begin{equation}\label{introDeltaFL}   \Delta \FL^{\mm}_{\Sigma}(s_0,\ldots, s_n)= \FL^{\mm}_{\Sigma}( s_0,\ldots,  s_n) \otimes \FL^{\varpi}_{\Sigma} ( s_0(\Lef^{\varpi})^{-1} ,\ldots, s_n(\Lef^{\varpi})^{-1}) \ ,
\end{equation}
 where $\Lef^{\varpi}$ is the (canonical) de Rham Lefschetz motivic period (motivic version of $2\pi i$). In this formula, the variables $s_k$ should be viewed as having weight $-2$ and as such admit a non-trivial motivic coaction: $\Delta(s_k)=s_k\, (1\otimes (\Lef^{\varpi})^{-1})$. \end{thm}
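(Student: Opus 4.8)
The plan is to realise $\FL^{\mm}_{\Sigma}$ and $\FL^{\varpi}_{\Sigma}$ as the motivic, respectively de Rham, period matrices of one and the same ind-object of $\MT(k)$ — the object dual to the metabelian (double-commutator) quotient of the motivic torsor of paths $\pi_1^{\mathrm{mot}}(X_{\Sigma},t_0,-t_i)$, organised into an $(n\times n)$ matrix exactly as in the definition $(\FL^{\mm}_{\Sigma})_{ij}=\mathbf{1}_{i=j}\overline{\ZZ^{\mm,i}}-s_j\overline{\ZZ^{\mm,i}_j}$ — and then to read off \eqref{introDeltaFL} from the comultiplication of matrix coefficients, exactly as \eqref{introCoact} was obtained in the global part (G). The ingredients are Theorem \ref{thm: FL equals L} and its setup, which provide this realisation, and a precise description of the $\MT(k)$-structure, above all of the weights.

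The first step is the matrix-coefficient identity: for any ind-object of the Tannakian category, writing $P^{\mm}$ and $P^{\dR}$ for its motivic and de Rham period matrices with respect to compatible bases, one has $\Delta P^{\mm}=P^{\mm}\otimes P^{\dR}$, the right-hand side being a matrix product in the tensor algebra. The intermediate sums are infinite, but for the object at hand they converge in the $(s_0,\dots,s_n)$-adic topology because the ranks grow only linearly with the weight. Applied to the metabelian object, this yields $\Delta\FL^{\mm}_{\Sigma}=\FL^{\mm}_{\Sigma}\otimes Q$, where $Q$ is its de Rham period matrix in the basis compatible with the one used on the motivic side.

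The second step is to identify $Q$ with $\FL^{\varpi}_{\Sigma}(s_0(\Lef^{\varpi})^{-1},\dots,s_n(\Lef^{\varpi})^{-1})$; this is where the weights enter. The Lie algebra of the de Rham fundamental group $\pi_1^{\mathrm{dR}}(X_{\Sigma},t_0)$ is free on $e_0,\dots,e_n$, and each $e_k$ is dual to the class $d\log(x-\sigma_k)\in H^1_{\mathrm{dR}}(X_{\Sigma})$, which is a copy of $\Q(-1)$; hence $e_k$ is pure of weight $-2$, and the Galois group of $\MT(k)$ acts on it through its Lefschetz ($\G_m$)-quotient, scaling it by $(\Lef^{\varpi})^{-1}$. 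The prefactors $s_j$ in the definition of $\FL$ carry the same weight $-2$, with $\Delta(s_j)=s_j(1\otimes(\Lef^{\varpi})^{-1})$. Propagating these two facts through the metabelian period computation — equivalently, comparing the canonical de Rham trivialisation of the metabelian connection with its motivic structure — converts the de Rham series $\overline{\ZZ^{\varpi,i}}$, $\overline{\ZZ^{\varpi,i}_j}$ and the prefactors $s_j$ into the same expressions with every occurrence of $s_k$ replaced by $s_k(\Lef^{\varpi})^{-1}$, which is the desired form of $Q$.

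The main obstacle is this last identification. One must fix the weight grading on the coordinate ring of the metabelian quotient of $\pi_1^{\mathrm{mot}}(X_{\Sigma},t_0,-t_i)$ so that the purely formal substitution $e_k\mapsto s_k$, with $s_k$ of weight $-2$, is a morphism of the appropriate kind; and one must verify that the Betti--de Rham comparison at the tangential basepoints $t_0$ and $-t_i$ contributes no Lefschetz factors beyond those already accounted for — this is what the particular choice of these tangential basepoints and of the signs is arranged to guarantee. A more computational route, which also serves as a cross-check, sidesteps this: by Theorem \ref{thm: FL equals L} the entries of $\FL^{\mm}_{\Sigma}$ are explicit motivic hypergeometric periods, whose motivic coaction can be computed order by order in the $s_k$ from the coaction formula on motivic iterated integrals \cite{delignegoncharov}; one then checks, using the shuffle relations and the functional structure of the hypergeometric series, that the many resulting terms reorganise into \eqref{introDeltaFL}, consistently with $\per(\FL^{\mm}_{\Sigma})=L_{\Sigma}=\per(L^{\mm}_{\Sigma})$ and the global formula \eqref{introCoact}.
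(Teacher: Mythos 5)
Your overall strategy --- realise $\FL^{\mm}_{\Sigma}$ and $\FL^{\varpi}_{\Sigma}$ as the two period matrices of a single ``metabelian'' ind-object and quote the general identity $\Delta P^{\mm}=P^{\mm}\otimes P^{\dR}$ --- assumes precisely what has to be proved. The entries of $\FL^{\mm}_{\Sigma}$ are generating series of matrix coefficients of an \emph{infinite-rank} ind-object of $\MT(k)$, not matrix coefficients of a rank-$n$ object, so the general coaction formula \eqref{Deltaformula} produces an infinite sum over a de Rham basis, which a priori lands only in the completed tensor product $(\Pe^{\mm}\otimes\Pe^{\varpi})[[s_0,\ldots,s_n]]$. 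The issue is not $(s_0,\ldots,s_n)$-adic convergence but whether this sum reorganises into the finite $n\times n$ matrix product whose second factor is the \emph{same} power-series matrix $\FL^{\varpi}_{\Sigma}$ evaluated at $s_k(\Lef^{\varpi})^{-1}$; the paper stresses in \S\ref{subsec: coaction beta quotients} that landing in the non-completed tensor product $\Pe^{\mm}[[\underline{s}]]\otimes_{\Q[[\underline{s}]]}\Pe^{\varpi}[[\underline{s}]]$ is ``a crucial, and non-trivial fact''. Making your first step rigorous would require treating the $s_i$ as formal variables and constructing a Tannakian framework over $k[s_0,\ldots,s_n]$ in which $\FL^{\mm}_{\Sigma}$ genuinely is a rank-$n$ period matrix --- exactly the bridge the authors defer in \S\ref{par:variant formal s}.

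Your second step contains a concrete error which hides the real mechanism. It is not true that the motivic Galois group acts on each letter $e_k$ ($k\geq 1$) through the Lefschetz character alone: by Ihara's formula (Propositions \ref{propactiononZ} and \ref{propCoactZim}), $e_k$ is sent to $\Lef^{\varpi}\,\ZZ^{k,\varpi}e_k(\ZZ^{k,\varpi})^{-1}$, so the unipotent radical acts non-trivially by conjugation by the associators attached to the \emph{other} basepoints $\sigma_k$; only $e_0$ (and the abelianisation $H_1$) is acted on by pure scaling. These conjugation terms are indispensable: after abelianisation and the beta-quotient identities \eqref{SubDerivation}--\eqref{SubInversion} they produce the cross terms $\sum_{k}s_k\overline{\ZZ^{i,\mm}_k}\otimes s_j\overline{\ZZ^{k,\varpi}_j}$, which are exactly the off-diagonal contributions in the matrix product $(\FL^{\mm}_{\Sigma}\otimes\FL^{\varpi}_{\Sigma})_{ij}=\sum_k(\FL^{\mm}_{\Sigma})_{ik}\otimes(\FL^{\varpi}_{\Sigma})_{kj}$. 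With pure Lefschetz scaling one would obtain only the two ``diagonal'' terms, giving a formula in which the right-hand factor contains no non-trivial de Rham periods at all --- already contradicted by the beta-function example, where $\zeta^{\dR}(\mathrm{odd})$ must appear. Your alternative ``computational route'' (coaction on motivic iterated integrals, then reorganise) is in fact the paper's proof --- Ihara's formula for generalised associators, the identification of the Ihara action with the coefficientwise Galois action (Lemma \ref{lem: two actions}), and the beta-quotient computation of Theorem \ref{thmMainCoactionMmm} --- but you only gesture at it, and the reorganisation via \eqref{SubDerivation} and \eqref{SubInversion} is precisely the step that cannot be skipped.
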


In the last two sections we apply our results to Gauss' hypergeometric function $F={}_2F_1$. Via the integral formula \eqref{eq: intro 2F1 integral} we define global and local lifts of the function $F(a,b,c;y)$. In the global setting, $a,b,c$ are generic complex numbers, i.e.,  $a,b,c,c-a,c-b\notin\mathbb{Z}$,  whereas in the local setting they are formal variables around $a=b=c=0$. The global motivic coaction formula reads:
$$\Delta F^\mm(a,b,c;y) = F^\mm(a,b,c;y) \otimes F^\dR(a,b,c;y) - \frac{y}{1+c}\, F^\mm(a+1,b+1,c+2;y)\otimes G^\dR(a,b,c;y)\ ,$$
and the local formula (Theorem \ref{thm: coaction local 2F1}) is formally similar, with $\dR$ replaced with $\varpi$ and extra $(\Lef^\varpi)^{-1}$ factors inserted as in Theorem \ref{thm: intro thm coaction local}. The term $G^\dR$ appearing in the formula is a de Rham version of the function
$$G(a,b,c;y) = \frac{\sin(\pi a)\sin(\pi (c-a))}{\pi \sin(\pi c)} \,\beta(b,c-b)^{-1}\int_\infty^{y^{-1}}x^{b-1}(1-x)^{c-b-1}(1-yx)^{-a}\,dx\ ,$$
which equals $y^{1-c}\,F(1+b-c,1+a-c,2-c;y)$ up to a prefactor. We also study single-valued versions of the functions $F$ and $G$, both in the global and local settings, and prove double copy formulae relating them to $F$ and $G$ (see Proposition \ref{prop: doublecopyfor2F1}).

\subsection{Example}  \label{sect: introBeta}
Let $n=1$,  $\Sigma = \{0,1\}$,   $k=\Q$ and $X_{\Sigma} = \Pro^1\backslash \{0,1,\infty\}$. The canonical fiber functor $\varpi$ on $\MT(\mathbb{Q})$ is simply the de Rham fiber functor. 
\subsubsection{Cohomology with coefficients}
Let $s_0, s_1\in\mathbb{C}$ be generic, i.e.,  $\{s_0,s_1, s_0+s_1\} \cap \Z = \varnothing$. The algebraic de Rham cohomology $H^1_{\mathrm{dR}}(X, \nabla_{\underline{s}})$  has rank one over $\Q(s_0,s_1)$ and is spanned by the class of  $s_1 \frac{dx}{1-x}$. The locally finite homology $H^{\lf}_1(X(\C), \mathcal{L}_{\underline{s}}^\vee)$  also has rank one over $\Q(e^{2\pi is_0},e^{2\pi is_1})$, and is spanned by the class of  $(0,1)\otimes x^{s_0} (1-x)^{s_1}$.  The corresponding period matrix is the $(1 \times 1)$ matrix
$$ \LP_{\{0,1\}} =  \left( s_1\int_0^1  x^{s_0}(1-x)^{s_1} \frac{dx}{1-x}\right) = \left(  \frac{s_0s_1}{s_0+s_1} \beta(s_0,s_1) \right) \ .$$
Note that $\LP_{\{0,1\}}$ is \emph{a priori} only defined for generic $s_0,s_1$. It turns out \emph{a posteriori} that it admits a Taylor expansion at the point $(s_0,s_1)=(0,0)$, which is not generic. The lifted (global) period matrix  $\LP^{\mm}_{\{0,1\}}$ also has a single entry and 
satisfies the (global) coaction formula
\begin{equation} \label{introdeltaLbeta}  \Delta  \LP_{\{0,1\}}^{\mm} = \LP_{\{0,1\}}^{\mm} \otimes \LP_{\{0,1\}}^{\dR}   \ . \end{equation}
This is immediate from the fact that the matrix has rank one. 
\subsubsection{Formal series expansion} Consider the Drinfeld associator
$$\ZZ= \sum_{w\in \{e_0,e_1\}^{\times}} \zeta(w) \, w  =  1+ \zeta(2) (e_0e_1-e_1e_0) + \ldots $$
where $\zeta(w)$ are shuffle regularised multiple zeta values. Its  abelianisation   satisfies $\overline{\ZZ}=1$. The  $(1\times 1)$ matrix of formal expansions of Lauricella functions $\FL_{\{0,1\}}$ is  therefore $$\FL_{\{0,1\}}= \Big( 1 - s_1 \overline{\ZZ_1} \Big) \ .$$
Its entry is  the formal power series
$$
 1 - s_1\int_{\dch}  x^{s_0} (1-x)^{s_1} \frac{dx}{x-1}  = \frac{s_0s_1}{s_0+s_1} \beta(s_0,s_1)$$
where $\dch$ is the straight  line path between tangential base points at  $0$ and $1$, and the second equality follows from 
 Proposition   \ref{lem: ConvergentZi}. It is well-known that
 \begin{equation} \label{introbetaexpand} \frac{s_0s_1}{s_0+s_1} \beta(s_0,s_1) =  \exp \left( \sum_{n\geq 2}  \frac{(-1)^{n-1} \zeta(n)}{n} \left( (s_0+s_1)^n  - s_0^n -s_1^n\right)\right)\ .
 \end{equation} 
The above objects have motivic and de Rham  versions $\ZZ^{\mm} , \FL^{\mm}_{\{0,1\}}$ (respectively $\mathcal{Z}^\dR$, $FL^\dR_{\{0,1\}}$),  formally denoted by adding superscripts in the appropriate places,  which are formal power series in $s_0,s_1$ whose coefficients are motivic (respectively de Rham) periods of $\MT(\mathbb{Q})$\footnote{They are actually motivic and de Rham periods of the Tannakian subcategory $\MT(\mathbb{Z})$.}.
For example,  the entry of the matrix  $\FL^{\mm}_{\{0,1\}}$ is  exactly  the right-hand side of  \eqref{introbetaexpand}, in which $\zeta(n)$ is replaced by $\zetam(n)$. The (local) motivic coaction satisfies\footnote{If one writes this in terms of the motivic beta function $\beta^{\mm}(s_0,s_1) $ defined by  $(s_0^{-1} + s_1^{-1})$ times the entry of $ \FL^{\mm}_{\{0,1\}}$, then it takes the form 
 $$ \Delta \beta^{\mm}(s_0,s_1)   =  \frac{s_0s_1}{s_0+s_1}  \beta^{\mm} (s_0, s_1) \otimes \beta^{\dR}((\Lef^\dR)^{-1}s_0,(\Lef^\dR)^{-1}s_1) \ .$$}
 \begin{equation}\label{eq: coaction FL beta intro}
 \Delta \, \FL^{\mm}_{\{0,1\}}(s_0,s_1) = \FL^{\mm}_{\{0,1\}}(s_0,s_1) \otimes \FL_{\{0,1\}}^{\dR}((\Lef^\dR)^{-1}s_0,(\Lef^\dR)^{-1}s_1) \ , 
 \end{equation}
 where $\Delta$ acts term by term and on the formal variables $s_k$ via $\Delta(s_k)=s_k(1\otimes (\Lef^\dR)^{-1})$. The formula \eqref{eq: coaction FL beta intro} is equivalent to the equations:
 \begin{equation} \label{intro: Deltazetam} \Delta \, \zetam(n) = \zetam(n) \otimes (\Lef^{\dR})^n + 1 \otimes \zeta^{\dR}(n)\end{equation} 
 for all $n\geq 2$, using a variant of the well-known fact that in a complete Hopf algebra, 
  an element is group-like if and only if it is the exponential of a primitive element. Since  $\zeta^{\dR}(2n)=0$ for  $n\geq 1$, we retrieve the known coaction formulae on motivic zeta values \cite{brownICM}.

 \begin{rem} 
 Equation \eqref{intro: Deltazetam} is equivalent to the fact that  zeta values are periods of simple extensions in the category $\MT(\Q)$.  It is very interesting that  this non-trivial statement  shows up as the  apparently simpler  fact \eqref{introdeltaLbeta} that the cohomology group underlying $L_{\{0,1\}}$ has rank one. It also explains why the formula is uniform for all values of $n$.
\end{rem} 

\subsubsection{Single-valued versions} The single-valued beta integral is 
\begin{equation}\label{eq: intro sv beta}
- \frac{ s_1}{2\pi i} \iint_{\C}   |z|^{2s_0} |1-z|^{2s_1}  \left(\frac{d\overline{z}}{\overline{z}-1}  -  \frac{d\overline{z}}{\overline{z}} \right) \wedge \frac{dz}{1-z}  \   = \      \frac{s_0s_1}{s_0+s_1} \beta^{\,\s}(s_0,s_1)  \ , 
\end{equation}
where the single-valued (or `complex') beta function $\beta^{\,\s}(s_0,s_1)$ satisfies the  formula 
$$  \frac{s_0s_1}{s_0+s_1} \beta^{\,\s}(s_0,s_1)   =   - \frac{\beta(s_0,s_1)}{\beta(-s_0,-s_1)} \ .$$
The expansion of \eqref{eq: intro sv beta} can be expressed in the form 
$$
1- s_1\, \s( \overline{\ZZ^{\dR}_1}) =  \exp \left( \sum_{n\geq 2}  \frac{(-1)^{n-1} \zeta^{\mathbf{sv}}(n)}{n} \left( (s_0+s_1)^n  - s_0^n -s_1^n\right)\right)\ .
$$
where the single-valued zeta $\zeta^{\mathbf{sv}}(n)$ equals $2\, \zeta(n)$ for $n$ odd $\geq 3$ and vanishes for even $n$. \medskip

This discussion of the beta function generalises to the case of  the moduli spaces of curves of genus zero with marked points. It has  been studied in \cite{BD2, vanhovezerbini, schlottererschnetz}.

\subsection{Comments} 

\subsubsection{Comparing local expansions} 
It is natural to ask what can be said about Taylor expansions of Lauricella functions around different (rational) values of the parameters $s_k$. Clearly, the expansions around different values of $s_k$ are independent from each other: trying to compare them quickly leads to identities involving infinite sums of the kind
$$\sum_{n=2}^{\infty} \left(\zeta(n)-1 \right)=1\ , $$
for which there is no motivic interpretation.
In fact, the coefficients of  expansions  at different rational points are very different from the motivic point of view. For example, the value of $\beta(s_0,s_1)$ at non-integer rational values of $s_0,s_1$ is not a mixed Tate period in general. However, we believe that there should be a general tannakian framework which, when correctly interpreted, controls the motivic Galois theory of all the expansions of Lauricella functions (and related hypergeometric-type integrals) around rational values of the parameters. This is beyond the scope of this article.

\subsubsection{Divergences}
The main technical point in this paper is, as usual,  dealing with divergences. For cohomology with coefficients, this appears as non-genericity of  the parameters $s_k$. For motivic fundamental groups, it takes the form of tangential base points. The following key example illustrates the point:

\begin{example} Suppose that $\mathrm{Re}(s)>0$. Then, viewed as a function of $s$, 
$$I(s)= \int_{0}^1  x^s \,\frac{dx}{x} = \frac{1}{s}\ \cdot  $$
The renormalised version $I^{\ren}(s)$  of this integral (defined in \S\ref{sect: Renorm})  removes the pole in $s$, hence  $I^{\ren}(s)=0$. 
Now consider the integral as a formal power series in $s$. We perform a Taylor expansion of the integrand  and integrate term by term. Since the integrals diverge, they are regularised with respect to a tangent vector of length $1$ at the origin, which is equivalent to integrating along the straight line path $\dch$ (for `droit chemin'):
$$I^{\mathrm{local}}(s)  =  \sum_{n\geq 0}  \frac{s^n}{n!} \int_{\dch}  \log^n(x) \, \frac{dx}{x}   = 0  \ .$$
Thus $I^{\mathrm{local}}(s)$ is indeed  the Taylor expansion of $I^{\ren}(s)$, which is not true if one were to regularise with respect to a different tangential base point. In general, our tangential basepoints are chosen to be consistent with the renormalisation of divergent integrals. 
\end{example}

\subsubsection{Higher dimensional generalisations}
There are precursors in the physics literature  to coaction formulae on generating series of motivic periods.  Indeed, in  \cite{schlottererstiebergermotivic}  open string amplitudes in genus $0$ (which can be computed in terms of associators \cite{BroedelSchlottererStiebergerTerasoma}) were recast in terms of series of motivic multiple zeta values, and some conjectures were formulated about their $f$-alphabet decomposition to all orders.  For four particles, this  is equivalent to Example \ref{sect: introBeta}.

We can make these conjectures precise as a simple application of the framework described here. Let $\mathcal{M}_{0,S}$ denote the moduli space of curves with points marked by a set $S$ with $n+3$ elements, and  let $\nabla_{\underline{s}}$, $\mathcal{L}_{\underline{s}}$ be the Koba--Nielsen connection and local system considered in \cite[\S 6]{BD2}. The periods and single-valued periods  studied in that paper can be easily formalised using the Tannakian  categories defined in this paper: in brief, the triple
$$ M^S \ = \ \left( H_{\mathrm{B}}^n(\mathcal{M}_{0,S}, \mathcal{L}_{\underline{s}}) \ , \   H_{\mathrm{dR}}^n(\mathcal{M}_{0,S}, \nabla_{\underline{s}}) \ , \  \comp_{\mathrm{B}, \mathrm{dR}} \right) $$
defines an object of any of the `global' Tannakian categories considered in \S\ref{sec: tannakianglobal}.  From this one can define `global' Tannakian lifts of the closed and open superstring amplitudes in genus $0$. `Local' lifts (after expanding in the variables $s_{ij}$) were worked out in \cite{BD2}.   We can define global matrix coefficients 
$$I^{\mm}(\omega) = [  M^S,  \gamma, \omega ]^{\mm} \qquad \hbox{ and } \qquad I^{\dR}(\nu, \omega) = [  M^S,  \nu , \omega ]^{\dR} $$
for suitable Betti homology classes $\gamma$ and de Rham (resp. dual de Rham) classes $\omega$ (resp.  $\nu$). The period of $I^{\mm}(\omega)$ is an open string amplitude, and the single-valued period of (a slight variant of) $I^{\dR}(\nu, \omega)$ is a closed string amplitude. The general coaction formalism \eqref{Deltaformula}  or \cite{brownnotesmot} yields 
$$ \Delta I^{\mm}(\omega) = \sum_{\eta}  I^{\mm}(\eta) \otimes  I^{\dR}(\eta^{\vee}, \omega)$$ 
where $\eta$ ranges over a basis of  $H_{\mathrm{dR}}^n(\mathcal{M}_{0,S}, \nabla_{\underline{s}})$ and $\eta^{\vee}$ is the dual basis. 
The objects $I^{\mm}(\omega)$ can be viewed as versions of open string amplitudes, the objects   $I^{\dR}(\eta^{\vee}, \omega)$ as  versions of closed string amplitudes. 
In terms of the de Rham intersection pairing, this can equivalently  be written 
$$ \Delta I^{\mm}(\omega) = \sum_{\eta ,\eta'}  \langle \eta, \eta' \rangle^{\mathrm{dR}}  \, I^{\mm}(\eta) \otimes  I^{\dR}(\eta', \omega)$$ 
where $\eta, \eta'$ range over bases of   $H_{\mathrm{dR}}^n(\mathcal{M}_{0,S}, \nabla_{\underline{s}})$ and $H_{\mathrm{dR}}^n(\mathcal{M}_{0,S}, \nabla_{-\underline{s}})$ respectively.  We proved in \cite{BD2} that the Laurent expansions of open and closed string amplitudes admit (non-canonical)  motivic lifts. In the light of the present paper, it is natural to expect that their coactions are compatible with the global formula written above. It would be interesting to see if this is equivalent to the conjectures of Stieberger and Schlotterer mentioned above. 

\subsubsection{Generalisations to other settings}
There should be interesting possible  generalisations of our results to the elliptic \cite{Matthes} and $\ell$-adic \cite{Nakamura, UniversalJacobi} settings. \medskip

\subsubsection*{Convention}
Throughout this paper we use the following convention: we denote the coordinate on the affine line $\mathbb{C}$ by $x$ when dealing with line integrals, and by $z$ when dealing with double (single-valued) integrals. 
\vspace{0.1in}

\subsubsection*{Acknowledgements} This paper was begun during a visit of the  first named author to Trinity College Dublin as a Simons visiting Professor.  He is grateful to that institution for hospitality and also to Ruth Britto and Ricardo Gonzo for conversations during that time. Many thanks  to Samuel Abreu, Claude Duhr, and Einan Gardi for discussions on their  work, to Lance Dixon and Paul Fendley for references to  conformal field theory, and to Matija Tapu\v{s}kovi\'{c} for comments on the first version of this paper. 
We thank the anonymous referees for many valuable comments and suggestions.
This project has received funding from the European Research Council (ERC) under the European Union's Horizon 2020 research and innovation programme (grant agreement No. 724638).
The second named author was partially supported by ANR grant PERGAMO (ANR-18-CE40-0017).

\section{Cohomology with coefficients of a punctured Riemann sphere}\label{sec: cohomology coeff}
We  first recall the interpretation of  the integrals \eqref{IntroLSigma} as periods of  the cohomology of the punctured Riemann sphere  with coefficients in a rank one algebraic vector bundle or local system. Most of the results of this section can be found in the classical literature \cite{hattorikimura, aomotovanishing, kitanoumi, kitayoshida, kitayoshida2, ChoMatsumoto}. However, we provide an original treatment of the single-valued period homomorphism for cohomology with coefficients. It is based on the Frobenius at infinity, as in the general setting of \cite{BD1, BD2} and not on the complex conjugation of coefficients, as in  \cite{hanamurayoshida, mimachiyoshida}.

\subsection{Periods of cohomology with coefficients}
 Let  $k\subset \C$ be a field and let $\Sigma=\{\sigma_0,\ldots, \sigma_n\}$ be  distinct points  in $k$ with $\sigma_0=0$. Write $$X_{\Sigma}= \A_k^1 \backslash \Sigma \ .$$
We consider a tuple $\underline{s}=(s_0,\ldots,s_n)$ of complex numbers that we shall often assume to be \emph{generic}, meaning that we have:
\begin{equation}\label{generic} 
\{s_0, s_1, \ldots, s_n,  s_0+s_1+\cdots+s_n\}  \cap  \Z = \varnothing \ .
\end{equation}
An alternative point of view, that we will not develop here, would be to treat the $s_i$ as formal variables (see \S \ref{par:variant formal s}). 

\subsubsection{Algebraic de Rham cohomology} For any subfield $k\subset \C$, denote by 
$$k^{\mathrm{dR}}_{\underline{s}} = k(s_0,\ldots, s_n)\ .$$
The algebraic de Rham cohomology groups that we will consider are $k^{\mathrm{dR}}_{\underline{s}}$-vector spaces with a natural $\Q^{\mathrm{dR}}_{\underline{s}}$-structure. 
Define the following  logarithmic 1-forms on $\Pro^1_k$  
\begin{equation} \label{omegajdef}   \omega_i= \frac{dx}{x- \sigma_i} \qquad \hbox{ for } \quad  i = 0,\ldots, n \ ,
\end{equation} 
 which have residue $0$ or $1$ at points of $\Sigma$, and $-1$ at $\infty$. They form a basis of the space of global logarithmic forms $\Gamma(\Pro^1_k, \Omega^1_{\Pro^1_k} ( \log \Sigma \cup \{\infty\}))$, which maps isomorphically to $H_{\mathrm{dR}}^1(X_{\Sigma}/k) $.

\begin{defn} Let $\Or_{X_\Sigma}$ denote the trivial rank one bundle on $X_{\Sigma} \times_{k} k_{\underline{s}}^{\mathrm{dR}}$, and consider the following logarithmic connection upon it
$$\nabla_{\underline{s}} : \Or_{X_\Sigma} \To \Omega^1_{X_{\Sigma}}
\qquad \hbox{ given by } \qquad 
\nabla_{\underline{s}} = d + \sum_{i=0}^n  s_i\, \omega_i \ .$$
\end{defn} 
It is automatically integrable since $X_{\Sigma}$ has dimension one, and is in fact closely related to the abelianisation of the canonical connection on the de Rham unipotent fundamental group of $X_\Sigma$. Consider the algebraic de Rham cohomology groups 
$$H^r_{\mathrm{dR}}(X_{\Sigma}, \nabla_{\underline{s}}) = H^r_{\mathrm{dR}}(X_{\Sigma}, (\Or_{X_{\Sigma}}, \nabla_{\underline{s}})) $$
which are finite-dimensional $k_{\underline{s}}^{\mathrm{dR}}$-vector spaces.
The fact that the $s_i$ are generic implies, by \cite[Proposition II.3.13]{deligneEqDiff}, that one has a logarithmic comparison theorem for $(\Or_{X_\Sigma},\nabla_{\underline{s}})$. Since the cohomology of $X_\Sigma$ is spanned by global logarithmic forms, this implies (see \cite{esnaultschechtmanviehweg}) that the cohomology groups $H^r_{\mathrm{dR}}(X_\Sigma,\nabla_{\underline{s}})$ are computed by the complex of global logarithmic forms
$$0\To k_{\underline{s}}^{\mathrm{dR}}\stackrel{\nabla_{\underline{s}}}{\To} k_{\underline{s}}^{\mathrm{dR}}\,\omega_0\oplus \cdots \oplus k_{\underline{s}}^{\mathrm{dR}}\,\omega_n \To 0\ ,$$
where  $\nabla_{\underline{s}}(1)=\sum_{i=0}^ns_i\omega_i$. Again by genericity of the $s_i$,  $H^r_{\mathrm{dR}}(X_{\Sigma}, \nabla_{\underline{s}})$ vanishes for  $r\neq 1$, and $H^1_{\mathrm{dR}}(X_{\Sigma}, \nabla_{\underline{s}})$ has dimension $n$. It is generated by the $\omega_i$ 
subject to  the single relation 
\begin{equation}\sum_{i=0}^n s_i \, \omega_i = 0\ .
\end{equation}
 Since the forms $\omega_i$ have rational residues, they in fact define a natural $\Q^{\mathrm{dR}}_{\underline{s}}$-structure on $H^1_{\mathrm{dR}}(X_{\Sigma}, \nabla_{\underline{s}})$ which we shall denote by $H^1_{\varpi}(X_{\Sigma}, \nabla_{\underline{s}})$. We therefore have
\begin{equation}  \label{H1dRwithcoeff} H^1_{\varpi}(X_{\Sigma}, \nabla_{\underline{s}}) \cong \left(\bigoplus_{i=0}^n \Q^{\mathrm{dR}}_{\underline{s}}  \omega_i\right) / \ \, \Q^{\mathrm{dR}}_{\underline{s}}\,\sum_{i=0}^n s_i\omega_i\ .
\end{equation}
We shall use the following  basis  for \eqref{H1dRwithcoeff}:
\begin{equation} \label{dRcoeffBasis} 
\{  - s_i \omega_i, \hbox{ for }  i= 1,\ldots, n\}\ .
\end{equation}

\subsubsection{Betti (co)homology} 

We introduce the subfield of $\C$ defined by
$$\Q_{\underline{s}}^{\mathrm{B}}=\Q(e^{2\pi is_0},\ldots,e^{2\pi is_n})\ .$$
\begin{defn} Let $\mathcal{L}_{\underline{s}}$ denote the rank one local system of $\Q^{\mathrm{B}}_{\underline{s}}$-vector spaces on the complex points $X_{\Sigma}(\C)= \C \backslash \Sigma$ defined by 
$$\mathcal{L}_{\underline{s}} = \Q^{\mathrm{B}}_{\underline{s}}\; x^{-s_0} \prod_{k=1}^n (1-x \sigma_k^{-1})^{-s_k} \ . $$
\end{defn} 
The local system $\mathcal{L}_{\underline{s}}$ has monodromy $e^{-2\pi is_k}$ around the point $\sigma_k$. After extending scalars to $\C$, it is identified with the horizontal sections of the (analytified) connexion $\nabla_{\underline{s}}$ on the trivial vector bundle of rank one on $X_\Sigma(\C)$:
$$\mathcal{L}_{\underline{s}}\otimes_{\Q_{\underline{s}}^{\mathrm{B}}} \C \cong \left( \Or^{\mathrm{an}}_{X_\Sigma}\right)^{\nabla_{\underline{s}}}\ .$$
We will be interested in its cohomology 
$$H_{\mathrm{B}}^r(X_{\Sigma}, \mathcal{L}_{\underline{s}})= H^r(\C\backslash \Sigma, \mathcal{L}_{\underline{s}})   \  \cong  \ H_r(\C \backslash \Sigma, \mathcal{L}_{\underline{s}}^\vee)^{\vee}\ ,$$
where $\mathcal{L}_{\underline{s}}^\vee$ is the dual local system
$$\mathcal{L}_{\underline{s}}^\vee=\Q^{\mathrm{B}}_{\underline{s}}\; x^{s_0}\prod_{k=1}^n (1-x\sigma_k^{-1})^{s_k}\ .$$
The genericity assumption \eqref{generic} implies that $\mathcal{L}_{\underline{s}}$ and $\mathcal{L}_{\underline{s}}^\vee$ have non-trivial monodromy around every point of $\Sigma$ and around $\infty$, which implies that the natural map 
$$ H_r( \C \backslash \Sigma, \mathcal{L}_{\underline{s}}^\vee) \To H^{\lf}_r( \C \backslash \Sigma, \mathcal{L}_{\underline{s}}^\vee)  $$
from ordinary homology to locally finite homology  is an isomorphism.  Its inverse is sometimes called regularisation. 
 An easy computation shows that all  homology  is concentrated in degree one and $H_1^{\lf}(\C \backslash \Sigma, \mathcal{L}_{\underline{s}}^\vee)$ has rank $n$. 
 It has a basis consisting of the (classes of the) locally finite chains
 \begin{equation} \label{BcoeffBasis} 
  \delta_i \otimes  x^{s_0} \prod_{k=1}^n (1-x \sigma_k^{-1})^{s_k} 
\end{equation} 
for $i=1,\ldots,n$, where $\delta_i: (0,1) \rightarrow \C \backslash \Sigma$ is a smooth  
path from $0$ to $\sigma_i$ that can be extended to a smooth path $\overline{\delta_i}:[0,1]\to \C$, and $x^{s_0}\prod_{k=1}^n(1-x\sigma_k^{-1})^{s_k}$ denotes some choice of section of $\mathcal{L}_{\underline{s}}^\vee$ on $\delta_i$.

\begin{rem}\label{rem: choice delta i winding}
In all that follows, we will assume that the choices of representatives are such that $\delta_i$ does not wind infinitely around $0$ and $\sigma_i$, i.e., that the argument of $\delta_i(t)$ is bounded as $t$ approaches $0$ and that the argument of $\delta_i(t)-\sigma_i$ is bounded as $t$ approaches $1$. This assumption will ensure the convergence of the integrals considered below.
\end{rem}

\subsubsection{Comparison isomorphism.} 
 There is a canonical isomorphism \cite[\S6]{deligneEqDiff}
 \begin{equation}\label{eq:compBdRcoefficients}
 \comp_{\mathrm{B}, \mathrm{dR}}(\underline{s}):  H^1_{\mathrm{dR}}(X_{\Sigma}, \nabla_{\underline{s}}) \otimes_{k^{\mathrm{dR}}_{\underline{s}}} \C \overset{\sim}{\To}   H^1_{\mathrm{B}}(X_{\Sigma}, \mathcal{L}_{\underline{s}}) \otimes_{\Q^{\mathrm{B}}_{\underline{s}}} \C\ .
 \end{equation}
 whose restriction to the $\Q_{\underline{s}}^{\mathrm{dR}}$-structure we shall denote by
 \begin{equation}\label{eq:compBvarpi}
 \comp_{\mathrm{B}, \varpi}(\underline{s}):  H^1_{\varpi}(X_{\Sigma}, \nabla_{\underline{s}}) \otimes_{\Q^{\mathrm{dR}}_{\underline{s}}} \C \overset{\sim}{\To}   H^1_{\mathrm{B}}(X_{\Sigma}, \mathcal{L}_{\underline{s}}) \otimes_{\Q^{\mathrm{B}}_{\underline{s}}} \C\ .
 \end{equation}
 Assuming  \eqref{generic}, we can identify
 Betti cohomology  $H_{\mathrm{B}}^1(X_{\Sigma}, \mathcal{L}_{\underline{s}})$ with the dual of locally finite homology, which leads to  a bilinear pairing
 $$H_1^{\lf}(\C\backslash {\Sigma}, \mathcal{L}_{\underline{s}}^\vee) \times  H^1_{\varpi}(X_{\Sigma}, \nabla_{\underline{s}})   \To \C\ .  $$
 It is well-known that the comparison isomorphism is computed by integration when it makes sense.
 
 \begin{lem}\label{lem:comparison equals Lauricella coefficients}   Assuming  \eqref{generic}, a matrix representative for the comparison isomorphism in the bases \eqref{dRcoeffBasis}, \eqref{BcoeffBasis}   is  the $(n\times n)$  period  matrix $\LP_{\Sigma}$ with entries
$$(\LP_{\Sigma})_{ij} = -s_j   \int_{\delta_i}   x^{s_0} \prod_{k=1}^n (1-x \sigma_k^{-1})^{s_k}  \frac{dx}{x-\sigma_j}\ ,$$
provided that $\mathrm{Re}\, s_0>-1$ and 
$\mathrm{Re}\, s_i>
\begin{cases} -1 & \hbox{ if }\; i\neq j\ ; \\
0 & \hbox{ if }\;  i=j\ .
\end{cases}
$
\end{lem}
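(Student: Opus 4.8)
The plan is to verify that the canonical comparison isomorphism of \cite{deligneEqDiff} is, in the relevant range of parameters, literally computed by the integration pairing, and then to identify the matrix entries. First I would recall that $\comp_{\mathrm{B},\mathrm{dR}}(\underline{s})$ sends a de Rham cohomology class, represented by a global logarithmic form $\omega$, to the Betti class whose pairing with a locally finite homology class $[\gamma\otimes\varphi]$ (where $\varphi$ is the chosen flat multivalued section of $\mathcal{L}^{\vee}_{\underline{s}}$) is $\int_\gamma \varphi\,\omega$, whenever this integral converges absolutely. This is the statement just before the lemma (``the comparison isomorphism is computed by integration when it makes sense''), so the only real content is (a) convergence of the specific integrals and (b) plugging in the chosen bases \eqref{dRcoeffBasis} and \eqref{BcoeffBasis}.

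For step (b): the basis element of $H^1_\varpi$ indexed by $j$ is $-s_j\omega_j = -s_j\,dx/(x-\sigma_j)$, and the basis element of $H_1^{\lf}$ indexed by $i$ is the class of $\delta_i \otimes x^{s_0}\prod_{k=1}^n(1-x\sigma_k^{-1})^{s_k}$, a path from $0$ to $\sigma_i$ decorated with the indicated section of $\mathcal{L}^{\vee}_{\underline{s}}$. Pairing the $(i)$-th homology basis vector against the $(j)$-th cohomology basis vector gives exactly
$$ -s_j \int_{\delta_i} x^{s_0}\prod_{k=1}^n (1-x\sigma_k^{-1})^{s_k}\,\frac{dx}{x-\sigma_j}\ , $$
which is the claimed $(\LP_\Sigma)_{ij}$. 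So once convergence is established, the matrix identification is immediate from the definitions of the two bases.

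For step (a), the convergence analysis: the integrand behaves near the endpoints and singular points of the integration path like a power of the local coordinate, and the path $\delta_i$ runs from $0$ to $\sigma_i$. Near $x=0$ the integrand is $\sim x^{s_0}$ times a form; when $j\neq 0$ the form $dx/(x-\sigma_j)$ is regular there, so absolute integrability near $0$ needs $\mathrm{Re}(s_0)>-1$ (there is no $i=0$ case since $i$ ranges over $1,\dots,n$). Near $x=\sigma_i$, the endpoint of $\delta_i$: if $i\neq j$ the integrand is $\sim (x-\sigma_i)^{s_i}$, needing $\mathrm{Re}(s_i)>-1$; if $i=j$ then in addition the factor $dx/(x-\sigma_j)=dx/(x-\sigma_i)$ contributes an extra $(x-\sigma_i)^{-1}$, so the integrand is $\sim (x-\sigma_i)^{s_i-1}$ and we need $\mathrm{Re}(s_i)>0$. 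Near any other $\sigma_k$ (with $k\neq 0,i$) that $\delta_i$ might pass near — but one takes $\delta_i$ to avoid the other punctures, so there is nothing to check there, and near $\infty$ the path does not go. This reproduces exactly the stated conditions: $\mathrm{Re}(s_0)>-1$, and $\mathrm{Re}(s_i)>-1$ for $i\neq j$, $\mathrm{Re}(s_i)>0$ for $i=j$. I would write $\delta_i$ as a straight segment (or any embedded path staying in $\C\setminus\Sigma$ except at its endpoints) so that the only boundary contributions are at $0$ and $\sigma_i$.

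The main obstacle is a subtle bookkeeping point rather than a deep one: one must make sure that the convergent integral actually represents the value of the \emph{canonical} comparison isomorphism of \cite[\S6]{deligneEqDiff}, i.e.\ that the ``regularisation'' identifying $H_1$ with $H_1^{\lf}$ and the normalisation of flat sections are all compatible with the integration-theoretic pairing — there are no stray factors of $2\pi i$ or sign ambiguities from choices of branches of $x^{s_0}\prod(1-x\sigma_k^{-1})^{s_k}$. I would handle this by fixing once and for all the branch of the multivalued function along $\delta_i$ (the principal branch near $0$, say), noting that the genericity assumption \eqref{generic} makes $H_1\xrightarrow{\sim} H_1^{\lf}$ so that no extra regularisation terms appear, and citing that for convergent integrals Deligne's comparison is the naive period pairing; the bound conditions in the lemma are precisely what guarantees we are in that convergent regime so that no analytic continuation in $\underline{s}$ is needed at this stage.
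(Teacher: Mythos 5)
Your step (b) and the convergence analysis in step (a) match the paper, and you correctly isolate the real issue in your final paragraph — but that is precisely where the proposal stops short. The pairing being computed is between a \emph{locally finite} homology class (the open chain $\delta_i$ decorated with a flat section) and the Betti class $\comp_{\mathrm{B},\mathrm{dR}}(\underline{s})(-s_j\omega_j)$; to evaluate a cohomology class against a locally finite cycle one must use a representative that is compactly supported in $\C\setminus\Sigma$, and the twisted logarithmic form $x^{s_0}\prod_k(1-x\sigma_k^{-1})^{s_k}\,\omega_j$ is not such a representative. So the convergence of the naive integral does not by itself show that this integral \emph{is} the value of the pairing; "citing that for convergent integrals Deligne's comparison is the naive period pairing" is not a proof of the lemma, it is a restatement of it. Likewise, "genericity makes $H_1\to H_1^{\lf}$ an isomorphism so no extra regularisation terms appear" is an assertion of exactly the point that needs an argument.

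The paper closes this gap as follows. Write $\widetilde{\omega}_j-\omega_j=\nabla_{\underline{s}}\phi$, where $\widetilde{\omega}_j$ is a smooth representative of the class of $\omega_j$ with compact support in $\C\setminus\Sigma$ and $\phi$ is a smooth function on $\Pro^1(\C)$; taking residues at the points of $\Sigma\cup\infty$ (where $\widetilde{\omega}_j$ vanishes) forces $\phi(\sigma_k)=0$ for all $k\neq j$, in particular at $0$ and, when $i\neq j$, at $\sigma_i$. Since
$$x^{s_0}\prod_{k=1}^n(1-x\sigma_k^{-1})^{s_k}\,\nabla_{\underline{s}}\phi \;=\; d\Big(x^{s_0}\prod_{k=1}^n(1-x\sigma_k^{-1})^{s_k}\,\phi\Big)\ ,$$
Stokes' theorem reduces the discrepancy $\int_{\delta_i}x^{s_0}\prod_k(1-x\sigma_k^{-1})^{s_k}(\widetilde{\omega}_j-\omega_j)$ to boundary terms at $0$ and $\sigma_i$, which vanish exactly under the stated hypotheses: at $0$ because $\phi(0)=0$ and $\mathrm{Re}(s_0)>-1$; at $\sigma_i$ because either $\phi(\sigma_i)=0$ (case $i\neq j$, needing only $\mathrm{Re}(s_i)>-1$) or, when $i=j$, $\phi(\sigma_i)$ need not vanish and one needs $\mathrm{Re}(s_i)>0$ so that the factor $(1-x\sigma_i^{-1})^{s_i}$ kills the boundary contribution. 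Note in particular that the exponent conditions in the lemma do double duty — they guarantee convergence, as you observed, \emph{and} the vanishing of these Stokes boundary terms — and the asymmetry between $i=j$ and $i\neq j$ is forced by the second role as much as the first. Supplying this argument (or an equivalent one via a regularised compact cycle) is what your write-up is missing.
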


\begin{proof}
By definition, the pairing that we wish to compute is 
$$\langle\, \delta_i \otimes  x^{s_0} \prod_{k=1}^n (1-x \sigma_k^{-1})^{s_k}  , \, \mathrm{comp}_{\mathrm{B},\mathrm{dR}}(\underline{s})(-s_j\omega_j)\,\rangle = -s_j \int_{\delta_i}x^{s_0}\prod_{k=1}^n(1-x\sigma_k^{-1})^{s_k}\widetilde{\omega}_j\ ,$$
where $\widetilde{\omega}_j$ is a smooth form on $\C\setminus\Sigma$ with compact support, representing the cohomology class of $\omega_j=dz/(z-\sigma_j)$.  In other words, we have
$$\widetilde{\omega}_j-\omega_j = \nabla_{\underline{s}}\phi = d\phi + \sum_{k=0}^ns_k\, \phi \, d\log(z-\sigma_k)\ ,$$
where $\phi$ is a smooth function on $\mathbb{P}^1(\C)$. Since $\widetilde{\omega}_j$ vanishes in the neighbourhood of $\Sigma\cup\infty$, taking residues in the previous equation along points of $\Sigma\cup\infty$ implies that $\phi$ vanishes at every $\sigma_k$, $k\neq j$, including at $\sigma_0=0$. Now we only need to prove that the integral
$$\int_{\delta_i}x^{s_0} \prod_{k=1}^n (1-x \sigma_k^{-1})^{s_k} \,\nabla_{\underline{s}}\phi $$
vanishes. Since $x^{s_0}\prod_{k=1}^n (1-x \sigma_k^{-1})^{s_k} \,\nabla_{\underline{s}}\phi = d(x^{s_0}\prod_{k=1}^n (1-x \sigma_k^{-1})^{s_k} \,\phi)$, this integral is computed by Stokes' theorem and we need to prove that $x^{s_0}\prod_{k=1}^n(1-x\sigma_k^{-1})^{s_k}\phi$ vanishes at $0$ and at $\sigma_i$.
\begin{enumerate}[--]
\item At $0$ this amounts to proving that $\delta(t)^{s_0}\phi(\delta(t))$ goes to $0$ when $t\to 0$. Since $\phi$ is smooth and vanishes at $0$ we have 
$ |\phi(\delta(t)) | < C |\delta(t)| $ for some constant $C$, and so
$$|\delta(t)^{s_0}\phi(\delta(t))| < C |\delta(t)^{s_0+1}|=C |\delta(t)|^{\mathrm{Re}(s_0)+1}\exp(-\mathrm{Im}(s_0)\mathrm{arg}(\delta(t)))\ .$$ 
By assumption, $\mathrm{Re}(s_0)+1>0$ and $\mathrm{arg}(\delta(t))$ is bounded as $t$ approaches zero (see Remark \ref{rem: choice delta i winding}), which implies that the limit of $\delta(t)^{s_0+1}$, and also    $\delta(t)^{s_0}\phi(\delta(t))$,   is zero when $t\rightarrow 0$.
\item At $\sigma_i$, the same argument gives the desired vanishing, by using the fact that $\phi$ vanishes at $\sigma_i$ if $i\neq j$.
\end{enumerate}
The result follows.
\end{proof}

\subsection{Intersection pairings} For   generic   $s_i$ \eqref{generic},  the natural map 
    $H_1(\C\backslash \Sigma, \mathcal{L}_{\underline{s}}^\vee)\rightarrow H_1^\lf(\C\backslash \Sigma, \mathcal{L}_{\underline{s}}^\vee)$
    is an isomorphism.  Poincar\'{e} duality gives an isomorphism
    $$H_1^\lf(\C\backslash \Sigma, \mathcal{L}_{\underline{s}}^\vee) \simeq  H_1(\C\backslash \Sigma, \mathcal{L}_{\underline{s}})^\vee \simeq H_1(\C\backslash \Sigma, \mathcal{L}_{-\underline{s}}^\vee)^\vee\ ,$$
    where we set $-\underline{s}=(-s_0,\ldots, -s_n)$. By combining these two isomorphisms we get a perfect pairing, called the \emph{Betti intersection pairing} \cite[\S 2]{kitayoshida}, \cite{ChoMatsumoto}, \cite[\S 2]{mimachiyoshida}:
    $$\langle  \ ,  \, \rangle_{\mathrm{B}} : H_1(\C\backslash \Sigma, \mathcal{L}_{-\underline{s}}^\vee) \otimes_{\Q_{\underline{s}}^{\mathrm{B}}} H_1(\C\backslash \Sigma, \mathcal{L}_{\underline{s}}^\vee) \To \Q_{\underline{s}}^{\mathrm{B}}\ ,$$
    or dually in cohomology:
    \begin{equation}\label{eq:bettipairing}
    \langle  \ ,  \, \rangle^{\mathrm{B}} : H^1_{\mathrm{B}}(X_\Sigma, \mathcal{L}_{-\underline{s}}) \otimes_{\Q_{\underline{s}}^{\mathrm{B}}} H^1_{\mathrm{B}}(X_\Sigma, \mathcal{L}_{\underline{s}}) \To \Q_{\underline{s}}^{\mathrm{B}}\ .
    \end{equation}
    The de Rham counterpart is the \emph{de Rham intersection pairing} \cite{ChoMatsumoto, KMatsumoto}:
    \begin{equation}\label{eq:derhampairing}
    \langle  \ ,  \, \rangle^{\mathrm{dR}} : H^1_{\mathrm{dR}}(X_\Sigma,\nabla_{-\underline{s}}) \otimes_{k_{\underline{s}}^{\mathrm{dR}}} H^1_{\mathrm{dR}}(X_\Sigma,\nabla_{\underline{s}}) \To k_{\underline{s}}^{\mathrm{dR}}\ ,
    \end{equation}
    which comes from Poincar\'{e} duality and the fact that the natural map
    \begin{equation}\label{eq:canonicalderhampairing}
    H^1_{\mathrm{dR},\mathrm{c}}(X_\Sigma,\nabla_{\underline{s}}) \To H^1_{\mathrm{dR}}(X_\Sigma,\nabla_{\underline{s}})
    \end{equation}
    is an isomorphism if the $s_i$ are generic, where the subscript $\mathrm{c}$ denotes compactly supported cohomology. The map \eqref{eq:derhampairing} respects the natural $\Q_{\underline{s}}^{\mathrm{dR}}$-structures and induces
    $$\langle\ , \, \rangle^{\varpi}:H^1_{\varpi}(X_\Sigma,\nabla_{-\underline{s}}) \otimes_{\Q_{\underline{s}}^{\mathrm{dR}}} H^1_{\varpi}(X_\Sigma,\nabla_{\underline{s}}) \To \Q_{\underline{s}}^{\mathrm{dR}}\ .$$

    We set  
    \begin{equation}\label{eq:def nui}
    \nu_i = \frac{dx}{x-\sigma_i} - \frac{dx}{x} \qquad \hbox{ for } \quad  1\leq i \leq n
    \end{equation}
    and use the same notation for the corresponding class in $H^1_{\varpi}(X_\Sigma,\nabla_{-\underline{s}})$.
    
    \begin{lem}\label{lem: omega nu dR pairing}  For all $1\leq i, j \leq n$,  and $s_0,\ldots, s_n$ satisfying \eqref{generic},
    \begin{equation}
    \label{deRhampairingForms} \left\langle \nu_i   , \omega_j \right\rangle^{\mathrm{dR}} = -\frac{1}{s_i} \, \mathbf{1}_{i=j} \ .
    \end{equation} 
    \end{lem}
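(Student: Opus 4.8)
The plan is to compute the de Rham intersection pairing $\langle \nu_i, \omega_j\rangle^{\mathrm{dR}}$ explicitly using the standard local description of the intersection form on logarithmic $1$-forms, which reduces the pairing to a finite sum of residue contributions at the points of $\Sigma\cup\{\infty\}$. Recall that under the isomorphism \eqref{eq:canonicalderhampairing}, one lifts $\omega_j$ to a compactly supported representative $\widetilde\omega_j$ with $\widetilde\omega_j - \omega_j = \nabla_{\underline{s}}\psi_j$ for a suitable smooth function $\psi_j$ that is holomorphic near each point of $\Sigma\cup\{\infty\}$ and locally solves the defining equation; concretely, near $\sigma_k$ one may take $\psi_j$ to be (a smooth extension of) the local holomorphic primitive of $\omega_j$ divided by the appropriate exponent, so that $\psi_j$ is the local ``potential'' with $\psi_j(\sigma_k) = \mathbf{1}_{k=j}/s_k$ when $\sigma_k\neq 0$, and $\psi_j$ vanishing at $\infty$. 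The pairing is then given by the sum of local terms $\langle \nu_i, \omega_j\rangle^{\mathrm{dR}} = \sum_{p\in\Sigma\cup\{\infty\}} \mathrm{Res}_{p}(\psi_j\,\nu_i)$.

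**Next I would** carry out the residue computation. The form $\nu_i = \omega_i - \omega_0$ has nonzero residues only at $\sigma_i$ (residue $+1$) and at $0$ (residue $-1$); note it has residue $0$ at $\infty$. Therefore only two terms survive: $\mathrm{Res}_{\sigma_i}(\psi_j\nu_i) = \psi_j(\sigma_i) = \mathbf{1}_{i=j}/s_i$ and $\mathrm{Res}_{0}(\psi_j\nu_i) = -\psi_j(0)$. The key input is that $\psi_j$ vanishes at $0$: this is exactly the vanishing property established in the proof of Lemma~\ref{lem:comparison equals Lauricella coefficients}, where it is shown (by taking residues of $\widetilde\omega_j - \omega_j = \nabla_{\underline{s}}\phi$ along $\Sigma\cup\{\infty\}$) that the potential $\phi$ vanishes at every $\sigma_k$ with $k\neq j$, in particular at $\sigma_0 = 0$. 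Since the de Rham class $\nu_i$ lives in $H^1_\varpi(X_\Sigma,\nabla_{-\underline{s}})$ — i.e. it is paired using the connection $\nabla_{-\underline{s}}$, whose potential vanishes at $0$ and at the $\sigma_k$, $k\neq j$, by the same argument with $\underline{s}$ replaced by $-\underline{s}$ — the contribution at $0$ vanishes and we are left with $\langle\nu_i,\omega_j\rangle^{\mathrm{dR}} = -\psi_j(\sigma_i)$, where the overall sign comes from the orientation/antisymmetry convention of the intersection pairing (equivalently, from pairing $\nabla_{-\underline{s}}$-classes against $\nabla_{\underline{s}}$-classes). Chasing the sign convention carefully gives $\langle\nu_i,\omega_j\rangle^{\mathrm{dR}} = -\frac{1}{s_i}\mathbf{1}_{i=j}$.

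**The main obstacle** I expect is bookkeeping rather than conceptual: getting the sign and normalisation exactly right. There are several places where a sign can slip — the orientation convention in Poincaré duality defining $\langle\ ,\ \rangle^{\mathrm{dR}}$, the direction of the pairing (which of the two slots carries $\nabla_{\underline{s}}$ versus $\nabla_{-\underline{s}}$), and the normalisation of residues (whether one includes a factor of $2\pi i$). A clean way to pin these down is to compare with a known case: specialise to $\Sigma = \{0,1\}$ (so $n=1$), where everything is one-dimensional, $\nu_1 = \frac{dx}{x-1} - \frac{dx}{x}$ and $\omega_1 = \frac{dx}{x-1}$, and the de Rham intersection number is classically known to be $-1/s_1$; this fixes all conventions at once, and the general formula then follows by the localisation/residue argument above since the computation is entirely local around $\sigma_i$ and $0$. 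Alternatively one can invoke directly the residue formula for the intersection pairing on twisted cohomology of curves from \cite{ChoMatsumoto, KMatsumoto}, which already incorporates a consistent set of conventions, and simply plug in the local expansions of $\nu_i$ and the potential of $\omega_j$.
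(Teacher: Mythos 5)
Your plan is essentially the paper's own proof reflected into the other slot of the pairing: the paper replaces $\nu_i$ by a compactly supported representative, $\widetilde{\nu}_i-\nu_i=\nabla_{-\underline{s}}\phi$, reads off $\phi(0)=-1/s_0$, $\phi(\sigma_i)=1/s_i$ and $\phi=0$ at the remaining points of $\Sigma\cup\{\infty\}$ from residues, and concludes by Stokes that the pairing equals $-\mathrm{Res}_{\sigma_j}(\phi\,\omega_j)$; you instead compactify $\omega_j$ and sum residues of $\psi_j\,\nu_i$ over the poles of $\nu_i$. Both computations are legitimate and give the same number (one checks that $\widetilde{\nu}_i\wedge\nabla_{\underline{s}}\psi_j=-d(\psi_j\widetilde{\nu}_i)$ integrates to zero on $\Pro^1(\C)$, so it does not matter which argument is compactified), so the route is fine and no harder than the paper's.

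However, two of the local values you feed into your residue formula are wrong, and the error is not a matter of convention. Taking the residue at $\sigma_j$ of $\widetilde{\omega}_j-\omega_j=d\psi_j+\sum_k s_k\,\psi_j\,d\log(x-\sigma_k)$ gives $-1=s_j\,\psi_j(\sigma_j)$, i.e.\ $\psi_j(\sigma_j)=-1/s_j$, not $+1/s_j$: the minus sign in \eqref{deRhampairingForms} is exactly this value, since with the paper's normalisation one finds
\begin{equation*}
\left\langle \nu_i,\omega_j\right\rangle^{\mathrm{dR}}=\frac{1}{2\pi i}\iint_{\Pro^1(\C)}\nu_i\wedge\widetilde{\omega}_j=\sum_{p}\psi_j(p)\,\mathrm{Res}_p(\nu_i)=\psi_j(\sigma_i)=-\frac{1}{s_i}\,\mathbf{1}_{i=j}\ ,
\end{equation*}
whereas with your claimed value the same computation would output $+\tfrac{1}{s_i}\mathbf{1}_{i=j}$, and no orientation chase repairs that once the $\tfrac{1}{2\pi i}\iint$ normalisation is fixed. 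Likewise the assertion that $\psi_j$ vanishes at $\infty$ is false: the residue there forces $\psi_j(\infty)=-1/(s_0+\cdots+s_n)$, nonzero by \eqref{generic} (the vanishing at $\infty$ holds for the paper's $\phi$, whose form $\nu_i$ has residue $0$ at $\infty$, not for $\psi_j$). This slip is harmless only because $\nu_i$ is regular at $\infty$, so $\mathrm{Res}_\infty(\psi_j\nu_i)=0$ for that reason, and your argument should say so. Your correct input, $\psi_j(0)=\psi_j(\sigma_k)=0$ for $k\neq j$, is indeed the vanishing established in the proof of Lemma \ref{lem:comparison equals Lauricella coefficients}. Finally, calibrating the sign against the $n=1$ case is circular within this paper, since that case is precisely the lemma being proved; matching conventions with the residue formulae of \cite{ChoMatsumoto} would be legitimate but is exactly the delicate bookkeeping you are trying to avoid, so the clean fix is simply to compute $\psi_j(\sigma_j)$ correctly, after which your argument goes through and coincides, mutatis mutandis, with the paper's.
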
 
    \begin{proof} 
    By definition, the de Rham intersection pairing that we wish to compute is
    $$\langle \nu_i,\omega_j \rangle^{\mathrm{dR}} = \frac{1}{2\pi i}\iint_{\Pro^1(\C)}\widetilde{\nu}_i\wedge\omega_j\ ,$$
    where $\widetilde{\nu}_i$ is a smooth form on $\mathbb{C}\setminus \Sigma$ with compact support, representing 
    the cohomology class of $\nu_i$. In other words we have 
    $$\widetilde{\nu}_i-\nu_i =\nabla_{-\underline{s}}\phi = d\phi -\sum_{k=0}^ns_k\,\phi\, d\log(x-\sigma_k)\ ,$$
    where $\phi$ is a smooth function on $\Pro^1(\C)$. Since $\widetilde{\nu}_i$ vanishes in the neighbourhood of $\Sigma\cup\infty$, taking residues along points of   $\Sigma\cup\infty$  implies that $\phi$ vanishes at every $\sigma_k$, $k\notin\{ 0,i\}$, and at $\infty$, and
    $$\phi(0) = -\frac{1}{s_0} \; , \; \phi(\sigma_i)=\frac{1}{s_i} \ . $$
    By noticing that $\nu_i\wedge \omega_j=0$ and  $d\log(x-\sigma_k)\wedge \omega_j=0$, we thus get
    $$\langle \nu_i,\omega_j\rangle^{\mathrm{dR}} = \frac{1}{2\pi i}\iint_{\Pro^1(\C)} d\phi\wedge \omega_j = \frac{1}{2\pi i}\iint_{\Pro^1(\C)} d(\phi\,\omega_j)\ .$$
    By Stokes, this last integral can be computed as the limit when $\varepsilon$ goes to zero of
    $$-\frac{1}{2\pi i}\int_{\partial P_\varepsilon} \phi\,\omega_j$$
    where $P_\varepsilon$ is the complement in $\Pro^1(\C)$ of $\varepsilon$-disks around the points of $\Sigma\cup\infty$, and the sign comes from the orientation of $\partial P_\varepsilon$. By using the fact that $\phi(\infty)=0$ and $\omega_j$ is regular at every $\sigma_k$, $k\neq j$, a local computation (variant of Cauchy's formula) thus gives
    $$\langle \nu_i , \omega_j\rangle^{\mathrm{dR}} = -\mathrm{Res}_{\sigma_j}(\phi\,\omega_j) =-\frac{1}{s_i}\mathbf{1}_{i=j}\ .$$
    \end{proof} 
    
    This lemma implies that the dual basis to \eqref{dRcoeffBasis} is given by the classes
    $$ \nu_i  \;\;\in \;\; H^1_{\varpi}(X_\Sigma,\nabla_{-\underline{s}}) \qquad 
    \hbox{ for } i=1,\ldots,n \ .$$

    \begin{rem}\label{rem: twisted period relations}
    The Betti and de Rham intersection pairings are compatible with the comparison isomorphism, which leads to quadratic relations among periods of cohomology with coefficients, known in the literature as \emph{twisted period relations} \cite{ChoMatsumoto, gotolauricella}. In matrix form they read:
    \begin{equation}\label{eq: twisted period relation}
    {}^tL_\Sigma(-\underline{s})\, I^{\mathrm{B}}_\Sigma(\underline{s})\,L_\Sigma(\underline{s}) = 2\pi i\,I^{\mathrm{dR}}_\Sigma(\underline{s})
    \end{equation}
    where $I^{\mathrm{B}}_\Sigma(\underline{s})$ and $I^{\mathrm{dR}}_\Sigma(\underline{s})$ are the matrices of the Betti and de Rham intersection pairings \eqref{eq:bettipairing} and \eqref{eq:derhampairing}, respectively.
    \end{rem}

\subsection{Single-valued periods of cohomology with coefficients}
We can define and compute a period pairing on de Rham cohomology classes by transporting complex conjugation.

    \subsubsection{Definition of the single-valued period map}\label{subsubsec: def sv}
 Let 
 $\overline{\Sigma}=\{\overline{\sigma_0},\overline{\sigma_1},\ldots,\overline{\sigma_n}\}$ denote the complex conjugates of the points in $\Sigma$. 
 We have an anti-holomorphic diffeomorphism 
 $$\mathrm{conj}:\C\backslash \overline{\Sigma} \To \C\backslash \Sigma$$ given by complex conjugation. We note that the induced map $H_1(\C\backslash\overline{\Sigma})\rightarrow H_1(\C\backslash \Sigma)$ sends the class of a positively oriented loop around $\overline{\sigma_j}$ to the class of a negatively oriented loop around $\sigma_j$. Since a rank one local system on $\C\backslash\Sigma$ (resp. $\C\backslash\overline{\Sigma}$) is equivalent to a representation of the abelian group $H_1(\C\backslash\Sigma)$ (resp. $H_1(\C\backslash\overline{\Sigma})$) we see that we have an isomorphism of local systems: \begin{equation}\label{eq: conj frob iso}
 \mathrm{conj}^*\mathcal{L}_{\underline{s}}\simeq \mathcal{L}_{-\underline{s}}\ .
 \end{equation}
 We thus get a morphism of local systems on $\C\backslash\Sigma$:  
 $$\mathcal{L}_{\underline{s}} \To \mathrm{conj}_*\mathrm{conj}^*\mathcal{L}_{\underline{s}} \simeq \mathrm{conj}_*\mathcal{L}_{-\underline{s}}\ ,$$
 which at the level of cohomology induces a morphism of $\Q_{\underline{s}}^{\mathrm{B}}$-vector spaces
 $$F_\infty: H^1_{\mathrm{B}}(\C\backslash \Sigma, \mathcal{L}_{\underline{s}}) \To H^1_{\mathrm{B}}(\C\backslash \overline{\Sigma},\mathcal{L}_{-\underline{s}})\ .$$
 We call $F_\infty$ the \emph{real Frobenius} or \emph{Frobenius at the infinite prime}. We will use the notation $F_\infty(\underline{s})$ when we want to make  the dependence on $\underline{s}$ explicit. One checks that the Frobenius is involutive: $F_{\infty}(-\underline{s}) F_{\infty}(\underline{s}) =\id$.

 \begin{rem}\label{rem: compute frobenius betti}
 The isomorphism \eqref{eq: conj frob iso} is induced by the trivialisation of the tensor product $\mathrm{conj}^*\mathcal{L}_{\underline{s}}\otimes\mathcal{L}_{\underline{s}}$ given by the section
 $$g_{\underline{s}} = |z|^{-2s_0}\prod_{k=1}^n|1-z\sigma_k^{-1}|^{-2s_k}\ .$$
 Thus, the homological real Frobenius 
 $$F_\infty:H_1^{\mathrm{B}}(\mathbb{C}\setminus \overline{\Sigma},\mathcal{L}_{\underline{s}}) \longrightarrow H_1^{\mathrm{B}}(\mathbb{C}\setminus \Sigma,\mathcal{L}_{-\underline{s}})$$
 is computed at the level of representatives by the formula
 $$\delta\otimes z^{-s_0}\prod_{k=1}^n(1-z\overline{\sigma_k}^{-1})^{-s_k} \mapsto \overline{\delta}\otimes \overline{z}^{-s_0}\prod_{k=1}^n(1-\overline{z}\,\overline{\sigma_k}^{-1})^{-s_k} \, g_{\underline{s}}^{-1} = \overline{\delta}\otimes z^{s_0}\prod_{k=1}^n(1-z\sigma_k^{-1})^{s_k}\ .$$
 \end{rem}
 
 \begin{rem}\label{re: sv is not conj of coeff}
 A morphism similar to $F_\infty$ was considered in \cite{hanamurayoshida} and leads to similar formulae but has a different definition and a different interpretation. Our definition only uses the action of complex conjugation  on the complex points of the variety $X_\Sigma$ relative to two complex conjugate embeddings of $k$ in $\mathbb{C}$, whereas the definition in [\emph{loc. cit.}] conjugates the field of coefficients of the local systems, which requires the $s_i$ to be real. Note that our definition does not require the assumption that the $s_i \in \mathbb{R}$.
 \end{rem}

In the rest of this article, however, we will often assume that the $s_i$ are real. (This is an unnatural assumption and would not be necessary if the $s_i$ were treated as formal variables, see \S\ref{par:variant formal s}.) In this way, the complex conjugate of the field $k_{\underline{s}}^{\mathrm{dR}}$ inside $\C$ is the field $\overline{k}(s_1,\ldots,s_n)$. We use the notation $(-)\otimes_{k_{\underline{s}}^{\mathrm{dR}}}\overline{\C}$ for the tensor product with $\C$, viewed as a $k_{\underline{s}}^{\mathrm{dR}}$-vector space via the complex conjugate embedding. We thus have an additional $\C$-linear comparison isomorphism:
$$\mathrm{comp}_{\overline{\mathrm{B}},\mathrm{dR}}(\underline{s}):H^1_{\mathrm{dR}}(X_\Sigma,\nabla_{\underline{s}})\otimes_{k_{\underline{s}}^{\mathrm{dR}}}\overline{\C} \To H^1_{\mathrm{B}}(\C\setminus \overline{\Sigma},\mathcal{L}_{\underline{s}})\otimes_{\Q_{\underline{s}}^{\mathrm{B}}}\C\ .$$

\begin{defn}
Assume that the $s_i$ are real. The \emph{single-valued period map} is the $\C$-linear isomorphism 
$$\s : H^1_{\mathrm{dR}}(X_\Sigma,\nabla_{\underline{s}})\otimes_{k^{\mathrm{dR}}_{\underline{s}}}\C\To H^1_{\mathrm{dR}}(X_\Sigma,\nabla_{-\underline{s}})\otimes_{k^{\mathrm{dR}}_{\underline{s}}}\overline{\C}$$
defined as the composite 
$$\s = \mathrm{comp}_{\overline{\mathrm{B}},\mathrm{dR}}^{-1}(-\underline{s}) \circ (F_\infty\otimes \mathrm{id}) \circ \mathrm{comp}_{\mathrm{B},\mathrm{dR}}(\underline{s}) \ .$$
In other words, it is defined by the commutative diagram
$$\xymatrixcolsep{5pc}\xymatrix{
 H^1_{\mathrm{dR}}(X_\Sigma,\nabla_{\underline{s}})\otimes_{k^{\mathrm{dR}}_{\underline{s}}}\C \ar[d]_{\s} \ar[r]^-{\mathrm{comp}_{\mathrm{B},\mathrm{dR}}(\underline{s})} & H^1_{\mathrm{B}}(\C\setminus \Sigma,\mathcal{L}_{\underline{s}})\otimes_{\Q_{\underline{s}}^{\mathrm{B}}}\C \ar[d]^{F_\infty\otimes \mathrm{id}}\\
H^1_{\mathrm{dR}}(X_\Sigma,\nabla_{-\underline{s}})\otimes_{k^{\mathrm{dR}}_{\underline{s}}}\overline{\C} \ar[r]_{\mathrm{comp}_{\overline{\mathrm{B}},\mathrm{dR}}(-\underline{s})}& H^1_{\mathrm{B}}(\C\setminus \overline{\Sigma},\mathcal{L}_{-\underline{s}})\otimes_{\Q_{\underline{s}}^{\mathrm{B}}}\C
}$$
We will use the notation $\s(\underline{s})$ when we want to make the dependence on $\underline{s}$ explicit.
\end{defn}

The single-valued period map is a transcendental comparison isomorphism that is naturally interpreted at the level of analytic de Rham cohomology via the isomorphisms
$$H^1_{\mathrm{dR}}(X_\Sigma,\nabla_{\underline{s}})\otimes_{k^{\mathrm{dR}}_{\underline{s}}}\C \simeq H^1_{\mathrm{dR, an}}(\C\backslash\Sigma, (\mathcal{O}_{\C\backslash \Sigma},\nabla_{\underline{s}}))$$
and
$$H^1_{\mathrm{dR}}(X_\Sigma,\nabla_{-\underline{s}})\otimes_{k^{\mathrm{dR}}_{\underline{s}}}\overline{\C} \simeq H^1_{\mathrm{dR, an}}(\C\backslash\overline{\Sigma}, (\mathcal{O}_{\C\backslash \overline{\Sigma}},\nabla_{-\underline{s}}))\ .$$
To avoid any confusion we use the coordinate $w=\overline{z}$ on $\C\backslash\overline{\Sigma}$.

\begin{lem}\label{lem:singlevalued analytic}
Assume that the $s_i$ are real. In analytic de Rham cohomology, the single-valued period map is induced by the morphism of smooth de Rham complexes
\begin{eqnarray*}
    \s_{\mathrm{an}} : (\mathcal{A}^\bullet_{\C\backslash \Sigma},\nabla_{\underline{s}}) &\To &
    \mathrm{conj}_*(\mathcal{A}^\bullet_{\C\backslash \overline{\Sigma}}, \nabla_{-\underline{s}})
\end{eqnarray*}
given on the level of sections by 
$$\mathcal{A}^\bullet_{\C\backslash\Sigma}(U)\,\ni\, \omega \quad  \mapsto \quad  |w|^{2s_0}\prod_{k=1}^n|1-w\overline{\sigma}_k^{-1}|^{2s_k}\; \mathrm{conj}^*(\omega)\,\in\, \mathcal{A}^\bullet_{\C\backslash\overline{\Sigma}}(\overline{U})\ .$$

\end{lem}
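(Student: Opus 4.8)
The statement to prove is Lemma~\ref{lem:singlevalued analytic}: that under the analytic de Rham comparison isomorphisms, the single-valued period map $\s$ is induced by the explicit morphism of smooth de Rham complexes $\s_{\mathrm{an}}$. The strategy is to unwind the definition of $\s$ as the composite $\mathrm{comp}_{\overline{\mathrm{B}},\mathrm{dR}}^{-1}(-\underline{s}) \circ (F_\infty\otimes \mathrm{id}) \circ \mathrm{comp}_{\mathrm{B},\mathrm{dR}}(\underline{s})$ and to trace a de Rham cohomology class through each of the three arrows, working at the level of representatives in the smooth (Dolbeault-type) de Rham complex $\mathcal{A}^\bullet$. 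First I would recall, following \cite{deligneEqDiff, BD1}, that the analytic comparison isomorphism $\mathrm{comp}_{\mathrm{B},\mathrm{dR}}(\underline{s})$ identifies a class $[\omega]$ in $H^1_{\mathrm{dR,an}}(\C\backslash\Sigma,(\Or,\nabla_{\underline{s}}))$ with the Betti class obtained by multiplying $\omega$ by the (multivalued) flat section $x^{s_0}\prod_k(1-x\sigma_k^{-1})^{s_k}$ generating $\mathcal{L}_{\underline{s}}^\vee$; integrating against a chain $\delta$ recovers the usual period $\int_\delta x^{s_0}\prod_k(\cdots)^{s_k}\,\omega$, which matches Lemma~\ref{lem:comparison equals Lauricella coefficients}.

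The heart of the argument is the middle arrow $F_\infty\otimes\mathrm{id}$. Here I would use Remark~\ref{rem: compute frobenius betti}: the isomorphism $\mathrm{conj}^*\mathcal{L}_{\underline{s}}\simeq\mathcal{L}_{-\underline{s}}$ comes from trivialising $\mathrm{conj}^*\mathcal{L}_{\underline{s}}\otimes\mathcal{L}_{\underline{s}}$ by the single-valued section $g_{\underline{s}}=|z|^{-2s_0}\prod_{k=1}^n|1-z\sigma_k^{-1}|^{-2s_k}$ (note: the excerpt's Remark writes $|1-z\sigma_k|$, but for consistency with the local systems it should be $|1-z\sigma_k^{-1}|$, i.e.\ $g_{\underline{s}}=|x^{s_0}\prod_k(1-x\sigma_k^{-1})^{s_k}|^{-2}$ up to sign conventions). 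Concretely, $F_\infty$ sends the representative $\delta\otimes z^{-s_0}\prod_k(1-z\sigma_k^{-1})^{-s_k}$ of a Betti homology class on $\C\backslash\overline\Sigma$ to $\overline\delta\otimes \overline z^{s_0}\prod_k(1-\overline z\,\overline\sigma_k^{-1})^{s_k}$ on $\C\backslash\Sigma$. Dualising and then applying $\mathrm{comp}_{\overline{\mathrm{B}},\mathrm{dR}}^{-1}(-\underline{s})$, the upshot is that a de Rham form $\omega$ on $\C\backslash\Sigma$, representing a class on the source, is sent to the form on $\C\backslash\overline\Sigma$ (in the coordinate $w=\overline z$) whose associated flat-section-twisted integrand is $\mathrm{conj}^*$ of the original one divided by $g_{\underline{s}}$; since $g_{\underline{s}}^{-1}=|w|^{2s_0}\prod_k|1-w\overline\sigma_k^{-1}|^{2s_k}$, this yields exactly $\omega\mapsto |w|^{2s_0}\prod_{k=1}^n|1-w\overline\sigma_k^{-1}|^{2s_k}\,\mathrm{conj}^*(\omega)$. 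The key bookkeeping point is that the real Frobenius, which a priori is only defined on cohomology, can be realised on the chain/form level precisely because $g_{\underline{s}}$ is a globally defined smooth (single-valued) function on $\C\backslash(\Sigma\cup\overline\Sigma)$ when the $s_i$ are real — this is where the reality hypothesis is genuinely used.

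Then I would verify that $\s_{\mathrm{an}}$ is indeed a morphism of complexes, i.e.\ that it intertwines $\nabla_{\underline{s}}$ on the source with $\mathrm{conj}_*\nabla_{-\underline{s}}$ on the target. This is a direct computation: applying $\nabla_{-\underline{s}}=d-\sum_k s_k\,d\log(w-\overline\sigma_k)$ to $g_{\underline{s}}^{-1}\,\mathrm{conj}^*(\omega)$, the logarithmic part of $d(g_{\underline{s}}^{-1})$ cancels the $-\sum_k s_k\,d\log(w-\overline\sigma_k)$ term (using $g_{\underline{s}}^{-1}=|w^{s_0}\prod_k(1-w\overline\sigma_k^{-1})^{s_k}|^2$ and that $d\log|f|^2 = d\log f + d\log\overline f$), so that $\nabla_{-\underline{s}}(g_{\underline{s}}^{-1}\mathrm{conj}^*\omega)=g_{\underline{s}}^{-1}\,\mathrm{conj}^*(d\omega + (\sum_k s_k\,d\log(z-\sigma_k))\wedge\omega)=g_{\underline{s}}^{-1}\mathrm{conj}^*(\nabla_{\underline{s}}\omega)$ — here one must track that $\mathrm{conj}^*$ is anti-holomorphic so it sends $d\log(z-\sigma_k)$ to $d\log(\overline z-\overline\sigma_k)=d\log(w-\overline\sigma_k)$. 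Granting this, $\s_{\mathrm{an}}$ descends to cohomology, and the identification above shows it computes the same map as $\s$; since both are $\C$-linear isomorphisms agreeing on a spanning set of classes, they coincide. The main obstacle I anticipate is purely one of careful bookkeeping: keeping straight the four comparison isomorphisms (for $\pm\underline s$ and the two conjugate embeddings), the passage between homology and cohomology via Poincaré duality, and the various sign and orientation conventions (e.g.\ that $\mathrm{conj}$ reverses orientation of loops, as noted before \eqref{eq: conj frob iso}) — rather than any conceptual difficulty. A secondary subtlety worth a remark is the precise normalisation of $g_{\underline{s}}$ so that the monodromies match on the nose, which should be pinned down by comparing with the explicit formula in Remark~\ref{rem: compute frobenius betti}.
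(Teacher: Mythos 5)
Your proposal is correct and follows essentially the paper's own argument: you verify that $\s_{\mathrm{an}}$ intertwines $\nabla_{\underline{s}}$ with $\mathrm{conj}_*\nabla_{-\underline{s}}$ by the same direct computation, and you identify the induced map on cohomology with $\s$ by checking that on flat (horizontal) sections $\s_{\mathrm{an}}$ realises exactly the morphism $\mathcal{L}_{\underline{s}}\to \mathrm{conj}_*\mathcal{L}_{-\underline{s}}$ of Remark \ref{rem: compute frobenius betti} defining $F_\infty$, which is precisely what the paper does. The only slip is a side remark: the reality of the $s_i$ is not what makes $g_{\underline{s}}$ single-valued (that holds for arbitrary complex $s_k$, since $|z|^{-2s_0}=\exp(-2s_0\log|z|)$, etc.); it is needed so that the complex-conjugate embedding of $k^{\mathrm{dR}}_{\underline{s}}$ fixes the $s_i$, which is what makes the target comparison map $\mathrm{comp}_{\overline{\mathrm{B}},\mathrm{dR}}(-\underline{s})$, and hence the stated $k^{\mathrm{dR}}_{\underline{s}}$-structure on the target of $\s$, well defined.
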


\begin{proof}
Let  $P=|w|^{2s_0}\prod_{k=1}^n|1-w\overline{\sigma}_k^{-1}|^{2s_k}$. We first check that $\s_{\mathrm{an}}$ is a morphism of complexes:
\begin{eqnarray*}
\nabla_{-\underline{s}}(\s_{\mathrm{an}}(\omega)) & = & \nabla_{-\underline{s}}(P\,\mathrm{conj}^*(\omega)) \\
& = & P\left(\left(\sum_{k=0}^ns_k\,d\log(w-\overline{\sigma_k})  + \sum_{k=0}^ns_k\,d\log(\overline{w}-\sigma_k)\right)\wedge\mathrm{conj}^*(\omega) + d(\mathrm{conj}^*(\omega))\right) \\
& & \qquad -\sum_{k=0}^ns_k\, d\log(w-\overline{\sigma_k})\wedge (P \,\mathrm{conj}^*(\omega)) \\
& = & P\left(\sum_{k=0}^n s_k\,d\log(\overline{w}-\sigma_k)\wedge\mathrm{conj}^*(\omega)+d(\mathrm{conj}^*(\omega))\right) \\
& = & P\,\mathrm{conj}^*\left(\sum_{k=0}^n s_k\, d\log(z-\sigma_k)\wedge\omega+ d\omega \right)  \\
& = & \s_{\mathrm{an}}(\nabla_{\underline{s}}(\omega))\ .
\end{eqnarray*}
On the level of horizontal sections, we compute:
\begin{eqnarray*}
\s_{\mathrm{an}}\left(z^{-s_0}\prod_{k=1}^n(1-z\sigma_k^{-1})^{-s_k}\right)  & = & |w|^{2s_0}\prod_{k=1}^n|1-w\overline{\sigma}_k^{-1}|^{2s_k}\, \overline{w}^{-s_0}\prod_{k=1}^n(1-\overline{w}\sigma_k^{-1})^{-s_k} \\
&= & w^{s_0}\prod_{k=1}^n(1-w\overline{\sigma_k}^{-1})^{s_k}\ .
\end{eqnarray*}
Thus, $\s_{\mathrm{an}}$ induces the morphism $\mathcal{L}_{\underline{s}}\rightarrow \mathrm{conj}_*\mathcal{L}_{-\underline{s}}$ and the result follows.
\end{proof}

\subsubsection{Integral formula for single-valued periods} We  derive a formula for the single-valued period map $\s$ using the de Rham intersection pairing \eqref{eq:derhampairing}, that is, for the \emph{single-valued period pairing}
\begin{equation}\label{eq: sv pairing}
H^1_{\mathrm{dR}}(X_\Sigma,\nabla_{\underline{s}}) \times H^1_{\mathrm{dR}}(X_\Sigma,\nabla_{\underline{s}}) \longrightarrow \mathbb{C} \;\;\; ,\;\;\;  (\nu ,\omega)\,\mapsto\, \langle \nu, \s\,\omega\rangle^{\mathrm{dR}} \ .
\end{equation}
Note that this pairing is $k_{\underline{s}}^{\mathrm{dR}}$-linear in each argument, where in the first slot the complex conjugate embedding of $k_{\underline{s}}^{\mathrm{dR}}$ inside $\mathbb{C}$ is understood: this means that we have, for $a,b\in k_{\underline{s}}^{\mathrm{dR}}$:
$$\langle a\nu , \s\, b\omega\rangle^{\mathrm{dR}} = \overline{a}b\,\langle \nu,\s\,\omega\rangle^{\mathrm{dR}}\ .$$

\begin{prop}  \label{propSVpairingCoeffs} 
Assume that $s_0,\ldots, s_n$ are real and generic \eqref{generic}. Let $\omega, \nu \in \Gamma(\Pro^1_k, \Omega^1_{\Pro^1_k} (\log \Sigma  \cup \infty))$ and write $\omega$ and $\nu$ for their classes in $H^1_{\mathrm{dR}} (X_{\Sigma}, \nabla_{\underline{s}})$. Assume that $\nu$ has no pole at $\infty$.  Then the single-valued pairing is
\begin{equation} \label{svGlobal}   \langle \nu , \, \s\, \omega \rangle^{\mathrm{dR}} =  - \frac{1}{2\pi i} \iint_{\C} |z|^{2s_0} \prod_{k=1}^n |1- z \sigma_k^{-1} |^{2s_k}   \, \overline{ \nu} \wedge  \omega  
\end{equation}
whenever   $s_i>0$ for all $0\leq i \leq n$, and $s_0+\cdots +s_n < 1/2$. 
\end{prop}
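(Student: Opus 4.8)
\emph{Proof strategy.} The plan is to combine the explicit description of $\s$ at the level of smooth de Rham complexes from Lemma \ref{lem:singlevalued analytic} with the integral representation of the de Rham intersection pairing \eqref{eq:derhampairing} recalled in the proof of Lemma \ref{lem: omega nu dR pairing}. Write $G(z) = |z|^{2s_0}\prod_{k=1}^n |1-z\sigma_k^{-1}|^{2s_k}$. Recall that for a de Rham class $\alpha$ represented by a smooth $1$-form with compact support in $X_\Sigma(\C)$ and a holomorphic $1$-form $\beta$ one has $\langle \alpha,\beta\rangle^{\mathrm{dR}} = \frac{1}{2\pi i}\iint_{\Pro^1(\C)}\widetilde{\alpha}\wedge\beta$, where $\widetilde{\alpha}$ is such a compactly supported representative. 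By Lemma \ref{lem:singlevalued analytic}, unwinding the comparison isomorphisms defining $\s$ and transporting everything back to the coordinate $z$ via the anti-holomorphic diffeomorphism $\mathrm{conj}$ (which reverses orientation and undoes the pullback $\mathrm{conj}^*$ in that lemma), the class $\s\,\omega$ acquires the smooth closed representative $G\cdot\omega$, whereas $\nu$ enters the pairing through the complex conjugate embedding of $k$ and hence contributes its complex conjugate $\overline{\nu}$. Thus, up to a global sign coming from the orientation of $\mathrm{conj}$ and the skew-symmetry of \eqref{eq:derhampairing}, formula \eqref{svGlobal} is equivalent to the statement that in the integral formula for $\langle\nu,\s\,\omega\rangle^{\mathrm{dR}}$ one may use the (non-compactly supported) representative $G\cdot\omega$ in place of a compactly supported one $\widetilde{G\cdot\omega}$.

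\emph{Convergence.} First I would check that $\iint_{\C}G\,\overline{\nu}\wedge\omega$ converges absolutely. Near a puncture $\sigma_k$ one has $G = O(|z-\sigma_k|^{2s_k})$ while $\overline{\nu}$ and $\omega$ each have at worst a simple pole, so the integrand is $O(|z-\sigma_k|^{2s_k-2})$, which is integrable precisely because $s_k>0$; near $\infty$ one has $G\sim c\,|z|^{2(s_0+\cdots+s_n)}$, and since $\nu$ has no pole at $\infty$ (so $\nu = O(|z|^{-2})$) while $\omega$ has at worst a simple pole (so $\omega = O(|z|^{-1})$), the integrand is $O(|z|^{2(s_0+\cdots+s_n)-3})$, which is integrable precisely because $s_0+\cdots+s_n<1/2$. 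These are exactly the hypotheses in the statement.

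\emph{Main step.} Write the difference $\widetilde{G\cdot\omega}-G\cdot\omega = \nabla_{-\underline{s}}\psi$ for a smooth function $\psi$ on $X_\Sigma(\C)$. Since $\nu$ is a holomorphic $1$-form, $d\log(x-\sigma_k)\wedge\nu = 0$ and $d\nu = 0$, so that $\nabla_{-\underline{s}}(\psi)\wedge\nu = d(\psi\,\nu)$; by Stokes' theorem, $\iint_{\C}d(\psi\,\nu) = -\lim_{\varepsilon\to 0}\int_{\partial P_\varepsilon}\psi\,\nu$, where $P_\varepsilon$ is the complement in $\Pro^1(\C)$ of $\varepsilon$-balls around the points of $\Sigma\cup\{\infty\}$. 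The remaining task is to show that each boundary contribution vanishes in the limit, and this is exactly where the hypotheses are used. Since $\widetilde{G\cdot\omega}$ vanishes near every point of $\Sigma\cup\{\infty\}$, near $\sigma_k$ one has $\nabla_{-\underline{s}}\psi = -G\cdot\omega = O(|z-\sigma_k|^{2s_k-1})$, and solving this differential equation gives $\psi = O(|z-\sigma_k|^{2s_k})$; as $\nu = O(|z-\sigma_k|^{-1})$ the integral over the circle of radius $\varepsilon$ is $O(\varepsilon^{2s_k})$, which tends to $0$ because $s_k>0$. Likewise near $\infty$ one gets $\nabla_{-\underline{s}}\psi = O(|z|^{2(s_0+\cdots+s_n)-1})$, hence $\psi = O(|z|^{2(s_0+\cdots+s_n)})$, and since $\nu = O(|z|^{-2})$ the integral over the circle of radius $1/\varepsilon$ is $O(\varepsilon^{1-2(s_0+\cdots+s_n)})$, which tends to $0$ because $s_0+\cdots+s_n<1/2$. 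Therefore $\iint_{\C}\nabla_{-\underline{s}}(\psi)\wedge\nu = 0$, which combined with the first paragraph and a final verification of signs yields \eqref{svGlobal}.

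\emph{Main obstacle.} The real content is the last step: the natural representative of $\s\,\omega$ furnished by Lemma \ref{lem:singlevalued analytic} is \emph{not} compactly supported away from $\Sigma\cup\{\infty\}$, so one must estimate the growth of the auxiliary potential $\psi$ near each puncture and near infinity, and it is precisely the conditions $s_i>0$ and $s_0+\cdots+s_n<1/2$ that force the Stokes boundary terms to disappear. Keeping track of the complex conjugations, the $\Q_{\underline{s}}^{\mathrm{dR}}$-structures and the orientation reversal involved in the reduction of the first paragraph is tedious but routine.
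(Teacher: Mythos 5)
Your proposal follows the same architecture as the paper's proof (Lemma \ref{lem:singlevalued analytic}, the integral formula for the de Rham pairing, a Stokes argument to kill the correction term, and an orientation-reversing change of variables), but you interchange which argument of the pairing carries the compactly supported representative, and that swap is where the argument has a genuine gap. The paper puts the compact support on the logarithmic form coming from $\nu$ (transported to the conjugate plane); the correction is then $\nabla\phi$ with $\phi$ smooth on \emph{all} of $\Pro^1(\C)$, its values at $\Sigma\cup\{\infty\}$ — in particular $\phi(\infty)=0$ — are determined by a residue computation exactly as in Lemma \ref{lem: omega nu dR pairing}, and the boundary terms vanish trivially because the weight $G=|z|^{2s_0}\prod_k|1-z\sigma_k^{-1}|^{2s_k}$ tends to $0$ at the punctures ($s_k>0$) and because $\phi(\infty)=0$ together with $s_0+\cdots+s_n<1/2$. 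You instead take the compactly supported representative of the class of the non-algebraic smooth form $G\cdot\omega$; the potential $\psi$ is then only smooth on the open curve, nothing is known a priori about it near $\Sigma\cup\{\infty\}$, and your claim that ``solving this differential equation gives $\psi=O(|z-\sigma_k|^{2s_k})$'' is precisely the hard analytic content of your route and is asserted, not proved. It does not follow from $\nabla\psi=O(|z-\sigma_k|^{2s_k-1})$ alone: the homogeneous solutions behave like $\prod_j(z-\sigma_j)^{s_j}$, so one must invoke genericity ($s_k\notin\Z$, single-valuedness) to exclude them and then actually construct a particular solution with decay (a weighted $\bar\partial$-type problem). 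Some decay of $\psi$ at each $\sigma_k$ is genuinely needed, since $\nu$ may have a simple pole there, so mere boundedness would not make the boundary term vanish; none of this is supplied.

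There is also a bookkeeping mismatch that invalidates the main step as literally written. Your reduction (correctly) says the integrand in \eqref{svGlobal} involves $\overline{\nu}$, but the exactness manipulation uses $d\log(z-\sigma_k)\wedge\nu=0$ and $d\nu=0$, which hold for the holomorphic $\nu$ and fail for $\overline{\nu}$. Moreover $G\cdot\omega$ is \emph{not} $\nabla_{-\underline{s}}$-closed in the $z$-coordinate: it is closed for the anti-holomorphic connection $d-\sum_k s_k\,d\overline{z}/(\overline{z}-\overline{\sigma_k})$, so the equation $\widetilde{G\cdot\omega}-G\cdot\omega=\nabla_{-\underline{s}}\psi$ is not correctly matched to the objects it is supposed to compare. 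The fix is to carry out the whole computation on the conjugate plane (coordinate $w=\overline{z}$), where the $\nu$-side form has poles at the $\overline{\sigma_k}$ and is holomorphic in $w$, the relevant connection has entries $d\log(w-\overline{\sigma_k})$, and the wedge-vanishing does hold, conjugating back only at the very end — which is what the paper does. That repair is routine; the unproved asymptotics of $\psi$ are not, and they are exactly what the paper's choice of where to put the compact support is designed to avoid.
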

\begin{proof} 
Let us first note that the integral in \eqref{svGlobal} converges under the assumptions on $\omega$, $\nu$, and the $s_i$. To check this, pass to local polar coordinates $z=\rho e^{i\theta}$ in the neighbourhood of every point in $\Sigma$, and verify that $|z|^{2s}\frac{dz\,d\overline{z}}{z\overline{z}}$ is proportional to $\rho^{2s-1}d\rho\,d\theta$ and is integrable when $\mathrm{Re}(s)>0$. At $\infty$ use the fact that $\nu$ has no pole to obtain a local estimate of the form $\rho^{-2s_0-\cdots-2s_n}d\rho\,d\theta$, which is integrable for $s_0+\cdots +s_n < 1/2$. 

We use the notation $\omega=\omega_\sigma$ for the smooth form on $\C\setminus\Sigma$ induced by $\omega$, and $\omega_{\overline{\sigma}}$ for the smooth form on $\C\setminus\overline{\Sigma}$ induced by $\omega$. (It is obtained by replacing each occurrence of $\sigma_j$ in $\omega$ by a $\overline{\sigma_j}$.) Then by definition we have $\langle \nu,\s\,\omega\rangle = \langle \nu_{\overline{\sigma}},\s_{\mathrm{an}}\omega_\sigma\rangle $. By Lemma \ref{lem:singlevalued analytic} and the definition of the de Rham intersection pairing this equals the integral
\begin{equation}\label{eq:integralsvproofglobal}
\frac{1}{2\pi i}\iint_{\Pro^1(\C)}|w|^{2s_0}\prod_{k=1}^n|1-w\overline{\sigma_k}^{-1}|^{2s_k}\, \widetilde{\nu_{\overline{\sigma}}}\wedge \mathrm{conj}^*(\omega_{\sigma})\ ,
\end{equation}
where $\widetilde{\nu_{\overline{\sigma}}}$ is a smooth form on $\C\setminus \overline{\Sigma}$ with compact support,  
representing the cohomology class of $\nu_{\overline{\sigma}}$. In other words we have 
$$\widetilde{\nu_{\overline{\sigma}}} - \nu_{\overline{\sigma}} = \nabla_{\underline{s}}\phi = d\phi + \sum_{k=0}^ns_k\,\phi\, d\log(w-\overline{\sigma_k})\ ,$$
where $\phi$ is a smooth function on $\Pro^1(\C)$. The assumption that $\nu$ has no pole at $\infty$ and the fact that $s_0+\cdots+s_n\neq 0$ imply, by taking residues, that $\phi(\infty)=0$. We first prove that we may remove the tilde in \eqref{eq:integralsvproofglobal}, i.e., that the integral
\begin{equation}\label{eq:integralmustvanish}
\iint_{\Pro^1(\C)}|w|^{2s_0}\prod_{k=1}^n|1-w\overline{\sigma_k}^{-1}|^{2s_k}\, \nabla_{\underline{s}}\phi\wedge \mathrm{conj}^*(\omega_{\sigma})
\end{equation}
vanishes. Its integrand equals 
$$d\left(|w|^{2s_0}\prod_{k=1}^n|1-w\overline{\sigma_k}^{-1}|^{2s_k}\phi\; \mathrm{conj}^*(\omega)\right)$$
because $d\overline{w}\wedge\mathrm{conj}^*(\omega_\sigma)=\mathrm{conj}^*(dz\wedge\omega_\sigma)=0$. By Stokes, the integral \eqref{eq:integralmustvanish} can be computed as the limit when $\varepsilon$ goes to zero of
$$-\int_{\partial P_\varepsilon} |w|^{2s_0}\prod_{k=1}^n|1-w\overline{\sigma_k}^{-1}|^{2s_k}\phi\;\mathrm{conj}^*(\omega)$$
where $P_\varepsilon$ is the complement in $\Pro^1(\C)$ of $\varepsilon$-disks around the points of $\overline{\Sigma}\cup\infty$, and the sign comes from the orientation of $\partial P_\varepsilon$. The contribution of each point of $\overline{\Sigma}$ vanishes, as can be seen from a computation in a local coordinate, because of the assumption that $s_i>0$ for all $i$. The contribution of the point $\infty$ also vanishes because of the fact that $\phi(\infty)=0$ and the assumption that $s_0+\cdots +s_n < 1/2$. Thus, we have 
\begin{eqnarray*}
\langle \nu , \, \s \,\omega \rangle^{\mathrm{dR}} & = & \frac{1}{2\pi i}\iint_{\Pro^1(\C)}|w|^{2s_0}\prod_{k=1}^n|1-w\overline{\sigma_k}^{-1}|^{2s_k}\, \nu_{\overline{\sigma}}\wedge \mathrm{conj}^*(\omega_{\sigma}) \\
& = & - \frac{1}{2\pi i}\iint_{\Pro^1(\C)}|\overline{z}|^{2s_0}\prod_{k=1}^n|1-\overline{z}\overline{\sigma_k}^{-1}|^{2s_k}\, \mathrm{conj}^*(\nu_{\overline{\sigma}})\wedge \omega_{\sigma} \\
& = & - \frac{1}{2\pi i}\iint_{\Pro^1(\C)}|z|^{2s_0} \prod_{k=1}^n |1- z \sigma_k^{-1} |^{2s_k}   \, \overline{ \nu_\sigma} \wedge  \omega_\sigma \ .
\end{eqnarray*}
The second equality follows from performing a change of variables via $\mathrm{conj}:\C\backslash\Sigma \rightarrow \C\backslash\overline{\Sigma}$, which reverses the orientation of $\C$, hence the minus sign. The third equality relies on $\mathrm{conj}^*(\nu_{\overline{\sigma}}) = \overline{\nu_\sigma}$, which is obvious. The result follows.
\end{proof}

\begin{cor}\label{cor LSigma sv}
Assume that $s_0,\ldots,s_n$ are real and generic \eqref{generic}. Then for all $1\leq i,j\leq n$ the single-valued Lauricella function $(L_\Sigma^\s)_{ij}$ \eqref{IntroLSigmasv} is a single-valued period of cohomology with coefficients:
$$(L_\Sigma^\s)_{ij} = \langle \nu_i, \s(-s_j\omega_j) \rangle^{\mathrm{dR}}$$
for $s_k>0$ for all $0\leq k\leq n$ and $s_0+\cdots+s_n<1/2$.
\end{cor}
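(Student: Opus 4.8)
The plan is to obtain this as an immediate consequence of Proposition \ref{propSVpairingCoeffs}, applied to the de Rham classes $\nu=\nu_i$ and $\omega=-s_j\,\omega_j$. First I would check that the hypotheses of that proposition are met: both $\nu_i=\tfrac{dx}{x-\sigma_i}-\tfrac{dx}{x}$ and $\omega_j=\tfrac{dx}{x-\sigma_j}$ belong to $\Gamma(\Pro^1_k,\Omega^1_{\Pro^1_k}(\log \Sigma\cup\infty))$, and $\nu_i$ has \emph{no} pole at $\infty$: the residues $+1$ and $-1$ of its two summands at $\infty$ cancel, so in the coordinate $u=1/x$ one sees that $\nu_i$ extends holomorphically across $u=0$. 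The constraints $s_k>0$ for all $k$ and $s_0+\cdots+s_n<1/2$ are exactly those imposed in Proposition \ref{propSVpairingCoeffs}, and they also guarantee convergence of the double integral \eqref{IntroLSigmasv} by the same local polar-coordinate estimates used in the proof of that proposition.

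Next I would substitute into formula \eqref{svGlobal}. Since $\overline{\nu_i}=\tfrac{d\overline{z}}{\overline{z}-\overline{\sigma_i}}-\tfrac{d\overline{z}}{\overline{z}}$ and $-s_j\omega_j=-s_j\,\tfrac{dz}{z-\sigma_j}$, the proposition gives
\[
\langle \nu_i,\,\s(-s_j\omega_j)\rangle^{\mathrm{dR}} \,=\, \frac{s_j}{2\pi i}\iint_{\C}|z|^{2s_0}\prod_{k=1}^n|1-z\sigma_k^{-1}|^{2s_k}\left(\frac{d\overline{z}}{\overline{z}-\overline{\sigma_i}}-\frac{d\overline{z}}{\overline{z}}\right)\wedge\frac{dz}{z-\sigma_j}\ ,
\]
and the right-hand side is precisely the definition \eqref{IntroLSigmasv} of $(L^\s_\Sigma)_{ij}$. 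Here one should recall that $-s_j\omega_j$ is a member of the chosen basis \eqref{dRcoeffBasis} of $H^1_{\varpi}(X_\Sigma,\nabla_{\underline{s}})$, which is why this particular matrix coefficient of the single-valued period map is the natural object to compute.

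There is essentially no obstacle: all of the analytic work — convergence, the Stokes-theorem manipulation, and the change of variables via complex conjugation — has already been carried out in Proposition \ref{propSVpairingCoeffs}. The only point deserving a line of justification is the vanishing of the residue of $\nu_i$ at infinity, which is what licenses the application of that proposition; everything else is bookkeeping, matching the representatives $\nu_i$ and $-s_j\omega_j$ against the integrand of \eqref{IntroLSigmasv}.
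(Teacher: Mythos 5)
Your proof is correct and is exactly the paper's argument: the corollary is deduced by applying Proposition \ref{propSVpairingCoeffs} to $\nu=\nu_i$ (which has no pole at $\infty$, as you verify) and $\omega=-s_j\omega_j$, and matching the resulting integral with the definition \eqref{IntroLSigmasv}. The paper simply states "This follows from Proposition \ref{propSVpairingCoeffs}"; you have filled in the same bookkeeping it leaves implicit.
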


\begin{proof}
This follows from Proposition \ref{propSVpairingCoeffs}.
\end{proof}

\begin{cor}
Under the assumptions of the previous corollary we have the matrix equality
\begin{equation}\label{splusminusDC}
L_\Sigma^\s(s_0,\ldots,s_n) = L_{\overline{\Sigma}}(-s_0,\ldots,-s_n)^{-1} L_\Sigma(s_0,\ldots,s_n)\ ,
\end{equation}
where it is understood that the Lauricella functions $(L_\Sigma)_{ij}$ and $(L_{\overline{\Sigma}})_{ij}$ are computed via choices of paths which are complex conjugate to each other.
\end{cor}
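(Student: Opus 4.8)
The plan is to read off \eqref{splusminusDC} directly from the definition of the single-valued period map as the composite $\s = \mathrm{comp}_{\overline{\mathrm{B}},\mathrm{dR}}^{-1}(-\underline{s})\circ(F_\infty\otimes\mathrm{id})\circ\mathrm{comp}_{\mathrm{B},\mathrm{dR}}(\underline{s})$, by passing to matrices in carefully chosen bases and using Corollary \ref{cor LSigma sv} to recognise the result as $L_\Sigma^\s$.

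First I would note, combining Corollary \ref{cor LSigma sv} with Lemma \ref{lem: omega nu dR pairing} applied with $\underline{s}$ replaced by $-\underline{s}$ (which exhibits $\nu_1,\ldots,\nu_n$ as the basis of $H^1_{\mathrm{dR}}(X_\Sigma,\nabla_{\underline{s}})$ dual, for $\langle\,\cdot\,,\cdot\,\rangle^{\mathrm{dR}}$, to the basis $s_1\omega_1,\ldots,s_n\omega_n$ of $H^1_{\mathrm{dR}}(X_\Sigma,\nabla_{-\underline{s}})$), that the matrix $L_\Sigma^\s(\underline{s})$ is nothing other than the matrix of the $\C$-linear isomorphism $\s$ with respect to the basis \eqref{dRcoeffBasis}, i.e.\ $\{-s_j\omega_j\}$, of the source and the basis $\{s_j\omega_j\}$ (which is \eqref{dRcoeffBasis} for the parameters $-\underline{s}$) of the target. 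It then remains to compute this matrix as the product of the matrices of the three factors. I would use the basis \eqref{BcoeffBasis} of $H_1^{\lf}(\C\setminus\Sigma,\mathcal{L}_{\underline{s}}^\vee)$ built from paths $\delta_i$ from $0$ to $\sigma_i$, together with the matching basis of $H_1^{\lf}(\C\setminus\overline{\Sigma},\mathcal{L}_{-\underline{s}}^\vee)$ built from the complex conjugate paths $\overline{\delta_i}$ from $0$ to $\overline{\sigma_i}$. By Lemma \ref{lem:comparison equals Lauricella coefficients}, $\mathrm{comp}_{\mathrm{B},\mathrm{dR}}(\underline{s})$ is then represented by $L_\Sigma(\underline{s})$ (first where the defining integrals converge, and then for all generic $\underline{s}$ by analytic continuation of the matrix entries); the same lemma, applied to $X_{\overline{\Sigma}}$ with parameters $-\underline{s}$ and the conjugate paths $\overline{\delta_i}$, shows that $\mathrm{comp}_{\overline{\mathrm{B}},\mathrm{dR}}(-\underline{s})$ is represented by $L_{\overline{\Sigma}}(-\underline{s})$, hence its inverse by $L_{\overline{\Sigma}}(-\underline{s})^{-1}$. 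The middle factor is the decisive one: Remark \ref{rem: compute frobenius betti} shows that the homological real Frobenius carries the chain based on $\overline{\delta_i}$ precisely to the chain based on $\delta_i$, so that with these compatible choices $F_\infty\otimes\mathrm{id}$ is represented by the identity matrix. Multiplying the three matrices in the order dictated by the composite then yields $L_\Sigma^\s(\underline{s}) = L_{\overline{\Sigma}}(-\underline{s})^{-1}\,L_\Sigma(\underline{s})$, which is exactly \eqref{splusminusDC}.

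The only genuine work is the bookkeeping in the middle step, and this is where I expect the main subtlety to lie: one must verify that the Betti basis of $H^1_{\mathrm{B}}(\C\setminus\overline{\Sigma},\mathcal{L}_{-\underline{s}})$ obtained by transporting the chosen basis for $\Sigma$ through $F_\infty$ is exactly the one dual to the conjugate chains used to define $L_{\overline{\Sigma}}(-\underline{s})$ — including the correct normalisation of the multivalued section, which is precisely what Remark \ref{rem: compute frobenius betti} provides — and that one is inserting the period matrices of the comparison isomorphisms and not their transposes or inverse transposes. Once the indices are lined up, everything is formal and no further analysis is required. (An alternative, more computational route would be to transform the double integral \eqref{IntroLSigmasv} directly by a Stokes argument, but the Tannakian bookkeeping above is cleaner and reuses machinery already in place.)
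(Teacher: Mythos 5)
Your proposal is correct and follows essentially the same route as the paper: the paper's own proof is a one-line appeal to the definition of $\s$ as the composite $\mathrm{comp}_{\overline{\mathrm{B}},\mathrm{dR}}^{-1}(-\underline{s})\circ(F_\infty\otimes\mathrm{id})\circ\mathrm{comp}_{\mathrm{B},\mathrm{dR}}(\underline{s})$, together with Corollary \ref{cor LSigma sv} and the duality of the bases $\{\nu_i\}$ and $\{-s_i\omega_i\}$ from Lemma \ref{lem: omega nu dR pairing}, which is exactly the matrix bookkeeping you spell out (including the identification of $F_\infty$ with the identity matrix via Remark \ref{rem: compute frobenius betti}, made explicit by the paper only later in the $\mathcal{T}_\infty$ discussion and the ${}_2F_1$ section). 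Your extra care about conjugate paths and about extending Lemma \ref{lem:comparison equals Lauricella coefficients} beyond the convergence region is consistent with the paper's conventions and fills in details the paper leaves implicit.
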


\begin{proof}
This follows from the definition of the single-valued period homomorphism and  corollary \ref{cor LSigma sv} since the classes $\nu_i$ are the dual basis, with respect to the de Rham pairing,  of the basis $(-s_i\omega_i)$  by Lemma \ref{lem: omega nu dR pairing}.
\end{proof}

\begin{rem}\label{rem: double copy lauricella global}
The inverse of the matrix $L_{\overline{\Sigma}}(-\underline{s})$ appearing in \eqref{splusminusDC} can be computed in terms of $L_{\overline{\Sigma}}(\underline{s})$ from the twisted period relation \eqref{eq: twisted period relation}. This leads to quadratic expressions of the single-valued Lauricella function in terms of ordinary Lauricella functions, which we call \emph{double copy formulae}. They read, in matrix form:
$$L^\s_\Sigma(\underline{s}) = \frac{1}{2\pi i}\, {}^tL_{\overline{\Sigma}}(\underline{s})\, I^{\mathrm{B}}_{\overline{\Sigma}}(-\underline{s})\, L_\Sigma(\underline{s})\ .$$
This is because the matrix $I^{\mathrm{dR}}_{\overline{\Sigma}}(-\underline{s})$ is the identity matrix in the bases $(+s_i\omega_i)$ and $(\nu_i)$ by Lemma \ref{lem: omega nu dR pairing}. 
They are related to double copy formulae in the physics literature such as the Kawai--Lewellen--Tye formula \cite{klt}, which was interpreted in the framework of cohomology with coefficients in \cite{mizera}.
\end{rem}

\section{Tannakian interpretations and `global' coaction}\label{sec: tannakianglobal}

Using some simple Tannakian formalism, we compute a global coaction formula on Tannakian lifts  of Lauricella functions. We also consider a more refined Tannakian formalism which takes into account the Frobenius at infinity and interpret single-valued Lauricella functions in this more refined  framework.

\subsection{Minimalist version}\label{subsubsec: tannakian T minimalist}  Let $k_{\mathrm{dR}} \subset \C$ and $\Q_{\mathrm{B}} \subset \C$ be two subfields of $\C$. Consider the $\Q$-linear abelian category $\mathcal{T}$ whose objects consist of triples 
$V=(V_{\mathrm{B}}, V_{\mathrm{dR}}, c)$ where $V_{\mathrm{B}}, V_{\mathrm{dR}}$ are finite-dimensional vector spaces over $\Q_{\mathrm{B}} $ and $k_{\mathrm{dR}}$ respectively, and $c: V_{\mathrm{dR}} \otimes_{k_{\mathrm{dR}}} \C \overset{\sim}{\rightarrow} V_{\mathrm{B}} \otimes_{\Q_{\mathrm{B}}} \C$ is a $\C$-linear isomorphism.   
The morphisms $\phi$ in $\mathcal{T}$ are  pairs of linear maps $\phi_{\mathrm{B}}$, $\phi_{\mathrm{dR}}$ compatible with the isomorphisms $c$.  The category $\mathcal{T}$ is Tannakian  with  two fiber functors
$$\omega_{\mathrm{dR}} : \mathcal{T} \To \mathrm{Vec}_{k_{\mathrm{dR}}} \qquad \hbox{ and } \qquad 
\omega_{\mathrm{B}} : \mathcal{T} \To \mathrm{Vec}_{\Q_{\mathrm{B}}}\ .$$
The  ring $\mathcal{P}_{\mathcal{T}}^{\mm} = \Or(\mathrm{Isom}^{\otimes}_{\mathcal{T}} (\omega_{\mathrm{dR}},\omega_{\mathrm{B}}))$  is the $(\Q_{\mathrm{B}}, k_{\mathrm{dR}})$-bimodule  spanned by equivalence classes of matrix coefficients  $[V, \sigma, \omega]^{\mm}$ where $\sigma \in V_{\mathrm{B}}^{\vee}$ and $\omega \in V_{\mathrm{dR}}$. The $k_{\mathrm{dR}}$-algebra  $\mathcal{P}_{\mathcal{T}}^{\dR} = \Or(\mathrm{Aut}^{\otimes}_{\mathcal{T}} (\omega_{\mathrm{dR}}))$ is  spanned by equivalence classes of matrix coefficients  $[V, f, \omega]^{\dR}$ where $f \in V_{\mathrm{dR}}^{\vee}$ and $\omega \in V_{\mathrm{dR}}$.  The multiplicative structure is given by tensor products. There is a natural coaction 
$$\Delta: \Pe^{\mm}_{\mathcal{T}} \To \Pe^{\mm}_{\mathcal{T}} \otimes_{k_{\mathrm{dR}}} \Pe^{\dR}_{\mathcal{T}}$$
which expresses $\Pe^{\mm}_{\mathcal{T}}$ as an algebra comodule over the Hopf algebra $\Pe^{\dR}_{\mathcal{T}}$.
It is given by the formula
\begin{equation} \label{Deltaformula}   \Delta [ V, \sigma, \omega]^{\mm} =  \sum_{i} \, [V, \sigma, e_i]^{\mm} \otimes [V,e_i^{\vee}, \omega]^{\dR}
\end{equation}
where the sum ranges over a $k_{\mathrm{dR}}$-basis $\{e_i\}$ of $V_{\mathrm{dR}}$, and $e_i^{\vee}$ denotes the dual basis of $V_{\mathrm{dR}}^{\vee}$.

\begin{defn} \label{defnofLP} For generic complex numbers $s_i$ \eqref{generic},  let $k_{\mathrm{dR}}=\Q^{\mathrm{dR}}_{\underline{s}}$, $\Q_{\mathrm{B}} =\Q^{\mathrm{B}}_{\underline{s}}$
and define 
$$M_{\Sigma} = \left( \   H^1_{\mathrm{B}}(X_{\Sigma}, \mathcal{L}_{\underline{s}}) \ , \  H^1_{\varpi}(X_{\Sigma}, \nabla_{\underline{s}}) \ , \  \comp_{\mathrm{B},\varpi}(\underline{s}) \ \right) \quad \in\quad  \mathrm{Ob}(\mathcal{T})  \ .$$
Define a matrix $\LP_{\Sigma}^{\mm}  \in   M_{n\times n}( \Pe^{\mm}_{\mathcal{T}})$ by 
$$(\LP_{\Sigma}^{\mm})_{ij} =   \Big[ M_{\Sigma} \  ,  \   \delta_i \otimes  x^{s_0} \prod_{k=1}^n (1-x \sigma_k^{-1})^{s_k}  \ , \   - s_j \omega_j \Big]^\mm\ ,$$
where the basis elements are given by \eqref{dRcoeffBasis} and \eqref{BcoeffBasis}. We will use the notation $M_\Sigma(\underline{s})$ and $L^\mm_{\Sigma}(\underline{s})$ when we want to make the dependence on $\underline{s}$ explicit. 

The image of $\LP_{\Sigma}^{\mm}$ under the period homomorphism 
\begin{eqnarray*} \per : \Pe^{\mm}_{\mathcal{T}}  &\To &  \C  \nonumber \\
\left[(V_{\mathrm{B}}, V_{\mathrm{dR}}, c), \sigma ,\omega\right]^{\mm} & \mapsto& \langle \sigma \, , \, c(\omega)\rangle  
\end{eqnarray*} 
is precisely the matrix of Lauricella functions (when the integral converges, see Lemma \ref{lem:comparison equals Lauricella coefficients}):
$$\mathrm{per} \left(\LP_{\Sigma}^{\mm}\right)_{ij} =       -   s_j   \int_{\delta_i}   x^{s_0} \prod_{k=1}^n (1-x \sigma_k^{-1})^{s_k}  \frac{dx}{x-\sigma_j}\ \cdot$$ 
For this reason we think of $L_{\Sigma}^{\mm}$ as a (global) motivic lift of the matrix of Lauricella functions \eqref{IntroLSigma}. Define a (global) de Rham version 
$$(\LP_{\Sigma}^{\dR})_{ij} =   \left[ M_{\Sigma} \  ,  \ \nu_i    \ , \   - s_j \omega_j \right]^{\dR} \ ,$$
where the forms $\nu_i$, defining classes in $H^1_\varpi(X_\Sigma,\nabla_{-\underline{s}})$, were defined in \eqref{eq:def nui}. Recall that the de Rham pairing \eqref{eq:canonicalderhampairing} induces an isomorphism $H^1_\varpi(X_\Sigma,\nabla_{-\underline{s}})\simeq H^1_\varpi(X_\Sigma,\nabla_{\underline{s}})^\vee$ and that Lemma \ref{lem: omega nu dR pairing} implies that the basis $\{\nu_i\}$ is dual to the basis $\{-s_i\omega_i\}$. 
\end{defn} 

\begin{rem}\label{rem: c zero}
The class of $\nu_i$ is the image of the relative homology class (viewed in homology with trivial coefficients) of the path $\delta_i$ under the map $c_0^\vee$ studied in \cite[\S 4.5.1]{BD1}. It would be interesting to know whether this map can be naturally defined at the level of cohomology with coefficients by relying on Hodge theoretic arguments as in [\emph{loc. cit.}].
\end{rem} 

\begin{example} For all $n\in \Z$, one has  `Tate' objects 
$$\Q_{\underline{s}}(-n) = ( \Q^{\mathrm{B}}_{\underline{s}}, \Q^{\mathrm{dR}}_{\underline{s}}, 1\mapsto (2\pi i)^n)\ . $$
 The Betti and de Rham pairings \eqref{eq:bettipairing} and \eqref{eq:derhampairing}, along with their compatibility with the comparison map \eqref{eq:compBdRcoefficients} (also known as the `twisted period relations'), can be succinctly  encoded as a perfect pairing
 $$M_{\Sigma}(-\underline{s}) \otimes M_{\Sigma}(\underline{s}) \To \Q_{\underline{s}}(-1)$$
 in the category $\mathcal{T}$. Equivalently, $M_{\Sigma}( \underline{s})^{\vee} \simeq M_{\Sigma}(-\underline{s})(1)$, where $(n)$ is the standard notation for Tate twist, i.e., tensor product with $ \Q_{\underline{s}}(n)$.
\end{example} 

\begin{prop} The (global) coaction satisfies:
$$\Delta  \LP_{\Sigma}^{\mm}=  \LP_{\Sigma}^{\mm}\otimes  \LP_{\Sigma}^{\dR}\ .$$
\end{prop}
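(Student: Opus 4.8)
The plan is to deduce this directly from the general coaction formula \eqref{Deltaformula}, with no input beyond the linear algebra already set up in Section~\ref{sec: cohomology coeff}. Fix the $k_{\mathrm{dR}}$-basis $\{e_k := -s_k\omega_k : 1\le k\le n\}$ of $(M_\Sigma)_{\mathrm{dR}} = H^1_\varpi(X_\Sigma,\nabla_{\underline{s}})$ from \eqref{dRcoeffBasis}, and write $b_i$ for the Betti homology class \eqref{BcoeffBasis} occurring in the definition of $(\LP^\mm_\Sigma)_{ij}$. Applying \eqref{Deltaformula} entry by entry gives
$$\begin{aligned}
\Delta\,(\LP^\mm_\Sigma)_{ij} &= \Delta\,[M_\Sigma,\, b_i,\, -s_j\omega_j]^\mm = \sum_{k=1}^n [M_\Sigma,\, b_i,\, e_k]^\mm \otimes [M_\Sigma,\, e_k^\vee,\, -s_j\omega_j]^{\dR} \\
&= \sum_{k=1}^n (\LP^\mm_\Sigma)_{ik} \otimes [M_\Sigma,\, e_k^\vee,\, -s_j\omega_j]^{\dR}\,,
\end{aligned}$$
where $\{e_k^\vee\}$ denotes the dual basis of $(M_\Sigma)_{\mathrm{dR}}^\vee$. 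Thus everything reduces to identifying the de Rham matrix coefficient $[M_\Sigma,\, e_k^\vee,\, -s_j\omega_j]^{\dR}$ with $(\LP^\dR_\Sigma)_{kj} = [M_\Sigma,\, \nu_k,\, -s_j\omega_j]^{\dR}$.

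This identification is precisely what Lemma~\ref{lem: omega nu dR pairing} provides. By genericity of $\underline{s}$, the de Rham intersection pairing \eqref{eq:derhampairing}, together with the isomorphism \eqref{eq:canonicalderhampairing}, yields a canonical isomorphism $H^1_\varpi(X_\Sigma,\nabla_{-\underline{s}}) \overset{\sim}{\To} H^1_\varpi(X_\Sigma,\nabla_{\underline{s}})^\vee = (M_\Sigma)_{\mathrm{dR}}^\vee$, under which $\nu_k \mapsto e_k^\vee$, since $\langle \nu_k,\, -s_j\omega_j\rangle^{\dR} = -s_j\langle\nu_k,\omega_j\rangle^{\dR} = \mathbf{1}_{k=j}$. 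Substituting this back yields $\Delta\,(\LP^\mm_\Sigma)_{ij} = \sum_{k=1}^n (\LP^\mm_\Sigma)_{ik}\otimes(\LP^\dR_\Sigma)_{kj}$, which is exactly the matrix identity $\Delta\LP^\mm_\Sigma = \LP^\mm_\Sigma\otimes\LP^\dR_\Sigma$.

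The argument carries no real difficulty; the one point deserving care is the bookkeeping of the two dual de Rham spaces. The classes $\nu_k$ live \emph{a priori} in $H^1_\varpi(X_\Sigma,\nabla_{-\underline{s}})$ rather than in $(M_\Sigma)_{\mathrm{dR}}^\vee$ directly, so one must first use the isomorphism coming from \eqref{eq:canonicalderhampairing}--\eqref{eq:derhampairing} to regard them as functionals on $(M_\Sigma)_{\mathrm{dR}}$, and then verify that the sign and normalisation in Lemma~\ref{lem: omega nu dR pairing} make $\{\nu_k\}$ the \emph{exact} dual basis of $\{-s_k\omega_k\}$, not merely a dual basis up to scalars. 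Once this is pinned down, the statement is a purely formal consequence of \eqref{Deltaformula}, requiring no further analytic input.
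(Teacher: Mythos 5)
Your proof is correct and follows the same route as the paper: the paper's proof simply invokes the general coaction formula \eqref{Deltaformula}, the identification of $\{\nu_k\}$ as the dual basis of $\{-s_k\omega_k\}$ (via Lemma \ref{lem: omega nu dR pairing} and the pairing \eqref{eq:derhampairing}) having already been built into Definition \ref{defnofLP}. You have merely made explicit the dual-basis bookkeeping that the paper treats as part of the setup, and your verification $\langle \nu_k, -s_j\omega_j\rangle^{\dR} = \mathbf{1}_{k=j}$ is exactly right.
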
 
\begin{proof}
 This is an immediate consequence of the  formula \eqref{Deltaformula}. 
\end{proof} 

\begin{rem}
The coproduct $\Delta$ in the Hopf algebra $\Pe^\dR_{\mathcal{T}}$ is given by a formula similar to \eqref{Deltaformula} and satisfies: $\Delta  \LP_{\Sigma}^{\dR}=  \LP_{\Sigma}^{\dR}\otimes  \LP_{\Sigma}^{\dR}$.
\end{rem}

\subsection{Version with real Frobenius involutions}\label{subsubsec: tannakian infinity}
In order to incorporate the real Frobenius isomorphism, and hence single-valued periods, into a Tannakian framework, we are obliged to consider a more complicated version of the previous categorical construction. This is somewhat artificial, but is forced upon us by the fact that complex conjugation does not  define an involution  on 
$H^{1}_{\mathrm{B}}(X, \mathcal{L}_{\underline{s}})$, but rather  relates the cohomology of $\mathcal{L}_{\underline{s}}$ with that of $\mathcal{L}_{-\underline{s}}$.  For this reason, we must consider two de Rham, and two Betti realisations corresponding to coefficients  $\nabla_{\underline{s}}$,  $\nabla_{-\underline{s}}$  and  $\mathcal{L}_{\underline{s}}$, $\mathcal{L}_{-\underline{s}}$, which we denote  by superscripts $+/-$.

Let $s_i \in \R$ be real numbers satisfying the genericity conditions \eqref{generic}. Let $k_{\mathrm{dR}} \subset \C$ and $\Q_{\mathrm{B}} \subset \C$ be two subfields of $\C$. We let $(-)\otimes_{k_\mathrm{dR}}\overline{\C}$ denote tensor product taken with respect to the complex conjugate embedding.
One solution is to consider a category $\mathcal{T}_{\infty}$ defined in  a  similar manner as $\mathcal{T}$, except that the objects are given by: 
\begin{enumerate}
    \item A pair of finite-dimensional $k_{\mathrm{dR}}$-vector spaces $V^+_{\mathrm{dR}}$, $V_{\mathrm{dR}}^-$.
    \item Two pairs of finite-dimensional $\Q_{\mathrm{B}}$-vector spaces $V^+_{\mathrm{B}}$, $V_{\mathrm{B}}^-$ and $V^+_{\overline{\mathrm{B}}}$, $V_{\overline{\mathrm{B}}}^-$.
    \item Two $\C$-linear comparison isomorphisms
\begin{eqnarray} c_{\mathrm{B},\mathrm{dR}}^{+}: V_{\mathrm{dR}}^{+} \otimes_{k_{\mathrm{dR}}} \C & \overset{\sim}{\To}& V_{\mathrm{B}}^{+} \otimes_{\Q_{\mathrm{B}}} \C  \nonumber \ ,\\ 
    c_{\mathrm{\overline{B}}, \mathrm{dR}}^{+}: V_{\mathrm{dR}}^{+} \otimes_{k_{\mathrm{dR}}} \overline{\C} &\overset{\sim}{\To}&  V_{\mathrm{\overline{B}}}^{+} \otimes_{\Q_{\mathrm{B}}} \C  \ . \nonumber 
    \end{eqnarray}
    and another two  defined in the same way  with all $+$'s replaced by $-$.
    \item Two $\Q_B$-linear real Frobenius maps 
    $$ V_{B}^{+} \overset{\sim}{\To}  V_{\overline{B}}^{-}  \qquad \hbox{ and } \qquad  V_{\overline{B}}^{+} \overset{\sim}{\To}  V_{B}^{-} \  , $$ 
    and another two with the $+$'s and $-$'s interchanged. These maps will simply be denoted  by $F_{\infty}$, since the source and hence target will be clear from context.  The composition of any two such maps, when defined, is the identity: $F_{\infty} F_{\infty} = \id.$
\end{enumerate}
The morphisms $\phi$ between  objects are given by the data of two $k_{\mathrm{\mathrm{dR}}}$-linear maps $\phi_{\mathrm{dR}}^{+}, \phi_{\mathrm{dR}}^{-}$, and four 
$\Q_{\mathrm{B}}$-linear maps $\phi_{\mathrm{B}}^{+}$, $\phi_{\mathrm{B}}^{-}$,  $\phi_{\overline{\mathrm{B}}}^{+}$, $\phi_{\overline{\mathrm{B}}}^{-}$ which are compatible with $(3)$ and $(4)$. 
One checks that this category is Tannakian, equipped with six fiber functors $\omega_{\mathrm{dR}}^{\pm}, \omega_{\mathrm{B}}^{\pm}, \omega_{\overline{\mathrm{B}}}^{\pm}$ (over $k_{\mathrm{dR}}, \Q_{\mathrm{B}}$ respectively) which are obtained by forgetting all data except one of the vector spaces in $(1)$ or $(2)$. 

\begin{rem}
For objects of $\mathcal{T}_\infty$ coming from geometry such as the ones that we will shortly define, there are compatibilities between the real Frobenius isomorphisms $F_\infty$ and the comparison isomorphisms. We do not include such compatibilities in our definition because they will be irrelevant to the computations that we will be performing.
\end{rem}

The category $\mathcal{T}_\infty$ admits various rings of periods defined in a similar manner as before. Consider the 8 rings of periods:
$$\Pe_{\mathcal{T}_{\infty}}^{\mathrm{B}^{\pm} , {\mathrm{dR}}^{\pm}} = \mathcal{O}(  \mathrm{Isom}^{\otimes}_{\mathcal{T}_{\infty}} ( \omega^{\pm}_{\mathrm{dR}}, \omega^{\pm}_{\mathrm{B}}) )  \qquad \hbox{ and } \qquad  \Pe_{\mathcal{T}_{\infty}}^{\mathrm{\overline{B}^{\pm}} , \mathrm{dR}^{\pm}}  = \mathcal{O}(\mathrm{Isom}^{\otimes}_{\mathcal{T}_{\infty}} ( \omega^{\pm}_{\mathrm{dR}}, \omega^{\pm}_{\mathrm{\overline{B}}}))  \  , $$
corresponding to all possible choices of signs. 
The  four comparison isomorphisms define four period homomorphisms:
$$\per: \Pe_{\mathcal{T}_{\infty}}^{ \mathrm{B}^+, \mathrm{dR}^+} \To \C \qquad \hbox{ and } \qquad \per: \Pe_{\mathcal{T}_{\infty}}^{ \mathrm{\overline{B}}^+, \mathrm{dR}^+} \To \C \   $$
and similarly with $+$ replaced with $-$. 
The four Frobenius maps define isomorphisms: 
$$F_{\infty} \ : \  \Pe_{\mathcal{T}_{\infty}}^{\mathrm{B}^{+}, \bullet}  \ \cong  \ \Pe_{\mathcal{T}_{\infty}}^{\mathrm{\overline{B}}^{-}, \bullet} \quad \hbox{ and } \quad 
 F_{\infty} \ : \    \Pe_{\mathcal{T}_{\infty}}^{\mathrm{B}^{-}, \bullet}  \ \cong  \ \Pe_{\mathcal{T}_{\infty}}^{\mathrm{\overline{B}}^{+}, \bullet}
\ , $$
where $\bullet \in \{\mathrm{dR}^+, \mathrm{dR}^-\}.$ 
By composing with the period homomorphism $\per$, one obtains a full set of 8 period homomorphisms for each of our period rings, e.g., 
$\per\,  F_{\infty}: \Pe_{\mathcal{T}_{\infty}}^{\mathrm{B}^{+}, \mathrm{dR}^-} \rightarrow \C.$

There are also four possible rings of de Rham periods. We shall mainly consider two of them:
$$\Pe_{\mathcal{T}_{\infty}}^{\mathrm{dR}^-, \mathrm{dR}^+} =  \mathcal{O}(\mathrm{Isom}^{\otimes}_{\mathcal{T}_{\infty}} ( \omega^{+}_{\mathrm{dR}}, \omega^{-}_{\mathrm{dR}})) \qquad \hbox{ and } \qquad  \Pe_{\mathcal{T}_{\infty}}^{\mathrm{dR}^+, \mathrm{dR}^-} = \mathcal{O}(\mathrm{Isom}^{\otimes}_{\mathcal{T}_{\infty}} ( \omega^{-}_{\mathrm{dR}}, \omega^{+}_{\mathrm{dR}}))  \ . $$
Duality induces a canonical  isomorphism  $\Pe_{\mathcal{T}_{\infty}}^{\mathrm{dR}^-, \mathrm{dR}^+} \cong \Pe_{\mathcal{T}_{\infty}}^{\mathrm{dR}^+, \mathrm{dR}^-}$ which  we shall not make use of here. 
Both of these rings admit  a single-valued period map, which can be used to detect the non-vanishing of de Rham periods.

\begin{defn} There is a homomorphism `single-valued period'
$$\s^{-,+} : \Pe^{\mathrm{dR}^-, \mathrm{dR}^+}_{\mathcal{T}_{\infty}} \To \C \otimes_{k^{\mathrm{dR}}} \overline{\C}$$
 defined by the composite
$$\xymatrixcolsep{4pc}\xymatrix{V_{\mathrm{dR}}^+ \otimes_{k_{\mathrm{dR}}} \C  \ar[r]^{ c^+_{\mathrm{B}, \mathrm{dR}}} & V_{\mathrm{B}}^+ \otimes_{\Q_{\mathrm{B}}} \C  \ar[r]^{F_{\infty}\otimes\mathrm{id}} & V_{\mathrm{\overline{B}}}^- \otimes_{\Q_{\mathrm{B}}} \C \ar[r]^{( c^-_{\mathrm{\overline{B}}, \mathrm{dR}})^{-1}  } & V_{\mathrm{dR}}^- \otimes_{k_{\mathrm{dR}}} \overline{\C} \ .}$$
Since $\C$ and $\overline{\C}$ have different $k_{\mathrm{dR}}$-structures, this map defines a point on the torsor of isomorphisms from $\omega_{\mathrm{dR}}^+$ to $\omega_{\mathrm{dR}}^-$ only after extending scalars to $\C\otimes_{k_{\mathrm{dR}}} \overline{\C}$.  
The single-valued map therefore sends $[V, f, \omega]^{\dR}$ to  $ \langle f , ( c^-_{\mathrm{\overline{B}}, \mathrm{dR}})^{-1} F_{\infty} c^+_{\mathrm{B}, \mathrm{dR}} \,  \omega \rangle$.

However, in the case  when $k^{\mathrm{dR}} \subset \R$, which is the case that we shall mostly consider, we can  compose with the multiplication homomorphism $\C \otimes_{k^{\mathrm{dR}}} \overline{\C} \rightarrow \C$  to obtain a homomorphism  $$\s^{-,+} : \Pe^{\mathrm{dR}^-, \mathrm{dR}^+}_{\mathcal{T}_{\infty}} \To \C \ .$$
There is a  variant $\s^{+,-}$
 obtained by interchanging all $+$'s and $-$'s in the above.   When it is clear from the context, we drop the superscripts and simply write $\s$.  \end{defn}

\begin{rem}
Alternatively, one can view $\s^{-,+}$ as a morphism of the $(k_{\mathrm{dR}}, k_{\mathrm{dR}})$-bimodule $\Pe^{\mathrm{dR}^-, \mathrm{dR}^+}_{\mathcal{T}_{\infty}} $ to $\C$, with the bimodule structure on the latter given by $(k_{\mathrm{dR}}, \overline{k}_{\mathrm{dR}})$. In other words, for $\lambda_1, \lambda_2 \in k_{\mathrm{dR}}$ one has $ \s^{-, +} ( \lambda_1 \xi \lambda_2) = \lambda_1  \s^{-, +} ( \xi ) \overline{\lambda_2}$.  When $k$ is real, these bimodule structures coincide and we obtain a genuine linear map. 
\end{rem}

The composition of torsors between fiber functors defines several coaction morphisms, including:
 \begin{equation} \label{Coactionplusminus} \Delta : \Pe^{\mathrm{B}^+,\mathrm{dR}^+}_ {\mathcal{T}_{\infty}} \To  \Pe^{\mathrm{B}^+,\mathrm{dR}^-}_ {\mathcal{T}_{\infty}} \otimes_{k_{\mathrm{dR}}}  \Pe^{\mathrm{dR}^-,\mathrm{dR}^+}_ {\mathcal{T}_{\infty}}
\end{equation} 
and likewise with $+, -$ interchanged. The period homomorphisms defined earlier are compatible with composition of torsors between fiber functions. In particular, one has
\begin{equation} \label{perconvolution} \per \left(\xi\right)=  \left( \per\, F_{\infty} \otimes  \s \right)\left( \Delta  \xi \right) \qquad \hbox{ for all } \quad \xi \  \in \  \Pe^{\mathrm{B}^+,\mathrm{dR}^+}_ {\mathcal{T}_{\infty}}\ .
\end{equation}

\begin{defn} For $s_i$ real and generic \eqref{generic},  let $k_{\mathrm{dR}}= \Q^\mathrm{dR}_{\underline{s}}$, $\Q_{\mathrm{B}}= \Q^{\mathrm{B}}_{\underline{s}}$ and  define  an object of rank $n$ in $\mathcal{T}_{\infty}$, denoted by $\widetilde{M}_\Sigma$, whose underlying vector spaces are given by 
$$V_{\mathrm{dR}}^{+}  = H^1_{\varpi}(X_{\Sigma}, \nabla_{+ \underline{s}})    \quad , \quad V_{\mathrm{B}}^{+} =  H^1(\C\backslash \Sigma, \mathcal{L}_{+ \underline{s}})  \quad , \quad V_{\mathrm{\overline{B}}}^{+} =  H^1(\C\backslash \overline{\Sigma}, \mathcal{L}_{+ \underline{s}})$$
and similarly with all $+$'s replaced with $-$'s. 
The Frobenius maps $F_{\infty}$ are induced  on Betti cohomology by complex conjugation $\mathrm{conj}^*$ and its inverse. The comparison isomorphisms $ c^{+}_{\mathrm{B}, \mathrm{dR}} , c^{+}_{\mathrm{\overline{B}}, \mathrm{dR}}$ are defined by 
$$ \mathrm{comp}_{\mathrm{B}, \varpi}(+\underline{s}) :  H^1_{\varpi}(X_{\Sigma}, \nabla_{+ \underline{s}})\otimes_{k_\mathrm{dR}} \C \To 
H^1(\C\backslash \Sigma, \mathcal{L}_{+ \underline{s}})    \otimes_{\Q_{\mathrm{B}}} \C  $$ 
$$ \mathrm{comp}_{\mathrm{\overline{B}}, \varpi}(+\underline{s}) :  H^1_{\varpi}(X_{\Sigma}, \nabla_{+ \underline{s}})\otimes_{k^\mathrm{dR}} \overline{\C} \To 
H^1(\C\backslash \overline{\Sigma}, \mathcal{L}_{+ \underline{s}})    \otimes_{\Q_{\mathrm{B}}} \C  $$
and  $c^{-}_{\mathrm{B}, \mathrm{dR}} , c^{-}_{\mathrm{\overline{B}}, \mathrm{dR}}$ are similarly defined by replacing all $+$'s with $-$'s.  
Since $k_{\mathrm{dR}}=\Q^\mathrm{dR}_{\underline{s}} \subset \R$, the complex conjugate $\overline{\C}$ in the left-hand side of the previous equation can in fact be replaced with $\C. $
\end{defn} 

 The image of $\widetilde{M}_{\Sigma}$ under  the functor $\mathcal{T}_{\infty} \rightarrow \mathcal{T}$ which forgets all data except for $(V^+_{\mathrm{B}}, V_{\mathrm{dR}}^+, c_{\mathrm{B}, \mathrm{dR}}) $ is the object $M_{\Sigma}$   which was defined in Definition \ref{defnofLP}.

Let us define an $(n \times n)$ matrix 
$$\widetilde{\LP}^{\mm}_{{\Sigma}} \quad \in \quad M_{n\times n} ( \Pe^{\mathrm{B}^+,\mathrm{dR}^+}_{\mathcal{T}_{\infty}})$$
whose entries are 
$$ \left(\widetilde{\LP}_{\Sigma}^{\mm}\right)_{ij} =   \Big[ \widetilde{M}_{\Sigma} \  ,  \    \delta_i \otimes  x^{  s_0} \prod_{k=1}^n (1-x \sigma_k^{-1})^{ s_k}  \ , \   - s_j \omega_j \Big]^{\mm}\ ,$$
where   the Betti homology
class lies in $ H^1_{\mathrm{B}}(X_{\Sigma}, \mathcal{L}_{\underline{s}} ) ^\vee = H_1(\C\setminus\Sigma,\mathcal{L}_{\underline{s}}^\vee)$ and  the de Rham class lies  in   $H^1_{\varpi}(X_{\Sigma}, \nabla_{\underline{s}})$. We will use the notation $\widetilde{M}_{\Sigma}(\underline{s})$ and $\widetilde{L}^\mm_\Sigma(\underline{s})$ when we want to make the dependence on $\underline{s}$ explicit.  The image of $\widetilde{L}^\mm_\Sigma$ under the natural map 
 $\Pe^{\mathrm{B}^+,\mathrm{dR}^+}_{\mathcal{T}_{\infty}} \rightarrow  \Pe^{\mm}_{\mathcal{T}}   $  is $L^{\mm}_{\Sigma}$ and its period is the Lauricella matrix 
  \begin{equation} \label{PeriodofLsigmatilde} \per   \,  \widetilde{\LP}^{\mm}_{{\Sigma}} =    L_{\Sigma} \ . 
  \end{equation}
 
 \subsection{de Rham motivic version and single-valued periods}

    Now let us define a de Rham motivic Lauricella function 
    $$ \left(\widetilde{\LP}_{\Sigma}^{\dR}\right)_{ij} =   \left[ \widetilde{M}_{\Sigma} \  ,  \    \nu_i  \ , \  - s_j \omega_j  \right]^{\dR}  \quad \in \quad \Pe_{\mathcal{T}_{\infty}}^{\mathrm{dR}^-, \mathrm{dR}^+} \ , $$ 
    where the de Rham class  $ \nu_i $ is to be viewed in $H^1_\varpi(X_\Sigma,\nabla_{\underline{s}})=H^1_{\varpi}(X_{\Sigma}, \nabla_{-\underline{s}})^{\vee}$
    and the second is to be viewed in $H^1_{\varpi}(X_{\Sigma}, \nabla_{\underline{s}})$. Note that this differs from the earlier definition of the de Rham Lauricella matrix in the ring of de Rham periods of the category $\mathcal{T}$ because the de Rham classes reside in different cohomology groups. This is required so that we may speak of its single-valued period. By Corollary \ref{cor LSigma sv} the matrix of single-valued periods consists of the single-valued Lauricella hypergeometric functions \eqref{IntroLSigmasv}: 
    $$ \s ( \widetilde{\LP}^{\dR}_{\Sigma}) =\LP^{\s}_{\Sigma}\ ,$$
    whenever all the $s_i$ are real and generic and satisfy $ s_i>0$ for all $0\leq i \leq n$, and $s_0+\cdots + s_n < 1/2$. 
 
 \subsection{Coaction} \label{subsect: Ltildes}
 The coaction  \eqref{Coactionplusminus}, applied to the motivic Lauricella function, will give rise to a third but closely related quantity, given by the matrix 
 $$ \left( \widetilde{L}^{\mm, -}_{\Sigma}(\underline{s})\right)_{ij}= \Big[ \widetilde{M}_{\Sigma}(\underline{s}) \  ,  \   \delta_i \otimes  x^{  s_0} \prod_{k=1}^n (1-x \sigma_k^{-1})^{ s_k}  \ , \     s_j \omega_j \Big]^{\mm}   \quad \in \quad \Pe_{\mathcal{T}_{\infty}}^{\mathrm{B}^+, \mathrm{dR}^-}$$
 where the de Rham class   $ s_j \omega_j$ is in    $H^1_{\varpi}(X_{\Sigma}, \nabla_{-\underline{s}})$.
 It is the image under $F_{\infty}$ of 
 $$ \left(\widetilde{\LP}^{\mm}_{\overline{\Sigma}}(-\underline{s})\right)_{ij} =   \Big[ \widetilde{M}_{\Sigma}(\underline{s}) \  ,  \    \overline{\delta}_i \otimes  x^{-s_0} \prod_{k=1}^n (1-x \sigma_k^{-1})^{-s_k}  \ , \     s_j \omega_j \Big]^{\mm}   \quad \in \quad \Pe_{\mathcal{T}_{\infty}}^{\mathrm{\overline{B}}^-, \mathrm{dR}^-} \ ,$$
where   the Betti
class is viewed in $H_1(\C \backslash \overline{\Sigma}, \mathcal{L}_{-\underline{s}}^\vee)$ (see Remark \ref{rem: compute frobenius betti})
and  the de Rham class  $ s_j \omega_j$ is  viewed in  $H^1_{\varpi}(X_{\Sigma}, \nabla_{-\underline{s}})$ as before. More precisely, we have
$$       \widetilde{L}^{\mm, -}_{\Sigma}(\underline{s})  =   F_{\infty} \widetilde{\LP}^{\mm}_{\overline{\Sigma}}( - \underline{s}) $$
and hence, by definition of the period homomorphism $\per \, F_{\infty}  : \Pe_{\mathcal{T}_{\infty}}^{\mathrm{B}^+, \mathrm{dR}^-} \rightarrow \C $
the period is 
$$  \per \,  F_{\infty} \left( \widetilde{L}^{\mm, -}_{\Sigma}(\underline{s}) \right)= L_{\overline{\Sigma}} (-\underline{s}) \ .$$
\begin{cor}
The  coaction \eqref{Coactionplusminus} satisfies
$$\Delta    \widetilde{\LP}^{\mm}_{\Sigma}(\underline{s})=  \widetilde{\LP}^{\mm,-}_{\Sigma}(\underline{s})\otimes  \widetilde{\LP}^{\dR}_{\Sigma}(\underline{s})   \ .$$
\end{cor}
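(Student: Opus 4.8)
The proof is a direct application of the general coaction formula \eqref{Deltaformula}, adapted to the category $\mathcal{T}_\infty$ and its coaction \eqref{Coactionplusminus}. The plan is as follows. First I would recall that the coaction morphism $\Delta : \Pe^{\mathrm{B}^+,\mathrm{dR}^+}_{\mathcal{T}_\infty} \to \Pe^{\mathrm{B}^+,\mathrm{dR}^-}_{\mathcal{T}_\infty} \otimes_{k_{\mathrm{dR}}} \Pe^{\mathrm{dR}^-,\mathrm{dR}^+}_{\mathcal{T}_\infty}$ arises from the composition of torsors of isomorphisms between the fiber functors $\omega_{\mathrm{dR}}^+ \to \omega_{\mathrm{dR}}^- \to \omega_{\mathrm{B}}^+$, and hence on matrix coefficients is given by inserting a complete set of intermediate basis vectors in $V_{\mathrm{dR}}^-$:
\begin{equation*}
\Delta [\widetilde{M}_\Sigma, \delta_i \otimes \xi, -s_j\omega_j]^{\mm} = \sum_k [\widetilde{M}_\Sigma, \delta_i\otimes\xi, e_k]^{\mm} \otimes [\widetilde{M}_\Sigma, e_k^\vee, -s_j\omega_j]^{\dR}\ ,
\end{equation*}
where $\xi = x^{s_0}\prod_{l}(1-x\sigma_l^{-1})^{s_l}$, $\{e_k\}$ is a $k_{\mathrm{dR}}$-basis of $V_{\mathrm{dR}}^- = H^1_\varpi(X_\Sigma,\nabla_{-\underline{s}})$, and $\{e_k^\vee\}$ is the dual basis viewed in $V_{\mathrm{dR}}^{+} = H^1_\varpi(X_\Sigma,\nabla_{\underline{s}})$ via the de Rham intersection pairing \eqref{eq:canonicalderhampairing}. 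The key step is then to choose this intermediate basis wisely.

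The natural choice is $e_k = s_k\omega_k$ for $1\le k\le n$, which is a basis of $H^1_\varpi(X_\Sigma,\nabla_{-\underline{s}})$ exactly as \eqref{dRcoeffBasis} is a basis of $H^1_\varpi(X_\Sigma,\nabla_{\underline{s}})$ (the roles of $\underline{s}$ and $-\underline{s}$ are simply swapped). By Lemma \ref{lem: omega nu dR pairing}, applied with the roles of $\underline{s}$ and $-\underline{s}$ interchanged, the dual basis to $\{s_k\omega_k\}$ with respect to the de Rham pairing $H^1_\varpi(X_\Sigma,\nabla_{\underline{s}})\otimes H^1_\varpi(X_\Sigma,\nabla_{-\underline{s}})\to \Q_{\underline{s}}^{\mathrm{dR}}$ is precisely $\{\nu_k\}$ (indeed $\langle \nu_i, s_j\omega_j\rangle^{\mathrm{dR}} = s_j\langle\nu_i,\omega_j\rangle^{\mathrm{dR}} = -\mathbf 1_{i=j}$, up to the sign bookkeeping which I would check carefully). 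With this choice the first factor $[\widetilde{M}_\Sigma, \delta_i\otimes\xi, s_k\omega_k]^{\mm}$ is exactly $(\widetilde{L}^{\mm,-}_\Sigma)_{ik}$ by the definition in \S\ref{subsect: Ltildes}, since the de Rham class $s_k\omega_k$ there is viewed in $H^1_\varpi(X_\Sigma,\nabla_{-\underline{s}})$, and the second factor $[\widetilde{M}_\Sigma, \nu_k, -s_j\omega_j]^{\dR}$ is exactly $(\widetilde{L}^{\dR}_\Sigma)_{kj}$. Summing over $k$ gives the claimed matrix identity $\Delta\widetilde{L}^{\mm}_\Sigma = \widetilde{L}^{\mm,-}_\Sigma \otimes \widetilde{L}^{\dR}_\Sigma$.

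The step requiring the most care — though it is bookkeeping rather than a genuine obstacle — is making sure the four cohomology groups and the various pairings are matched up correctly: the de Rham class $-s_j\omega_j$ in $(\widetilde{L}^{\mm}_\Sigma)_{ij}$ lives in $H^1_\varpi(X_\Sigma,\nabla_{\underline{s}}) = V_{\mathrm{dR}}^+$, the intermediate classes $e_k = s_k\omega_k$ live in $V_{\mathrm{dR}}^- = H^1_\varpi(X_\Sigma,\nabla_{-\underline{s}})$, and the de Rham pairing used to form the dual basis is the one of Lemma \ref{lem: omega nu dR pairing} with $\underline{s}\leftrightarrow-\underline{s}$. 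One also needs that the coaction \eqref{Coactionplusminus} — which is defined purely Tannakically by composition of torsors — does indeed specialise to formula \eqref{Deltaformula} with this intermediate de Rham fiber functor; this is the standard fact that the comultiplication on $\Pe^{\mathrm{B}^+,\mathrm{dR}^+}_{\mathcal{T}_\infty}$ factors through an intermediate de Rham realisation, already recorded in \S\ref{subsubsec: tannakian infinity}. Once these identifications are in place, the proof is immediate and I would present it as a one-line consequence of \eqref{Deltaformula}, Lemma \ref{lem: omega nu dR pairing}, and the definitions of $\widetilde{L}^{\mm,-}_\Sigma$ and $\widetilde{L}^{\dR}_\Sigma$.
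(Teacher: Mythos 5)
Your proposal is correct and takes essentially the same route as the paper, whose proof is the one-line observation that the identity is an immediate consequence of the general formula for the coaction on matrix coefficients; you have merely made explicit the intermediate basis $\{s_k\omega_k\}$ of $V_{\mathrm{dR}}^-=H^1_{\varpi}(X_\Sigma,\nabla_{-\underline{s}})$ with dual basis $\{\nu_k\}$ and matched the two tensor factors with the definitions of $\widetilde{\LP}^{\mm,-}_{\Sigma}$ and $\widetilde{\LP}^{\dR}_{\Sigma}$. On the flagged sign: applying Lemma \ref{lem: omega nu dR pairing} with $\underline{s}$ and $-\underline{s}$ interchanged gives $\langle \nu_i, s_j\omega_j\rangle^{\mathrm{dR}}=+\mathbf{1}_{i=j}$ (the $-1/s_i$ becomes $+1/s_i$), so $\{\nu_k\}$ is dual to $\{s_k\omega_k\}$ on the nose and no extra sign enters the formula.
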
 
\begin{proof} This is an immediate consequence of the formula for the coaction on matrix coefficients in a Tannakian category. 
\end{proof} 
Equation  \eqref{perconvolution}, applied to the matrix $ \widetilde{\LP}^{\mm}_{\Sigma}(\underline{s})$ reduces to the equation 
$$  L_{\Sigma}(\underline{s}) =  L_{\overline{\Sigma}}(-\underline{s})\, L^{\s}_{\Sigma}(\underline{s})$$
 which is another way of writing formula \eqref{splusminusDC}. Consider the following  refinement of  the period map:
$$\xymatrixcolsep{5pc}\xymatrix{ \Pe^{\mathrm{B}^+,\mathrm{dR}^+}_ {\mathcal{T}_{\infty}} \ar[r]^-{\Delta} & \Pe^{\mathrm{B}^+,\mathrm{dR}^-}_ {\mathcal{T}_{\infty}} \otimes_{k_{\mathrm{dR}}}  \Pe^{\mathrm{dR}^-,\mathrm{dR}^+}_ {\mathcal{T}_{\infty}}  \ar[r]^-{\per\,F_{\infty}\otimes 
 \s^{-,+}} &  \C\otimes_{\Q} \C\ .}$$ 
 Indeed,  by composing it with the multiplication map $\C\otimes_\Q\C\to \C$ one recovers the usual period map, by \eqref{perconvolution}.
Then under this map, the elements $(\widetilde{\LP}^{\mm}_{\Sigma}(\underline{s}))_{ij} $ for $1\leq i,j\leq n$
  map to 
\begin{multline} \sum_{\ell=1}^{n} \left(s_{\ell}    \int_{\delta_i}       x^{-s_0} \prod_{k=1}^n (1- x \overline{\sigma_k}^{-1})^{-s_k} \frac{dx}{x-\overline{\sigma_{\ell}}}  \right) \otimes  \\
  \frac{s_j }{2\pi i} \left( \iint_{\C} |z|^{ 2s_0} \prod_{k=1}^n |1- z \sigma_k^{-1} |^{  2s_k}   \,\left(  \frac{d\overline{z}}{\overline{z}-\overline{\sigma_{\ell}}}- \frac{d \overline{z}}{\overline{z}} \right)  \wedge \frac{dz}{z-\sigma_j} \right) \ .
  \end{multline}
The point of this formula is that both sides of the tensor product admit a Taylor
 expansion in the $s_i$, which is the subject of the next section.

\subsection{Variant} \label{par:variant formal s} 
We had to assume that the parameters $s_i$ are real in order to obtain a Tannakian interpretation of the single-valued period homomorphism. This was so that the subfield $\overline{k}(s_0,\ldots,s_n)\subset\C$ is isomorphic to $k(s_0,\ldots,s_n)$, which ensures that there is  a comparison map for de Rham cohomology associated to the complex conjugate embedding of $k$:
$$H^1_{\mathrm{dR}}(X_\Sigma,\nabla_{\underline{s}})\otimes_{k_{\underline{s}}^{\mathrm{dR}}}\overline{\C} \To H^1_{\mathrm{B}}(\C\backslash\overline{\Sigma},\mathcal{L}_{\underline{s}})\otimes_{\Q_{\underline{s}}^{\mathrm{dR}}}\C\ .$$
However, such an assumption is unnatural, for instance because  formulae such as \eqref{introDC} are true for all $s_i\in\C$ for which they make sense. A way to remedy this would be to treat the $s_i$ as formal parameters and work with modules over the polynomial ring $k[s_0,\ldots,s_n]$. This would also be needed to build a bridge between the framework of cohomology with coefficients that we discussed in this section and the Taylor expansions that we will consider in the rest of this article. 

\section{Laurent series expansion of periods of cohomology with coefficients} 
\label{sect: Renorm}
The Lauricella functions \eqref{IntroLSigma} are not \emph{a priori} defined for  $s_0,\ldots, s_n$ at the origin.  We show  using a renormalisation procedure that 
they  extend to a neighbourhood of  the origin and admit a Taylor expansion there. We prove a similar statement for their single-valued versions \eqref{IntroLSigmasv}. The reason for the (ab)use of the word `renormalisation' is explained in \cite{BD2}.

\subsection{Renormalisation of line integrals}

    Let $\Sigma=\{\sigma_0,\ldots,\sigma_n\}$ be distinct complex numbers with $\sigma_0=0$. We fix an index $i\in\{1,\ldots,n\}$ and a locally finite path $\delta_i$ in $\mathbb{C}\setminus\Sigma$ from $0$ to $\sigma_i$. More precisely, $\delta_i:(0,1)\to \mathbb{C}\setminus \Sigma$ is a smooth path that can be extended to a smooth path $\overline{\delta_i}:[0,1]\to \mathbb{C}$ satisfying $\overline{\delta_i}(0)=0$ and $\overline{\delta_i}(1)=\sigma_i$. As in all this paper, we assume that $\delta_i$ does not wind infinitely around $0$ or  $\sigma_i$ (Remark \ref{rem: choice delta i winding}).
    
    Let $\omega$ be a global meromorphic form on $\mathbb{P}^1(\mathbb{C})$ with logarithmic poles at $\Sigma\cup\{\infty\}$. We define
    $$\Omega= x^{s_0}\prod_{k=1}^n(1-x\sigma_k^{-1})^{s_k}\,\omega$$
    and view it as a  multi-valued $1$-form which depends holomorphically on the parameters $\underline{s}$, and that we wish to integrate along $\delta_i$.
    
    \begin{rem}\label{rem: convention branches}
    Since we shall only consider the integral of $\Omega$ along $\delta_i$,  it suffices to define the pull-back $\delta_i^*\Omega$, which is a  $1$-form on the contractible space $(0,1)$. It depends only on a determination of $\log(\sigma_i)$, which we choose once and for all. Hence $\sigma_i^{s_0} = \exp (s_0 \log(\sigma_i))$ is defined. This fixes a branch of $ \delta_i^*\Omega$ as follows. Close to $1$, (i.e., for $x$  near $\sigma_i$) the function $x^{s_0}$ is prescribed and so $\delta^*_i x^{s_0}$ is analytically continued to the open interval $(0,1)$. Close to $0$ (i.e., for $x$ near $0$) the function  $\log (1-x\sigma_k^{-1})$ is analytic and the terms $(1-x\sigma_k^{-1})^{s_k} = \exp  \left(s_k \log(1-x\sigma_k^{-1})\right)$ are well-defined, and so their pull-backs along $\delta$ are analytically continued to $(0,1)$.  
    \end{rem}
    
    \begin{rem}\label{rem: global class specialise}
    Let us fix a parameter $\underline{s}=(s_0,\ldots,s_n)\in\mathbb{C}^{n+1}$. The recipe given in Remark \ref{rem: convention branches} gives rise to a determination of $x^{s_0}\prod_{k=1}^n(1-x\sigma_k^{-1})^{s_k}$ along the path $\delta_i$, and thus a class in the cohomology group  $H_1^\lf(X_\Sigma(\mathbb{C}),\mathcal{L}_{\underline{s}}^\vee)$ which we denote $\delta_{i,\underline{s}}$. 
    \end{rem}
    
    Without any assumption on the residues of $\omega$ at $0$ and $\sigma_i$, the integral of $\Omega$ along $\delta_i$ defines a holomorphic function of the parameters $\underline{s}$ in the domain $\{\operatorname{Re}(s_0),\operatorname{Re}(s_i)>0\}$.

    \begin{defn} \label{definition: renormalisedforms} Following \cite{BD2}, let us define the renormalised version of $\Omega$ with respect to the points $\{0,\sigma_i\}$ to be
    $$\Omega^{\ren_{i}}=  \Omega -    \Res_0(\omega)\, x^{s_0}\frac{dx}{x} - \Res_{\sigma_i}(\omega)\Big(\sigma_i^{s_0}\prod_{k\neq i}(1-\sigma_i\sigma_k^{-1})^{s_k}\Big)\, (1-x\sigma_i^{-1})^{s_i} \frac{dx}{x-\sigma_i} \ \cdot$$
    \end{defn} 
    
    \begin{prop}\label{proprenorm}
    We have for every $\underline{s}$ in the domain $\{\operatorname{Re}(s_0),\operatorname{Re}(s_i)>0\}$, the equality
    \begin{equation}  \label{renormintomega}\int_{\delta_i}\Omega = \Res_0(\omega)\frac{\sigma_i^{s_0}}{s_0} - \Res_{\sigma_i}(\omega)\frac{\sigma_i^{s_0}\prod_{k\neq i}(1-\sigma_i\sigma_k^{-1})^{s_k}}{s_i} + \int_{\delta_i}\Omega^{\ren_i}\ .\end{equation}
    The integral of $\Omega^{\ren_i}$ along $\delta_i$ defines a holomorphic function of the parameters $\underline{s}$ in the domain $\{\operatorname{Re}(s_0),\operatorname{Re}(s_i)>-1\}$.
    \end{prop}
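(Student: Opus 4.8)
The plan is to compare the integrand of $\Omega$ near its two `bad' endpoints $0$ and $\sigma_i$ with the explicit model terms subtracted in Definition \ref{definition: renormalisedforms}, and to evaluate those model terms in closed form. First I would observe that the form
$$\Omega - \Res_0(\omega)\, x^{s_0}\frac{dx}{x}$$
has, near $x=0$, the property that $x^{-s_0}$ times it extends to a holomorphic $1$-form in a neighbourhood of $0$: indeed $x^{-s_0}\Omega = \prod_{k=1}^n(1-x\sigma_k^{-1})^{s_k}\,\omega$, whose only possible pole at $0$ comes from $\omega$, with residue $\Res_0(\omega)$, and subtracting $\Res_0(\omega)\frac{dx}{x}$ kills it. Hence near $0$ the renormalised integrand behaves like $x^{s_0}\times(\text{holomorphic})$, which is integrable for $\Real(s_0)>-1$. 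A symmetric computation at $x=\sigma_i$ shows, after factoring out $(1-x\sigma_i^{-1})^{s_i}$, that the local behaviour is $(x-\sigma_i)^{s_i}\times(\text{holomorphic})$, integrable for $\Real(s_i)>-1$; here the normalising constant $\sigma_i^{s_0}\prod_{k\neq i}(1-\sigma_i\sigma_k^{-1})^{s_k}$ is exactly the value at $x=\sigma_i$ of the `other' factors $x^{s_0}\prod_{k\neq i}(1-x\sigma_k^{-1})^{s_k}$, so that the subtracted model term matches $\Omega$ to leading order at $\sigma_i$. Along the path $\delta_i$ the recipe of Remark \ref{rem: convention branches} fixes all branches consistently, so these local statements glue: $\int_{\delta_i}\Omega^{\ren_i}$ converges and is holomorphic in $\underline{s}$ on $\{\Real(s_0),\Real(s_i)>-1\}$, by the usual theorem on holomorphic dependence of convergent parameter integrals (dominated convergence plus Morera/Fubini).

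Next I would verify the identity \eqref{renormintomega} on the smaller domain $\{\Real(s_0),\Real(s_i)>0\}$, where all three integrals converge separately. It suffices to check that the two subtracted model $1$-forms integrate along $\delta_i$ to the two explicit terms on the right-hand side. For the first: $\int_{\delta_i} x^{s_0}\frac{dx}{x} = \int_0^{\sigma_i} x^{s_0-1}dx = \frac{\sigma_i^{s_0}}{s_0}$, using the branch of $x^{s_0}$ prescribed near $\sigma_i$ in Remark \ref{rem: convention branches} and the fact that the integral depends only on the endpoints since $x^{s_0-1}dx$ is exact (a primitive being $x^{s_0}/s_0$). For the second: the model term is a constant times $(1-x\sigma_i^{-1})^{s_i}\frac{dx}{x-\sigma_i}$, and $\int_{\delta_i}(1-x\sigma_i^{-1})^{s_i}\frac{dx}{x-\sigma_i}$ is again a pure endpoint evaluation, equal to $-\frac{1}{s_i}$ times the value of $(1-x\sigma_i^{-1})^{s_i}$ at $x=\sigma_i$ minus its value at $x=0$; the former vanishes (for $\Real(s_i)>0$) and the latter is $1$, giving $-\frac{1}{s_i}$, whence the model term contributes $-\Res_{\sigma_i}(\omega)\,\sigma_i^{s_0}\prod_{k\neq i}(1-\sigma_i\sigma_k^{-1})^{s_k}/s_i$. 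Rearranging yields \eqref{renormintomega} on $\{\Real(s_0),\Real(s_i)>0\}$.

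Finally, the left-hand side $\int_{\delta_i}\Omega$ is holomorphic on $\{\Real(s_0),\Real(s_i)>0\}$, the first two terms on the right are meromorphic on all of $\mathbb{C}^{n+1}$ with poles only along $s_0=0$ and $s_i=0$, and $\int_{\delta_i}\Omega^{\ren_i}$ is holomorphic on the larger domain $\{\Real(s_0),\Real(s_i)>-1\}$ by the first paragraph; so \eqref{renormintomega}, an identity of holomorphic functions on the nonempty open set $\{\Real(s_0),\Real(s_i)>0\}$, propagates by analytic continuation, and exhibits $\int_{\delta_i}\Omega$ as extending meromorphically across $\{\Real(s_i)>-1,\,\Real(s_0)>-1\}$ with at worst simple poles on the coordinate hyperplanes. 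The main obstacle is the careful bookkeeping of branches along $\delta_i$ near the two endpoints — one must be sure that the constants $\sigma_i^{s_0}$ and $\prod_{k\neq i}(1-\sigma_i\sigma_k^{-1})^{s_k}$ appearing in the model term are computed with exactly the determinations fixed in Remark \ref{rem: convention branches}, so that the local cancellations are genuine and not off by a factor $e^{2\pi i s_k}$; everything else is a routine convergence-and-analytic-continuation argument.
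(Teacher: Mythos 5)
Your proposal is correct and follows essentially the same route as the paper: evaluate the two subtracted model forms exactly ($\int_{\delta_i}x^{s_0}\frac{dx}{x}=\sigma_i^{s_0}/s_0$ and $\int_{\delta_i}(1-x\sigma_i^{-1})^{s_i}\frac{dx}{x-\sigma_i}=-\frac{1}{s_i}$) and observe that the subtractions cancel the simple poles of $\omega$ at $0$ and $\sigma_i$, so $\Omega^{\ren_i}$ has singularities at worst of type $x^{s_0}dx$ and $(1-x\sigma_i^{-1})^{s_i}dx$, hence is integrable and holomorphic in $\underline{s}$ for $\Real(s_0),\Real(s_i)>-1$. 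The only quibble is a harmless wording slip in your second model computation (the primitive is $\tfrac{1}{s_i}(1-x\sigma_i^{-1})^{s_i}$, so the integral is $+\tfrac{1}{s_i}$ times the difference of endpoint values), but your stated value $-\tfrac{1}{s_i}$ is the correct one.
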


    \begin{proof}
    The equality follows from the computations:
    $$\int_{\delta_i}x^{s_0}\frac{dx}{x} = \frac{\sigma_i^{s_0}}{s_0} \qquad \mbox{and} \qquad \int_{\delta_i}(1-x\sigma_i^{-1})^{s_i}\frac{dx}{x-\sigma_i} = -\frac{1}{s_i}\ \cdot$$
    Since $\omega-\Res_0(\omega)\frac{dx}{x}$ does not have a pole at $0$, the singularities of $\Omega^{\ren_i}$ at $0$ are at worst of the type $x^{s_0}dx$ and therefore integrable for $\operatorname{Re}(s_0)>-1$. Likewise, its singularities at $\sigma_i$ are at worst of the type $(1-x\sigma_i^{-1})^{s_i}dx$ and are integrable for $\operatorname{Re}(s_i)>-1$.
    \end{proof}

    Since $\underline{s}=0$ is in the domain of convergence of the integral of $\Omega^{\ren_i}$ along $\delta_i$, we see that $ \eqref{renormintomega}$ yields a Laurent series expansion at $\underline{s}=0$ for the integral of $\Omega$ along $\delta_i$. We record the following special case which shows that the Lauricella functions $(L_\Sigma)_{ij}$ admit Taylor expansions around $\underline{s}=0$. Here the integrand of $(L_\Sigma)_{ij}$ is understood according to the conventions of Remark \ref{rem: convention branches}.

    \begin{prop}\label{prop: taylor series LSigma}
    We have the following equality
    $$(L_\Sigma)_{ij} = \mathbf{1}_{i=j}\Big(\sigma_i^{s_0}\prod_{k\neq i}(1-\sigma_i\sigma_k^{-1})^{s_k}\Big)-s_j\int_{\delta_i}\Omega_j^{\ren_i}$$
    where
    $$\Omega_j^{\ren_i}= \Big(x^{s_0}\prod_{k\neq i}(1-x\sigma_k^{-1})^{s_k} - \mathbf{1}_{i=j}\, \sigma_i^{s_0}\prod_{k\neq i}(1-\sigma_i\sigma_k^{-1})^{s_k}\Big)(1-x\sigma_i^{-1})^{s_i}\frac{dx}{x-\sigma_j}\ \cdot$$
    and the integral on the right-hand side defines a holomorphic function of the parameters $\underline{s}$ in the domain $\{\operatorname{Re}(s_0),\operatorname{Re}(s_i)>-1\}$. 
    \end{prop}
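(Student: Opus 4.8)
The plan is to deduce Proposition~\ref{prop: taylor series LSigma} directly from Proposition~\ref{proprenorm}, specialised to the single logarithmic form $\omega=\omega_j=\frac{dx}{x-\sigma_j}$ occurring in the definition of $(L_\Sigma)_{ij}$, so that $\Omega=x^{s_0}\prod_{k=1}^n(1-x\sigma_k^{-1})^{s_k}\,\omega_j$.

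First I would compute the residues of $\omega_j$. Since $j\in\{1,\ldots,n\}$ we have $\sigma_j\neq\sigma_0=0$, hence $\Res_0(\omega_j)=0$, while $\Res_{\sigma_i}(\omega_j)=\mathbf{1}_{i=j}$ because the only finite pole of $\omega_j$ lies at $\sigma_j$. Substituting these into Definition~\ref{definition: renormalisedforms}, the renormalised form attached to $\Omega$ is
$$\Omega^{\ren_i}=x^{s_0}\prod_{k=1}^n(1-x\sigma_k^{-1})^{s_k}\frac{dx}{x-\sigma_j}-\mathbf{1}_{i=j}\Big(\sigma_i^{s_0}\prod_{k\neq i}(1-\sigma_i\sigma_k^{-1})^{s_k}\Big)(1-x\sigma_i^{-1})^{s_i}\frac{dx}{x-\sigma_i}\ .$$
Pulling the factor $(1-x\sigma_i^{-1})^{s_i}$ out of the product $\prod_{k=1}^n$ in the first term — and, when $i=j$, using $\frac{dx}{x-\sigma_j}=\frac{dx}{x-\sigma_i}$ to collect the two terms — one checks that $\Omega^{\ren_i}$ is exactly the form $\Omega_j^{\ren_i}$ of the statement. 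This both justifies the notation and exhibits $\Omega_j^{\ren_i}$ as an instance of the general renormalisation recipe, so that the holomorphy assertion becomes automatic.

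Next I would feed the residues into the conclusion of Proposition~\ref{proprenorm}, which for $\operatorname{Re}(s_0),\operatorname{Re}(s_i)>0$ becomes
$$\int_{\delta_i}x^{s_0}\prod_{k=1}^n(1-x\sigma_k^{-1})^{s_k}\frac{dx}{x-\sigma_j}=-\mathbf{1}_{i=j}\,\frac{\sigma_i^{s_0}\prod_{k\neq i}(1-\sigma_i\sigma_k^{-1})^{s_k}}{s_i}+\int_{\delta_i}\Omega_j^{\ren_i}\ .$$
Multiplying through by $-s_j$ gives $(L_\Sigma)_{ij}$ on the left, and on the right the prefactor becomes $\mathbf{1}_{i=j}\,\frac{s_j}{s_i}\,\sigma_i^{s_0}\prod_{k\neq i}(1-\sigma_i\sigma_k^{-1})^{s_k}$; since $\mathbf{1}_{i=j}\,s_j/s_i=\mathbf{1}_{i=j}$, this is precisely the claimed identity. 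The holomorphy of $\int_{\delta_i}\Omega_j^{\ren_i}$ on $\{\operatorname{Re}(s_0),\operatorname{Re}(s_i)>-1\}$ is then inherited verbatim from the final sentence of Proposition~\ref{proprenorm}.

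Since all the real work is already done in Proposition~\ref{proprenorm}, I do not expect a genuine obstacle; the only delicate points are bookkeeping. One must check that the determination of $x^{s_0}\prod_k(1-x\sigma_k^{-1})^{s_k}$ along $\delta_i$ fixed by Remarks~\ref{rem: convention branches} and \ref{rem: global class specialise}, which underlies $(L_\Sigma)_{ij}$, is the same one implicitly used in Proposition~\ref{proprenorm}; and one must track the coincidence $\sigma_i=\sigma_j$ carefully in the case $i=j$ — there the boundary term at $0$ vanishes (because $\Res_0(\omega_j)=0$) while the boundary term at $\sigma_i=\sigma_j$ is present and supplies the $\mathbf{1}_{i=j}$ summand. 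I would also double-check the sign in the elementary identity $\int_{\delta_i}(1-x\sigma_i^{-1})^{s_i}\frac{dx}{x-\sigma_i}=-1/s_i$ used inside Proposition~\ref{proprenorm}, since it is what converts the minus sign above into the positive prefactor $\mathbf{1}_{i=j}\,\sigma_i^{s_0}\prod_{k\neq i}(1-\sigma_i\sigma_k^{-1})^{s_k}$.
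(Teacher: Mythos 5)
Your proposal is correct and is exactly the paper's argument: specialise Proposition \ref{proprenorm} to $\omega=\omega_j=d\log(x-\sigma_j)$ (noting $\Res_0(\omega_j)=0$ and $\Res_{\sigma_i}(\omega_j)=\mathbf{1}_{i=j}$), identify the resulting renormalised form with $\Omega_j^{\ren_i}$, and multiply \eqref{renormintomega} by $-s_j$. The bookkeeping you carry out (branch conventions, the $i=j$ case, the sign from $\int_{\delta_i}(1-x\sigma_i^{-1})^{s_i}\frac{dx}{x-\sigma_i}=-1/s_i$) is consistent with the paper, which leaves these details implicit.
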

    
    \begin{proof}
    This follows from applying Proposition \ref{proprenorm} to the case $\omega=\omega_j=d\log(x-\sigma_j)$ and multiplying \eqref{renormintomega} by $-s_j$.
    \end{proof}
    
\subsection{Renormalisation of complex volume integrals}

    Let $\Sigma=\{\sigma_0,\ldots,\sigma_n\}$ be distinct complex numbers with $\sigma_0=0$. Let $\omega$ be a global meromorphic form on $\mathbb{P}^1(\mathbb{C})$ with logarithmic poles at $\Sigma\cup\{\infty\}$. We define
    $$\Omega^{\s} \ = \  |z|^{2 s_0}  \prod_{k=1}^n |1-z\sigma_k^{-1}|^{2s_k} \,  \omega$$
    and view it as a smooth $1$-form on $\mathbb{P}^1(\mathbb{C})\setminus (\Sigma\cup\{\infty\})$ which depends holomorphically on the parameters $\underline{s}$. We fix an index $i\in\{1,\ldots,n\}$ and consider the integral
    $$-\frac{1}{2\pi i}\iint_{\mathbb{C}}\left(\frac{d\overline{z}}{\overline{z}-\overline{\sigma_i}}-\frac{d\overline{z}}{\overline{z}}\right)\wedge \Omega^\s\ .$$
    Without any assumption on the residues of $\omega$, this integral defines a holomorphic function of the parameters $\underline{s}$ in the domain 
    $$D=\{\Real(s_0),\Real(s_i)>0\; ,\; \Real(s_k)>-1/2 \;\;(k\notin\{0,i\})\; , \; \Real(s_0)+\cdots+\Real(s_n)<1/2\}\ .$$
    One can check this by passing to local coordinates around every point of $\Sigma\cup\{\infty\}$.
    
    \begin{defn} \label{definition: renormalisedformssv} Following \cite{BD2}, let us define the renormalised version of $\Omega^\s$ with respect to the points $\{0,\sigma_i\}$ to be
    $$\Omega^{\s,\ren_{i}}=  \Omega^s -   \Res_0(\omega)\, |z|^{2s_0}\frac{dz}{z} - \Res_{\sigma_i}(\omega)\Big(|\sigma_i|^{2s_0}\prod_{k\neq i}|1-\sigma_i\sigma_k^{-1}|^{2s_k}\Big)\, |1-z\sigma_i^{-1}|^{2s_i} \frac{dz}{z-\sigma_i}\ \cdot$$
    \end{defn}

    The following lemma will be used several times in the sequel. It is a single-valued analogue of the identity $\int_0^{\sigma}z^s\frac{dz}{z}=\frac{\sigma^s}{s}$.

    \begin{lem} \label{lem: svlogintegralregterm} For any $\sigma,s \in \C$ with $\mathrm{Re}(s)>0$, we have
    $$-\frac{1}{2\pi i} \iint_{\C}  \left(\frac{d\overline{z}}{\overline{z}-\overline{\sigma}}  -\frac{d\overline{z}}{\overline{z}} \right)\wedge |z|^{2s}\frac{dz}{z} = \frac{|\sigma|^{2s}}{s}\ .$$
    \end{lem}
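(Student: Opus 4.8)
The plan is to compute the integral directly using Stokes' theorem on $\mathbb{P}^1(\mathbb{C})$, exactly parallel to the proof strategy already used in Lemma \ref{lem: omega nu dR pairing} and Proposition \ref{propSVpairingCoeffs}. First I would observe that the integrand is an exact form away from the singularities: since $d\bar z \wedge dz = 0$ is false but $d\bar z\wedge d\bar z=0$, we have
$$\left(\frac{d\overline{z}}{\overline{z}-\overline{\sigma}}-\frac{d\overline{z}}{\overline{z}}\right)\wedge |z|^{2s}\frac{dz}{z} = -\,d\!\left(\left(\log\frac{\overline{z}-\overline{\sigma}}{\overline{z}}\right)|z|^{2s}\frac{dz}{z}\right)$$
up to checking the sign and that the $\partial$-part of $d$ applied to $\log\frac{\bar z - \bar\sigma}{\bar z}$ contributes nothing when wedged against $|z|^{2s}\frac{dz}{z}$ (it produces a $dz\wedge dz = 0$ term); here $g(z) = \log\frac{\bar z-\bar\sigma}{\bar z}$ is a well-defined smooth function on $\mathbb{C}\setminus\{0,\sigma\}$ once we fix a branch, vanishing at $\infty$.

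Next I would apply Stokes' theorem on $P_\varepsilon$, the complement in $\mathbb{P}^1(\mathbb{C})$ of $\varepsilon$-disks around $0$, $\sigma$, and $\infty$, so that the integral equals $\lim_{\varepsilon\to 0}\frac{1}{2\pi i}\int_{\partial P_\varepsilon} g(z)\,|z|^{2s}\frac{dz}{z}$ with the appropriate orientation sign. Then I would estimate each of the three boundary contributions: around $0$, the factor $|z|^{2s}$ forces the contribution to vanish as $\varepsilon\to 0$ because $\mathrm{Re}(s)>0$ (the $g(z)\sim \log(\bar z)$ logarithmic blow-up is killed by $|z|^{2s}$); around $\infty$ the contribution vanishes since $g$ vanishes there and $|z|^{2s}\frac{dz}{z}$ is controlled; the only surviving term is the circle around $\sigma$, where $|z|^{2s}\frac{dz}{z}$ is smooth and $g(z) = \log(\bar z - \bar\sigma) - \log\bar z$ has a logarithmic singularity from the first summand. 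A residue-type computation (a variant of Cauchy's formula, exactly as invoked at the end of the proof of Lemma \ref{lem: omega nu dR pairing}) on the small circle $z = \sigma + \varepsilon e^{i\theta}$ extracts the value of the smooth factor $|z|^{2s}/z$ at $z=\sigma$, namely $|\sigma|^{2s}/\sigma$, and the $\log(\bar z-\bar\sigma)$ against $\frac{dz}{z}$ integrates (after the sign from $\partial P_\varepsilon$) to give $2\pi i\cdot\frac{|\sigma|^{2s}}{s}$; the $-\log\bar z$ piece is smooth near $\sigma$ and contributes nothing.

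The main obstacle I anticipate is bookkeeping of signs and branches: one must be careful that the $\partial$ versus $\bar\partial$ decomposition of $d$ is handled correctly so that the exact-form identity holds on the nose, and that the orientation of $\partial P_\varepsilon$ (induced from $\mathbb{P}^1(\mathbb{C})$ with its complex orientation) produces the correct overall sign, so that the answer comes out as $+|\sigma|^{2s}/s$ rather than its negative. A clean alternative that sidesteps some of this would be to reduce to the known one-dimensional identity: writing the left-hand side as a limit of integrals over $\mathbb{C}$ and using Cauchy--Pompeiu / Stokes to recognize $-\frac{1}{2\pi i}\iint_{\mathbb{C}}\bar\partial(\cdots)\wedge|z|^{2s}\frac{dz}{z}$ as a boundary term, but I expect the direct $P_\varepsilon$ computation above, mirroring the paper's established technique, to be the most transparent route.
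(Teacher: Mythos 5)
Your Stokes strategy is the right general idea (and is the paper's), but the specific primitive you chose does not work, for three concrete reasons. First, the exactness identity you start from is false: the form $|z|^{2s}\frac{dz}{z}$ is \emph{not} closed, since
$$d\left(|z|^{2s}\,\frac{dz}{z}\right)\;=\;s\,|z|^{2s}\,\frac{d\overline{z}}{\overline{z}}\wedge\frac{dz}{z}\;\neq\;0\ ,$$
so the Leibniz rule gives
$$d\left(g\,|z|^{2s}\,\frac{dz}{z}\right)\;=\;\left(\frac{d\overline{z}}{\overline{z}-\overline{\sigma}}-\frac{d\overline{z}}{\overline{z}}\right)\wedge|z|^{2s}\,\frac{dz}{z}\;+\;s\,g\,|z|^{2s}\,\frac{d\overline{z}}{\overline{z}}\wedge\frac{dz}{z}\ ,$$
and the second term, which you never address (you only checked that $\partial g$ wedges to zero, a different and trivial point), is a genuinely nonzero $2$-form whose integral cannot be dropped. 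Second, $g(z)=\log\frac{\overline{z}-\overline{\sigma}}{\overline{z}}$ admits no single-valued branch on $\C\setminus\{0,\sigma\}$: it has monodromy $\pm 2\pi i$ around each of $0$ and $\sigma$ separately, so "fixing a branch" forces a cut from $0$ to $\sigma$ and Stokes then produces additional jump contributions along the cut. Third, even granting all of this, your final residue step is wrong: on the circle $z=\sigma+\varepsilon e^{i\theta}$ the form $|z|^{2s}\frac{dz}{z}$ is smooth (it has no pole at $\sigma$) and $g$ has only a logarithmic singularity, so that boundary term is $O(\varepsilon\log\varepsilon)\to 0$; nothing in your computation can produce the factor $\frac{1}{s}$ or the value $|\sigma|^{2s}$.

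The fix is to take the primitive of the \emph{other} factor, which is exactly what the paper does: the anti-holomorphic form $\frac{d\overline{z}}{\overline{z}-\overline{\sigma}}-\frac{d\overline{z}}{\overline{z}}$ \emph{is} closed, so the globally defined, single-valued $1$-form
$$F\;=\;-\frac{1}{s}\,|z|^{2s}\left(\frac{d\overline{z}}{\overline{z}-\overline{\sigma}}-\frac{d\overline{z}}{\overline{z}}\right)$$
satisfies $dF=$ integrand on the nose (the Leibniz term with $d$ of the closed factor vanishes, and the $\frac{d\overline{z}}{\overline{z}}$-part of $d|z|^{2s}$ dies against the $d\overline{z}$'s). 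Applying Stokes on the complement of $\varepsilon$-discs around $0$, $\sigma$, $\infty$, the circles at $0$ and $\infty$ contribute nothing (using $\mathrm{Re}(s)>0$ at the origin and decay at infinity), while the negatively oriented circle at $\sigma$ picks up the simple pole of $\frac{d\overline{z}}{\overline{z}-\overline{\sigma}}$, giving $-\frac{1}{s}|\sigma|^{2s}\cdot 2\pi i$, hence $-\frac{1}{2\pi i}\cdot\left(-\frac{2\pi i}{s}|\sigma|^{2s}\right)=\frac{|\sigma|^{2s}}{s}$. In other words, the essential point is that the antiderivative must be taken in the $|z|^{2s}\frac{dz}{z}$ direction (yielding $-\frac{1}{s}|z|^{2s}$, the source of both $|\sigma|^{2s}$ and $\frac{1}{s}$), not by integrating the anti-holomorphic factor to a logarithm; this is precisely where your proposal fails.
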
 

    \begin{proof}
    One verifies in local polar coordinates  that the integral converges. Let $$F = - \frac{1}{s} |z|^{2s} \left( \frac{ d\overline{z}}{\overline{z}-\overline{\sigma}}  -\frac{d\overline{z}}{\overline{z}}  \right)\ . $$
    Its total derivative is  the integrand on the left-hand side. By Stokes' formula applied to  the complement of three discs in  $\Pro^1(\C)$  centered at $0$, $\infty$, $\sigma$,
    it suffices to compute 
    $$-\frac{1}{2\pi i}  \int_{D_\varepsilon}    F \ = \   \frac{1}{s}\frac{
1}{2\pi i}    \int_{D_\varepsilon}     |z|^{2s}\left(\frac{ d\overline{z}}{\overline{z}-\overline{\sigma}}  -\frac{d\overline{z}}{\overline{z}} \right)$$
    where $D_\varepsilon$ is the union of three circles of radius $\varepsilon$ winding negatively around  $0$, $\infty$, and $ \sigma$. When $\varepsilon$ goes to zero, the integral around the first and second vanish, and the integral around the third gives $2\pi i\,|\sigma|^{2s}$.
    \end{proof} 
    
    \begin{prop}\label{proprenormsv}
    We have for every $\underline{s}$ in the domain $D$ the equality
    \begin{equation}\label{renormintomegasv}
    \begin{split}
    -&\frac{1}{2\pi i}\iint_{\mathbb{C}}\left(\frac{d\overline{z}}{\overline{z}-\overline{\sigma_i}}-\frac{d\overline{z}}{\overline{z}}\right)\wedge\Omega^\s = \\ & \Res_0(\omega)\frac{|\sigma_i|^{2s_0}}{s_0} - \Res_{\sigma_i}(\omega)\frac{|\sigma_i|^{2s_0}\prod_{k\neq i}|1-\sigma_i\sigma_k^{-1}|^{2s_k}}{s_i} -\frac{1}{2\pi i} \iint_{\mathbb{C}}\left(\frac{d\overline{z}}{\overline{z}-\overline{\sigma_i}}-\frac{d\overline{z}}{\overline{z}}\right)\wedge\Omega^{\s,\ren_i}\ .
    \end{split}
    \end{equation}
    The integral on the right-hand side defines a holomorphic function of the parameters $\underline{s}$ in the domain 
    \begin{equation}\label{eq: domain d prime}
    D'=\{\operatorname{Re}(s_k)>-1/2 \;\;(k\in\{0,\ldots,n\})\; ,\; \Real(s_0)+\cdots+\Real(s_n)<1/2\}\ .
    \end{equation}
    \end{prop}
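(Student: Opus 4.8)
The argument runs parallel to that of Proposition~\ref{proprenorm}. Write $\eta_i = \frac{d\overline{z}}{\overline{z}-\overline{\sigma_i}}-\frac{d\overline{z}}{\overline{z}}$, so that the integral under study is $-\frac{1}{2\pi i}\iint_{\mathbb{C}}\eta_i\wedge\Omega^\s$, and rearrange Definition~\ref{definition: renormalisedformssv} as the pointwise identity of smooth forms $\Omega^\s = \Omega^{\s,\ren_i}+T_0+T_i$ on $\mathbb{P}^1(\mathbb{C})\setminus(\Sigma\cup\{\infty\})$, where $T_0 = \Res_0(\omega)\,|z|^{2s_0}\frac{dz}{z}$ and $T_i = \Res_{\sigma_i}(\omega)\,|\sigma_i|^{2s_0}\prod_{k\neq i}|1-\sigma_i\sigma_k^{-1}|^{2s_k}\,|1-z\sigma_i^{-1}|^{2s_i}\frac{dz}{z-\sigma_i}$. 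For $\underline{s}\in D$ all three of the integrals $-\frac{1}{2\pi i}\iint_{\mathbb{C}}\eta_i\wedge(\cdot)$ converge, so \eqref{renormintomegasv} reduces to evaluating the two subtracted ones. The $T_0$ term is Lemma~\ref{lem: svlogintegralregterm} applied with $(\sigma,s)=(\sigma_i,s_0)$, which gives $\Res_0(\omega)\frac{|\sigma_i|^{2s_0}}{s_0}$. For the $T_i$ term I would substitute $z=\sigma_i(1-u)$: this is a biholomorphism of $\mathbb{C}$, orientation-preserving with Jacobian $|\sigma_i|^2$, carrying $|1-z\sigma_i^{-1}|^{2s_i}\frac{dz}{z-\sigma_i}$ to $|u|^{2s_i}\frac{du}{u}$ and $\eta_i$ to $-\big(\frac{d\overline{u}}{\overline{u}-1}-\frac{d\overline{u}}{\overline{u}}\big)$, so Lemma~\ref{lem: svlogintegralregterm} with $(\sigma,s)=(1,s_i)$ gives $-\frac{1}{2\pi i}\iint_{\mathbb{C}}\eta_i\wedge|1-z\sigma_i^{-1}|^{2s_i}\frac{dz}{z-\sigma_i}=-\frac{1}{s_i}$. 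Inserting these two evaluations and rearranging produces exactly \eqref{renormintomegasv}.

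For the second assertion, it suffices by the usual holomorphy-under-the-integral-sign argument to show that $\iint_{\mathbb{C}}\eta_i\wedge\Omega^{\s,\ren_i}$ converges locally uniformly for $\underline{s}\in D'$. Away from $\infty$ this is a local computation at the points of $\Sigma$, just as in the convergence discussion preceding the Proposition and in the proof of Lemma~\ref{lem: svlogintegralregterm}: the subtractions in Definition~\ref{definition: renormalisedformssv} cancel the residues of $\omega$ at $0$ and at $\sigma_i$, so near $z=0$ (resp. near $z=\sigma_i$) the form $\Omega^{\s,\ren_i}$ equals $|z|^{2s_0}$ (resp. $|1-z\sigma_i^{-1}|^{2s_i}$) times a $1$-form with no pole there; since $\eta_i$ contributes at worst a simple antiholomorphic pole at those points, the integrand is $O(\rho^{2\Real(s_0)})\,d\rho\,d\theta$ near $0$ and $O(\rho^{2\Real(s_i)})\,d\rho\,d\theta$ near $\sigma_i$ in local polar coordinates, hence integrable precisely when $\Real(s_0)>-1/2$, resp. $\Real(s_i)>-1/2$; at the remaining $\sigma_k$, where $\eta_i$ is smooth, the integrand is $O(\rho^{2\Real(s_k)})\,d\rho\,d\theta$, integrable when $\Real(s_k)>-1/2$. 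These are all among the defining inequalities of $D'$.

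The delicate point, which accounts for the condition $\Real(s_0)+\cdots+\Real(s_n)<1/2$ in $D'$, is the behaviour near $\infty$. There $\eta_i=\frac{\overline{\sigma_i}\,d\overline{z}}{\overline{z}(\overline{z}-\overline{\sigma_i})}$ has no pole, being $O(|z|^{-2})$, while $\Omega^\s=O(|z|^{2\Real(s_0+\cdots+s_n)})\cdot\omega$ with $\omega$ having at worst a simple pole at $\infty$; hence $\eta_i\wedge\Omega^\s=O(|z|^{2\Real(s_0+\cdots+s_n)-3})\,d\overline{z}\wedge dz$ is absolutely integrable over $\{|z|>M\}$ as soon as $\Real(s_0+\cdots+s_n)<1/2$. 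The subtracted forms $\eta_i\wedge T_0$ and $\eta_i\wedge T_i$ are not absolutely integrable near $\infty$ for every $\underline{s}\in D'$, but, exactly as in the proof of Lemma~\ref{lem: svlogintegralregterm}, their angular integral over each circle $\{|z|=R\}$ vanishes for $R$ large (the relevant one-variable contour integral encloses two simple poles whose residues cancel), so they contribute nothing to $\iint_{\{|z|>M\}}$ for $M$ large; thus on $\{|z|>M\}$ the renormalised integrand may be replaced by $\eta_i\wedge\Omega^\s$, which converges on $D'$. Combining this with the local estimates at $\Sigma$ yields convergence, and hence holomorphy, of $\iint_{\mathbb{C}}\eta_i\wedge\Omega^{\s,\ren_i}$ throughout $D'$. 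I expect the reconciliation of the conditionally convergent subtracted terms with the absolutely convergent tail of $\Omega^\s$ near $\infty$ — equivalently, making precise the sense in which the integrals in question converge — to be the step requiring the most care.
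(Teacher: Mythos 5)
Your derivation of the identity \eqref{renormintomegasv} is the same as the paper's: the same splitting $\Omega^\s=\Omega^{\s,\ren_i}+T_0+T_i$ and two applications of Lemma \ref{lem: svlogintegralregterm}, your substitution $z=\sigma_i(1-u)$ being equivalent to the change of variables $z\leftrightarrow\sigma_i-z$ (plus rescaling by $|\sigma_i|^{-2s_i}$) used there; and your local estimates at the points of $\Sigma$ reproduce the paper's only stated justification of the second assertion, namely that the renormalisation cancels the residues of $\omega$ at $0$ and $\sigma_i$, giving integrability there for $\Real(s_0),\Real(s_i)>-1/2$. The paper's proof stops at that point and says nothing about the point at infinity, so the last paragraph of your proposal is an addition, and it is exactly there that your argument has a flaw.

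The exact-vanishing claim works for $T_0$ but not for $T_i$. For $T_0$ the weight $|z|^{2s_0}$ is constant on circles about the origin and $\oint_{|z|=R}\bigl(\tfrac{d\overline z}{\overline z-\overline\sigma_i}-\tfrac{d\overline z}{\overline z}\bigr)=0$ for $R>|\sigma_i|$, so indeed $\iint_{M<|z|<R}\eta_i\wedge T_0=0$ once $M>|\sigma_i|$. For $T_i$, however, the weight $|1-z\sigma_i^{-1}|^{2s_i}$ is \emph{not} radial about $0$: writing $|1-z\sigma_i^{-1}|^{2s_i}=|z/\sigma_i|^{2s_i}\,|1-\sigma_i/z|^{2s_i}$ and expanding $|1-\sigma_i/z|^{2s_i}$ in powers of $\sigma_i/z$ and $\overline{\sigma_i}/\overline{z}$, the angular average of $\eta_i\wedge T_i$ on $\{|z|=\rho\}$ does not vanish; it decays like $\rho^{2\Real(s_i)-3}$ with coefficient proportional to $(1-s_i)|\sigma_i|^2$, hence non-zero for generic $s_i$. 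Consequently $\iint_{\{|z|>M\}}\eta_i\wedge T_i$ is not zero, and it actually diverges (over expanding disks centred at $0$) as soon as $\Real(s_i)\geq 1$, which happens inside $D'$ already for $n=2$ (take $s_i=6/5$ and the other two parameters equal to $-2/5$). So the reduction of the tail of $\eta_i\wedge\Omega^{\s,\ren_i}$ to that of $\eta_i\wedge\Omega^\s$ does not go through as stated; the same caveat affects your sentence that ``all three integrals converge'' for $\underline{s}\in D$, since $\Real(s_0)$ or $\Real(s_i)$ may exceed $1/2$ there and absolute convergence at $\infty$ then fails for the subtracted pieces. To make this part rigorous you should either restrict to the subdomain where in addition $\Real(s_0),\Real(s_i)<1/2$ (on which every integral in sight is absolutely convergent; this contains a neighbourhood of $\underline{s}=0$ and is all that Proposition \ref{prop: taylor series LSigma sv} needs), or fix once and for all a regularised meaning of the integrals (a definite exhaustion) and redo the $T_i$ tail in the coordinate $u=1-z\sigma_i^{-1}$, where the weight \emph{is} radial about the centre of the exhausting circles, justifying the change of exhaustion. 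The ``delicate point'' you flag at the end is genuine, but the mechanism you propose resolves it for only one of the two subtracted terms.
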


    \begin{proof}
    The equality follows from the computations
    $$-\frac{1}{2\pi i}\iint_{\mathbb{C}}\left(\frac{d\overline{z}}{\overline{z}-\overline{\sigma_i}}-\frac{d\overline{z}}{\overline{z}}\right)\wedge |z|^{2s_0}\frac{dz}{z} = \frac{|\sigma_i|^{2s_0}}{s_0} \ ,$$
    and
    $$-\frac{1}{2\pi i}\iint_{\mathbb{C}}\left(\frac{d\overline{z}}{\overline{z}-\overline{\sigma_i}}-\frac{d\overline{z}}{\overline{z}}\right)\wedge |1-z\sigma_i^{-1}|^{2s_i}\frac{dz}{z-\sigma_i} = - \frac{1}{s_i}\ ,$$
    which follows from Lemma \ref{lem: svlogintegralregterm} (for the second, apply the change of variables $z\leftrightarrow \sigma-z$ and multiply by $|\sigma|^{-2s}$). Since $\omega-\Res_0(\omega)\frac{dx}{x}$ does not have a pole at $0$, the singularities of $\Omega^{\s,\ren_i}$ at $0$ are at worst of the type $|z|^{2s_0}dz$ and therefore 
    $$\left(\frac{d\overline{z}}{\overline{z}-\overline{\sigma_i}}-\frac{d\overline{z}}{\overline{z}}\right)\wedge\Omega^{\s,\ren_i}$$
    is integrable around zero for $\operatorname{Re}(s_0)>-1/2$. Likewise, it is integrable around $\sigma_i$ for $\operatorname{Re}(s_i)>-1/2$.
    \end{proof}
    
    Since $\underline{s}=0$ lies in the domain $D'$, this proposition provides a Laurent series expansion at $\underline{s}=0$ for the integral on the left-hand side. We record the following special case which shows that the single-valued Lauricella functions \eqref{IntroLSigmasv} admit Taylor expansions around $\underline{s}=0$.
    
    \begin{prop}\label{prop: taylor series LSigma sv}
    We have the following equality
    $$(L^\s_\Sigma)_{ij} = \mathbf{1}_{i=j}\Big(|\sigma_i|^{2s_0}\prod_{k\neq i}|1-\sigma_i\sigma_k^{-1}|^{2s_k}\Big)+\frac{s_j}{2\pi i}\iint_{\mathbb{C}}\left(\frac{d\overline{z}}{\overline{z}-\overline{\sigma_i}}-\frac{d\overline{z}}{\overline{z}}\right)\wedge \Omega_j^{\s,\ren_i}$$
    where
    $$\Omega_j^{\s,\ren_i}= \Big(|z|^{2s_0}\prod_{k\neq i}|1-z\sigma_k^{-1}|^{2s_k} - \mathbf{1}_{i=j}\,|\sigma_i|^{2s_0}\prod_{k\neq i}|1-\sigma_i\sigma_k^{-1}|^{2s_k}\Big)|1-z\sigma_i^{-1}|^{2s_i}\frac{dz}{z-\sigma_j}\ \cdot$$
    The  integral on the right-hand side defines a holomorphic function of the parameters $\underline{s}$ in the domain $D'$ described in \eqref{eq: domain d prime}.
    \end{prop}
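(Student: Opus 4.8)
The plan is to apply Proposition \ref{proprenormsv} with a cleverly chosen meromorphic form $\omega$, in exact parallel to the way Proposition \ref{prop: taylor series LSigma} was deduced from Proposition \ref{proprenorm}. First I would take $\omega=\omega_j=d\log(x-\sigma_j)$, so that $\Omega^\s=|z|^{2s_0}\prod_{k=1}^n|1-z\sigma_k^{-1}|^{2s_k}\,\frac{dz}{z-\sigma_j}$, and multiply the identity \eqref{renormintomegasv} through by $-s_j$. This immediately produces the single-valued Lauricella function $(L^\s_\Sigma)_{ij}$ on the left-hand side, by definition \eqref{IntroLSigmasv}. On the right-hand side, the two residue terms need to be computed: $\Res_0(\omega_j)=\mathbf{1}_{j=0}=0$ since $1\leq j\leq n$, so the first residue term vanishes; and $\Res_{\sigma_i}(\omega_j)=\mathbf{1}_{i=j}$, so the second residue term contributes, after multiplication by $-s_j$, exactly the term $\mathbf{1}_{i=j}\bigl(|\sigma_i|^{2s_0}\prod_{k\neq i}|1-\sigma_i\sigma_k^{-1}|^{2s_k}\bigr)$. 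Finally the last term, $-\frac{1}{2\pi i}\iint_{\mathbb{C}}\bigl(\frac{d\overline{z}}{\overline{z}-\overline{\sigma_i}}-\frac{d\overline{z}}{\overline{z}}\bigr)\wedge\Omega^{\s,\ren_i}$, multiplied by $-s_j$, gives the stated integral $\frac{s_j}{2\pi i}\iint_{\mathbb{C}}\bigl(\frac{d\overline{z}}{\overline{z}-\overline{\sigma_i}}-\frac{d\overline{z}}{\overline{z}}\bigr)\wedge\Omega_j^{\s,\ren_i}$, provided one checks that $-s_j\,\Omega^{\s,\ren_i}_{(\omega=\omega_j)}$ equals $-s_j\,\Omega_j^{\s,\ren_i}$, i.e.\ that with $\omega=\omega_j$ the renormalised form of Definition \ref{definition: renormalisedformssv} coincides with the explicit $\Omega_j^{\s,\ren_i}$ given in the statement.

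The remaining bookkeeping step is therefore to verify this identification of renormalised forms. Writing out Definition \ref{definition: renormalisedformssv} for $\omega=\omega_j=\frac{dz}{z-\sigma_j}$: the $\Res_0$ correction vanishes (as above), and the $\Res_{\sigma_i}$ correction is $\mathbf{1}_{i=j}\bigl(|\sigma_i|^{2s_0}\prod_{k\neq i}|1-\sigma_i\sigma_k^{-1}|^{2s_k}\bigr)|1-z\sigma_i^{-1}|^{2s_i}\frac{dz}{z-\sigma_i}$. When $i\neq j$ this correction is zero and $\Omega^{\s,\ren_i}=\Omega^\s$, which matches $\Omega_j^{\s,\ren_i}$ since the indicator $\mathbf{1}_{i=j}$ there also vanishes. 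When $i=j$, one factors $\prod_{k=1}^n|1-z\sigma_k^{-1}|^{2s_k}=\bigl(\prod_{k\neq i}|1-z\sigma_k^{-1}|^{2s_k}\bigr)|1-z\sigma_i^{-1}|^{2s_i}$ and pulls $|1-z\sigma_i^{-1}|^{2s_i}\frac{dz}{z-\sigma_i}$ out as a common factor of $\Omega^\s$ and the correction term; the difference of the two bracketed prefactors is exactly $|z|^{2s_0}\prod_{k\neq i}|1-z\sigma_k^{-1}|^{2s_k}-|\sigma_i|^{2s_0}\prod_{k\neq i}|1-\sigma_i\sigma_k^{-1}|^{2s_k}$, reproducing the formula for $\Omega_j^{\s,\ren_i}$ in the $i=j$ case. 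This completes the verification.

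For the holomorphy claim, Proposition \ref{proprenormsv} already asserts that $\iint_{\mathbb{C}}\bigl(\frac{d\overline{z}}{\overline{z}-\overline{\sigma_i}}-\frac{d\overline{z}}{\overline{z}}\bigr)\wedge\Omega^{\s,\ren_i}$ is holomorphic in $\underline{s}$ on the domain $D'$ of \eqref{eq: domain d prime}; multiplying by the entire function $-s_j$ preserves holomorphy on $D'$, so the stated integral on the right-hand side of the Proposition is holomorphic there. Since $\underline{s}=0$ lies in $D'$, this yields the desired Taylor expansion of $(L^\s_\Sigma)_{ij}$ around the origin. I expect no genuine obstacle here: the entire argument is a transcription of the line-integral case, and the only mildly delicate point is tracking the prefactor $|\sigma_i|^{2s_0}\prod_{k\neq i}|1-\sigma_i\sigma_k^{-1}|^{2s_k}$ through the residue computation at $\sigma_i$, which is handled uniformly by the single-valued logarithmic integral identity of Lemma \ref{lem: svlogintegralregterm} already invoked in the proof of Proposition \ref{proprenormsv}.
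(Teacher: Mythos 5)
Your proposal is correct and is precisely the paper's argument: the paper's proof is the one-line "apply Proposition \ref{proprenormsv} with $\omega=\omega_j=d\log(x-\sigma_j)$ and multiply \eqref{renormintomegasv} by $-s_j$", and your residue computations ($\Res_0(\omega_j)=0$, $\Res_{\sigma_i}(\omega_j)=\mathbf{1}_{i=j}$), the identification of the renormalised form with $\Omega_j^{\s,\ren_i}$, and the holomorphy on $D'$ are exactly the bookkeeping that proof leaves implicit.
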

    
    \begin{proof}
    This follows from applying Proposition \ref{proprenormsv} to the case $\omega=\omega_j=d\log(x-\sigma_j)$ and multiplying \eqref{renormintomegasv} by $-s_j$.
    \end{proof}

\section{Notations relating to motivic fundamental groups}   \label{sect: Notations}
In this, the second part of the paper, we study  \eqref{IntroLSigma} from the point of view of the motivic fundamental group of the punctured Riemann sphere. This requires a number of notations and background, mostly from \cite{delignegoncharov} and \cite{brownnotesmot}, which we recall here.
Let $k\subset \C$ be a number field.

\subsection{Categorical and Tannakian} 
\begin{enumerate}
\item  Let $\MT(k)$ denote the category of mixed Tate motives over $k$.  One can replace $\MT(k)$ with a  category of Betti and de Rham realisations, with  essentially no change  to our  arguments.  The category   $\MT(k)$ has a canonical fiber functor $\varpi: \MT(k) \rightarrow \mathrm{Vec}_{\Q}$. Let us set
$$G_{\MT(k)}^{\varpi} =\mathrm{Aut}_{\varpi}^{\otimes}\,  \MT(k)\ . $$ 
It is an affine group scheme over $\Q$.  Let us denote by $\omega_{\mathrm{B}}: \MT(k) \rightarrow \mathrm{Vec}_{\Q}$ the Betti realisation functor with respect to the given embedding $k \subset \C$.   The de Rham realisation functor $\omega_{\mathrm{dR}}: \MT(k)\rightarrow \mathrm{Vec}_k$ is obtained from $\varpi$ by extending scalars: $\omega_{\mathrm{dR}}= \varpi \otimes_{\Q} k$.
\vspace{0.05in}

\item   Let $\Pe^{\mm} = \Or(\mathrm{Isom}^{\otimes}_{\MT(k)}(\varpi,\omega_{\mathrm{B}}))$ denote 
the $\Q$-algebra of motivic periods  on $\MT(k)$.
Let $$\Pe^{\varpi} = \Or(G_{\MT(k)}^{\varpi})$$
denote the   $\Q$-algebra of of (canonical,  i.e., `$(\varpi,\varpi)$')) de Rham  motivic periods. The latter is a graded Hopf algebra, and the former  is a graded algebra comodule over it.   Denote the corresponding motivic coaction by 
$$\Delta: \Pe^{\mm} \To \Pe^{\mm}  \otimes_{\Q} \Pe^{\varpi}\ .$$
We let $\per : \Pe^{\mm} \rightarrow \C$ denote the period homomorphism and $\s:\Pe^\varpi\to \mathbb{C}$ denote the single-valued period homomorphism \cite[\S 2.6]{BD1}.
  
\item   
\vspace{0.05in}
 Let $\Lef^{\mm} = [ \Q(-1), 1_{\mathrm{B}}^{\vee}, 1_{\varpi}]^{\mm}$, resp. $\Lef^{\varpi} = [ \Q(-1), 1_{\varpi}^{\vee}, 1_{\varpi}]^{\varpi}$, denote the Lefschetz motivic period, whose period is $2\pi i$, resp.  its (canonical) de Rham version.  

\vspace{0.05in}

\item Let $\Pe^{\mm,+}\subset \Pe^\mm$ denote the subspace of effective motivic periods. It is spanned by the motivic periods of objects $M\in \MT(k)$ with $W_{-1}M=0$. It is a non-negatively graded sub-algebra and sub-$\Pe^\varpi$-comodule of $\Pe^\mm$, and contains the Lefschetz motivic period $\Lef^\mm$. There is a canonical projection homomorphism
$$ \pi^{\mm,+}_{\varpi} : \Pe^{\mm,+} \To  \Pe^{\varpi} $$
which, in particular,  sends $\Lef^{\mm}$ to zero.  It can be  defined by composing the coaction $\Delta$ with projection onto the weight-graded zero piece 
$W_0  \Pe^{\mm,+} \cong \Q$.

\end{enumerate}

\subsection{Geometric}  \label{subsect: NotGeometric} Let  $\Sigma = \{\sigma_0, \sigma_1,\ldots, \sigma_n \} \subset \A^1(k)$ be distinct points with $\sigma_0=0$. Recall that $X_{\Sigma}= \A_k^1 \backslash \Sigma.$

\begin{enumerate}
\item Let $t_0 \in T_0\A^1_k$ denote the tangent vector $1$ at $0$. For each $ i\geq 1$, let 
$$t_{\sigma_i}  \quad \in \quad  T_{\sigma_i}\A_k^1$$
be the tangent vector  $\sigma_i$ based at the point $\sigma_i$.  Let $\pi_1^{\mathrm{top}}(X_\Sigma(\mathbb{C}),t_i,-t_j)$ denote the fundamental torsor of homotopy classes of paths between the tangential basepoints $t_i$ and $-t_j$. Concretely, an element of this set is represented by a (piecewise) smooth path $\gamma:(0,1)\to \mathbb{C}\setminus \Sigma$ that can be extended to a  path $\overline{\gamma}:[0,1]\to \mathbb{C}$ which is smooth at $0$ and $1$ and satisfies $\overline{\gamma}(0)=\sigma_i$ and $\overline{\gamma}(1)=\sigma_j$, with prescribed velocities $\overline{\gamma}'(0)=t_i$ and $\overline{\gamma}'(1)=t_j$. 
\vspace{0.05in}
\item  For all $0\leq i,j\leq n$, denote by
$${}_i \Pi^{\bullet}_{j}  = \pi_1^{\bullet}(X_{\Sigma}, t_i, -t_j) \qquad \hbox{ where } \quad  \bullet \in \{\mathrm{B}, \varpi , \mathrm{mot} \}$$
the Betti, (canonical) de Rham,  or motivic fundamental torsor of paths from the tangential base point  $t_i$ at  $\sigma_i$,  to $-t_j$ at  $\sigma_j$. The versions ${}_i\Pi_j^{\mathrm{B}}$ and ${}_i\Pi_j^\varpi$ are affine $\mathbb{Q}$-schemes; the version ${}_i\Pi_j^{\mathrm{mot}}$ is an affine scheme in the category $\MT(k)$, which simply means that its affine ring is isomorphic to an algebra ind-object $\mathcal{O}({}_i\Pi_j^{\mathrm{mot}})$ in $\MT(k)$, whose ($\mathrm{B}$, resp. $\varpi$) realisations are $\mathcal{O}({}_i\Pi_j^{\mathrm{B}})$ and $\mathcal{O}({}_i\Pi_j^{\varpi})$. There is a groupoid structure ${}_a \Pi^{\bullet}_b \times {}_b \Pi^{\bullet}_c \To {}_a \Pi^{\bullet}_c \ ,$ for all $a,b,c$ in the set of our tangential basepoints (composition of paths). 
There are maps
$$ \gamma \mapsto \gamma^{\mathrm{B}} \ :  \   \pi_1^{\mathrm{top}}(X_{\Sigma}(\C), t_i, -t_j)  \To {}_i \Pi^{\mathrm{B}}_{j}(\Q)\ $$
which are Zariski-dense and compatible with the groupoid structure. (They identify ${}_i\Pi_j^{\mathrm{B}}$ as the Mal\v{c}ev, or pro-unipotent, completion of $\pi_1^{\mathrm{top}}(X_\Sigma(\mathbb{C}), t_i,-t_j)$.)
\vspace{0.05in}
\item  The scheme ${}_i\Pi_j^\varpi$ does not depend on $i,j$, although the action of the (canonical) motivic Galois group $G_{\MT(k)}^\varpi$ upon it does depend on $i,j$. On $X_{\Sigma}$ we considered  the  logarithmic 1-forms 
$\omega_i$ for $i=0,\ldots,n$ \eqref{omegajdef}. 
Since they have residue $0$ or  $\pm 1$ at points of $\Sigma$, 
 they  generate  the canonical $\Q$-structure (or $\varpi$-structure) on the de Rham realisation of $H^1(X_{\Sigma}) \in \mathrm{Ob} (\MT(k))$, 
which we shall denote simply by $H_{\varpi}^1(X_{\Sigma})$. It is the $\Q$-vector space spanned by the classes $\omega_i$. 
The affine ring of the de Rham canonical torsor of paths  is 
$$\Or({}_i \Pi_{j}^\varpi)  \cong  \bigoplus_{n\geq 0}  H^1_{\varpi}(X_{\Sigma})^{\otimes n}\ .$$
It is isomorphic to the graded tensor coalgebra on $H^1_{\varpi}(X_{\Sigma})$, equipped with the shuffle product $\sha$ and deconcatenation coproduct. 
For any commutative unital $\Q$-algebra $R$, the $R$-points of ${}_i \Pi_{j}^\varpi$
$$ {}_i \Pi_{j}^\varpi (R) \  \subset \   R \langle \langle e_0, \ldots, e_n \rangle \rangle $$
is the set of group-like formal power series  with respect to the continuous coproduct for which the $e_i$ are primitive.  They are formal power series
$$ S   = \sum_{w\in \{e_0,e_1,\ldots, e_n\}^{\times}} S(w) \, w \quad   \in \quad   R \langle \langle e_0, \ldots, e_n \rangle \rangle $$
where the linear extension of the map $w\mapsto S(w)$ is a homomorphism with respect to the shuffle product.  The letters $e_i$, for $1\leq i \leq n$, are dual to  the $\omega_i$. 
\vspace{0.05in}

\vspace{0.05in}
\item  For all $0 \leq i \leq n$, let ${}_01_i \in {}_0\Pi_i^\varpi(\Q)$ denote the canonical  $\varpi$-path. It is defined by the augmentation map $\Or({}_0\Pi_i^\varpi) \rightarrow \Q$ onto the degree zero component (or by quotienting  by the Hodge filtration $F^1$). It is the formal power series $1 \in \Q\langle\langle e_0,\ldots, e_n \rangle \rangle$ consisting only of the empty word. 

\vspace{0.05in}
\item Since the motivic fundamental torsor of paths is the spectrum of an ind-object  in the category $\MT(k)$, there is a canonical  universal comparison isomorphism of schemes \cite[\S 4.1]{brownnotesmot}
$$ \comp^{\mm}_{\mathrm{B}, \varpi} \quad :   \quad {}_i\Pi^{\mathrm{B}}_j \times_{\Q} \Pe^{\mm} \overset{\sim}{\To} {}_i\Pi_j^\varpi \times_{\Q} \Pe^{\mm} $$     
for all $0\leq i,j \leq n$, compatible with the groupoid structure.
Composing $\comp^{\mm}_{\mathrm{B},\varpi}$ with the period homomorphism gives back the canonical comparison isomorphism whose coefficients are regularised interated integrals:
$$\comp_{\mathrm{B}, \varpi} \quad : \quad  {}_i\Pi^{\mathrm{B}}_j \times_{\Q} \C \overset{\sim}{\To} {}_i\Pi_j^\varpi \times_{\Q} \C\ .$$

\end{enumerate}

\section{Generalised associators and their beta quotients}

    We now study the generalised associators $\mathcal{Z}^i$, which are  formal power series in  non-commuting variables $e_0,\ldots,e_n$ whose coefficients are regularised iterated integrals on $X_\Sigma$. We compute their beta quotients, which are formal power series in commuting  variables $s_0,\ldots,s_n$, and use them to define a matrix $FL_\Sigma$ of power series which we prove  to be the matrix of Taylor series of the Lauricella functions $L_\Sigma$ \eqref{IntroLSigma} (Theorem \ref{thm: FL equals L} below, which is Theorem \ref{introthmperandsv} (\emph{i}) from the introduction).

\subsection{Generalised (motivic) associators}

Le us fix an index $1\leq i\leq n$. The generalised associator $\mathcal{Z}^i$, that we will soon define, is a formal power series that records \emph{all} the iterated integrals that can be computed on $X_\Sigma$ on a given (homotopy class of) path $\gamma_i$ between the tangential basepoints $t_0$ and $-t_i$. For this we will need to fix a class $\gamma_i\in \pi_1^{\mathrm{top}}(X_\Sigma(\mathbb{C}),t_0,-t_i)$ as in \S\ref{subsect: NotGeometric} (1) above. We will need to impose an extra condition on $\gamma_i$ that can be stated as follows. Consider the following maps between fundamental torsors:
\begin{equation}\label{eq: maps between fundamental groups}
\pi_1^{\mathrm{top}}(X_\Sigma(\mathbb{C}),t_0,-t_i) \longrightarrow \pi_1^{\mathrm{top}}(\mathbb{C}\setminus \{0,\sigma_i\},t_0,-t_i) \longrightarrow \pi_1^{\mathrm{top}}(\mathbb{C}\setminus \{\sigma_i\}, 0, -t_i)\simeq \mathbb{Z}\ .
\end{equation}
The first map is induced by the inclusion $X_\Sigma(\mathbb{C})\hookrightarrow \mathbb{C}\setminus \{0,\sigma_i\}$. The second map is also induced by the inclusion $\mathbb{C}\setminus \{0,\sigma_i\}\hookrightarrow\mathbb{C}\setminus \{\sigma_i\}$, where the tangential basepoint $t_0$ becomes the usual basepoint $0$.  There is a canonical element  in $\pi_1^{\mathrm{top}}(\mathbb{C}\setminus \{\sigma_i\}, 0, -t_i)$, namely the homotopy class of the straight path $t\mapsto \sigma_i t$, which provides the identification of
the torsor $\pi_1^{\mathrm{top}}(\mathbb{C}\setminus \{\sigma_i\}, 0, -t_i)$ with the fundamental group $\pi_1^{\mathrm{top}}(\mathbb{C}\setminus \{\sigma_i\}, 0)\simeq \mathbb{Z}$.

\begin{defn}\label{defi: gamma i admissible}
The class $\gamma_i\in \pi_1^{\mathrm{top}}(X_\Sigma(\mathbb{C}),t_0,-t_i)$ is said to be \emph{admissible} if its image by the composite \eqref{eq: maps between fundamental groups} is the class of the straight path $t\mapsto \sigma_it$.
\end{defn}

Roughly speaking, it means that $\gamma_i$ ``does not wind around $\sigma_i$''. A representative of an admissible class in $\pi_1^{\mathrm{top}}(X_\Sigma(\mathbb{C}), t_0,-t_i)$ can be constructed as follows: first travel via a small arc from the tangential basepoint $t_0$ to $\sigma_i\varepsilon$ for a small $\varepsilon>0$, and then travel via the straight path $t\mapsto \sigma_i t$ towards the tangential basepoint $-t_i$ (avoiding if necessary the points $\sigma_j$ that are on the line segment between $\sigma_i\varepsilon$ and $\sigma_i$ via small arcs).
We will see in Remark \ref{rem: admissible vanishing} below that the admissibility of $\gamma_i$ is equivalent to the vanishing of the following regularised iterated integral:
$$\int_{\gamma_i}\omega_i=0\ .$$
We fix an admissible class $\gamma_i$ for every $1\leq i\leq n$ for the remainder of this article.

\begin{defn} Define  a formal power series
$$\Zim        \  \in \          {}_0\Pi_i^\varpi (\Pe^{\mm}) \subset  \Pe^{\mm} \langle \langle e_0,\ldots, e_n\rangle\rangle   $$
by 
$\Zim = \comp^{\mm}_{\mathrm{B},\varpi} (\gamma^{\mathrm{B}}_i)$
where $\gamma^{\mathrm{B}}_i\in {}_0\Pi_i^{\mathrm{B}}(\Q)$ is the image of $\gamma_i$ as in \S\ref{subsect: NotGeometric} (2) above. It is called a \emph{generalised motivic associator}.
\end{defn}

Since $\Or({}_0 \Pi_i^\varpi)$ has weights $\geq 0$, 
it follows that $\Zim$ actually lies in $ {}_0\Pi_i (\Pe^{\mm,+})$.  Explicitly,
$$\Zim  = \sum_{w\in \{e_0,\ldots, e_n\}^{\times}} \left[ \Or(\pi_1^{\mathrm{mot}}(X_{\Sigma}, t_0, -t_i)), \gamma_i^{\mathrm{B}}, w\right]^{\mm} w\  $$
where the sum is over all words $w$ in $e_i$, which  are in turn dual to words in the $\omega_i$ \eqref{omegajdef} and hence define an element $w \in   \Or( {}_0\Pi_i^\varpi)$.   The path $\gamma_i^{\mathrm{B}}$ is viewed as an element in $\Or( {}_0\Pi^{\mathrm{B}}_i)^{\vee}$.   

\begin{defn}
The image $\ZZ^i =\mathrm{per} \left( \Zim \right)$  of $\Zim$ under the period homomorphism is called a \emph{generalised associator}.
\end{defn}

Explicitly, it is the group-like formal power series
$$ \ZZ^i= \sum_{w \in \{e_0,\ldots, e_n\}^{\times}}  \left( \int_{\gamma_i} w \right) w  \quad \in \quad \C \langle \langle e_0,\ldots, e_n\rangle \rangle$$
where the sum is over all words $w$ in $\{e_0,\ldots, e_n\}$, and the integral is the regularised iterated integral (from left to right) of the corresponding word in $\{\omega_0,\ldots, \omega_n\}$. 

When we wish to emphasize the dependence on the variables $e_i$, we shall write
 $\Zim(e_0,\ldots, e_n)$  for  $\Zim$,
 and so on.

\begin{example} \label{ExMZV1} Let $\Sigma = \{0,1\}$ and $k= \Q$. Then
$ \ZZ^{1,\mm}= \ZZ^{\mm}$ where
\begin{equation}\label{eq: motivic Drinfeld associator}\ZZ^{\mm} = \sum_{w \in \{e_0,e_1\}^{\times} } \zetam(w) w  \quad \in \quad  \Pe^{\mm}_{\MT(\Q)} \langle \langle e_0,e_1\rangle \rangle
\end{equation}
is the motivic Drinfeld associator.
Drinfeld's associator is $\ZZ = \mathrm{per}\left( \ZZ^{\mm} \right) \in \R \langle \langle e_0, e_1 \rangle \rangle$. 
\end{example}

 \subsection{Beta quotients}
 
Let $R$ be any commutative unital $\Q$-algebra.

\begin{defn}
Consider the abelianisation map 
$$ F\mapsto \overline{F} \quad :  \quad    R\langle \langle e_0,\ldots, e_n \rangle \rangle  \To R [[s_0,\ldots, s_n]]$$
which sends $e_i$ to $s_i$,  where the $s_i$ are commuting variables. 
We shall call $\overline{F}$ the \emph{abelianisation} of $F$.
\end{defn} 

Computing the abelianisation of a group-like series is easy, as the next lemma shows.

\begin{lem}\label{lem: abelianisationofF}
For a group-like series $F$, we have: 
$$\overline{F} = \exp \left(  F(e_0) s_0 + \cdots +F(e_n) s_n \right)= \prod_{k=0}^n \exp (F(e_k) s_k)\ .  $$
\end{lem}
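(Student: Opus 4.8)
The plan is to use the defining property of a group-like element together with the compatibility of abelianisation with the coproduct. First I would observe that abelianisation $F\mapsto\overline{F}$ is a continuous algebra homomorphism from $R\langle\langle e_0,\ldots,e_n\rangle\rangle$ (with the shuffle product) to $R[[s_0,\ldots,s_n]]$ (with the ordinary product), and that it intertwines the deconcatenation coproduct on the source with the coproduct on $R[[s_0,\ldots,s_n]]$ for which each $s_i$ is primitive. Hence $\overline{F}$ is a group-like element of the complete Hopf algebra $R[[s_0,\ldots,s_n]]$, i.e.\ $\Delta\overline{F}=\overline{F}\otimes\overline{F}$ and its constant term is $1$.

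Next I would invoke the standard fact that in a complete Hopf algebra over a $\Q$-algebra, the exponential map gives a bijection between primitive elements and group-like elements; equivalently, a group-like series with constant term $1$ is the exponential of a primitive series. In $R[[s_0,\ldots,s_n]]$ the primitive elements are exactly the linear forms $\sum_{k=0}^n c_k s_k$ with $c_k\in R$. Therefore $\overline{F}=\exp\!\left(\sum_{k=0}^n c_k s_k\right)$ for some coefficients $c_k\in R$, and the product decomposition $\prod_{k=0}^n\exp(c_k s_k)$ follows immediately since the $s_k$ commute.

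It then remains to identify $c_k$ with $F(e_k)$. For this I would extract the coefficient of $s_k$ (equivalently, the linear part) on both sides: on the left-hand side the coefficient of $s_k$ in $\overline{F}$ is, by definition of abelianisation, the coefficient $F(e_k)$ of the length-one word $e_k$ in $F$; on the right-hand side the linear term of $\exp\!\left(\sum_k c_k s_k\right)$ is $\sum_k c_k s_k$, whose $s_k$-coefficient is $c_k$. Hence $c_k=F(e_k)$ for each $k$, which gives the claimed formula.

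The only mildly delicate point — and the main thing to be careful about rather than a genuine obstacle — is the topological/completion bookkeeping: one must work in the appropriate graded (or filtered) completion so that the exponential and logarithm series converge and the ``group-like $=$ exponential of primitive'' statement applies. Since the generalised associators $\mathcal{Z}^i$ and their images live in power series rings that are complete for the word-length (respectively monomial-degree) filtration, and abelianisation is filtered, this causes no difficulty; I would simply cite the group-like/primitive correspondence for complete Hopf algebras and record that abelianisation preserves the relevant filtrations.
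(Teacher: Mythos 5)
Your overall route is sound and in fact very close to the paper's: both arguments reduce to the statement that a group-like element is the exponential of a primitive. The paper applies it upstairs, writing $F=\exp(\log F)$ with $\log F$ a Lie series $\sum_k F(e_k)e_k + \hbox{commutators}$ and noting that abelianisation kills the commutators; you apply it downstairs, first pushing $F$ into $R[[s_0,\ldots,s_n]]$ and then using that the primitives there are the linear forms. Your steps (b)--(d) — group-like implies exponential of a primitive in a complete Hopf algebra over a $\Q$-algebra, primitives of $R[[s_0,\ldots,s_n]]$ are $\sum_k c_ks_k$, and identification of $c_k$ with $F(e_k)$ by comparing linear terms — are all correct.

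The one genuine problem is the justification of step (a): the structural facts you cite are false as stated. The abelianisation $e_i\mapsto s_i$ is \emph{not} an algebra homomorphism for the shuffle product (e.g.\ $\overline{e_0\sha e_0}=\overline{2e_0e_0}=2s_0^2\neq s_0^2=\overline{e_0}\cdot\overline{e_0}$), and it does \emph{not} intertwine the deconcatenation coproduct with the coproduct on $R[[s_0,\ldots,s_n]]$ for which the $s_i$ are primitive (apply both to $e_0e_1$: the term $s_1\otimes s_0$ appears on one side only). The shuffle/deconcatenation structure is the graded dual of the one relevant here. What is true, and what your argument needs, is that abelianisation is a morphism of complete Hopf algebras when the source carries the \emph{concatenation} product and the coproduct for which the letters $e_i$ are primitive — which is precisely the coproduct with respect to which $F$ is assumed group-like in the paper (equivalently, $w\mapsto F(w)$ is a shuffle homomorphism). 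With that replacement the intertwining is immediate on generators and extends multiplicatively, $\overline{F}$ is group-like in $R[[s_0,\ldots,s_n]]$, and the rest of your proof goes through unchanged. So the gap is a misidentification of the Hopf structure rather than a flaw in the strategy, but as written the key step rests on false assertions and must be corrected.
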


\begin{proof}
Since $F$ is group-like, 
$F= \exp (\log F))$, where $\log(F)$ is  a Lie series of the form 
$$\log (F) = \sum_{k=0}^n F(e_k) e_k   +  \hbox{commutators}$$
The exponential and logarithm are  taken with respect to the concatenation product, and  commute with the abelianisation map.  Since abelianisation sends all commutators to zero we have $\overline{\log F}=\sum_{k=0}^nF(e_k)s_k$ and the result follows. 
\end{proof}

\begin{defn}
For any series $F \in R\langle \langle e_0,\ldots, e_n \rangle \rangle $ let us write 
\begin{equation} \label{Fsubscripts} F= F_{\varnothing} +  F_0 e_0 + \cdots + F_n e_n
\end{equation}
where $F_{\varnothing} \in R$ denotes the coefficient of the empty word (constant coefficient) and $F_{j}$ is obtained from $F$ by deleting the last letter from all words ending in $e_j$. We call the abelianisation $\overline{F_j}$ the $j^{th}$ \emph{beta-quotient} of $F$.
\end{defn} 

\begin{rem} The $\overline{F_j}$ are very closely related to the image of $F$ in what is known as the metabelian quotient.
\end{rem}

For any two series $A, B \in R\langle \langle e_0,\ldots, e_n \rangle \rangle $  their product satisfies
\begin{equation} \label{SubDerivation}  \left( A B \right)_j =  A  \left( B_j\right)  + A_j  B_{\varnothing}\ .
\end{equation} 

We verify that,  if $F$ is invertible, then 
\begin{equation} \label{SubInversion}
\overline{(F^{-1})_j} =  - \frac{1}{F_{\varnothing}} \frac{ \overline{F_j}  }{ \overline{F} } 
\end{equation}

This follows from applying \eqref{SubDerivation} to  $F F^{-1} = 1$ which  gives $F (F^{-1})_j + F_j F^{-1}_{\varnothing}=0$, and then applying the abelianisation map. 
All series $F$ that we shall consider are group-like (for the continuous coproduct on formal power series for which all letters $e_k$ are primitive) and therefore  have constant term $F_{\varnothing}= 1$.  

Recall that $F(w)$ denotes (a linear combination of) coefficients of words $w$ in $F$. 

 \begin{lem} \label{lem: abelianisationofFbeta}  For any series $F\in R\langle\langle e_0,\ldots,e_n\rangle\rangle$ we have:
$$ \overline{F_j} = \sum_{m_0, \ldots, m_n \geq 0} F\left( \big[e_0^{\sha m_0} \sha  \cdots \sha e_n^{\sha m_n}  \big]e_j\right) \frac{s_0^{m_0}}{m_0!} \cdots \frac{s_n^{m_n}}{m_n!} \ ,  $$ 
where  $[ w ] e_j$ denotes the right-concatenation of $e_j$ to any linear combination $w$ of words in the letters $e_0,\ldots, e_n$.   The previous expression can also be written 
$$\overline{F_j} = \sum_{m_0,\ldots, m_n \geq 0}   F \left( \left[e_{0}^{m_0} \sha \cdots \sha  e_n^{m_n} \right] e_j \right) s_0^{m_0} \cdots s_n^{m_n}   \ .$$
\end{lem}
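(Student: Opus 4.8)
The plan is to unwind the definition of the beta-quotient $\overline{F_j}$ directly. By definition, $F_j$ is the series obtained from $F$ by taking all words ending in $e_j$ and deleting the final letter $e_j$; thus the coefficient of a word $u$ in $F_j$ is exactly the coefficient $F(ue_j)$ of $ue_j$ in $F$. Applying the abelianisation map $e_k\mapsto s_k$ to $F_j$ therefore gives
\[
\overline{F_j} \;=\; \sum_{u}\, F(ue_j)\,\overline{u}\,,
\]
where the sum is over all words $u$ in $e_0,\ldots,e_n$, and $\overline{u}=s_0^{m_0}\cdots s_n^{m_n}$ if $u$ has $m_k$ occurrences of each $e_k$. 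The key point is now to reorganise this sum by collecting together all words $u$ with the same multidegree $(m_0,\ldots,m_n)$. For a fixed multidegree, the words $u$ contributing the monomial $s_0^{m_0}\cdots s_n^{m_n}$ are precisely those appearing (each with multiplicity one) in the expansion of the shuffle product $e_0^{m_0}\sha\cdots\sha e_n^{m_n}$ — here I use the standard fact that the shuffle product of the commuting-letter-powers enumerates each word of the prescribed multidegree exactly once, as a consequence of the shuffle being the "sum over interleavings" and the alphabet letters being distinct. Hence, using linearity of $F$ in its argument,
\[
\sum_{\substack{u\ \text{of multidegree}\\(m_0,\ldots,m_n)}} F(ue_j) \;=\; F\!\left(\bigl[e_0^{m_0}\sha\cdots\sha e_n^{m_n}\bigr]e_j\right),
\]
which yields the second displayed formula in the statement. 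The first displayed formula then follows from the identity $e_k^{\sha m_k} = m_k!\, e_k^{m_k}$ in the shuffle algebra (the shuffle of $m_k$ copies of a single letter $e_k$ equals $m_k!$ times the concatenation power $e_k^{m_k}$), and multilinearity of $F$ together with the compatibility of $\sha$ with scalars: distributing the factors $m_0!\cdots m_n!$ converts $s_0^{m_0}\cdots s_n^{m_n}$ into $\tfrac{s_0^{m_0}}{m_0!}\cdots\tfrac{s_n^{m_n}}{m_n!}$ against the shuffle $e_0^{\sha m_0}\sha\cdots\sha e_n^{\sha m_n}$.

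Concretely the steps are: (1) identify the coefficient of $u$ in $F_j$ with $F(ue_j)$ straight from Definition \eqref{Fsubscripts}; (2) apply abelianisation and group terms by multidegree; (3) prove the combinatorial identity that $e_0^{m_0}\sha\cdots\sha e_n^{m_n}$ is the sum of all distinct words of that multidegree, each with coefficient one — this is where a short induction on $\sum m_k$ or an explicit bijection with interleavings is needed; (4) invoke $e_k^{\sha m_k}=m_k!\,e_k^{m_k}$ to pass between the two normalisations. I expect step (3) to be the only genuine content: it is the elementary but slightly fiddly assertion about how iterated shuffles of single letters enumerate words. Everything else is formal bookkeeping with the definitions already set up in the excerpt, and no regularisation or convergence issues arise since these are purely algebraic identities of formal power series over $R$.
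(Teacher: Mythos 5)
Your proposal is correct and follows essentially the same route as the paper: identify the coefficient of a word $w$ in $F_j$ with $F(we_j)$, group the abelianised sum by multidegree, and use the identity $\sum_{\overline{w}=s_0^{m_0}\cdots s_n^{m_n}} w = \frac{e_0^{\sha m_0}}{m_0!}\sha\cdots\sha\frac{e_n^{\sha m_n}}{m_n!}= e_0^{m_0}\sha\cdots\sha e_n^{m_n}$, which is exactly the combinatorial fact the paper substitutes in. The only difference is that you spell out the enumeration argument (and the identity $e_k^{\sha m_k}=m_k!\,e_k^{m_k}$) that the paper states without comment.
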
 

\begin{proof}
Notice that 
$$\overline{F_j} = \sum_w \overline{w}  \, F(w e_j) = \sum_{m_0,\ldots, m_n\geq 0 }   s_0^{m_0} \cdots s_n^{m_n} \left( \sum_{\overline{w} =  s_0^{m_0} \cdots s_n^{m_n} } F(w e_j)\right)$$
and substitute in the expression:
$$\left( \sum_{\overline{w} = s^{m_0}_{0} \cdots s_{n}^{m_n}} w \right)  = \frac{e^{\sha m_0}_{0}}{m_0!} \sha \cdots \sha \frac{e^{\sha m_n}_{n}}{m_n!} =  e^{m_0}_{0} \sha \cdots \sha e^{m_n}_{n} \ . $$
\end{proof} 

\subsection{Beta quotients of generalised associators}

    \begin{defn}\label{defi: FL}
    We define the following $n\times n$ matrices of power series:
    $$(FL_\Sigma^\mm)_{ij} = \mathbf{1}_{i=j}\overline{\mathcal{Z}^{i,\mm}} - s_j\overline{\mathcal{Z}^{i,\mm}_j} \;\;\; \in \; \Pe^\mm[[s_0,\ldots,s_n]] .$$
    $$(FL_\Sigma)_{ij} = \mathbf{1}_{i=j}\overline{\mathcal{Z}^{i}} - s_j\overline{\mathcal{Z}^{i}_j} \;\;\; \in \;\mathbb{C}[[s_0,\ldots,s_n]] .$$
    \end{defn}

    We clearly have $\mathrm{per}(FL_\Sigma^\mm)=FL_\Sigma$, where $\mathrm{per}:\Pe^\mm\to \mathbb{C}$ denotes the period map of $\MT(k)$ applied coefficientwise. Our next goal (Theorem \ref{thm: FL equals L} below) is to prove that $FL_\Sigma$ equals the matrix of Taylor series of the Lauricella functions $L_\Sigma$ \eqref{IntroLSigma}. (This justifies the notation $FL_\Sigma$ where the letter $F$ stands for ``formal''.) We now compute the abelianisation and beta quotients of the generalised associators $\mathcal{Z}^i$.\medskip

    We first need a lemma that clarifies the role of the choice of tangential basepoints. For a point $x=\gamma_i(t)$, for $t\in (0,1)$, we let $\gamma_i^x$ denote the restriction of $\gamma_i$ to the interval $(0,t)$. By abuse of notation, we manipulate functions and forms on $\mathbb{C}\setminus \Sigma$ when we actually mean their pullbacks to $(0,1)$ via the path $\gamma_i$.

    \begin{lem}\label{lem: iterated integrals length one}
    For $0\leq k\leq n$ we have the equalities, for $x=\gamma_i(t)$ with $t\in (0,1)$:
    $$\int_{\gamma_i^x}\omega_k = \begin{cases} \log(x) & \mbox{if } k=0\ ;  \\ \log(1-x\sigma_k^{-1}) & \mbox{if } 1\leq k\leq n\ . \end{cases}$$
    Furthermore, for $x=\sigma_i$, i.e, for $t=1$, we have:
    $$\int_{\gamma_i}\omega_k = \begin{cases} \log(\sigma_i) & \mbox{if } k=0\ ;  \\ \log(1-\sigma_i\sigma_k^{-1}) & \mbox{if } 1\leq k\neq i\leq n\ ; \\ 0 & \mbox{if } k=i\ .  \end{cases}$$
    \end{lem}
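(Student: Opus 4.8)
The plan is to compute these regularised iterated integrals of length one directly from the definition, using the standard recipe for regularisation with respect to tangential basepoints. First I would recall that a length-one iterated integral $\int_{\gamma}\omega_k$ along a path from a tangential basepoint is, by definition, the regularised value of $\int \omega_k$ pulled back along $\gamma_i$; concretely, writing $x=\gamma_i(t)$, the primitive of $\omega_k=d\log(x-\sigma_k)$ is $\log(x-\sigma_k)$ (for $k\ge 1$; for $k=0$ it is $\log(x)$), normalised so that the appropriate branch is fixed by the conventions of Remark \ref{rem: convention branches}. Since $\omega_k$ has no pole at the source point $\sigma_0=0$ when $k\ne 0$, there is nothing to regularise at $t\to 0$ and the integral is simply $\log(1-x\sigma_k^{-1})$ evaluated along the path, with the branch normalised to vanish as $x\to 0$; this gives the first displayed formula. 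For $k=0$, the form $\omega_0=dx/x$ has a simple pole at the source, and regularisation with respect to the tangent vector $t_0$ of length $1$ at $0$ replaces the divergent $\log(x)|_0$ by $0$, leaving $\int_{\gamma_i^x}\omega_0=\log(x)$ with the branch fixed near $x=\sigma_i$ by the chosen determination of $\log(\sigma_i)$.

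For the second set of equalities I would specialise $t=1$, i.e. $x=\sigma_i$. For $k=0$ this gives $\log(\sigma_i)$ directly. For $1\le k\neq i\le n$, the form $\omega_k$ has no pole at the endpoint $\sigma_i$ either, so no regularisation is needed at $t\to 1$ and we get $\log(1-\sigma_i\sigma_k^{-1})$. The only subtle case is $k=i$: here $\omega_i=d\log(x-\sigma_i)$ has a pole at the endpoint $\sigma_i$, and the value must be regularised with respect to the tangent vector $-t_i$ at $\sigma_i$, which has length $\sigma_i$. The key point is precisely the admissibility hypothesis on $\gamma_i$ (Definition \ref{defi: gamma i admissible}): I would argue that the admissibility condition — that $\gamma_i$ maps to the straight path $t\mapsto\sigma_i t$ under the composite \eqref{eq: maps between fundamental groups} — forces the regularised integral $\int_{\gamma_i}\omega_i$ to vanish. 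Indeed for the straight path one computes $\int (1-x\sigma_i^{-1})$'s logarithmic derivative and sees that the regularisation against a tangent vector of length $\sigma_i$ at $\sigma_i$ cancels exactly; more structurally, admissibility was defined so that the image of $\gamma_i$ in $\pi_1^{\mathrm{top}}(\mathbb{C}\setminus\{\sigma_i\},0,-t_i)$ is trivial (the canonical straight path), hence the induced map on the abelianised fundamental group sends $\gamma_i$ to $0$, and $\int_{\gamma_i}\omega_i$ is precisely the pairing of this abelianised class with $\omega_i$. This is the substance of the forward reference in Remark \ref{rem: admissible vanishing}, and I would simply make it explicit here.

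The main obstacle, and the part that needs care rather than routine computation, is the last case $k=i$: pinning down the correct normalisation of the regularised iterated integral against the tangential basepoint $-t_i$ of length $\sigma_i$ (not length $1$), and checking that the admissibility of $\gamma_i$ is exactly the condition making it vanish. One has to be careful that the length of the tangent vector is $\sigma_i$ rather than $1$ precisely so that the $\log(\sigma_i)$-type terms coming from the branch choice in Remark \ref{rem: convention branches} are absorbed correctly; a naive computation with a unit tangent vector would produce a spurious $\log(\sigma_i)$ term. Once this normalisation is fixed, the vanishing follows either by the direct computation on the straight path (and homotopy invariance of the regularised integral within the class $\gamma_i$) or by the functoriality argument through the maps \eqref{eq: maps between fundamental groups} sketched above. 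The remaining cases are genuinely immediate: they are just evaluation of an honest (non-divergent) primitive along the path, with branches fixed by the stated conventions.
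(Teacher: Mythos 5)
Your proposal is correct and follows essentially the same route as the paper: the cases $k\neq 0$ and $k\neq i$ by direct evaluation of a primitive with the branch conventions of Remark \ref{rem: convention branches}, the $k=0$ case by regularisation against the unit tangent vector at $0$ (using $\gamma_i(\varepsilon)=\varepsilon+O(\varepsilon^2)$), and the $k=i$ case by noting that $\omega_i$ has poles only at $\sigma_i$ and $\infty$, so the integral depends only on the image of $\gamma_i$ in $\pi_1^{\mathrm{top}}(\mathbb{C}\setminus\{\sigma_i\},0,-t_i)$, which by admissibility is the straight path $t\mapsto\sigma_i t$, whose regularised integral vanishes. One small caution: your ``more structural'' variant via the abelianised fundamental group is not by itself complete, since a torsor of paths has no canonical zero and the pairing with $\omega_i$ is only pinned down once the regularised integral along the reference straight path is shown to vanish (here the length-$\sigma_i$ tangent vector matters, exactly as you note); since you also supply that direct computation, the argument is sound.
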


    \begin{proof}
    For the first claim, the $1\leq k\leq n$ case is clear since the differential form $\omega_j$ does not have a singularity at $0$. For $j=0$, we compute, by definition
    $$\int_{\gamma_i^x}\omega_0 = \mathrm{Reg}_{\varepsilon \to 0} \int_\varepsilon^t\gamma_i^*\omega_0=\mathrm{Reg}_{\varepsilon\to 0}(\log(x)-\log(\gamma_i(\varepsilon))) \ ,$$
    where applying $\mathrm{Reg}_{\varepsilon\to 0}$ amounts to formally setting $\log(\varepsilon)=0$ and $\varepsilon=0$ in the logarithmic asymptotic development \cite[Definition 3.237]{burgosfresanbook}. Since $\gamma_i$ extends to a smooth function on $[0,1]$ whose derivative at $0$ is $1$, we have $\gamma_i(\varepsilon)=\varepsilon + O(\varepsilon^2)$ and thus $\log(\gamma_i(\varepsilon))=\log(\varepsilon)+O(\varepsilon)$. We thus have $\mathrm{Reg}_{\varepsilon \to 0}(\log(\gamma_i(\varepsilon)))=0$ and the claim follows. 

    For the second claim, only the $k=i$ case requires a comment since in the other cases the form $\omega_j$ does not have a singularity at $\sigma_i$ and we can simply pass to the limit $x\to \sigma_i$ in the first statement. We will use the admissibility condition (Definition \ref{defi: gamma i admissible}). Since $\omega_i$ only has poles at $\sigma_i$ and $\infty$, its iterated integral along $\gamma_i$ only depends on the image of $\gamma_i$ in $\pi_1^{\mathrm{top}}(\mathbb{C}\setminus \{\sigma_i\}, 0, -t_i)$. By the admissibility condition this image is the class of $\widetilde{\gamma_i}(t)=\sigma_it$ and we can compute:
    $$\int_{\gamma_i}\omega_i = \int_{\widetilde{\gamma_i}}\omega_i= \mathrm{Reg}_{\varepsilon \to 0} \int_{\varepsilon}^{1-\varepsilon}\widetilde{\gamma_i}^*\omega_i = \mathrm{Reg}_{\varepsilon\to 0} \int_{\varepsilon}^{1-\varepsilon} \frac{dt}{t-1} = \mathrm{Reg}_{\varepsilon \to 0} \log\left(\frac{\varepsilon}{1-\varepsilon}\right) = 0\ .$$
    The claim follows.
    \end{proof}
    
    \begin{rem}\label{rem: admissible vanishing}
    From the proof of Lemma \ref{lem: iterated integrals length one} one sees that the admissibility condition on $\gamma_i$ (Definition \ref{defi: gamma i admissible}) is equivalent to the vanishing of the regularised iterated integral $\int_{\gamma_i}\omega_i$. In general that iterated integral could be any integer multiple of $2\pi i$.
    \end{rem}

    \begin{prop} \label{prop: periodAb}  The abelianisation of the generalised associator $\mathcal{Z}^i$ is:
    \begin{equation} \label{Ziperiod} \overline{\ZZ^i}= \sigma_i^{s_0}   \prod_{k\neq i }  (1 - \sigma_i\sigma_k^{-1})^{s_k}  \ . 
    \end{equation}
    For every $0\leq j\leq n$, the $j$-th beta quotient of $\mathcal{Z}^i$ is:
    \begin{equation}  \label{ZZijperiod} \overline{\ZZ^i_j} = \int_{\gamma_i}  x^{s_0} \prod _{k=1}^n   \left(1- x\, \sigma^{-1}_k\right)^{s_k}    \frac{dx}{x-\sigma_j}    \  .
    \end{equation} 
    These expressions are formal power series in the $s_i$ obtained by expanding the exponentials as power series and interpreting the various  logarithms  which appear  as coefficients as regularised iterated integrals via Lemma \ref{lem: iterated integrals length one}.
    \end{prop}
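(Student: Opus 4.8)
The plan is to read both identities straight off the combinatorial Lemmas \ref{lem: abelianisationofF} and \ref{lem: abelianisationofFbeta}, which express the abelianisation and the beta quotients of a group-like series in terms of its coefficients on short words, together with Lemma \ref{lem: iterated integrals length one}, which evaluates exactly those coefficients for $\ZZ^i$. The only extra ingredient is the standard multiplicative behaviour of (regularised) iterated integrals along $\gamma_i$: that $w\mapsto \int_{\gamma_i}w$ is a shuffle homomorphism (recalled in \S\ref{sect: Notations}) and that appending a final letter corresponds to one further integration against the relevant form.

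For \eqref{Ziperiod}: the series $\ZZ^i$ is group-like, being a $\C$-point of ${}_0\Pi_i^\varpi$, so Lemma \ref{lem: abelianisationofF} gives $\overline{\ZZ^i}=\prod_{k=0}^n\exp\!\big(\ZZ^i(e_k)\,s_k\big)$ with $\ZZ^i(e_k)=\int_{\gamma_i}\omega_k$. By the $x=\sigma_i$ part of Lemma \ref{lem: iterated integrals length one} these length-one integrals equal $\log\sigma_i$ for $k=0$, $\log(1-\sigma_i\sigma_k^{-1})$ for $1\le k\ne i\le n$, and $0$ for $k=i$ — the last vanishing being precisely the admissibility of $\gamma_i$ (Remark \ref{rem: admissible vanishing}), which is what makes the otherwise singular $k=i$ factor drop out. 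Regrouping the exponentials yields \eqref{Ziperiod}.

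For \eqref{ZZijperiod}: fix $j$ and apply the second form of Lemma \ref{lem: abelianisationofFbeta} to get
\[
\overline{\ZZ^i_j}=\sum_{m_0,\ldots,m_n\ge 0}\ZZ^i\!\Big(\big[e_0^{m_0}\sha\cdots\sha e_n^{m_n}\big]e_j\Big)\,s_0^{m_0}\cdots s_n^{m_n}.
\]
Write $g_k(x)=\int_{\gamma_i^x}\omega_k$ for the length-one regularised iterated integral along the initial sub-path $\gamma_i^x$, which by Lemma \ref{lem: iterated integrals length one} equals $\log x$ for $k=0$ and $\log(1-x\sigma_k^{-1})$ for $1\le k\le n$. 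Since $w\mapsto\int_{\gamma_i^x}w$ is a shuffle homomorphism and $\int_{\gamma_i^x}e_k^{m_k}=\tfrac{1}{m_k!}g_k(x)^{m_k}$, the word-combination $e_0^{m_0}\sha\cdots\sha e_n^{m_n}$ has iterated-integral function $x\mapsto\prod_k\tfrac{1}{m_k!}g_k(x)^{m_k}$ along $\gamma_i^x$; appending the final letter $e_j$ contributes one more integration against $\omega_j=dx/(x-\sigma_j)$, so
\[
\ZZ^i\!\Big(\big[e_0^{m_0}\sha\cdots\sha e_n^{m_n}\big]e_j\Big)=\int_{\gamma_i}\Big(\prod_{k=0}^n\frac{g_k(x)^{m_k}}{m_k!}\Big)\frac{dx}{x-\sigma_j}.
\]
Summing over all $(m_0,\ldots,m_n)$ — a coefficientwise-finite, purely formal operation once one reads everything as a power series in the $s_k$, as in the statement — converts the product into $\prod_k e^{s_k g_k(x)}=x^{s_0}\prod_{k=1}^n(1-x\sigma_k^{-1})^{s_k}$, which is \eqref{ZZijperiod}.

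The computation is routine once the regularisation machinery is trusted; the only genuine point of care — and where the ``hard'' analysis actually lives — is that the shuffle relation and the ``last integration'' recursion must be invoked in their regularised forms at the tangential basepoints $t_0$ and $-t_i$, and that the integrand of \eqref{ZZijperiod}, whose coefficients blow up logarithmically as $x\to\sigma_i$, is to be integrated \emph{with} that regularisation — this is exactly what absorbs the singularity and keeps \eqref{ZZijperiod} consistent with \eqref{Ziperiod}. Happily, the relevant special instances (the values of the $g_k$ and $\int_{\gamma_i}\omega_i=0$) are already established in Lemma \ref{lem: iterated integrals length one} and Remark \ref{rem: admissible vanishing}, so no new difficulty arises. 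An equivalent and slightly slicker route is to work with the variable-endpoint associator $\ZZ^{i,x}$: it satisfies the KZ equation $d\ZZ^{i,x}=\ZZ^{i,x}\cdot\sum_k\omega_k e_k$ with regularised initial value $1$ at $t_0$; abelianising gives $\overline{\ZZ^{i,x}}=\exp\!\big(\sum_k s_k g_k(x)\big)$, and extracting the part with a trailing $e_j$ gives $\overline{\ZZ^{i,x}_j}=\int_{t_0}^x\overline{\ZZ^{i,t}}\,\omega_j$; specialising to $x=\sigma_i$ (and using admissibility for the $\omega_i$ coefficient) then recovers both \eqref{Ziperiod} and \eqref{ZZijperiod} at once.
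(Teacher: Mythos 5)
Your proposal is correct and follows essentially the same route as the paper's proof: the abelianisation is read off from Lemma \ref{lem: abelianisationofF} together with the length-one evaluations (and admissibility) in Lemma \ref{lem: iterated integrals length one}, and the beta quotient is obtained from Lemma \ref{lem: abelianisationofFbeta} via the identity expressing the coefficient of $[e_0^{\sha m_0}\sha\cdots\sha e_n^{\sha m_n}]e_j$ as $\int_{\gamma_i}\big(\int_{\gamma_i^x}\omega_0\big)^{m_0}\cdots\big(\int_{\gamma_i^x}\omega_n\big)^{m_n}\omega_j$, then resummed into the exponential integrand. The alternative KZ-equation remark at the end is a harmless reformulation, not a different argument.
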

    
    \begin{proof}
    Since $\mathcal{Z}^i$ is a group-like series, the first claim follows from Lemma \ref{lem: abelianisationofF} and the computations of $\mathcal{Z}^i(e_j)$ performed in the second part of Lemma \ref{lem: iterated integrals length one}. The second claim follows from Lemma \ref{lem: abelianisationofFbeta} and the equality
    $$\int_{\gamma_i}[e_0^{\sha m_0}\sha\cdots\sha e_n^{\sha m_n}]e_j = \int_{\gamma_i}\Big(\int_{\gamma_i^x}\omega_0\Big)^{m_0}\cdots \Big(\int_{\gamma_i^x}\omega_n\Big)^{m_n} \omega_j\ ,$$
    which by the first part of Lemma \ref{lem: iterated integrals length one} equals
    $$\int_{\gamma_i}\log^{m_0}(x)\prod_{k=1}^n\log^{m_k}(1-x\sigma_k^{-1})\frac{dx}{x-\sigma_j}\ \cdot$$
    The claim follows.
    \end{proof}
 
    Equation   \eqref{ZZijperiod} in the case $\Sigma=\{0,1\}$ reduces to  Drinfeld's computation of the metabelian quotient of his associator in terms of the  usual beta function \cite{Drinfeld}. See also \cite{EnriquezGamma, LiGamma} for further developments.   
 
    The following lemma will be useful. 
    \begin{lem} \label{lem: integralzsigmavanishes}  For all $m\geq 0$, 
    \begin{equation}  \label{integralzsigmavanishes} \int_{\gamma_i}  \log^{m}(1-x \sigma_i^{-1}) \frac{dx}{x-\sigma_i} = 0\ . 
    \end{equation}
    \end{lem}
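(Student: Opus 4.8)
The plan is to recognize the integral in \eqref{integralzsigmavanishes} as the regularised iterated integral of a word built entirely from the single letter $e_i$, and then to invoke the admissibility of $\gamma_i$. Concretely, by the first part of Lemma \ref{lem: iterated integrals length one} we have $\int_{\gamma_i^x}\omega_i = \log(1-x\sigma_i^{-1})$, so that
$$\int_{\gamma_i}\log^m(1-x\sigma_i^{-1})\,\frac{dx}{x-\sigma_i} = \int_{\gamma_i}\Big(\int_{\gamma_i^x}\omega_i\Big)^m\omega_i = \int_{\gamma_i} e_i^{\sha m}\sha e_i\,,$$
which up to a combinatorial factor is $\int_{\gamma_i} e_i^{\sha(m+1)}$, i.e. $\tfrac{1}{m+1}\big(\int_{\gamma_i}\omega_i\big)^{m+1}$ by the shuffle property of regularised iterated integrals. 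By the second part of Lemma \ref{lem: iterated integrals length one} (equivalently Remark \ref{rem: admissible vanishing}), admissibility of $\gamma_i$ gives $\int_{\gamma_i}\omega_i=0$, and the claim follows immediately for all $m\geq 0$.

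Alternatively, and perhaps more transparently, one can argue directly as in the proof of Lemma \ref{lem: iterated integrals length one}: the integrand $\log^m(1-x\sigma_i^{-1})\,\tfrac{dx}{x-\sigma_i}$ involves only the form $\omega_i$ and functions of it, so its regularised iterated integral along $\gamma_i$ depends only on the image of $\gamma_i$ in $\pi_1^{\mathrm{top}}(\mathbb{C}\setminus\{\sigma_i\},0,-t_i)$. By the admissibility condition this image is the class of the straight path $\widetilde{\gamma_i}(t)=\sigma_i t$, along which, after the change of variables $u=1-t$ (so that $1-x\sigma_i^{-1}=u$ and $\tfrac{dx}{x-\sigma_i}=-\tfrac{du}{u}$), the integral becomes
$$\mathrm{Reg}_{\varepsilon\to 0}\int_{\varepsilon}^{1-\varepsilon}(-1)^{m+1}\log^m(u)\,\frac{du}{u} = \mathrm{Reg}_{\varepsilon\to 0}\,\frac{(-1)^{m+1}}{m+1}\Big(\log^{m+1}(1-\varepsilon)-\log^{m+1}(\varepsilon)\Big) = 0\,,$$
since regularisation sets $\log(\varepsilon)=0$ and $\log(1-\varepsilon)\to 0$.

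I expect no real obstacle here: the only subtlety is bookkeeping the shuffle-regularisation conventions so that the reduction to a power of $\int_{\gamma_i}\omega_i$ (or the change-of-variables computation on the straight path) is legitimate, but this is exactly the content already established in Lemma \ref{lem: iterated integrals length one} and Remark \ref{rem: admissible vanishing}. I would present the first, shuffle-based argument as the main proof, since it makes the conceptual point — the left-hand side of \eqref{integralzsigmavanishes} is a polynomial in the single iterated integral $\int_{\gamma_i}\omega_i$, which vanishes by admissibility — and mention the explicit computation on $\widetilde{\gamma_i}$ as a remark or parenthetical check.
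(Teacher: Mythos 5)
Your main, shuffle-based argument is exactly the paper's proof: the integral is proportional to the $(m+1)$-fold regularised iterated integral of $\omega_i$ along $\gamma_i$, hence by the shuffle product formula to $\big(\int_{\gamma_i}\omega_i\big)^{m+1}$, which vanishes by Lemma \ref{lem: iterated integrals length one} (admissibility). Your alternative direct computation on the straight path is also fine, up to harmless sign bookkeeping (with $u=1-t$ one has $\frac{dx}{x-\sigma_i}=\frac{du}{u}$ with reversed limits, and no factor $(-1)^m$ in front of $\log^m(u)$), since both boundary terms regularise to zero in any case.
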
 
    \begin{proof}
    The integral is proportional to the $(m+1)$-fold iterated integral of $\frac{dx}{x-\sigma_i}$ along $\gamma_i$, which,  by the shuffle product formula,  is in turn proportional to the $(m+1)$th power of  the integral  of $\frac{dx}{x-\sigma_i}$ along $\gamma_i$, which vanishes by Lemma \ref{lem: iterated integrals length one}. 
    \end{proof} 
 
\subsection{Comparing \texorpdfstring{$L_\Sigma$}{LSigma} and \texorpdfstring{$FL_\Sigma$}{FLSigma}}
    
    Up until now we have considered two types of integrals: convergent line integrals of smooth $1$-forms along a smooth (locally finite) path $\delta_i$ from $0$ to $\sigma_i$, and regularised iterated integrals of words in the $\omega_j$'s along a path $\gamma_i$ from $0$ to $\sigma_i$ with prescribed tangent directions at $0$ and $\sigma_i$. In order to compare these two different objects we have to impose some compatibility between $\gamma_i$ and $\delta_i$. Let us first fix parameters $\underline{s}=(s_0,s_1,\ldots,s_n)$. There is a natural map
    $$h_i:\pi_1^{\mathrm{top}}(X_\Sigma(\mathbb{C}),t_0,-t_i)\longrightarrow H_1^{\lf}(X_\Sigma(\mathbb{C}),\mathcal{L}_{\underline{s}}^\vee)$$
    defined in the following way. For a class in $\pi_1^{\mathrm{top}}(X_\Sigma(\mathbb{C}),t_0,-t_i)$ represented by a continuous map $\gamma:(0,1)\to \mathbb{C}\setminus \Sigma$, its image under $h_i$ has $\gamma$ as its underlying (locally finite) path, together with the section of $\mathcal{L}_{\underline{s}}^\vee$ given by
    $$x^{s_0}\prod_{k=1}^n(1-x\sigma_k^{-1})^{s_k}=\prod_{k=0}^n\exp\Big(s_k\int_{\gamma^x}\omega_k\Big)\ ,$$
    which makes sense thanks to Lemma \ref{lem: iterated integrals length one}. One checks that this is well-defined.
    
    Let $\delta_i:(0,1)\to \mathbb{C}\setminus \Sigma$ be a smooth map that can be extended to a smooth map $\overline{\delta_i}:[0,1]\to \mathbb{C}$, and which does not wind infinitely around $0$ or $\sigma_i$ (Remark \ref{rem: choice delta i winding}). Let us fix a determination of $\log(\sigma_i)$ for every $1\leq i\leq n$. In Remark \ref{rem: global class specialise} we defined out of this data a class $\delta_{i,\underline{s}}\in H_1^{\lf}(X_\Sigma(\mathbb{C}),\mathcal{L}_{\underline{s}}^\vee)$ for every parameter $\underline{s}=(s_0,\ldots,s_n)\in\mathbb{C}^{n+1}$. Let $\gamma_i$ be a class in $\pi_1^{\mathrm{top}}(X_\Sigma(\mathbb{C}),t_0,-t_i)$.
    
    \begin{defn} 
    We say that $\delta_i$ and $\gamma_i$ are compatible if for every $\underline{s}\in\mathbb{C}^{n+1}$ we have
    $$ h_i(\gamma_i) = \delta_{i,\underline{s}}\ .$$
    \end{defn}
    
    Note that this implies that the determinations of $\log(\sigma_i)$ are given by the regularised iterated integrals $\log(\sigma_i)=\int_{\gamma_i}\omega_0$.\medskip

    For all $1\leq j\leq n$ we define:
    \begin{equation} \label{OmegajDef} \Omega_j =   x^{s_0} \prod_{k=1}^n (1-x \sigma_{k}^{-1})^{s_k}\, \frac{dx}{x-\sigma_j} \ ,
    \end{equation} 
    which we interpret as a $1$-form on $\delta_i$ as in Remark \ref{rem: convention branches}. Let $\Omega^{\ren_i}_j$ denote its renormalised versions (Definition \ref{definition: renormalisedforms})  with respect to  $\{0, \sigma_i\}$, given by
    $$\Omega_j^{\ren,i} = \Big(x^{s_0}\prod_{k\neq i}(1-x\sigma_k^{-1})^{s_k} - \mathbf{1}_{i=j}\, \sigma_i^{s_0}\prod_{k\neq i}(1-\sigma_i\sigma_k^{-1})^{s_k}\Big)(1-x\sigma_i^{-1})^{s_i}\frac{dx}{x-\sigma_j}\ \cdot$$
    According to Proposition \ref{prop: taylor series LSigma}, the integral $\int_{\delta_i}\Omega_j^{\ren_i}$ defines a holomorphic function of the parameters $\underline{s}$ around $\underline{s}=0$ and thus has a Taylor expansion. In the next proposition we identify this Taylor expansion with the $j$-th beta quotient $\overline{\mathcal{Z}^i_j}$ of the generalised associator, defined via regularised iterated integrals along $\gamma_i$.

 \begin{prop} \label{lem: ConvergentZi}
For all $1\leq j \leq n$, the Taylor series at $\underline{s}=0$ of the integral $\int_{\delta_i} \Omega_j^{\mathrm{ren_i}}$ is $\overline{\mathcal{Z}^i_j}$.
\end{prop}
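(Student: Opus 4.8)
The plan is to match the two sides coefficient by coefficient in the $s_k$. On the analytic side, Proposition \ref{prop: taylor series LSigma} tells us that $\int_{\delta_i}\Omega_j^{\ren_i}$ is holomorphic near $\underline{s}=0$, and its Taylor coefficients are obtained by expanding the prefactor $x^{s_0}\prod_{k\neq i}(1-x\sigma_k^{-1})^{s_k}(1-x\sigma_i^{-1})^{s_i}$ as a power series in the $s_k$, with the subtracted constant $\mathbf{1}_{i=j}\sigma_i^{s_0}\prod_{k\neq i}(1-\sigma_i\sigma_k^{-1})^{s_k}$ treated likewise, and integrating term by term along $\delta_i$. Since all the $\log$-powers in sight extend to $\overline{\delta_i}$, the resulting integrals converge and the interchange of sum and integral is justified in a neighbourhood of the origin. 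On the combinatorial side, Lemma \ref{lem: abelianisationofFbeta} expresses $\overline{\mathcal{Z}^i_j}$ as $\sum_{m_0,\dots,m_n\geq 0}\big(\int_{\gamma_i}[e_0^{m_0}\sha\cdots\sha e_n^{m_n}]e_j\big)\,s_0^{m_0}\cdots s_n^{m_n}$, and by the shuffle formula together with Lemma \ref{lem: iterated integrals length one} each coefficient equals $\int_{\gamma_i}\log^{m_0}(x)\prod_{k=1}^n\log^{m_k}(1-x\sigma_k^{-1})\,\frac{dx}{x-\sigma_j}$, a regularised iterated integral along the tangential path $\gamma_i$.

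First I would reduce to comparing, for each monomial $s_0^{m_0}\cdots s_n^{m_n}$, the convergent integral $\frac{1}{\prod m_k!}\int_{\delta_i}\big(\log^{m_0}(x)\prod_{k=1}^n\log^{m_k}(1-x\sigma_k^{-1})-\mathbf{1}_{i=j}\,c_0\big)\frac{dx}{x-\sigma_j}$, where $c_0$ is the corresponding Taylor coefficient of the constant $\sigma_i^{s_0}\prod_{k\neq i}(1-\sigma_i\sigma_k^{-1})^{s_k}$, against the regularised iterated integral $\frac{1}{\prod m_k!}\int_{\gamma_i}\log^{m_0}(x)\prod_{k\neq i}\log^{m_k}(1-x\sigma_k^{-1})\log^{m_i}(1-x\sigma_i^{-1})\frac{dx}{x-\sigma_j}$. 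The key point is that because $\gamma_i$ and $\delta_i$ are \emph{compatible} (so $h_i(\gamma_i)=\delta_{i,\underline{s}}$ and in particular $\log(\sigma_i)=\int_{\gamma_i}\omega_0$, matching the branch conventions of Remark \ref{rem: convention branches}), the integrand along $\gamma_i$ and the integrand along $\delta_i$ are the \emph{same} multivalued $1$-form with the \emph{same} branch, pulled back via homotopic paths with the same endpoints $0$ and $\sigma_i$. The only discrepancy is the regularisation at the tangential basepoints: the $\gamma_i$-integral is a regularised iterated integral, whereas the $\delta_i$-integral is a convergent integral of a form whose singularity at $0$ has been tamed by subtracting the residue contribution and whose singularity at $\sigma_i$ is of integrable type $(1-x\sigma_i^{-1})^{s_i}dx$.

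I would therefore argue that both prescriptions compute the same number by the standard homotopy-invariance and endpoint-regularisation properties of iterated integrals: the regularised iterated integral along $\gamma_i$ only depends on the homotopy class rel endpoints and on the chosen tangent vectors, and a subtraction of the divergent part at $0$ (the $\Res_0(\omega)x^{s_0}\frac{dx}{x}$ term, which in the Taylor expansion produces precisely the $\mathrm{Reg}_{\varepsilon\to 0}\log^{m}(\gamma_i(\varepsilon))=0$ cancellations exploited in Lemma \ref{lem: iterated integrals length one}) converts it into the convergent integral along $\delta_i$; similarly the analytic continuation near $\sigma_i$ is governed by the admissibility of $\gamma_i$, which by Remark \ref{rem: admissible vanishing} forces $\int_{\gamma_i}\omega_i=0$ and hence $\log(1-x\sigma_i^{-1})\to 0$ consistently with the convergent form $(1-x\sigma_i^{-1})^{s_i}\frac{dx}{x-\sigma_j}$ having no extra winding contribution. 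Concretely, one can invoke Proposition \ref{proprenorm}: expanding $\int_{\delta_i}\Omega_j$ in $\underline{s}$ term by term gives, on one hand, polar terms plus $\int_{\delta_i}\Omega_j^{\ren_i}$, and on the other hand, the generating series identity $(FL_\Sigma)_{ij}=\mathbf{1}_{i=j}\overline{\mathcal{Z}^i}-s_j\overline{\mathcal{Z}^i_j}$ combined with Proposition \ref{prop: periodAb}'s formula $\overline{\mathcal{Z}^i_j}=\int_{\gamma_i}x^{s_0}\prod_k(1-x\sigma_k^{-1})^{s_k}\frac{dx}{x-\sigma_j}$, whose expansion matches term by term once the branch conventions agree. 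The main obstacle is the careful bookkeeping of the regularisation at the two tangential basepoints — showing rigorously that the $\mathrm{Reg}$ procedure at $0$ coincides with subtracting the $\Res_0$ term in Definition \ref{definition: renormalisedforms}, and that admissibility of $\gamma_i$ is exactly what is needed to make the contribution at $\sigma_i$ agree — but this is precisely what Lemma \ref{lem: iterated integrals length one}, Lemma \ref{lem: integralzsigmavanishes}, and Remark \ref{rem: admissible vanishing} have been set up to handle, so the argument is reduced to assembling those inputs.
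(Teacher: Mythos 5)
Your outline follows the same route as the paper's first proof: expand $\int_{\delta_i}\Omega_j^{\ren_i}$ coefficientwise in the $s_k$, express $\overline{\ZZ^i_j}$ coefficientwise via Lemma \ref{lem: abelianisationofFbeta} and Lemma \ref{lem: iterated integrals length one}, and match the two monomial by monomial using the compatibility of $\delta_i$ and $\gamma_i$. However, the crux is exactly the step you defer to ``assembling the inputs'', and the inputs you cite do not supply it. What has to be shown is the analogue of \eqref{inproofintomegarengamma}: that for each coefficient form $\Omega_j^{\ren_i}(\underline{m})$, the \emph{regularised} iterated integral along $\gamma_i$ (regularised at both tangential basepoints) equals the \emph{convergent} integral along $\delta_i$. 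Lemma \ref{lem: integralzsigmavanishes} only tells you that the subtracted constant piece $c_0\log^{m_i}(1-x\sigma_i^{-1})\frac{dx}{x-\sigma_i}$ has vanishing regularised integral along $\gamma_i$, i.e.\ it lets you pass between the renormalised and unrenormalised coefficient forms \emph{inside} the regularised integral; it does not identify the regularised integral of the renormalised form with an honest limit. Likewise your appeal to the subtraction of $\Res_0(\omega)\,x^{s_0}\frac{dx}{x}$ is off target here, since $\Res_0(\omega_j)=0$ for $j\geq 1$ and no such term is present in $\Omega_j^{\ren_i}$ (at the origin the relevant point is simply that $\log^{m_0}(x)\,\frac{dx}{x-\sigma_j}$ is integrable, so the regularisation at $t_0$ is vacuous), and Remark \ref{rem: admissible vanishing} (``no extra winding'') is an input to Lemma \ref{lem: iterated integrals length one}, not an argument that the endpoint regularisation at $-t_i$ produces the convergent value.

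The missing argument is short but genuine, and it is the one the paper gives: when $j=i$, write a primitive of $\Omega_i^{\ren_i}(\underline{m})$ near $x=\sigma_i$ in $\C[\log(x-\sigma_i)][[x-\sigma_i]]$ and observe that, because the bracketed factor in $\Omega_i^{\ren_i}(\underline{m})$ vanishes at $x=\sigma_i$, the primitive actually lies in $\C[(x-\sigma_i)\log(x-\sigma_i)][[x-\sigma_i]]$, so the tangential-basepoint regularisation (set $\log(x-\sigma_i)$ to zero, then $x\to\sigma_i$) coincides with the ordinary limit; the case $j\neq i$ and the behaviour at $x=0$ are handled by integrability alone. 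Once this is inserted, your assembly — compatibility of branches, Lemma \ref{lem: integralzsigmavanishes} to discard the subtracted term, and the coefficient formula of Proposition \ref{prop: periodAb} — does reproduce the paper's first proof; without it, the sentence ``the regularisation converts it into the convergent integral along $\delta_i$'' asserts precisely what is to be proved.
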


 Before proving the proposition, we need a technical lemma that allows us to exchange summation and integration.

\begin{lem}\label{lem: fubini}
Let us write $\Omega_j^{\ren_i}$ as a formal power series:
 $$\Omega^{\ren_i}_j=  \sum_{m_0,\cdots, m_n\geq 0} \Omega^{\ren_i}_j(\underline{m})\,  \frac{s^{m_0}_0\cdots s^{m_n}_n}{m_0! \cdots m_n!}\ \cdot$$
 Then we have the equality of formal power series:
 $$\int_{\delta_i}\Omega_j^{\ren_i} = \sum_{m_0,\ldots,m_n\geq 0} \int_{\delta_i}\Omega^{\ren_i}_j(\underline{m}) \, \frac{s^{m_0}_0\cdots s^{m_n}_n}{m_0! \cdots m_n!}\ \cdot$$
\end{lem}

\begin{proof}
Let us write $\delta_i^*\Omega_j^{\ren_i}(\underline{m})=f_{\underline{m}}(t)dt$. By Fubini, it is enough to prove that the multiple series
\begin{equation}\label{eq: power series in proof Fubini}
\sum_{m_0,\ldots,m_n\geq 0}\int_0^1|f_{\underline{m}}(t)|dt\, \frac{s^{m_0}_0\cdots s^{m_n}_n}{m_0! \cdots m_n!}
\end{equation}
is absolutely convergent for $|\underline{s}|$ small enough, and therefore we need to estimate the integral of $|f_{\underline{m}}(t)|$. We do so in the case $j=i$; the case $j\neq i$ is even simpler. For simplicity, we only treat the case $n=1$. The general case is similar and is left to the reader. We write $\sigma=\sigma_1$ and $\delta=\delta_1$.
For indices $m_0,m_1\geq 0$ we have, according to Definition \ref{definition: renormalisedforms}:
$$f_{m_0,m_1}(t) =\frac{\log^{m_0}(\delta(t))-\log^{m_0}(\sigma)}{\delta(t)-\sigma} \log^{m_1}(1-\delta(t)\sigma^{-1}) \,\delta'(t)\ .$$
Note that $\delta'(t)$ is bounded for $t\in (0,1)$. To prove convergence, we can clearly assume that there exist small closed discs $D_0$ and $D_\sigma$ around $0$ and $\sigma$ respectively, and constants $0<\alpha<\beta<1$, such that $\delta(t)\notin D_\sigma$ for all $t\in (0,\beta)$, and $\delta(t)\notin D_0$ for all $t\in (\alpha,1)$. We treat separately the cases $t\in (0,\beta)$ and $t\in (\alpha,1)$, and assume that $m_0,m_1\geq 1$.
\begin{enumerate}[--]
    \item For $t\in (0,\beta)$, $\log(1-\delta(t)\sigma^{-1})$ is bounded 
    and $\delta(t)-\sigma$ is 
    bounded below in absolute value, which gives an estimate
    $$|f_{m_0,m_1}(t)| < \left( |\log^{m_0}(\delta(t))| + |\log^{m_0}(\sigma)|\right) A^{m_1}$$
    for some positive constant $A$. Now since by assumption (Remark \ref{rem: choice delta i winding}) the argument of $\delta(t)$ is bounded as $t$ approaches zero, and since $|\log|\delta(t)||$ tends to infinity when $t$ goes to zero, we have $|\log(\delta(t))|<B|\log|\delta(t)||$ for some positive constant $B$, and therefore since $\delta$ is smooth at $0$, we have $|\log(\delta(t))|<B'|\log(t)|$ for all $t\in (0,1/2)$, for some positive constant $B'$. We deduce the estimate $|\log^{m_0}(\delta(t))| + |\log^{m_0}(\sigma)|< (B'')^{m_0}|\log(t)|^{m_0}$ for all $t\in (0,\beta)$, for some positive constant $B''$. Therefore,
    $$|f_{m_0,m_1}(t)| < C^{m_0+m_1}|\log(t)|^{m_0}$$
    for all $t\in (0,\beta)$, for some positive constant $C$.
    \item For $t\in (\alpha,1)$, the quotient $\frac{\log^{m_0}(\delta(t))-\log^{m_0}(\sigma)}{\delta(t)-\sigma}$ is bounded by $D^{m_0}$ for some positive constant $D$, and by the same argument as before (using the assumption of Remark \ref{rem: choice delta i winding}) we have an estimate $|\log^{m_1}(1-\delta(t)\sigma^{-1})|<E^{m_1}|\log(1-t)|^{m_1}$ for all $t\in (\alpha,1)$, for some positive constant $E$. Therefore,
    $$|f_{m_0,m_1}(t)|<F^{m_0+m_1}|\log(1-t)|^{m_1}$$
    for all $t\in (\alpha,1)$, for some positive constant $F$.
\end{enumerate}
Putting those two contributions together we get
$$\int_0^1|f_{m_0,m_1}(t)|dt < G^{m_0+m_1}\int_0^1(|\log(t)|^{m_0} + |\log(t)|^{m_1}) dt$$
for some positive constant $G$. Since $\int_0^1|\log(t)|^mdt=m!$ for all $m$, we therefore get the estimate
$$\int_0^1|f_{m_0,m_1}(t)|dt < G^{m_0+m_1}(m_0!+m_1!)$$
for some positive constant $G$. Therefore the series \eqref{eq: power series in proof Fubini} converges for $|s_0|,|s_1|<G^{-1}$, and the claim follows.
\end{proof}

We now give two proofs of Proposition \ref{lem: ConvergentZi}, since they are instructive.

\begin{proof}[First proof] 
 We  consider only the case $\overline{\ZZ^i_i}$ since the argument for $\overline{\ZZ^i_j}$ with $j \neq i$ is even simpler. 
According to Definition \ref{definition: renormalisedforms}, we have, with the notation of Lemma \ref{lem: fubini}:
\begin{multline} \Omega^{\ren_i}_i(\underline{m}) \qquad  = \qquad  \, \log^{m_i}(1-x \sigma_i^{-1}) \frac{dx}{x-\sigma_i}  \\
\times \left(\log^{m_0}(x) \prod_{1\leq k \neq i}  \log^{m_k}(1-x\sigma_k^{-1}) -  \log^{m_0}(\sigma_i) \prod_{1\leq k \neq i}  \log^{m_k}(1-\sigma_i \sigma_k^{-1})\right)  \ .\nonumber 
\end{multline}
According to Lemma \ref{lem: fubini}, the Taylor series of $\int_{\delta_i}\Omega_i^{\ren_i}$ has coefficients $\int_{\delta_i}\Omega_i^{\ren_i}(\underline{m})$, and we claim that they equal
\begin{equation} \label{inproofintomegarengamma} \int_{\delta}  \Omega_i^{\ren_i} (\underline{m})  \  = \  \int_{\gamma_i}   \Omega_i^{\ren_i} (\underline{m})  
\end{equation}
where the  integral on the left is an ordinary,  convergent integral, and the one on the right is regularised along the path $\gamma_i$ between tangential basepoints (\S \ref{subsect: NotGeometric}). 
To see this, use the fact that  regularisation  with respect to the tangential basepoint $-t_i$  is equivalent to  taking a primitive of   $ \Omega_i^{\ren_i} (\underline{m})$ in the  ring  $\C[[x-\sigma_i]][\log(x-\sigma_i)]$, and formally setting all 
$\log(x-\sigma_i)$ terms to zero, before in turn setting $x$ to $\sigma_i$. Since the term in brackets in  the above expression for    $ \Omega_i^{\ren_i} (\underline{m})$  vanishes at $x=\sigma_i$, it actually has a primitive in the subspace $(x-\sigma_i)\C[[x-\sigma_i]][\log(x-\sigma_i)]$, and one can simply take its limit as $x\rightarrow \sigma_i$, which is the procedure for computing an ordinary integral (without tangential basepoint regularisation). A simpler   argument applies at $x=0$ and proves \eqref{inproofintomegarengamma}.

The formula  for $\overline{\ZZ^i_i}$ follows by applying   Lemma \ref{lem: integralzsigmavanishes}
and  implies that
$$  \int_{\gamma_i}  \Omega_i^{\ren_i} (\underline{m})  \  = \int_{\gamma_i}  
\log^{m_0}(x) \prod^n_{k=1}  \log^{m_k}(1-x\sigma_k^{-1})  \frac{dx}{x-\sigma_i} \ . $$
This is precisely the coefficient of $\frac{s^{m_0}_0}{m_0!} \cdots \frac{s^{m_n}_n}{m_n!}$ in the Taylor expansion of \eqref{ZZijperiod}.
\end{proof} 

\begin{proof}[Second proof]
 For   $\gamma$ a path between (tangential) base-points $x,y \in X_{\Sigma}$, and $\omega$ a closed formal $1$-form taking values in  the Lie algebra of the ring of formal non-commutative power series $\mathbb{C}\langle\langle e_0,\ldots,e_n\rangle\rangle$, consider the formal power series defined by iterated integration:
$$I_{\gamma}(\omega) =1 + \int_{\gamma} \omega + \int_{\gamma} \omega \omega + \cdots \ .$$
It is known as  the transport  of (the connection associated to) $\omega$ along $\gamma$, and  satisfies the composition of paths formula $I_{\gamma\gamma'}(\omega)=I_\gamma(\omega)I_{\gamma'}(\omega)$.  
By applying the prescription for   computing iterated integrals with respect to tangential basepoints, we find that 
the transport of the formal one-form 
$\omega_{\Sigma} =e_0 \omega_0+ \cdots + e_n \omega_n$ along  the path $\gamma_i$  defined in \S \ref{subsect: NotGeometric} 
equals:
\begin{eqnarray} \ZZ^i = I_{\gamma_i}(\omega_\Sigma)  & =  & I_{\gamma_i^x}(\omega_{\Sigma}) I_{\nu_x}(\omega_{\Sigma})  \nonumber \\
 & = & \lim_{x\rightarrow \sigma_i} \left(   I_{\gamma_i^x}(\omega_{\Sigma}) I_{\nu_x}(\omega_{\Sigma})\right)   \nonumber \\
 & = & \lim_{x\rightarrow \sigma_i} \left(   I_{\gamma_i^x}(\omega_{\Sigma}) I_{\nu_x}(e_i \omega_i)\right)  \nonumber
\end{eqnarray} 
where  $x =\gamma_i(t)$ for some $0<t<1$;  $\gamma_i^x$ is the restriction of $\gamma_i$ to $[0,t]$; 
and $\nu_x$ is a path from $x$ to $-t_i$. The first equation is simply   the composition of  paths formula.  Since the left-hand side does not depend on the choice of point $x$, we may take a limit as $x \rightarrow \sigma_i$, which implies  the second equation. In the third equation, we view the path $\nu_x$   as a path  in the tangent space at $\sigma_i$, which we  identify with $\Pro^1\backslash \{\sigma_i,\infty\}$.  The form $e_i \omega_i$ is the localisation of $\omega_{\Sigma}$ to the punctured tangent space  and captures the divergent iterated integrals terminating in the letter $e_i$.

   It follows from \eqref{SubDerivation} that 
\begin{equation} \label{1stproofzzij} \overline{\ZZ^i_j} = \lim_{x \rightarrow \sigma_i} \left(\overline{I_{\gamma_i^x}(\omega_{\Sigma})} \,  \overline{I_{\nu_x}(e_i \omega_i)_j}    +   \overline{I_{\gamma_i^x}(\omega_{\Sigma})}_j \right) \ .
\end{equation}
The  tangential integral $ I_{\nu_x}(e_i \omega_i)$ is simply an exponential:
\begin{equation} \label{inprooftangentialexp} I_{\nu_x}(e_i \omega_i) = \exp ( -e_i \log (1-x \sigma^{-1}_i)) \  \end{equation}
and its abelianisation is therefore obtained by replacing $e_i$ with $s_i$ in the previous expression. 
Equation  \eqref{inprooftangentialexp} follows, for example,  from 
$$\int_x^{-t_i} \frac{dz}{z-\sigma_i} = \left( \int_x^{t_0} + \int_{t_0}^{-t_i} \right) \frac{dz}{z-\sigma_i} \ \overset{ \eqref{integralzsigmavanishes}}{=} \  \int_x^0  \frac{dz}{z-\sigma_i}+ 0 = - \log (1- x\sigma_i^{-1}) \ .$$
From equation \eqref{inprooftangentialexp}, the expression $ \overline{ I_{\nu_x}(e_i \omega_i)}_j$ vanishes if $j\neq i$, but equals $$\overline{ I_{\nu_x}(e_i \omega_i)_i}= \frac{1}{s_i}\left((1-x \sigma_i^{-1})^{-s_i} -1 \right) $$
otherwise. Thus if $j\neq i$,  the term   $\overline{I_{\gamma_i^x}(\omega_{\Sigma})}\overline{I_{\nu_x}(e_i \omega_i)_j}$ in  \eqref{1stproofzzij}  vanishes  and we find that
$$ \overline{\ZZ^i_j} = \lim_{x \rightarrow \sigma_i} \left(   \overline{I_{\gamma_i^x}(\omega_{\Sigma})}_j \right)= \lim_{x \rightarrow \sigma_i} \left(    \int_0^x    x^{s_0} \prod _{k=1}^n   \left(1- x\, \sigma^{-1}_k\right)^{s_k}    \frac{dx}{x-\sigma_j}         \right)  \ , $$
using the version of \eqref{ZZijperiod}  with the upper range of integration replaced with the point $x$ (which follows from the  computations in  the second paragraph of the proof of Proposition \ref{prop: periodAb} -- one needs only check that one can replace $\gamma_i^x$ with an ordinary path from $0$ to $x$, i.e., that the tangential component of $\gamma_i^x$ at the origin plays no role since the integral is convergent there).
This proves the  formula for $j\neq i$, thanks to Lemma \ref{lem: fubini}.  In the case $j=i$,   a  version of \eqref{Ziperiod}  with upper range of integration $x$ implies that  
$$\overline{I_{\gamma_i^x}(\omega_{\Sigma})} =  x^{s_0} \prod _{k=1}^n   \left(1- x\, \sigma^{-1}_k\right)^{s_k} \ .$$
Substituting this into   
 \eqref{1stproofzzij}  gives
$$\overline{\ZZ^i_i} = \lim_{x \rightarrow \sigma_i} \Bigg(
x^{s_0} \prod _{k=1}^n   \left(1- x\, \sigma^{-1}_k\right)^{s_k} \times \frac{1}{s_i}\left((1-x \sigma_i^{-1})^{-s_i}-1 \right) 
+
 \int_0^x    x^{s_0} \prod _{k=1}^n   \left(1- x\, \sigma^{-1}_k\right)^{s_k}    \frac{dx}{x-\sigma_i}         \Bigg) \ .
 $$
Using the identity $(1-x \sigma_i^{-1})^{s_i} \times  \left((1-x \sigma_i^{-1})^{-s_i} -1 \right)   =  1-(1-x \sigma_i^{-1})^{s_i}$,  
the previous expression can be rewritten in the form:
$$\lim_{x \rightarrow \sigma_i} \Bigg( 
\sigma_i^{s_0} \prod _{k\neq i}   (1-\sigma_i \sigma_k^{-1})^{s_k} \times \frac{1}{s_i}\left(1-(1-x \sigma_i^{-1})^{s_i} \right) 
+
 \int_0^x    x^{s_0} \prod _{k=1}^n   \left(1- x\, \sigma^{-1}_k\right)^{s_k}    \frac{dx}{x-\sigma_i}         \Bigg) \ .$$
Finally, substitute in the following identity
\begin{equation} \label{logidentityTBreg}
\frac{1}{s_i}\left(1-(1-x \sigma_i^{-1})^{s_i} \right)  = - \int_0^{x}  (1-x \sigma_i^{-1})^{s_i} \frac{dx}{x-\sigma_i}  \end{equation}
 to deduce the stated formula for $\overline{\ZZ^i_i}$, thanks to Lemma \ref{lem: fubini}.
\end{proof}

    We are now ready to prove part (\emph{i}) of Theorem \ref{introthmperandsv} from the introduction.

    \begin{thm}\label{thm: FL equals L}
    For all $i,j$, $(FL_\Sigma)_{ij}$ is the Taylor series at $\underline{s}=0$ of the Lauricella function $(L_\Sigma)_{ij}$.
    \end{thm}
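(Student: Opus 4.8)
The plan is to deduce the theorem by combining three results already in hand: the renormalised Taylor expansion of the Lauricella functions (Proposition \ref{prop: taylor series LSigma}), the formula for the abelianisation and beta quotients of a generalised associator (Proposition \ref{prop: periodAb}), and the identification of the Taylor expansion of a renormalised line integral with a beta quotient (Proposition \ref{lem: ConvergentZi}).

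First I would set up the data so that the two sides of the claimed equality refer to the same geometry: for each $1\le i\le n$, fix an admissible class $\gamma_i$ as in Definition \ref{defi: gamma i admissible}, and then fix a locally finite path $\delta_i$ from $0$ to $\sigma_i$ that is compatible with $\gamma_i$ in the sense introduced just before Proposition \ref{lem: ConvergentZi} (such a $\delta_i$ exists, for instance one isotopic to the underlying path of $\gamma_i$). This compatibility forces the determination of $\log(\sigma_i)$ entering the definition of $(L_\Sigma)_{ij}$ to equal the regularised iterated integral $\int_{\gamma_i}\omega_0$, so that the branch conventions of Remark \ref{rem: convention branches} are consistent with those built into $\mathcal{Z}^i$; this is the only point in the argument where choices need to be made carefully.

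Next, by Proposition \ref{prop: taylor series LSigma} we have $(L_\Sigma)_{ij}=\mathbf{1}_{i=j}\big(\sigma_i^{s_0}\prod_{k\neq i}(1-\sigma_i\sigma_k^{-1})^{s_k}\big)-s_j\int_{\delta_i}\Omega_j^{\ren_i}$, where the integral is holomorphic in a neighbourhood of $\underline{s}=0$ and hence has a Taylor expansion there; multiplying by $s_j$ and adding the (entire) prefactor keeps us within convergent power series, so it suffices to Taylor expand each summand. The prefactor $\sigma_i^{s_0}\prod_{k\neq i}(1-\sigma_i\sigma_k^{-1})^{s_k}$ is exactly $\overline{\mathcal{Z}^i}$ by Proposition \ref{prop: periodAb}, and by Proposition \ref{lem: ConvergentZi} the Taylor series of $\int_{\delta_i}\Omega_j^{\ren_i}$ is $\overline{\mathcal{Z}^i_j}$. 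Therefore the Taylor series of $(L_\Sigma)_{ij}$ at $\underline{s}=0$ equals $\mathbf{1}_{i=j}\overline{\mathcal{Z}^i}-s_j\overline{\mathcal{Z}^i_j}$, which is $(FL_\Sigma)_{ij}$ by Definition \ref{defi: FL}, concluding the proof.

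Thus the theorem carries essentially no new content beyond the path-and-branch bookkeeping of the first step: the genuine work was already done in Proposition \ref{lem: ConvergentZi}, where the convergent line integral along $\delta_i$ is matched with the tangential-basepoint regularisation along $\gamma_i$, the crucial input being the vanishing $\int_{\gamma_i}\omega_i=0$ coming from admissibility (Lemma \ref{lem: integralzsigmavanishes}, Remark \ref{rem: admissible vanishing}). Accordingly I do not expect any real obstacle here — everything reduces to a direct substitution — and I would also record, as in the introduction, that for $\Sigma=\{0,1\}$ this recovers Drinfeld's expression for the metabelian quotient of the Drinfeld associator in terms of Euler's beta function.
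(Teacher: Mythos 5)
Your proposal is correct and follows essentially the same route as the paper: its proof likewise combines Proposition \ref{prop: taylor series LSigma} with the formulas for $\overline{\mathcal{Z}^i}$ and $\overline{\mathcal{Z}^i_j}$ from Propositions \ref{prop: periodAb} and \ref{lem: ConvergentZi}, then compares with Definition \ref{defi: FL}. Your explicit remarks on the compatibility of $\delta_i$ with $\gamma_i$ and the resulting branch of $\log(\sigma_i)$ simply spell out bookkeeping the paper leaves implicit.
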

 
    \begin{proof}
    This follows from comparing the definition of $(FL_\Sigma)_{ij}$ with the expression in Proposition \ref{prop: taylor series LSigma}, and using the expressions for $\overline{\mathcal{Z}^i}$ and 
     $\overline{\mathcal{Z}^i_j}$
    of 
    Propositions \ref{prop: periodAb} and \ref{lem: ConvergentZi}.
    \end{proof}

\section{Generalised single-valued associators and their beta quotients} 

    This section is the single-valued version of the previous one. We introduce the single-valued versions $\mathcal{Z}^{i,\s}$ of the generalised associators, which are non-commutative power series whose coefficients are single-valued versions of iterated integrals on $X_\Sigma$. Through their beta quotients we define a matrix $FL_\Sigma^\s$ of power series which we prove to be the matrix of Taylor series of the single-valued Lauricella functions $L_\Sigma^\s$ \eqref{IntroLSigmasv} (Theorem \ref{thm: FL equals L sv}, which is Theorem \ref{introthmperandsv} (\emph{ii}) from the introduction).

\subsection{Generalised de Rham and single-valued associators}

    We fix an index $1\leq i\leq n$. We start by defining de Rham and single-valued versions of the generalised associators from the previous section.

    \begin{defn}\label{def: generalised de Rham associator}
    We define
    $$  \Zio    \      \in   \      {}_0\Pi_i^\varpi  (\Pe^{\varpi}) \subset  \Pe^{\varpi} \langle \langle e_0,\ldots, e_n\rangle\rangle  $$
to be the canonical element in $\mathrm{Hom}\left( \Or( {}_0\Pi_i^\varpi), \Pe^{\varpi}\right)$ given by  the morphism of schemes
$ G^{\varpi}_{\MT(k)}  \rightarrow {}_0 \Pi_i^\varpi$
 induced by the action  $g\mapsto g .  {}_0\!1_i$ of $G^{\varpi}_{\MT(k)}$ on  the canonical $\varpi$-path ${}_0 1_i$. It is called a \emph{generalised (canonical) de Rham associator}.
    \end{defn}
    
    It is given explicitly by the group-like formal power series
    $$\Zio  = \sum_{w\in \{e_0,\ldots, e_n\}^{\times}} \left[ \Or(\pi_1^{\mathrm{mot}}(X_{\Sigma}, t_0, -t_i)), {}_01_i, w\right]^{\varpi} w$$
    whose coefficients are (canonical) de Rham versions of iterated integrals. Since the empty iterated integral along $\gamma_i$ is 1, 
    it follows that $\Zio$ is the image of $\Zim$ under the coefficient-wise application of the  projection $\pi^{\mm,+}_{\varpi}$, i.e., 
    \begin{equation}\label{eq: Zio dRproj Zim}
    \Zio = \pi^{\mm,+}_{\varpi} \Zim\ .
    \end{equation}
    
    \begin{defn}
    We define
    $$\mathcal{Z}^{i,\s} = \s(\mathcal{Z}^{i,\varpi}) \;\; \in \; \mathbb{C}\langle\langle e_0,\ldots,e_n\rangle\rangle$$
    where $\s:\Pe^\varpi\to \mathbb{C}$ is the single-valued period map of $\MT(k)$ applied coefficientwise. It is called a \emph{generalised single-valued associator}.
    \end{defn}
    
    The coefficients of $\mathcal{Z}^{i,\s}$ are single-valued versions of iterated integrals.
    
    \begin{rem}
    As should be clear from the definitions, the power series $\mathcal{Z}^{i,\varpi}$ and $\mathcal{Z}^{i,\s}$ do not depend on the choice of a (class of a) path $\gamma_i$ as in the previous section. 
    \end{rem}
    
    \begin{example} \label{ExMZV1 sv} Let $\Sigma = \{0,1\}$ and $k= \Q$. Then $\varpi= \omega_{\mathrm{dR}}$ and $ \ZZ^{1,\varpi}= \ZZ^{\dR}$ where
    $$\ZZ^{\dR} = \sum_{w \in \{e_0,e_1\}^{\times} } \zeta^\dR(w) w  \quad \in \quad  \Pe^{\dR}_{\MT(\Q)} \langle \langle e_0,e_1\rangle \rangle $$  
    is the de Rham Drinfeld associator. It is obtained from the motivic Drinfeld associator \eqref{eq: motivic Drinfeld associator} by replacing every motivic multiple zeta value $\zetam$ with its de Rham version $\zeta^\dR$. Its image under the single-valued period map is the Deligne associator
    $$\mathcal{Z}^\s = \sum_{w\in\{e_0,e_1\}^\times}\zeta^\s(w)w \quad \in \quad \mathbb{R}\langle\langle e_0,e_1\rangle\rangle\ ,$$
    whose coefficients are single-valued multiple zeta values \cite{brownSVMZV}.
    \end{example} 
    
    The following definition is parallel to Definition \ref{defi: FL}.

    \begin{defn}
    We define the following $n\times n$ matrices of power series:
    $$(FL_\Sigma^\varpi)_{ij} = \mathbf{1}_{i=j}\overline{\mathcal{Z}^{i,\varpi}} - s_j\overline{\mathcal{Z}^{i,\varpi}_j} \;\;\; \in \; \Pe^\varpi[[s_0,\ldots,s_n]] .$$
    $$(FL_\Sigma^\s)_{ij} = \mathbf{1}_{i=j}\overline{\mathcal{Z}^{i,\s}} - s_j\overline{\mathcal{Z}^{i,\s}_j} \;\;\; \in \;\mathbb{C}[[s_0,\ldots,s_n]] .$$
    \end{defn}

    We clearly have 
    $$\s(FL_\Sigma^\varpi)=FL_\Sigma^\s$$ 
    where $\s:\Pe^\varpi\to \mathbb{C}$ denotes the single-valued period map of $\MT(k)$ applied coefficientwise. By \eqref{eq: Zio dRproj Zim} we also have
    $$FL_\Sigma^\varpi = \pi^{\mm,+}_\varpi FL^\mm_\varpi $$
    where $\pi^{\mm,+}_\varpi:\Pe^{\mm,+}\to \Pe^\dR$ is the de Rham projection applied coefficientwise.\medskip
    
    Our next goal (Theorem \ref{thm: FL equals L sv} below) is to prove that $FL_\Sigma^\s$ equals the matrix of Taylor series of the single-valued Lauricella functions $L_\Sigma^\s$ \eqref{IntroLSigmasv}. To this end, we first compute the abelianisation and beta quotients of the generalised single-valued associators $\mathcal{Z}^{i,\s}$. Our techniques can be used more generally to give integral formulae for the single-valued periods of motivic torsors of paths between tangential basepoints.
   
    \begin{prop}\label{prop: abelianisation Z i s}
    The abelianisation of the generalised single-valued associator $\mathcal{Z}^{i,\s}$ satisfies 
    $$\overline{\mathcal{Z}^{i,\s}} = \left|\sigma_i \right|^{2s_0} \prod_{k\neq i}  \left|1-\sigma_i \sigma_k^{-1}\right|^{2s_k}    \ .   $$
    \end{prop}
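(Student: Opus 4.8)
The plan is to reduce the computation of $\overline{\mathcal{Z}^{i,\s}}$ to the single-valued period of a length-one iterated integral, exactly mirroring the structure of the proof of Proposition \ref{prop: periodAb} in the ordinary (period) case. Since $\mathcal{Z}^{i,\s}=\s(\mathcal{Z}^{i,\varpi})$ and the single-valued period map is applied coefficientwise, and since abelianisation commutes with the coefficientwise application of any ring homomorphism, we have $\overline{\mathcal{Z}^{i,\s}}=\s\big(\overline{\mathcal{Z}^{i,\varpi}}\big)$. By Lemma \ref{lem: abelianisationofF} applied to the group-like series $\mathcal{Z}^{i,\varpi}$, this equals $\s\big(\prod_{k=0}^n\exp(\mathcal{Z}^{i,\varpi}(e_k)s_k)\big)$. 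The single-valued period map is a ring homomorphism and commutes with the formal exponential, so this becomes $\prod_{k=0}^n\exp\big(\s(\mathcal{Z}^{i,\varpi}(e_k))\,s_k\big)$. Hence the whole problem comes down to computing $\s(\mathcal{Z}^{i,\varpi}(e_k))$ for each $k$, i.e., the single-valued period of the length-one de Rham iterated integral of $\omega_k$ from $t_0$ to $-t_i$.

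The key input is then the identification of these length-one single-valued periods. First I would recall from Lemma \ref{lem: iterated integrals length one} that the corresponding ordinary (regularised) iterated integral is $\int_{\gamma_i}\omega_k=\log(\sigma_i)$ for $k=0$, $\log(1-\sigma_i\sigma_k^{-1})$ for $1\le k\ne i\le n$, and $0$ for $k=i$. Via \eqref{eq: Zio dRproj Zim} and the fact that $\pi^{\mm,+}_\varpi$ sends an effective motivic period of weight $\le 1$ to its canonical de Rham class, the de Rham periods $\mathcal{Z}^{i,\varpi}(e_k)$ are the de Rham avatars of these logarithms; their single-valued periods are governed by the single-valued period of a (motivic) logarithm. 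The single-valued period of $\log^{\mm}(z)$ is $\log|z|^2=\log(z)+\log(\overline{z})$ (this is the defining example of the single-valued period map, and follows from $\Delta\log^{\mm}(z)=\log^{\mm}(z)\otimes 1+1\otimes\log^{\dR}(z)$ together with $\s(\Lef^{\varpi})=1$), so $\s(\mathcal{Z}^{i,\varpi}(e_0))=\log|\sigma_i|^2=2\log|\sigma_i|$, $\s(\mathcal{Z}^{i,\varpi}(e_k))=\log|1-\sigma_i\sigma_k^{-1}|^2=2\log|1-\sigma_i\sigma_k^{-1}|$ for $k\ne 0,i$, and $\s(\mathcal{Z}^{i,\varpi}(e_i))=0$. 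Substituting into the product formula gives
$$\overline{\mathcal{Z}^{i,\s}}=\exp\Big(2s_0\log|\sigma_i|+\sum_{k\ne i}2s_k\log|1-\sigma_i\sigma_k^{-1}|\Big)=|\sigma_i|^{2s_0}\prod_{k\ne i}|1-\sigma_i\sigma_k^{-1}|^{2s_k}\ ,$$
as claimed.

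The main obstacle is making rigorous the claim that $\s$ commutes with the coefficientwise operations (abelianisation, $\exp$, $\log$) and correctly pins down the single-valued periods of the length-one coefficients. The cleanest route is to argue entirely at the level of $\mathcal{Z}^{i,\varpi}\in{}_0\Pi_i^\varpi(\Pe^\varpi)$: since $\mathcal{Z}^{i,\varpi}$ is group-like, $\log\mathcal{Z}^{i,\varpi}$ is a Lie series whose degree-one part is $\sum_k \mathcal{Z}^{i,\varpi}(e_k)\,e_k$, and since $\s:\Pe^\varpi\to\mathbb{C}$ is a homomorphism of $\mathbb{Q}$-algebras the series $\mathcal{Z}^{i,\s}=\s(\mathcal{Z}^{i,\varpi})$ is again group-like with $\mathcal{Z}^{i,\s}(e_k)=\s(\mathcal{Z}^{i,\varpi}(e_k))$; then Lemma \ref{lem: abelianisationofF} applies directly to $\mathcal{Z}^{i,\s}$. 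The only genuinely new point to verify is the value of the length-one single-valued periods, which reduces to the compatibility of the de Rham projection $\pi^{\mm,+}_\varpi$ with weight truncation (so that $\mathcal{Z}^{i,\varpi}(e_k)$ is literally a de Rham logarithm, a Kummer-type motivic period of weight $1$) together with the standard computation $\s(\log^{\mm}(z))=\log|z|^2$; I expect this to be a short verification citing \cite{BD1} and the structure of $\MT(k)$, with the remainder of the argument being the formal manipulation above.
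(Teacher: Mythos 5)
Your proposal follows essentially the same route as the paper's proof: reduce via group-likeness and Lemma \ref{lem: abelianisationofF} to the length-one coefficients $\mathcal{Z}^{i,\s}(e_k)$, identify these as single-valued logarithms through the weight $\leq 2$ (Kummer-type) subobject of the motivic torsor of paths, and conclude with Lemma \ref{lem: iterated integrals length one}; the paper makes the key step precise by writing $\mathcal{Z}^{i,\varpi}(e_k)$ as a matrix coefficient of the rank-two object $W_2\mathcal{O}(\pi_1^{\mathrm{mot}}(\mathbb{A}^1_k\setminus\{\sigma_k\},t_0,-t_i))$ and computing the top-right entry of $\overline{P}^{-1}P$, giving $\mathcal{Z}^{i,\s}(e_k)=2\,\Real\big(\int_\gamma\omega_k\big)$, which is exactly the "short verification citing \cite{BD1}" you anticipate. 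One caveat: your parenthetical justification of $\s(\log^{\dR}(z))=\log|z|^2$ is off in this paper's conventions — the coaction is $\Delta\log^{\mm}(z)=\log^{\mm}(z)\otimes\Lef^{\varpi}+1\otimes\log^{\varpi}(z)$ (not with $\otimes 1$), one has $\s(\Lef^{\varpi})=-1$ rather than $1$, and in any case the coaction alone does not determine $\s$, which is defined via the real Frobenius — but the value you quote is correct and branch-independent (it is $2\,\Real$ of the regularised logarithm), so the argument goes through once that step is carried out as in the paper.
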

    
    \begin{proof}
    This follows from Lemma \ref{lem: abelianisationofF} and the following claim about the length one coefficients of $\mathcal{Z}^{i,\s}$:
    $$\mathcal{Z}^{i,\s}(e_k)= \begin{cases} \log|\sigma_i|^2 & \mbox{ for } k=0\ ;\\ \log|1-\sigma_i\sigma_k^{-1}|^2 & \mbox{ for } 1\leq k\neq i\leq n\ ;\\ 0 & \mbox{ for } k=i\ .  \end{cases}$$
    Let us prove this claim.  By definition and since $\omega_k$ only has poles at $\sigma_k$ and $\infty$, we have 
    $$\mathcal{Z}^{i,\varpi}(e_k)= \left[W_2\mathcal{O}(\pi_1^{\mathrm{mot}}(\mathbb{A}^1_k\setminus \{\sigma_k\},t_0,-t_i)),{}_01_i,[\omega_k]\right]^\varpi\ .$$
    The object $W_2\mathcal{O}(\pi_1^{\mathrm{mot}}(\mathbb{A}^1_k\setminus \{\sigma_k\},t_0,-t_i))$ denotes the weight $\leq 2$ (or length $\leq 1$) subobject; it has rank $2$, with de Rham basis $(1,[\omega_k])$ and Betti basis $([\gamma],[\gamma']-[\gamma])$ for some paths $\gamma$, $\gamma'$ from $t_0$ to $-t_i$ such that the closed path $\gamma'\gamma^{-1}$ is homologous to a small positively oriented loop around $\sigma_k$. (Note that for $k\neq 0$, $t_0$ denotes the usual basepoint $0$, and for $k\neq i$, $-t_i$ denotes the usual basepoint $\sigma_i$.) We note that by definition, ${}_01_i(1)=1$ and ${}_01_i([\omega_k])=0$. The period matrix in these bases is 
    $$P = \left(\begin{matrix} 1 & \int_\gamma\omega_k\\ 0 & 2\pi i\end{matrix}\right)\ .$$
    By the definition of the single-valued period homomorphism \cite[Definition 2.5]{BD1}, $\mathcal{Z}^{i,\s}(e_k)=\s(\mathcal{Z}^{i,\varpi}(e_k))$ is the top right coefficient of the single-valued period matrix $\overline{P}^{-1}P$, where $\overline{P}$ denotes the complex conjugate matrix. An easy computation shows that this equals $\mathcal{Z}^{i,\s}(e_k)=2\,\Real(\int_\gamma\omega_k)$ and the claim follows from Lemma \ref{lem: iterated integrals length one}.
    \end{proof}
    
\subsection{Beta quotients of generalised single-valued associators}

 We start with a version of beta quotients which have finite, rather than tangential, basepoints. We extend Definition \ref{def: generalised de Rham associator} and let$$I^{\varpi}(x,y)  \ \in \  {}_x\Pi^\varpi_{y}(\Pe^{\varpi})$$ 
    denote the generating series of canonical de Rham periods from $x$ to $y$, where $x$ and $y$ are either finite basepoints in $X_\Sigma(k)$ or tangential basepoints $\pm t_k$, for $0\leq k\leq n$. We let $I^\s(x,y)=\s(I^\varpi(x,y))$ denote its image by the single-valued period homomorphism. We thus have $\mathcal{Z}^{i,\bullet}=I^\bullet(t_0,-t_i)$ for $\bullet\in \{\varpi,\s \}$. Note that the same method of proof as in Proposition \ref{prop: abelianisation Z i s} gives the abelianisation of $I^\s(x,y)$:
    \begin{equation} \label{svIomegaabelian}
    \overline{ I^{\s}(x,y)} =  \left|\frac{y}{x}\right|^{2s_0} \prod_{k=1}^n \left| \frac{1-y \sigma_k^{-1}}{1-x \sigma_k^{-1}}\right|^{2s_k}\ \cdot
    \end{equation}

    We will need the following lemma, which gives a single-valued version of the integral
$$ \int_{x}^z \frac{dw}{w-\sigma} = \log \left( \frac{z-\sigma}{x-\sigma} \right)\ .$$

    \begin{lem} \label{lem: svlogint} Suppose that $ \sigma,x,z\in \C$ are distinct. Then
    $$ - \frac{1}{2\pi i} \iint_{\C} \left( \frac{d \overline{w}}{\overline{w}-\overline{z}} -\frac{d\overline{w}}{\overline{w}-\overline{x}} \right) \wedge \frac{d  w}{ w - \sigma}  \ = \  \log \left|  \frac{ z -\sigma }{  x - \sigma} \right|^2 \ .$$
    \end{lem}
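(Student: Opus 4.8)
The plan is to prove this by the same Stokes-theorem argument used for Lemma \ref{lem: svlogintegralregterm}, namely to exhibit an explicit smooth primitive of the integrand on the complement of small disks around the singular points, and then reduce the two-dimensional integral to a sum of one-dimensional residue-type integrals along small circles. First I would set
$$ F = - \log\left|\frac{w-\sigma}{w_0-\sigma}\right|^2 \left( \frac{d\overline{w}}{\overline{w}-\overline{z}} - \frac{d\overline{w}}{\overline{w}-\overline{x}}\right) $$
(for some auxiliary fixed base point $w_0$, whose choice is immaterial since the difference of the two $d\overline{w}$ forms integrates to a genuine form), or more simply note that $\frac{dw}{w-\sigma} = d\log(w-\sigma)$ and that the antiderivative of the integrand with respect to $\overline{w}$ can be taken to be $-\log|w-\sigma|^2$ times the $d\overline{w}$-form; concretely, since $\partial_{\overline w}\log|w-\sigma|^2 = \frac{1}{\overline w - \overline\sigma}$ is not what we want — rather we use that $\frac{dw}{w-\sigma}$ is holomorphic so $d(\text{function}\cdot\frac{dw}{w-\sigma})$ picks out only the $\overline\partial$ of the function. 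The right choice is therefore
$$ F = -\log|w-\sigma|^2\left(\frac{d\overline w}{\overline w - \overline z} - \frac{d\overline w}{\overline w - \overline x}\right),$$
whose total derivative $dF$ equals $-\left(\frac{d\overline w}{\overline w - \overline z} - \frac{d\overline w}{\overline w - \overline x}\right)\wedge \frac{dw}{w-\sigma}$ up to the sign bookkeeping, because $d\log|w-\sigma|^2 = \frac{dw}{w-\sigma} + \frac{d\overline w}{\overline w-\overline\sigma}$ and the $\frac{d\overline w}{\overline w-\overline\sigma}$ piece wedges to zero against the $d\overline w$-forms. I would verify this computation carefully, as it is the one genuine check.

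Next, I would check convergence of the left-hand integral: the integrand has at worst logarithmic singularities at $w=\sigma$ (from $\log|w-\sigma|^2$ against a smooth form, integrable) and simple-pole-type singularities at $w=x$ and $w=z$ (contributing $\frac{d\rho\,d\theta}{\rho}$-type behaviour in local polar coordinates, integrable) and decays like $\frac{1}{|w|^2}\log|w|$ at infinity since the difference $\frac{d\overline w}{\overline w-\overline z}-\frac{d\overline w}{\overline w-\overline x} = O(|w|^{-2})d\overline w$ there. Then, applying Stokes' theorem to the complement $P_\varepsilon$ in $\mathbb{P}^1(\mathbb{C})$ of $\varepsilon$-disks around $\sigma$, $x$, $z$, $\infty$, the integral equals the limit as $\varepsilon\to 0$ of $-\frac{1}{2\pi i}\int_{\partial P_\varepsilon} F$, with the sign coming from the orientation of $\partial P_\varepsilon$ as in the proof of Lemma \ref{lem: svlogintegralregterm}. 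I would then evaluate each of the four boundary contributions: around $\infty$ the integrand vanishes in the limit (the $d\overline w$-form decays); around $w=\sigma$ the contribution vanishes because $\log|w-\sigma|^2$ times $\varepsilon\,d\theta$ tends to $0$; around $w=z$ only the term $-\log|z-\sigma|^2\cdot\frac{d\overline w}{\overline w - \overline z}$ has a pole and the circle integral $\frac{1}{2\pi i}\oint \frac{d\overline w}{\overline w - \overline z}$ over a negatively oriented circle gives $-1$ (complex conjugate of the usual residue), yielding $+\log|z-\sigma|^2$; similarly around $w=x$ one gets $-\log|x-\sigma|^2$. Summing gives $\log|z-\sigma|^2 - \log|x-\sigma|^2 = \log\left|\frac{z-\sigma}{x-\sigma}\right|^2$, which is the claim.

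The main obstacle — really the only subtle point — is getting the orientation signs and the $\frac{1}{2\pi i}$ normalisations exactly right in the circle integrals, since $\overline\partial$-type residues pick up a complex conjugate and the boundary circles are negatively oriented; this is exactly the bookkeeping that appears (and is handled) in the proof of Lemma \ref{lem: svlogintegralregterm}, so I would follow that template verbatim. One should also double-check that the case distinctions (whether any of $\sigma, x, z$ coincide) are excluded by hypothesis, so no degenerate configurations arise. Everything else is a routine adaptation of the earlier single-valued computations, and indeed this lemma is the natural generalisation of Lemma \ref{lem: svlogintegralregterm} from the special configuration $x=0$, $z$ replaced by a point and $\sigma=0$ to three arbitrary distinct points.
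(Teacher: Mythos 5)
Your argument is correct and is exactly the computation the paper delegates to \cite[\S 6.3]{BD1}, following the template of Lemma \ref{lem: svlogintegralregterm}: the primitive $F=-\log|w-\sigma|^2\left(\frac{d\overline w}{\overline w-\overline z}-\frac{d\overline w}{\overline w-\overline x}\right)$ satisfies $dF=\left(\frac{d\overline w}{\overline w-\overline z}-\frac{d\overline w}{\overline w-\overline x}\right)\wedge\frac{dw}{w-\sigma}$ (note the plus sign, since only the $\frac{dw}{w-\sigma}$ part of $d\log|w-\sigma|^2$ survives the wedge), and Stokes on the complement of $\varepsilon$-disks around $\sigma,x,z,\infty$ yields vanishing contributions at $\sigma$ and $\infty$ and residue contributions at $z$ and $x$, giving $\log|z-\sigma|^2-\log|x-\sigma|^2$. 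One bookkeeping correction to tighten: on a negatively (clockwise) oriented circle one has $\frac{1}{2\pi i}\oint\frac{d\overline w}{\overline w-\overline z}=+1$, not $-1$ (the value $-1$ is for the positively oriented circle), and it is this $+1$ that actually produces the contributions $+\log|z-\sigma|^2$ and $-\log|x-\sigma|^2$ you correctly state, so the slip is only in the parenthetical, not in the conclusion.
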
 

    \begin{proof} 
    See \cite[\S6.3]{BD1}.
    \end{proof}
    
    \begin{prop}\label{prop: beta quotient generic iterated integrals sv}
    For every $0\leq j\leq n$, the $j$-th beta quotient of $I^\s(x,y)$ is:
    \begin{equation} \label{svIomegaxij}  \overline{ I^{\s}(x,y)_j}  = -\frac{1}{2\pi i} \iint_{\C}  \left|\frac{z}{x}\right|^{2s_0} \prod_{k=1}^n \left| \frac{1-z \sigma_k^{-1}}{1-x \sigma_k^{-1}}\right|^{2s_k} \left(\frac{d \overline{z}}{\overline{z}- \overline{y}}  - \frac{d \overline{z}}{\overline{z} -\overline{x}}  \right)\wedge\frac{d z }{z - \sigma_j}  \ .
    \end{equation} 
    This expression is a formal power series in the $s_i$ obtained by expanding the exponentials as power series.
    \end{prop}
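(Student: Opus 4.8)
The plan is to compute the $j$-th beta quotient $\overline{I^\s(x,y)_j}$ by the same strategy used in the two proofs of Proposition \ref{lem: ConvergentZi}, but now in the single-valued world. First I would introduce the generating series $I^\s(x,y)$ as the single-valued period of a transport $I^\varpi(x,y)$ of the universal connection $\omega_\Sigma = e_0\omega_0 + \cdots + e_n\omega_n$, and use the composition-of-paths formula together with \eqref{SubDerivation} to express $\overline{I^\s(x,y)_j}$ in terms of lower-length single-valued iterated integrals. The key point is that the single-valued period homomorphism is compatible with the composition of torsors of paths, so $I^\s$ inherits the groupoid property $I^\s(x,z)\cdot I^\s(z,y) = \ldots$ — more precisely, the single-valued period map intertwines concatenation in a way that lets one build $I^\s(x,y)$ from infinitesimal pieces. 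Applying \eqref{SubDerivation} to a decomposition $I(x,y) = I(x,z)I(z,y)$ and passing to abelianisations, one gets $\overline{I^\s(x,y)_j}$ as an integral over $z$ of $\overline{I^\s(x,z)}$ times a length-one single-valued form terminating in $e_j$.

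The second key input is the single-valued version of the fundamental one-form $\omega_j = d\log(z-\sigma_j)$. Here the crucial computational lemma is Lemma \ref{lem: svlogint}, which gives the single-valued analogue of $\int_x^z \frac{dw}{w-\sigma} = \log\frac{z-\sigma}{x-\sigma}$, namely an explicit double (volume) integral over $\C$ of the kernel $\bigl(\frac{d\overline{w}}{\overline{w}-\overline{z}} - \frac{d\overline{w}}{\overline{w}-\overline{x}}\bigr)\wedge\frac{dw}{w-\sigma}$. Combining this with the abelianisation formula \eqref{svIomegaabelian}, which was already established by the same method as Proposition \ref{prop: abelianisation Z i s}, I would assemble the claimed formula: the prefactor $\bigl|\tfrac{z}{x}\bigr|^{2s_0}\prod_k\bigl|\tfrac{1-z\sigma_k^{-1}}{1-x\sigma_k^{-1}}\bigr|^{2s_k}$ is exactly $\overline{I^\s(x,z)}$ from \eqref{svIomegaabelian} (with $y$ replaced by $z$), and the two-form $\bigl(\frac{d\overline{z}}{\overline{z}-\overline{y}} - \frac{d\overline{z}}{\overline{z}-\overline{x}}\bigr)\wedge\frac{dz}{z-\sigma_j}$ is precisely the single-valued incarnation of the "last letter $e_j$" contribution. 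One needs Lemma \ref{lem: abelianisationofFbeta} to organise the sum over words $[e_0^{\sha m_0}\sha\cdots\sha e_n^{\sha m_n}]e_j$ into the product of exponentials appearing under the integral sign.

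Concretely, the order of steps I would follow is: (1) recall that the single-valued period homomorphism applied coefficientwise to $I^\varpi(x,y)$ is multiplicative with respect to the groupoid structure, hence $I^\s$ satisfies a composition-of-paths identity; (2) apply the derivation-type identity \eqref{SubDerivation} to the factorisation $I^\s(x,y) = I^\s(x,z)\, I^\s(z,y)$ and abelianise, using \eqref{svIomegaabelian} for the first factor and the fact that for $I^\s(z,y)$ only the length-one-terminating-in-$e_j$ part survives after extracting the $j$-subscript; (3) evaluate that length-one single-valued iterated integral using Lemma \ref{lem: svlogint}, which produces the volume-integral kernel with the difference of $d\log$ forms at $\overline{y}$ and $\overline{x}$; (4) assemble, and check via Lemma \ref{lem: abelianisationofFbeta} that expanding the exponentials in the prefactor reproduces the correct sum over words. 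The main obstacle I anticipate is step (2)–(3): making precise how the single-valued period map interacts with the \emph{tangential} versus \emph{finite} basepoint structure and with the deconcatenation that underlies the $F\mapsto F_j$ operation — in particular, justifying that the single-valued "transport from $x$ to $y$ of $\omega_\Sigma$" can be differentiated in the upper endpoint $y$ (morally, $\partial_{\bar y}$ acting on $\log|\tfrac{y-\sigma_j}{x-\sigma_j}|^2$ produces the $\frac{d\bar z}{\bar z - \bar y}$ term under the integral). This is a single-valued analogue of the iterated-integral manipulations in the second proof of Proposition \ref{lem: ConvergentZi}, and the bookkeeping of the anti-holomorphic variable and of convergence of the $\C$-integrals (which holds as formal power series, i.e. term by term in the $s_i$) is where the real work lies. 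Everything else is a formal consequence of Lemmas \ref{lem: abelianisationofF}, \ref{lem: abelianisationofFbeta}, \ref{lem: svlogint} and formula \eqref{svIomegaabelian}.
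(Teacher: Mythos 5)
Your proposal correctly identifies several ingredients that do appear in the paper's argument (Lemma \ref{lem: abelianisationofFbeta} to organise the sum over words, the abelianisation formula \eqref{svIomegaabelian}, and repeated use of Lemma \ref{lem: svlogint} to produce the logarithmic factors), but the central step is missing, and the route you sketch for it does not work as stated. The groupoid factorisation $I^\s(x,y)=I^\s(x,z)\,I^\s(z,y)$ combined with \eqref{SubDerivation} gives only the pointwise identity $\overline{I^\s(x,y)_j}=\overline{I^\s(x,z)}\;\overline{I^\s(z,y)_j}+\overline{I^\s(x,z)_j}$ for each \emph{fixed} intermediate basepoint $z$; there is no mechanism in this identity that turns the intermediate point into an integration variable over $\C$ against the kernel $\bigl(\frac{d\overline{z}}{\overline{z}-\overline{y}}-\frac{d\overline{z}}{\overline{z}-\overline{x}}\bigr)\wedge\frac{dz}{z-\sigma_j}$. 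Moreover your claim that ``for $I^\s(z,y)$ only the length-one part terminating in $e_j$ survives'' is false for a finite intermediate basepoint: in the second proof of Proposition \ref{lem: ConvergentZi} that collapse happens only because the second factor is a transport in the punctured tangent space at $\sigma_i$, where the connection localises to $e_i\omega_i$; nothing analogous holds here. Finally, the single-valued transport does not satisfy the naive endpoint differential equation you invoke (``$\partial_{\overline y}$ produces the $\frac{d\overline{z}}{\overline{z}-\overline{y}}$ term''), since $I^\s$ is built from a holomorphic and an anti-holomorphic series multiplied in a fixed order with a modified alphabet, so the last-letter extraction does not commute with this two-sided structure before abelianisation — this is exactly the obstacle you flag but do not resolve.

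The paper's proof fills this gap by an entirely cohomological argument: since $x,y$ are finite basepoints, the coefficients of $I^{\mm}_\gamma(x,y)$ are matrix coefficients of Beilinson's relative cohomology $H^{m+1}(X_\Sigma^{m+1},Y^{m+1})$; expanding the shuffle $e_0^{\sha m_0}\sha\cdots\sha e_n^{\sha m_n}e_j$ and permuting coordinates rewrites the beta-quotient coefficient as a single framing on $H^{m+1}(X_\Sigma^{m+1},\widetilde{Y}^{m+1})$ with Betti class the cube $\gamma(C_{m+1})$; the image of that homology framing under the map $c_0^\vee$ of \cite[\S 4.4]{BD1} is the explicit logarithmic form $\nu=\pm\bigwedge_i\bigl(\frac{dz_i}{z_i-z}-\frac{dz_i}{z_i-x}\bigr)\wedge\bigl(\frac{dz}{z-y}-\frac{dz}{z-x}\bigr)$, and Theorem 3.17 of \cite{BD1} then expresses the single-valued period as an $(m+1)$-fold volume integral, which Lemma \ref{lem: svlogint} collapses $m$ times to the one remaining complex integral in \eqref{svIomegaxij}. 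This identification of the anti-holomorphic kernel via $c_0^\vee$ and the single-valued integration formula is the real content of the proposition, and it is precisely the part your outline leaves unproved; as it stands the proposal has a genuine gap rather than an alternative proof.
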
 

    \begin{proof} The following  argument is slightly more intuitive using motivic, rather than de Rham periods, so we shall first compute $I_{\gamma}^{\mm}(x,y) \in   {}_x\Pi_{y}^\varpi(\Pe^{\mm})$, the image under the universal comparison map (\S\ref{subsect: NotGeometric} (5)) of a path $\gamma \in \pi_1(\C\backslash \Sigma, x,y)$, and then use the projection
    $$ I^{\varpi}(x,y) = \pi^{\mm,+}_{\varpi} \left(I_{\gamma}^{\mm}(x,y)\right) $$
    to deduce a formula for $I^{\varpi}(x,y)$.  Since $x,y$ are  ordinary basepoints, the motive underlying the torsor of paths  is given by Beilinson's  cosimplicial construction \cite[\S3.3]{delignegoncharov} and  $$I^{\mm}_\gamma(x,y) = \sum_{w \in \{e_0,\ldots, e_n\}^{\times} ,  |w|=\ell } [ H^{\ell} (X_{\Sigma}^{\ell}, Y^{\ell}),    \!\left[ \gamma  \, \Delta_{\ell}\right],w]^{\mm}\,  w$$
    where $|w|$ denotes the length of a word $w$, and the  divisor $ Y^\ell \subset X_{\Sigma}^{\ell}$ is  
    $$Y^{\ell} = \{z_1=x\} \cup \{z_1=z_2\} \cup \ldots \cup \{z_{\ell-1}=z_{\ell}\} \cup \{z_{\ell}=y\}\ ,$$
    and $\Delta_{\ell}$ is the standard simplex
    $$\Delta_{\ell} = \{ t_i \in \R\ : \  0 \leq t_1\leq t_2 \leq \ldots \leq t_{\ell} \leq 1 \} \subset \R^{\ell}\ .$$
    The coordinates $z_1,\ldots, z_{\ell}$ are the coordinates on $X_{\Sigma}^{\ell}$. 
    By Lemma \ref{lem: abelianisationofFbeta} the coefficient of $\frac{s_0^{m_0}}{m_0!} \ldots \frac{s_n^{m_n}}{m_n!}$ in 
    $\overline{I_{\gamma}^{\mm}(x,y)_j}$ is  
    $$  \left[H^{m+1} (X_{\Sigma}^{m+1}, Y^{m+1}) \ , \   [ \gamma \,\Delta_{m+1}]  \ , \ e_0^{\sha m_0}\sha \ldots \sha e_n^{\sha m_n} e_j  \right]^{\mm}   $$
    where $m = m_0+\cdots + m_n$. 
    By expanding out the shuffle products we get a sum of $m!$ terms indexed by permutations $\sigma\in \mathfrak{S}_m$. After permuting the coordinates the sum can be rewritten as
    \begin{equation} \label{MetabMotive}  \xi=  \left[H^{m+1} (X_{\Sigma}^{m+1} \ , \   \widetilde{Y}^{m+1}) \ , \  \left[ \gamma\,  C_{m+1}\right]  \ , \ e_0^{m_0} \ldots e_n^{m_n} e_j \right]^{\mm} 
    \end{equation} 
    where
    $$\widetilde{Y}^{m+1} = \bigcup_{\sigma\in \mathfrak{S}_m} \sigma Y^{m+1} \qquad \hbox{ and }\qquad    C_{m+1} = \bigcup_{\sigma\in\mathfrak{S}_m} \sigma \, \Delta_{m+1}$$
    and $\sigma \in \mathfrak{S}_{m}$ ranges over permutations of all but the last coordinate, i.e., 
    $$\sigma(z_1,\ldots,z_m,z_{m+1})=(z_{\sigma^{-1}(1)},\ldots,z_{\sigma^{-1}(m)},z_{m+1})\ .$$
    The union of the $m!$ simplices $\sigma\Delta_{m+1}$ glue together to form a cone
    $$C_{m+1} =  \{  t_i \in \R: 0\leq t_1,\ldots, t_{m} \leq t_{m+1}\leq 1 \} \subset \R^{m+1} \ .$$
    The boundary of $\gamma (C_{m+1})$  is contained in the complex points of the divisor $V\subset X_{\Sigma}^{m+1}$ defined by the union of $\{z_i=x\}$,  $\{z_i = z_{m+1}\}$ for $1\leq i \leq m$, and  $\{z_{m+1}=y\}$. 
    In \eqref{MetabMotive}, therefore, we can replace $H^{m+1} (X_{\Sigma}^{m+1}, \widetilde{Y}^{m+1})$ with 
    $H^{m+1} (X_{\Sigma}^{m+1}, V)$.
    Now take the image of   \eqref{MetabMotive} under the projection $\pi^{\mm,+}_{\varpi}.$  By \cite[\S 4.4]{BD1}, the image of the homology framing under the rational period map $c_0^{\vee}$ studied in loc. cit. is the differential form (writing $z=z_{m+1}$):
    $$\nu= (-1)^{\frac{m(m+1)}{2}}\bigwedge_{i=1}^m \left(  \frac{dz_i}{z_i-z} -\frac{dz_i}{z_i-x}    \right) \wedge  \left(\frac{dz}{z-y} -\frac{dz}{z-x}\right) \ . $$
    It follows, then, from Theorem 3.17 in \cite{BD1} that the single-valued period of 
    $ \pi^{\mm,+}_{\varpi}  \xi$ is 
    $$\frac{(-1)^{\frac{m(m+1)}{2}}}{(-2\pi i)^{m+1}}\iint_{\C^{m+1}}  \overline{\nu} \wedge  \frac{dz_1}{z_1 -\beta_1}\wedge \cdots \wedge\frac{dz_m}{z_m- \beta_m} \wedge \frac{dz}{z-\sigma_j}$$
    where $(\beta_1,\ldots, \beta_m)$ is $(0^{m_0}, \sigma_1^{m_1}, \ldots, \sigma_n^{m_n})$ (a sequence of $m_0$ $0$'s followed by $m_1$ $\sigma_1$'s and so on), corresponding to the differential form associated to the word $e_0^{m_0} \ldots e_n^{m_n} e_j$ in the affine ring of the de Rham fundamental groupoid. 
    Now rearrange the integrand and apply Lemma \ref{lem: svlogint} repeatedly to perform the $m$ integrals:
    $$ \frac{-1}{2\pi i} \iint_{\C}     \left(  \frac{d\overline{z_i}}{\overline{z_i}-\overline{z}} -\frac{d\overline{z_i}}{\overline{z_i}-\overline{x}}    \right)      \wedge \frac{dz_i}{z_i -\beta_i} = \log \left( \left| \frac{z-\beta_i}{x- \beta_i}\right|^2 \right)  $$  
    for $1\leq i \leq m$ to obtain
    $$-\frac{1}{2\pi i} \iint_{\C}  \log^{m_0} \left(\left|\frac{z}{x}\right|^{2}\right)  \prod_{k=1}^n \log^{m_k} \left(  \left| \frac{1-z \sigma_k^{-1}}{1-x \sigma_k^{-1}}\right|^{2} \right)\left(  \frac{d \overline{z}}{\overline{z}- \overline{y}}  -\frac{d\overline{z}}{\overline{z}-\overline{x}}\right)\wedge\frac{d z }{z - \sigma_j}  \ .$$
    This yields \eqref{svIomegaxij} after expanding the exponential factors as power series in the $s_i$.
    \end{proof} 
    
    Our next step is to replace $x$ by $t_0$ and $y$ by $-t_i$ in \eqref{svIomegaxij}. This is slightly subtle because Beilinsons's description of the motivic fundamental group with finite basepoints that was used in the proof of Proposition \ref{prop: beta quotient generic iterated integrals sv} is not available in the case of tangential basepoints. Thus, we will proceed as in \cite[\S 4]{delignegoncharov} and use the composition of paths to travel between two tangential basepoints by using finite basepoints as intermediate steps. We will thus need to understand the behaviour of single-valued versions of iterated integrals between a tangential basepoint and an infinitesimally close finite basepoint. 
    
    We will consider non-commutative power series $F(\tau)\in \mathbb{C}\langle\langle e_0,e_1,\ldots,e_n\rangle\rangle$ depending on a ``small'' rational point $\tau\in k^\times$ and having constant coefficient $1$. We say that such a series $F(\tau)$ has \emph{logarithmic growth} if each of its coefficients $a(\tau)$ satisfies $a(\tau)=O(\log^r|\tau|)$ for some integer $r$ (that may depend on the coefficient) when $\tau\to 0$. We say that such a series $F(\tau)$ is \emph{asymptotic to $1$} and write $F(\tau) \sim_{\tau \to 0}1$ if each of its non-constant coefficients $a(\tau)$ satisfies $a(\tau) = O(|\tau|^{1-\varepsilon})$ for every $\varepsilon >0$ when $\tau\to 0$. The class of series with logarithmic growth and the subclass of series asymptotic to $1$ are stable under products and inversion. This implies that the following relation is an equivalence relation that is compatible with products:
    $$F(\tau) \underset{\tau\to 0}{\sim} G(\tau) \qquad \Leftrightarrow \qquad F(\tau) G(\tau)^{-1} \underset{\tau\to 0}{\sim} 1\ .$$
    
    \begin{lem}\label{lem: infinitesimal sv iterated integral}
    We have, for $\tau\in X_\Sigma(k)$:
    $$I^\s(t_0,\tau) \underset{\tau\to 0}{\sim} \exp(e_0\log|\tau|^2)\ .$$
    \end{lem}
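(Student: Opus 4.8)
The plan is to reduce the statement about the single-valued associator $I^\s(t_0,\tau)$ to an analysis of the de Rham (equivalently motivic) associator $I^\varpi(t_0,\tau)$ together with the behaviour of the abelianisation, then to control the error terms using the growth estimates introduced just above the statement.

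\medskip

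First I would observe that, by the definition $I^\s = \s(I^\varpi)$ and the compatibility of the single-valued period map with products, it suffices to understand the class of $I^\varpi(t_0,\tau)$ modulo series asymptotic to $1$, combined with the explicit computation of the abelianisation from \eqref{svIomegaabelian}. Concretely, write $I^\s(t_0,\tau) = \exp(e_0 \log|\tau|^2)\, R(\tau)$ where $R(\tau) = \exp(-e_0\log|\tau|^2)\, I^\s(t_0,\tau)$; the goal is to prove $R(\tau)\sim_{\tau\to 0} 1$. The key point is that the prefactor $\exp(e_0\log|\tau|^2)$ carries \emph{all} the logarithmic divergence as $\tau \to 0$: this is precisely the statement that the tangential-base-point regularisation at $0$ is obtained by formally removing the $\log(\tau)$-part of the asymptotic expansion. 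So the plan is to prove that $I^\varpi(t_0,\tau)$ — and hence its single-valued image — has an asymptotic expansion of the form $I^\varpi(t_0,\tau) \sim \mathcal{Z}^{0,\varpi}\cdot (\text{something})$, but more efficiently to use the path-composition identity $I^\varpi(t_0,\tau) = I^\varpi(t_0,\tau)$ directly: the series $I^\varpi(t_0,\tau)$ is the solution of the KZ-type differential equation $dF = F\cdot(\sum_k e_k\,\omega_k)$ normalised at the tangential base point $t_0$, and near $\tau = 0$ only the term $e_0\,d\log\tau$ is singular, so $I^\varpi(t_0,\tau) = \exp(e_0\log\tau)\cdot H(\tau)$ with $H(\tau)$ holomorphic at $0$ and $H(0) = 1$ (this is the standard local analysis of a regular singular connection, together with the fact that the tangential regularisation at $t_0$ fixes the normalisation $H(0)=1$). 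Applying $\s$, the holomorphic factor $H(\tau)$ with $H(0)=1$ gives a series whose non-constant coefficients vanish at $\tau=0$ at least like $|\tau|$, hence is asymptotic to $1$; and $\s(\exp(e_0\log\tau)) = \exp(e_0\log|\tau|^2)$ because the single-valued period of $\log$ (a period of the Kummer motive) is $\log|\cdot|^2$.

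\medskip

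In slightly more detail, the steps I would carry out are: (1) recall that $I^\varpi(t_0,\tau) = \pi^{\mm,+}_\varpi I^\mm_\gamma(t_0,\tau)$ for any path $\gamma$ from $t_0$ to $\tau$, reducing everything to ordinary iterated integrals $\int_\gamma w$ of words in $\omega_0,\ldots,\omega_n$; (2) for such a path chosen to be (close to) the straight segment $t\mapsto \tau t$, analyse $\int_\gamma w$ as $\tau\to 0$ by separating out the letters $e_0$ at the \emph{left} end, writing each iterated integral as a polynomial in $\log\tau$ with coefficients that are $O(|\tau|^{1-\varepsilon})$ plus a constant; this is the shuffle-regularisation statement and can be done by induction on word length exactly as in \cite[\S 4]{delignegoncharov} or via Lemma \ref{lem: iterated integrals length one}; (3) reassemble: the generating series of the $\log\tau$-polynomial parts is exactly $\exp(e_0\log\tau)$ (group-likeness forces the shape, and the length-one coefficient $\int_{\gamma^\tau}\omega_0 = \log\tau$ by Lemma \ref{lem: iterated integrals length one} pins down the exponent), while the generating series of the remaining parts is asymptotic to $1$; (4) apply $\s$ coefficientwise, using that $\s$ sends the motivic period $\log^\mm\tau$ to $\log|\tau|^2$ and is multiplicative, and that it preserves the classes ``logarithmic growth'' and ``asymptotic to $1$'' since these are defined purely in terms of the archimedean size of the (real) numbers involved.

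\medskip

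The main obstacle I anticipate is step (2)/(3): making rigorous the claim that the only source of divergence as $\tau\to 0$ is the leftmost string of $e_0$'s, i.e. that after stripping off the factor $\exp(e_0\log\tau)$ the remaining series extends holomorphically (or at least is asymptotic to $1$) at $\tau = 0$. One has to be careful that the path $\gamma$ from $t_0$ to $\tau$ can be taken within $X_\Sigma$ and close to the straight segment, that the regularisation conventions at the tangential base point $t_0$ are compatible with this choice, and that the inductive estimate on coefficients genuinely produces error terms $O(|\tau|^{1-\varepsilon})$ and not merely $O(|\tau|\log^r|\tau|)$ — though the latter is in any case absorbed into the definition of ``$\sim 1$''. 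Once this local analysis is in place, passing to single-valued periods is formal, using only the multiplicativity of $\s$, its value $\log|\tau|^2$ on $\log\tau$, and the stability of the growth classes under $\s$.
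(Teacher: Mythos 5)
Your holomorphic analysis (steps (1)--(3)) is fine and coincides with an input the paper also uses, namely $I(t_0,\tau)\sim_{\tau\to 0}\exp(e_0\log\tau)$, where $I(t_0,\tau)$ is the generating series of honest (regularised) iterated integrals. The gap is in step (4), the transfer to the single-valued series. You factor $I^{\varpi}(t_0,\tau)=\exp(e_0\log^{\varpi}\tau)\,H^{\varpi}(\tau)$ and argue that since the \emph{periods} of the nontrivial coefficients of $H^{\varpi}$ are $O(|\tau|^{1-\varepsilon})$, the same holds for their \emph{single-valued} periods, ``since the growth classes are defined purely in terms of archimedean size''. This is not a formal property of $\s$: for a coefficient $\xi(\tau)\in\Pe^{\varpi}$, the number $\s(\xi(\tau))$ is not a function of the number $\per(\xi(\tau))$; it is an entry of $\overline{P(\tau)}^{-1}P(\tau)$ and therefore mixes in \emph{other} periods of the same object. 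Concretely, the coefficient of a word $w$ in $I^{\s}(t_0,\tau)$ is a sum over decompositions of $w$ of products of antiholomorphic contributions (coming from $\overline{P}^{-1}$) with holomorphic iterated integrals attached to \emph{proper subwords} of $w$, and those subword coefficients (e.g.\ constants like $\zeta(2)$ or powers of $\log\bar\tau$) do not vanish as $\tau\to 0$. So the claim ``$\s$ preserves the class asymptotic to $1$'' is, for this family, essentially equivalent to the lemma you are trying to prove, and as stated it is circular.

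The paper closes exactly this hole by invoking structure rather than a growth-transfer principle: by the generalisation of \cite[(5.4)]{brownSVMZV} to $\Pro^1$ minus several points, one has a factorisation $I^{\s}(t_0,\tau)=I(t_0,\tau)\,\widetilde{I}(t_0,\tau)$, where $\widetilde{I}$ is the complex conjugate series with the letters $e_k$ ($k\geq 1$) replaced by certain power series, $e_0$ unchanged, and words reversed. Then the same holomorphic asymptotic you established gives $I(t_0,\tau)\sim\exp(e_0\log\tau)$ and $\widetilde{I}(t_0,\tau)\sim\exp(e_0\log\overline{\tau})$, and multiplying (using that $\sim$ is compatible with products) yields $I^{\s}(t_0,\tau)\sim\exp(e_0\log|\tau|^2)$. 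To repair your argument you would need either this hol/antihol factorisation of $I^{\s}$, or some equivalent explicit description of the single-valued iterated integrals as functions of $\tau,\overline{\tau}$ near $0$ (e.g.\ via the differential equations they satisfy in both variables together with single-valuedness); the purely formal appeal to multiplicativity of $\s$ and its value $\log|\tau|^2$ on the motivic logarithm does not suffice.
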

    
    \begin{proof}
    By a slight generalisation of \cite[(5.4)]{brownSVMZV} to the case of the projective line minus several points $\Sigma$ (which follows, for example, by the argument in \emph{loc. cit.}, \S6.3), we have an expression of the form
    $$I^{\s}(t_0, \tau) =  I(t_0,\tau) \widetilde{I} (t_0,\tau) $$
    where $\widetilde{I}$ is  the complex conjugate of the series $I(t_0,\tau)$, in which  the letters $e_i$, for $i\geq 1$, are replaced with certain power series,  the  letter $e_0$ is unchanged, and all words are reversed.  Note that in that paper, the order is reversed because iterated integrals were computed from left to right, but it makes no difference to the conclusion of the lemma.  
            Since $$ I(t_0,\tau) \underset{\tau\to 0}{\sim}\exp(e_0\log \tau )\ , $$ it follows also that
    $\widetilde{I} (t_0,\tau)   \underset{\tau\to 0}{\sim}\exp( e_0\log \overline{\tau }) $
 and hence $I^{\s}(t_0, \tau) \underset{\tau\to 0}{\sim} \exp( e_0 \log |\tau|^2)$. 
    \end{proof}
   For all $1\leq j\leq n$ we define:
    \begin{equation} \label{OmegajDef sv} \Omega_j^\s =   |z|^{2s_0} \prod_{k=1}^n |1-z \sigma_{k}^{-1}|^{2s_k}\, \frac{dz}{z-\sigma_j} \ .
    \end{equation} 
    Let $\Omega^{\s,\ren_i}_j$ denote its renormalised versions (Definition \ref{definition: renormalisedformssv})  with respect to  $\{0, \sigma_i\}$, given by
    $$\Omega_j^{\s,\ren,i} = \Big(|z|^{2s_0}\prod_{k\neq i}|1-z\sigma_k^{-1}|^{2s_k} - \mathbf{1}_{i=j}\, |\sigma_i|^{2s_0}\prod_{k\neq i}|1-\sigma_i\sigma_k^{-1}|^{2s_k}\Big)|1-z\sigma_i^{-1}|^{2s_i}\frac{dz}{z-\sigma_j}\ \cdot$$
    According to Proposition \ref{prop: taylor series LSigma sv}, the integral $-\frac{1}{2\pi i}\iint_{\mathbb{C}}\left( \frac{d \overline{z}}{ \overline{z}-\overline{\sigma_i} }    -  \frac{d \overline{z}}{ \overline{z} }  \right)  \wedge\Omega_j^{\s,\ren_i}$ defines a holomorphic function of the parameters $\underline{s}$ around $\underline{s}=0$ and thus has a Taylor expansion. We now identify this Taylor expansion with the $j$-th beta quotient $\overline{\mathcal{Z}^{i,\s}_j}$ of the generalised single-valued associator $\mathcal{Z}^{i,\s}$.

    \begin{prop}   \label{thmsvZios} 
    
    For all $1\leq j \leq n$, the $j$-th beta quotient of $\overline{\mathcal{Z}^{i,\s}_j}$ is the Taylor series at $\underline{s}=0$ of the integral 
    \begin{equation}\label{svZio}
    -\frac{1}{2\pi i}\iint_{\mathbb{C}}\left( \frac{d \overline{z}}{ \overline{z}-\overline{\sigma_i} }    -  \frac{d \overline{z}}{ \overline{z} }  \right)  \wedge\Omega_j^{\s,\ren_i}\ .
    \end{equation}
    \end{prop}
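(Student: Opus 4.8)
The statement is the single-valued counterpart of Proposition~\ref{lem: ConvergentZi}, and the natural plan is to rerun the \emph{second} proof of that proposition with iterated integrals along a path replaced by the single-valued generating series $I^\s(-,-)$ of the present section. Since $\mathcal{Z}^{i,\s}=I^\s(t_0,-t_i)$ carries no choice of path, the point of departure is the composition-of-torsors identity $\mathcal{Z}^{i,\s}=I^\s(t_0,x)\cdot I^\s(x,-t_i)$ for a finite basepoint $x\in X_\Sigma(k)$ lying on a path from $t_0$ to $-t_i$ and close to $\sigma_i$; the left-hand side being independent of $x$, we may eventually let $x\to\sigma_i$. Applying the product rule \eqref{SubDerivation} and abelianising gives
\[ \overline{\mathcal{Z}^{i,\s}_j}=\overline{I^\s(t_0,x)}\cdot\overline{I^\s(x,-t_i)_j}+\overline{I^\s(t_0,x)_j}\ , \]
so the proof reduces to computing the abelianisation and $j$-th beta quotient of the two factors. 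For $I^\s(t_0,x)$ one composes once more through an auxiliary point $\tau_0$ and lets $\tau_0\to 0$: by Lemma~\ref{lem: infinitesimal sv iterated integral} the factor $I^\s(t_0,\tau_0)$ is asymptotic to $\exp(e_0\log|\tau_0|^2)$, which has constant term $1$ and no word ending in a letter $e_j$ with $j\geq1$, so Proposition~\ref{prop: beta quotient generic iterated integrals sv} together with \eqref{svIomegaabelian} should yield $\overline{I^\s(t_0,x)}=|x|^{2s_0}\prod_{k=1}^n|1-x\sigma_k^{-1}|^{2s_k}$ and $\overline{I^\s(t_0,x)_j}=-\frac{1}{2\pi i}\iint_{\C}|z|^{2s_0}\prod_{k=1}^n|1-z\sigma_k^{-1}|^{2s_k}\big(\frac{d\overline z}{\overline z-\overline x}-\frac{d\overline z}{\overline z}\big)\wedge\frac{dz}{z-\sigma_j}$. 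For $I^\s(x,-t_i)$, near $\sigma_i$ the connection form $\omega_\Sigma$ localises to $e_i\omega_i$, so that (exactly as in \eqref{inprooftangentialexp}, but for the single-valued logarithm, via the rank-two computation in the proof of Proposition~\ref{prop: abelianisation Z i s} and \cite{BD1}) it is an exponential in $e_i$ alone; hence $\overline{I^\s(x,-t_i)_j}=0$ for $j\neq i$, while $\overline{I^\s(x,-t_i)_i}$ is the single-valued analogue of $\overline{I_{\nu_x}(e_i\omega_i)_i}$ from the second proof of Proposition~\ref{lem: ConvergentZi}, namely $\tfrac{1}{s_i}\big(1-|1-x\sigma_i^{-1}|^{-2s_i}\big)$.

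With these inputs in hand, the case $j\neq i$ is immediate: the cross term vanishes, so $\overline{\mathcal{Z}^{i,\s}_j}=\lim_{x\to\sigma_i}\overline{I^\s(t_0,x)_j}$, and since $\omega_j$ has vanishing residue at $0$ and at $\sigma_i$ for $1\leq j\neq i\leq n$ we have $\Omega_j^{\s,\ren_i}=\Omega_j^\s$, so that the limit is precisely the integral \eqref{svZio}. For $j=i$ both summands $\overline{I^\s(t_0,x)}\cdot\overline{I^\s(x,-t_i)_i}$ and $\overline{I^\s(t_0,x)_i}$ diverge as $x\to\sigma_i$, but their sum does not. Factoring out $|1-x\sigma_i^{-1}|^{2s_i}$ one rewrites the first summand as $|x|^{2s_0}\prod_{k\neq i}|1-x\sigma_k^{-1}|^{2s_k}\cdot\tfrac{1}{s_i}\big(|1-x\sigma_i^{-1}|^{2s_i}-1\big)$, and the single-valued analogue of the identity \eqref{logidentityTBreg}, supplied by Lemma~\ref{lem: svlogintegralregterm} after the change of variables $z\leftrightarrow\sigma_i-z$, expresses $\tfrac{1}{s_i}\big(|1-x\sigma_i^{-1}|^{2s_i}-1\big)$ as a single-valued volume integral of $|1-z\sigma_i^{-1}|^{2s_i}\frac{dz}{z-\sigma_i}$. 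Combining the two summands then collapses them into a single integral whose integrand is exactly $\big(\frac{d\overline z}{\overline z-\overline x}-\frac{d\overline z}{\overline z}\big)\wedge\Omega_i^{\s,\ren_i}$, the residue-subtraction term in $\Omega_i^{\s,\ren_i}$ being produced by the $\mathbf{1}_{i=j}$ part of the first summand; letting $x\to\sigma_i$ recovers \eqref{svZio}. Since by Proposition~\ref{proprenormsv} (see also Proposition~\ref{prop: taylor series LSigma sv}) the integral \eqref{svZio} is holomorphic at $\underline s=0$, matching Taylor coefficients finishes the argument, consistently with the abelianisation already computed in Proposition~\ref{prop: abelianisation Z i s}.

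The main obstacle, precisely as in Proposition~\ref{lem: ConvergentZi} and Proposition~\ref{proprenormsv}, is the justification of the limit interchanges: one must pass the limits $\tau_0\to 0$ and $x\to\sigma_i$ inside the double integrals coefficient-by-coefficient in the $\underline s$-expansion, and verify that the divergences of the individual terms in the $j=i$ case cancel against the explicit residue subtractions built into $\Omega_i^{\s,\ren_i}$. This rests on local integrability estimates in polar coordinates around $0$, around $\sigma_i$, and around $\infty$, of exactly the type carried out in the proof of Proposition~\ref{proprenormsv}; the whole point of renormalising $\Omega_j^\s$ with respect to $\{0,\sigma_i\}$ is to make the limiting integral \eqref{svZio} convergent in a neighbourhood of $\underline s=0$, so that its Taylor coefficients at $\underline s=0$ exist and coincide with those of $\overline{\mathcal{Z}^{i,\s}_j}$.
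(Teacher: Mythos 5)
Your argument is correct and is essentially the paper's own proof, reorganised: the paper cuts $\mathcal{Z}^{i,\s}=I^\s(t_0,\tau)\,I^\s(\tau,\sigma_i-\tau)\,I^\s(\sigma_i-\tau,-t_i)$ with a single symmetric regularisation parameter and applies $\mathrm{Reg}_{\tau\to 0}$ once, whereas you cut at one finite point $x$ near $\sigma_i$ and nest a second regularisation at the origin inside $I^\s(t_0,x)$, but the ingredients (composition of paths, the product rule \eqref{SubDerivation}, Lemma \ref{lem: infinitesimal sv iterated integral}, Proposition \ref{prop: beta quotient generic iterated integrals sv}, and the single-valued analogue of \eqref{logidentityTBreg} supplied by Lemma \ref{lem: svlogintegralregterm}) and the cancellation of divergences in the $j=i$ case are the same. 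One small precision: your claim that $I^\s(x,-t_i)$ \emph{is} an exponential in $e_i$ alone (hence $\overline{I^\s(x,-t_i)_j}=0$ for $j\neq i$) is not an identity at fixed $x$ and should be stated as the asymptotic equivalence $I^\s(x,-t_i)\sim_{x\to\sigma_i}\exp\bigl(-e_i\log|1-x\sigma_i^{-1}|^2\bigr)$, which is exactly Lemma \ref{lem: infinitesimal sv iterated integral} after the change of variables $x\mapsto 1-x\sigma_i^{-1}$; since the error terms are $O(|x-\sigma_i|^{1-\varepsilon})$ and the other factors have only logarithmic growth, the limit interchanges you flag at the end do go through.
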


    \begin{proof} By the  composition of paths formula, we have for any small enough $\tau\in X_\Sigma(k)$, the equality: 
    \begin{equation} \label{inproofziomega} \mathcal{Z}^{i,\s} = I^{\s}(t_0,\tau)  \, I^{\s}(\tau, \sigma_i-\tau)\,  I^{\s}(\sigma_i -\tau, -t_i) 
    \end{equation}
    Now Lemma \ref{lem: infinitesimal sv iterated integral} implies that we have 
    $$I^{\s}(t_0,\tau) \underset{\tau\to 0}{\sim}  \exp (e_0 \log |\tau|^2) \qquad \mbox{ and } \qquad  I^\s(\sigma_i-\tau, -t_i) \underset{\tau\to 0}{\sim}  \exp(-e_i\log|\tau|^2+e_i\log|\sigma_i|^2)\ .  $$
    (For the second claim one needs to apply the change of variables $x\mapsto 1-x\sigma_i^{-1}$.)
    Since $\mathcal{Z}^{i,\s}$ is independent of $\tau$,    we conclude that
    $$\overline{\mathcal{Z}^{i,\s}_j} =  \mathrm{Reg}_{\tau\rightarrow 0} \,  \overline{\left( I^{\s}(\tau, \sigma_i-\tau)   \exp(e_i \log|\sigma_i|^2)  \right)_j} $$
    where $\mathrm{Reg}_{\tau\rightarrow 0}$ means the following: formally set  $\log|\tau|^2$ to zero, and then take the limit as $\tau\rightarrow 0$. Since the coefficients in the formal power series  $I^{\s}(\tau, \sigma_i-\tau)$ 
    can be expressed as elements  in $\C[[\tau, \overline{\tau}]][\log|\tau|^2]$, this operation is well-defined. By \eqref{SubDerivation} we thus get:
    \begin{equation} \label{szioinprooflimit} \overline{\mathcal{Z}^{i,\s}_j}  =  \mathrm{Reg}_{\tau \rightarrow 0}  \left(    \overline{ I^{\s}(\tau, \sigma_i-\tau) } \, \overline{\left( |\sigma_i|^{2e_i} \right)_j} + \overline{  I^{\s}(\tau, \sigma_i-\tau)_j } \right)\ .
    \end{equation} 
    Note that by \eqref{svIomegaabelian} we have
    $$\overline{I^\s(\tau,\sigma_i-\tau)} = \left|\frac{\sigma_i-\tau}{\tau}\right|^{2s_0} \prod_{k=1}^n\left|\frac{1-(\sigma_i-\tau)\sigma_k^{-1}}{1-\tau\sigma_k^{-1}}\right|^{2s_k}$$
    so that
     $$\mathrm{Reg}_{\tau\to 0}(\overline{I^\s(\tau,\sigma_i-\tau)}) = |\sigma_i|^{2s_0}  \, |\sigma_i|^{-2s_i}\, \prod_{k\neq i}   |1-\sigma_i\sigma_k^{-1}|^{2s_k} \ .$$
    Indeed, the factors $|\tau|^{2s_0}$ and $|\tau|^{2s_i}$ are sent to $1$ by the regularisation map. We also have $\overline{(|\sigma_i|^{2e_i})_j}=\mathbf{1}_{i=j}\frac{1}{s_i}(|\sigma_i|^{2s_i}-1)$
    which yields
    \begin{equation}\label{eq: first summand in proof before}
    \mathrm{Reg}_{\tau\to 0}(\overline{ I^{\s}(\tau, \sigma_i-\tau) } \, \overline{\left( |\sigma_i|^{2e_i} \right)_j}) = \mathbf{1}_{i=j}|\sigma_i|^{2s_0} \,\left( \prod_{k\neq i} |1-\sigma_i\sigma_k^{-1}|^{2s_k} \right)\  \frac{1}{s_i}(1-|\sigma_i|^{-2s_i})\,\cdot
    \end{equation}
    Using a variant of Lemma \ref{lem: svlogintegralregterm} we interpret the right-most factor as:
    $$  \frac{1}{s_i} \left(1- |\sigma_i|^{-2s_i}\right)  =  \mathrm{Reg}_{\tau \rightarrow 0} \,\left(  \frac{1}{2\pi i} \iint_{\C} \left| 1- z \sigma_i^{-1} \right|^{2s_i} \left(\frac{d\overline{z}}{\overline{z}-\overline{\sigma_i} + \overline{\tau} }  -\frac{d\overline{z}}{\overline{z}} \right)\wedge \frac{dz}{z- \sigma_i}\right)  \ .$$
    This allows us to rewrite \eqref{eq: first summand in proof before} as
    \begin{equation}\label{first summand in proof}
    \mathrm{Reg}_{\tau\to 0} \Bigg(\frac{1}{2\pi i}\iint_{\mathbb{C}}\mathbf{1}_{i=j} |\sigma_i|^{2s_0}  \prod_{k\neq i}|1-\sigma_i\sigma_k^{-1}|^{2s_k} \times |1-z\sigma_i^{-1}|^{2s_i} \left(\frac{d\overline{z}}{\overline{z}-\overline{\sigma_i} + \overline{\tau} }  -\frac{d\overline{z}}{\overline{z}} \right)\wedge \frac{dz}{z- \sigma_i} \Bigg).
    \end{equation}
    By Proposition \ref{prop: beta quotient generic iterated integrals sv},
    $$\overline{I^\s(\tau,\sigma_i-\tau)_j} = -\frac{1}{2\pi i}\iint_{\mathbb{C}} \left(\left|\frac{z}{\tau}\right|^{2s_0} \prod_{k=1}^n \left| \frac{1-z \sigma_k^{-1}}{1-\tau  \sigma_k^{-1}}\right|^{2s_k} \right) \left( \frac{d \overline{z}}{\overline{z}- \overline{\sigma_i} +\overline{\tau}} - \frac{d{\overline{z}}}{ \overline{z}- \overline{\tau}}  \right)\wedge\frac{d z }{z - \sigma_j}\ \cdot
    $$
    which implies  that 
    \begin{equation}\label{second summand in proof}
    \mathrm{Reg}_{\tau\to 0}(\overline{I^\s(\tau,\sigma_i-\tau)_j}) = \mathrm{Reg}_{\tau\to 0} \left(-\frac{1}{2\pi i}\iint_{\mathbb{C}}\left(\frac{d\overline{z}}{\overline{z}-\overline{\sigma_i}+\overline{\tau}}-\frac{d\overline{z}}{\overline{z}}\right)\wedge \Omega_j^{\s}\right)\ .
    \end{equation}
  since one has 
        $$  \mathrm{Reg}_{\tau\to 0}\, \left(\left|\frac{z}{\tau}\right|^{2s_0} \prod_{k=1}^n \left| \frac{1-z \sigma_k^{-1}}{1-\tau  \sigma_k^{-1}}\right|^{2s_k} \right)    \frac{d z }{z - \sigma_j}  = \Omega_j^{\s}\ $$
        and  the left-hand side can be expanded out in terms in $\tau, \overline{\tau}$ and $\log|\tau|^2$, which factor out of the integral.
       Resubstituting \eqref{first summand in proof} and \eqref{second summand in proof} into \eqref{szioinprooflimit} yields
    $$\overline{\mathcal{Z}^{i,\s}_j} = \mathrm{Reg}_{\tau\to 0} \left(-\frac{1}{2\pi i}\iint_{\mathbb{C}}\left(\frac{d\overline{z}}{\overline{z}-\overline{\sigma_i}+\overline{\tau}}-\frac{d\overline{z}}{\overline{z}}\right)\wedge \Omega^{\ren,i}_j\right) = -\frac{1}{2\pi i}\iint_{\mathbb{C}}\left(\frac{d\overline{z}}{\overline{z}-\overline{\sigma_i}}-\frac{d\overline{z}}{\overline{z}}\right)\wedge \Omega^{\ren,i}_j\ .$$
    Here $\mathrm{Reg}_{\tau\to 0}$ is simply the limit when $\tau\to 0$ since the last integral is convergent by Proposition \ref{prop: taylor series LSigma sv}. The claim follows.
    \end{proof}

\subsection{Comparing \texorpdfstring{$L_\Sigma^\s$}{LSigma sv} and \texorpdfstring{$FL_\Sigma^\s$}{FLSigma sv}}

    We are now ready to prove part (\emph{ii}) of Theorem \ref{introthmperandsv} from the introduction.

    \begin{thm}\label{thm: FL equals L sv}
     For every $i,j$, $(FL_\Sigma^\s)_{ij}$ is the Taylor series at $\underline{s}=0$ of the single-valued Lauricella function $(L_\Sigma^\s)_{ij}$.
    \end{thm}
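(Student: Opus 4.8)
The plan is to mirror the proof of Theorem \ref{thm: FL equals L} step by step, replacing line integrals by single-valued (complex) integrals throughout. Recall that by definition $(FL_\Sigma^\s)_{ij} = \mathbf{1}_{i=j}\,\overline{\mathcal{Z}^{i,\s}} - s_j\,\overline{\mathcal{Z}^{i,\s}_j}$, while by Proposition \ref{prop: taylor series LSigma sv} the Taylor series of $(L_\Sigma^\s)_{ij}$ at $\underline{s}=0$ is
$$(L^\s_\Sigma)_{ij} = \mathbf{1}_{i=j}\Big(|\sigma_i|^{2s_0}\prod_{k\neq i}|1-\sigma_i\sigma_k^{-1}|^{2s_k}\Big)+\frac{s_j}{2\pi i}\iint_{\mathbb{C}}\left(\frac{d\overline{z}}{\overline{z}-\overline{\sigma_i}}-\frac{d\overline{z}}{\overline{z}}\right)\wedge \Omega_j^{\s,\ren_i}\ .$$
So the theorem amounts to two identifications: the diagonal ``boundary'' term $\overline{\mathcal{Z}^{i,\s}}$ with the product $|\sigma_i|^{2s_0}\prod_{k\neq i}|1-\sigma_i\sigma_k^{-1}|^{2s_k}$, and the integral term $-s_j\,\overline{\mathcal{Z}^{i,\s}_j}$ with $\frac{s_j}{2\pi i}\iint_{\mathbb{C}}\big(\tfrac{d\overline{z}}{\overline{z}-\overline{\sigma_i}}-\tfrac{d\overline{z}}{\overline{z}}\big)\wedge \Omega_j^{\s,\ren_i}$, i.e. with $-s_j$ times (minus) the integral \eqref{svZio}.

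First I would dispatch the abelianisation term: this is exactly the content of Proposition \ref{prop: abelianisation Z i s}, which already gives $\overline{\mathcal{Z}^{i,\s}} = |\sigma_i|^{2s_0}\prod_{k\neq i}|1-\sigma_i\sigma_k^{-1}|^{2s_k}$, matching the diagonal term in Proposition \ref{prop: taylor series LSigma sv} verbatim. Second, for the off-diagonal / integral part, I would invoke Proposition \ref{thmsvZios}, which identifies $\overline{\mathcal{Z}^{i,\s}_j}$ with the Taylor series at $\underline{s}=0$ of the convergent integral $-\frac{1}{2\pi i}\iint_{\mathbb{C}}\big(\tfrac{d\overline{z}}{\overline{z}-\overline{\sigma_i}}-\tfrac{d\overline{z}}{\overline{z}}\big)\wedge \Omega_j^{\s,\ren_i}$. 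Multiplying by $-s_j$ turns this into $\frac{s_j}{2\pi i}\iint_{\mathbb{C}}\big(\tfrac{d\overline{z}}{\overline{z}-\overline{\sigma_i}}-\tfrac{d\overline{z}}{\overline{z}}\big)\wedge \Omega_j^{\s,\ren_i}$, which is precisely the non-diagonal contribution in Proposition \ref{prop: taylor series LSigma sv}. Adding the two pieces gives $(FL_\Sigma^\s)_{ij} = (L_\Sigma^\s)_{ij}$ as formal power series in $\underline{s}$, and we are done.

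In this form the theorem is really a bookkeeping consequence of the two preceding propositions, so there is no genuine obstacle left at this stage — all the analytic work (convergence of the renormalised complex volume integrals near $\underline{s}=0$, the regularisation argument controlling the behaviour of $I^\s$ near tangential basepoints, and the single-valued integration formula of Lemma \ref{lem: svlogint}) has already been carried out in Section \ref{sect: Renorm} and in Propositions \ref{prop: abelianisation Z i s} and \ref{thmsvZios}. The only point requiring a word of care is sign and normalisation matching between the definition of $\Omega_j^{\s,\ren_i}$ used in Proposition \ref{thmsvZios} and the one appearing in Proposition \ref{prop: taylor series LSigma sv} (they are the same form, written with respect to $\{0,\sigma_i\}$), together with the sign flip coming from the $-s_j$ prefactor; this is exactly parallel to the comparison done in the proof of Theorem \ref{thm: FL equals L} and presents no difficulty.

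\begin{proof}
This follows from comparing the definition of $(FL_\Sigma^\s)_{ij}$ with the expression in Proposition \ref{prop: taylor series LSigma sv}, and using the expressions for $\overline{\mathcal{Z}^{i,\s}}$ and $\overline{\mathcal{Z}^{i,\s}_j}$ of Propositions \ref{prop: abelianisation Z i s} and \ref{thmsvZios}.
\end{proof}
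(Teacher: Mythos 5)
Your proposal is correct and is essentially the paper's own proof: both reduce the theorem to matching the definition of $(FL_\Sigma^\s)_{ij}$ against Proposition \ref{prop: taylor series LSigma sv}, with the diagonal term supplied by Proposition \ref{prop: abelianisation Z i s} and the integral term by Proposition \ref{thmsvZios}. Your sign check on the $-s_j$ prefactor against the $-\frac{1}{2\pi i}$ normalisation in \eqref{svZio} is exactly the only bookkeeping point involved, and it is handled correctly.
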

    
    \begin{proof}
    This follows from comparing the definition of $(FL^\s_\Sigma)_{ij}$ with the expression in Proposition \ref{prop: taylor series LSigma sv} and using the expressions for $\overline{\mathcal{Z}^{i,\s}}$ and $\overline{\mathcal{Z}^{i,\s}_j}$    from Propositions \ref{prop: abelianisation Z i s} and  \ref{thmsvZios}, respectively. 
    \end{proof}

\section{`Local' motivic coaction}

We compute the action of the motivic Galois group (or equivalently, the motivic coaction) on the full motivic torsor of paths, and use it to deduce a formula for the `local' coaction on the beta quotients and on the Lauricella functions viewed as formal power series in their parameters.

\subsection{Formula for the motivic Galois action} 
Since the ${}_i\Pi_j^\varpi$ are (dual to) realisations of ind-objects in the Tannakian category $\MT(k)$, they admit an action of the motivic Galois group. More precisely, the Galois group $G^\varpi_{\MT(k)}$ acts \emph{on the left} on the $\Q$-algebra  $\mathcal{O}({}_0\Pi_i^\varpi)$ and thus naturally acts \emph{on the right} on the set of points ${}_0\Pi_i^\varpi(R)$ for every $\Q$-algebra $R$. 
Let 
$$\lambda : G^{\varpi}_{\MT(k)} \rightarrow \G_m \;\; , \; g\mapsto \lambda_g$$ 
denote the homomorphism given by the action of $G^{\varpi}_{\MT(k)}$  on $\varpi(\Q(-1))= \Q$.

\begin{prop}  \label{propactiononZ} Let $R$ be any $\Q$-algebra. For every element $g\in G^{\varpi}_{\MT(k)}(R)$,  its right action on any $F\in {}_0\Pi_i^\varpi(R)$ is given by a version of Ihara's formula:
\begin{equation}  \label{gIharaaction}
  (F\cdot g)(e_0,e_1,\ldots, e_n)  =  F \left( \lambda_g e_0, \lambda_gG_1 e_1 G_1^{-1}, \ldots, \lambda_g G_n e_n G_n^{-1}\right) G_i \end{equation} 
where $G_k \in \Q \langle \langle e_0,\ldots, e_n \rangle \rangle $ is the group-like formal power series $G_k = {}_0 1_k \cdot g$ for all $1\leq k\leq n$.
  \end{prop}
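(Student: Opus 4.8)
## Proof Proposal

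The plan is to reduce the claim to the well-known Ihara formula for the motivic Galois action on the de Rham fundamental \emph{group} $\,_0\Pi_0^\varpi$ of $X_\Sigma$, together with the compatibility of that action with the groupoid (composition-of-paths) structure and with the natural maps to the fundamental groups of the intermediate curves $\mathbb{A}^1_k \setminus \{\sigma_k\}$.

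First I would recall the setup. The motivic Galois group $G^\varpi_{\MT(k)}$ acts on the affine ring $\mathcal{O}(\,_0\Pi_0^\varpi)$ on the left and hence on $\,_0\Pi_0^\varpi(R)$ on the right; by the theory of \cite{delignegoncharov} (and its recollection in \cite{brownnotesmot}), this action is given on a group-like series $\Phi \in \,_0\Pi_0^\varpi(R) \subset R\langle\langle e_0,\ldots,e_n\rangle\rangle$ by a formula of Ihara type: writing $g \in G^\varpi_{\MT(k)}(R)$ and letting $\phi_g = 1_0 \cdot g \in \,_0\Pi_0^\varpi(R)$ be the image of $g$ under the orbit map at the basepoint $t_0$, one has, for suitable local coordinates compatible with the chosen tangential basepoints, $(\Phi \cdot g)(e_0,\ldots,e_n) = \Phi(\lambda_g e_0, \lambda_g \phi_g^{(1)} e_1 (\phi_g^{(1)})^{-1}, \ldots)$, where the conjugating factors $\phi_g^{(k)}$ are obtained from $\phi_g$ by transport along a path to the puncture $\sigma_k$. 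The key structural input is that the $G^\varpi_{\MT(k)}$-action is functorial for the groupoid: if $p \in \,_0\Pi_i^\varpi$ and $q \in \,_i\Pi_0^\varpi$, then $(p \cdot g)(q \cdot g) = (pq)\cdot g$, and the action on the one-dimensional object $\mathbb{Q}(-1)$ is by the character $\lambda$, which accounts for the overall scalings $\lambda_g e_0$ and $\lambda_g G_k e_k G_k^{-1}$ (since each $e_k$, dual to $\omega_k = dx/(x-\sigma_k)$, has weight $-2$, i.e.\ transforms like $\mathbb{Q}(1)$).

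The main step is then to pass from the group $\,_0\Pi_0^\varpi$ to the torsor $\,_0\Pi_i^\varpi$. Here I would use the composition of paths: fix an element $c_i \in \,_0\Pi_i^\varpi(\mathbb{Q})$, namely the canonical de Rham path $\,_01_i = 1$, so that any $F \in \,_0\Pi_i^\varpi(R)$ can be written as $F = \Phi \cdot c_i$ with $\Phi = F \cdot c_i^{-1} \in \,_0\Pi_0^\varpi(R)$. Applying $g$ and using functoriality, $F\cdot g = (\Phi\cdot g)(c_i \cdot g)$, and by definition $c_i \cdot g = {}_01_i \cdot g = G_i$. The series $\Phi\cdot g$ is then evaluated by the Ihara formula for the group; since $\Phi = F \cdot c_i^{-1} = F \cdot G_i^{-1}$ as formal series (using $c_i = 1$), one gets $F\cdot g = (F\cdot g)$ with the substitution inside $F$ exactly as stated, followed by right-multiplication by $G_i$. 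The identification of the conjugating factors as $G_k = {}_01_k\cdot g$ (rather than transports of $\phi_g = G_0$ or $G_i$) I would justify by a second application of composition of paths: the inclusion $X_\Sigma \hookrightarrow \mathbb{A}^1_k\setminus\{\sigma_k\}$ is $G^\varpi_{\MT(k)}$-equivariant, and on $\mathbb{A}^1_k\setminus\{\sigma_k\}$ the Galois action on the local monodromy generator around $\sigma_k$ is by $\lambda_g$-scaling and conjugation by precisely the straight-path class from $t_0$ to $t_k$, whose de Rham realization is ${}_01_k$; pushing forward to $X_\Sigma$ conjugates $e_k$ by $G_k = {}_01_k\cdot g$.

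The hard part will be bookkeeping the tangential basepoints and the local coordinates carefully enough that the conjugating factors come out as ${}_01_k\cdot g$ on the nose, with the correct $\lambda_g$ powers and no stray signs or extra factors — in particular, verifying that the admissibility/normalization choices made earlier (the canonical paths ${}_01_k$ defined via the augmentation) are exactly the ones for which Ihara's formula takes this clean form, and checking that the formula is independent of the auxiliary decomposition $F = \Phi\cdot c_i$. Once the group case and groupoid-functoriality are in hand, though, the torsor case is essentially a formal manipulation.
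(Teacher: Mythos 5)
Your overall skeleton matches the paper's proof: compute the Galois action on local monodromy generators, transport by the canonical de Rham paths, and then use the torsor structure $F\mapsto F\cdot{}_01_i$ (with ${}_01_i\cdot g=G_i$) to pass from the group ${}_0\Pi_0^\varpi$ to the torsor ${}_0\Pi_i^\varpi$, which accounts for the right factor $G_i$. The genuine gap is in your justification of the conjugating factors $G_k={}_01_k\cdot g$, which is the actual content of the proposition (the ``mild generalisation'' of the classical Ihara formula). You propose to read them off from the map induced by the inclusion $X_\Sigma\hookrightarrow \A^1_k\setminus\{\sigma_k\}$ and then ``push forward to $X_\Sigma$''. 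This cannot work, for two reasons. First, functoriality goes the wrong way: the inclusion induces a map $\pi_1(X_\Sigma)\to\pi_1(\A^1\setminus\{\sigma_k\})$, so information about the action downstairs does not determine the action upstairs. Second, and more fatally, the unipotent fundamental group(oid) of $\A^1\setminus\{\sigma_k\}$ is abelian --- it is just $\Q(1)$, generated by $e_k$ --- so any conjugation becomes invisible there; the image of $G_k$ in $\Q\langle\langle e_k\rangle\rangle$ commutes with $\exp(e_k)$, and no conjugating factor can be extracted. The same objection applies to your first paragraph, where the conjugators are described vaguely as ``transports of $\phi_g$'': the whole point is that they are \emph{not} built from $\phi_g={}_01_0\cdot g$ but are the images ${}_01_k\cdot g$ of the canonical paths to the other punctures.

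The correct mechanism (the paper's) stays inside $X_\Sigma$ throughout: the element $\exp(e_k)\in{}_k\Pi_k^\varpi$ lies in the image of the local monodromy $\pi_1^{\mathrm{mot}}\big((T_{\sigma_k}\A^1)^\times, t_k\big)\cong\pi_1^{\mathrm{mot}}(\G_m,1)\cong\Q(1)$ attached to the tangential basepoint $t_k$, hence is scaled by $\lambda_g$; one then transports to the basepoint $t_0$ via the conjugation map ${}_k\Pi_k^\varpi\to{}_0\Pi_0^\varpi$, $S\mapsto ({}_01_k)S({}_01_k)^{-1}$, whose equivariance (it is groupoid composition) produces exactly $G_k\exp(\lambda_g e_k)G_k^{-1}$. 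Until you replace your intermediate-curve argument by something of this kind, the identification of the conjugators --- and hence formula \eqref{gIharaaction} --- is not established. (A small additional slip: from $F=\Phi\cdot{}_01_i$ with ${}_01_i=1$ as a power series you get $\Phi=F$, not $\Phi=F\cdot G_i^{-1}$; the $G_i$ only appears after applying $g$.)
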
 

\begin{proof} The argument is a very mild  generalisation of the  argument given in \cite[\S 5]{delignegoncharov} (with the reverse conventions) or \cite[Proposition 2.5]{brownICM} so we shall be brief.
One first computes the action of $g$ on ${}_0\Pi_0^\varpi$.   It acts   on the element $\exp(e_0) \in {}_0 \Pi_0^\varpi$  by scaling: 
$$\exp(e_0)\cdot g = \exp(\lambda_g e_0)$$ 
since $\exp(e_0)$ is in the image of the local monodromy
$\pi_1^{\mathrm{mot}}(\G_m, 1)$, which is isomorphic to $\Q(1)=\Q(-1)^\vee$. The fact that we get a $\lambda_g$ and not a $\lambda_g^{-1}$ is because $g$ acts on the right. Another way to see this is that $g$ acts on the element `coefficient of $e_0^n$', which lies in $\varpi(\Q(-n))$, by $\lambda_g^n$.
For all $1 \leq i \leq n$,  the element $\exp(e_{i}) \in {}_i\Pi_i^\varpi$  is in the image of the local monodromy
$$x\mapsto \sigma_i x:  (\G_m, 1) \To  ((T_{\sigma_i} \A^1_k)^{\times}, t_i) $$
and hence, by a similar argument, is also acted upon by $g$ by scaling $\exp(e_i)\cdot g = \exp(\lambda_g e_i)$. We transport this action back to ${}_0\Pi_0^\varpi$ via
$$ {}_i\Pi_i^\varpi \longrightarrow {}_0\Pi_0^\varpi \quad , \quad {}_iF_i\mapsto ({}_01_i)\, {}_iF_i\, ({}_01_i)^{-1} $$
where ${}_xF_y\in {}_x\Pi_y^\varpi$ denotes the element defined by a power series $F\in \Q\langle\langle e_0,\ldots,e_n\rangle\rangle$. Since the action of the motivic Galois group is compatible with the composition of paths,  we deduce
that $g$ acts on $\exp(e_i)\in {}_0\Pi_0^\varpi$ via $\exp(e_i)\mapsto G_i\exp(\lambda_g e_i)G_i^{-1}=\exp(\lambda_gG_ie_iG_i^{-1})$ for all $1\leq i\leq n$ since $G_i$ is by definition ${}_01_i\cdot g$. 
Finally, use the torsor structure 
$${}_0\Pi_0^\varpi \longrightarrow {}_0\Pi_i^\varpi \quad ,\quad {}_0F_0 \mapsto {}_0F_0 \, {}_01_i$$
to conclude that the action of $g$ on any $F \in {}_0 \Pi_i^\varpi $ is indeed as claimed.
\end{proof}

The motivic Galois group acts in (at least) two different ways on the set ${}_0\Pi_i^\varpi(\Pe^\mm)$:
\begin{enumerate}
    \item  \emph{on the right} via the \emph{Ihara action} \eqref{gIharaaction} for $R=\Pe^\mm$, described in Proposition \ref{gIharaaction};
    \item  \emph{on the left} via its action \emph{on the coefficients} $\Pe^\mm$, i.e.,   term by term on the coefficients of formal power series in $\mathcal{P}^\mm\langle\langle e_0,\ldots,e_n\rangle\rangle$.
\end{enumerate}
We are interested in computing the action  (2) on the generalised motivic associators $\mathcal{Z}^{i,\mm}$. The next lemma shows that this action is  equivalent to  the action  (1) on these elements.

\begin{lem}\label{lem: two actions}
The Ihara action (1) and the action on the coefficients (2) coincide on the motivic associator $\mathcal{Z}^{i,\mm}$, i.e., we have for every $g\in G^\varpi_{\MT(k)}(\Q)$:
$$\mathcal{Z}^{i,\mm}\cdot g = g\cdot \mathcal{Z}^{i,\mm}\ .$$
\end{lem}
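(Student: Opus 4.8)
The plan is to unwind the definitions of the two actions and show that they differ, on a motivic period of the form $[\Or(\pi_1^{\mathrm{mot}}(X_\Sigma,t_0,-t_i)),\gamma_i^{\mathrm{B}},w]^{\mm}$, only by whether the Galois group element $g$ enters via the de Rham fiber functor (through $\comp^{\mm}_{\mathrm{B},\varpi}$) or via the Betti fiber functor (through $\gamma_i^{\mathrm{B}}$). The essential input is that $\Zim = \comp^{\mm}_{\mathrm{B},\varpi}(\gamma_i^{\mathrm{B}})$ and that the universal comparison isomorphism $\comp^{\mm}_{\mathrm{B},\varpi}$ intertwines the torsor structures, so that the ambiguity in identifying ${}_0\Pi_i^{\mathrm{B}}$ with ${}_0\Pi_i^\varpi$ is precisely an element of the motivic Galois group acting on coefficients.

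First I would recall the standard dictionary for matrix coefficients in a Tannakian category: for a motivic period $\xi = [M,\sigma,\omega]^{\mm}$ with $\sigma\in\omega_{\mathrm{B}}(M)^\vee$ and $\omega\in\varpi(M)$, and for $g\in G^\varpi_{\MT(k)}(\Q) = \Aut^\otimes(\varpi)(\Q)$, the action on coefficients (2) is $g\cdot\xi = [M,\sigma, g^{-1}\omega]^{\mm}$ (or $g\omega$, depending on the sign convention fixed in the paper — I would match it to the one implicit in \eqref{Deltaformula} and Proposition \ref{propactiononZ}). Applying this coefficientwise to $\Zim = \sum_w [\cdots,\gamma_i^{\mathrm{B}},w]^{\mm}\,w$, and using that $w$ runs over a basis of $\Or({}_0\Pi_i^\varpi)$ dual to words in the $\omega_k$, one sees that $g\cdot\Zim$ is obtained by transporting the $\varpi$-structure by $g$. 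On the other hand, the Ihara action (1) is $\Zim\cdot g$ where $g$ acts on the right on ${}_0\Pi_i^\varpi(\Pe^\mm)$ as in Proposition \ref{gIharaaction}.

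The key step is then to identify these: since $\Zim = \comp^{\mm}_{\mathrm{B},\varpi}(\gamma_i^{\mathrm{B}})$ and $\comp^{\mm}_{\mathrm{B},\varpi}$ is the universal comparison isomorphism, which is $G^\varpi$-equivariant in the appropriate sense (the de Rham Galois group acts on the target ${}_0\Pi_i^\varpi\times_\Q\Pe^\mm$ through both the Ihara action on the first factor and the coefficient action on $\Pe^\mm$, and this combined action is trivial on the image of a \emph{fixed} Betti path $\gamma_i^{\mathrm{B}}$ because $\gamma_i^{\mathrm{B}}$ carries no de Rham information). Concretely: the coefficient $[\cdots,\gamma_i^{\mathrm{B}},w]^{\mm}$ depends on $w\in\Or({}_0\Pi_i^\varpi)$ only through the pairing with $\comp^{\mm}_{\mathrm{B},\varpi}(\gamma_i^{\mathrm{B}})$, so moving $g$ from the coefficients onto the de Rham side is exactly the statement that for the specific element $\Zim$ coming from a Betti path, $g\cdot\Zim = \Zim\cdot g$. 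I would phrase this as: the map $g\mapsto \Zim\cdot g$ (Ihara) and $g\mapsto g\cdot\Zim$ (coefficients) both compute the image of $\Zim$ under the coaction $\Delta$ followed by evaluation against $g$, and $\Delta\Zim$ is group-like with $\Delta\Zim = \Zim\otimes\Zio$ in the relevant sense, from which both equal $\Zim$ precomposed with the same torsor element.

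I expect the main obstacle to be bookkeeping of conventions — in particular making sure that "right action via Ihara" and "left action on coefficients" are compared with compatible normalisations of $g$ versus $g^{-1}$ and of left-versus-right, and that the universal comparison $\comp^{\mm}_{\mathrm{B},\varpi}$ is invoked with the orientation matching Proposition \ref{propactiononZ}. The conceptual content is light (it is the standard fact that for a period obtained by pairing a \emph{de Rham–varying} framing against a \emph{fixed} Betti framing, Galois acts the same way whether one moves it onto the period coefficients or onto the de Rham framing), but writing it cleanly requires care; I would model the argument closely on \cite[\S5]{delignegoncharov} and \cite[Proposition 2.5]{brownICM} as the proof text already suggests, essentially reducing to the assertion that $\comp^{\mm}_{\mathrm{B},\varpi}$ is an isomorphism of $G^\varpi$-torsors and that $\gamma_i^{\mathrm{B}}$ is a genuine (Galois-invariant) Betti point.
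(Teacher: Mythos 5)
Your proposal is correct and takes essentially the same route as the paper: both arguments reduce to the tautological left $G^\varpi_{\MT(k)}$-equivariance of the matrix-coefficient map $w\mapsto[\Or({}_0\Pi_i^{\mathrm{mot}}),\gamma_i,w]^\mm$ against the \emph{fixed} Betti path, with the Ihara action being nothing but precomposition with the Galois action on $\Or({}_0\Pi_i^\varpi)$ — the paper merely packages this bookkeeping by factoring through the tautological point $\Phi^i=\id$ of ${}_0\Pi_i^\varpi(\Or({}_0\Pi_i^\varpi))$, whereas you phrase it through the universal comparison isomorphism applied to $\gamma_i^{\mathrm{B}}$. One caution: drop the alternative phrasing via ``$\Delta\Zim=\Zim\otimes\Zio$'', since that coaction formula (Proposition \ref{propCoactZim}) is itself deduced from this lemma and from Proposition \ref{propactiononZ}, so invoking it here would be circular; your primary equivariance argument does not need it.
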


\begin{proof}
We have a map $\mathcal{O}({}_0\Pi_i^\varpi)\to \Pe^\mm$ that sends a word $w$ to the motivic period $[\mathcal{O}({}_0\Pi_i^{\mathrm{mot}}),\gamma_i,w]^\mm$. It is left $G_{\MT(k)}^\varpi$-equivariant by definition, and induces a map
\begin{equation}\label{eq: in proof two actions}
{}_0\Pi_i^\varpi(\mathcal{O}({}_0\Pi_i^\varpi))\longrightarrow {}_0\Pi_i^\varpi(\Pe^\mm)
\end{equation}
that is also left $G_{\MT(k)}^\varpi$-equivariant, where $G_{\MT(k)}^\varpi$ acts on the coefficients, i.e., by (2). It is also obviously right $G_{\MT(k)}^\varpi$-equivariant for the action (1). By definition, $\mathcal{Z}^{i,\mm}$ is the image under \eqref{eq: in proof two actions} of the element $\Phi^i\in {}_0\Pi_i^\varpi(\mathcal{O}({}_0\Pi_i^\varpi))$ which corresponds to the map $\id:\mathcal{O}({}_0\Pi_i^\varpi)\to \mathcal{O}({}_0\Pi_i^\varpi)$. It is thus enough to prove the claim that  $\Phi^i\cdot g=g\cdot \Phi^i$, where the right and left actions of $G_{\MT(k)}^\varpi$ on ${}_0\Pi_i^\varpi(\mathcal{O}({}_0\Pi_i^\varpi))=\mathrm{Hom}_{\mathrm{Alg}}(\mathcal{O}({}_0\Pi_i^\varpi),\mathcal{O}({}_0\Pi_i^\varpi))$ are on the source and on the target, respectively. It is obvious that $\id \cdot g = g \cdot \id$, from which the claim follows. 
\end{proof}

Applying Proposition \ref{gIharaaction} to the series $F= \Zim$ and using Lemma \ref{lem: two actions}, we deduce that the action (2) of $G_{\MT(k)}^\varpi$ term by term on the coefficients of $\mathcal{Z}^{i,\mm}$ is given by the formula:
\begin{equation} \label{gactingonZim} g \cdot \Zim (e_0,e_1,\ldots, e_n)  =  \Zim \left(  \lambda_g e_0,  \lambda_g G_1 e_1 G_1^{-1}, \ldots, \lambda_g G_n e_n G_n^{-1}\right) G_i \  ,
\end{equation} 
where $G_k={}_01_i\cdot g$. The same formula holds with $\mm$ replaced by $\varpi$. 
This formula  can  be re-expressed as a universal coaction formula for
$$ \Delta \Zim  \in \left( \Pe^{\mm} \otimes_{\Q} \Pe^{\varpi} \right)  \langle \langle e_0,\ldots, e_n \rangle \rangle$$
 where $\Delta$ is  applied term by term to each coefficient and acts trivially on the $e_i$. The action of $g$ is retrieved  by the usual formula $g \cdot\Zim = (\id \otimes g)\, \Delta\Zim$.

\begin{prop}  \label{propCoactZim} The coaction $\Delta:\Pe^\mm\langle\langle e_0,e_1,\ldots,e_n\rangle\rangle \to (\Pe^\mm\otimes \Pe^\varpi)\langle\langle e_0,e_1,\ldots,e_n\rangle\rangle$ applied to the generalised motivic associator $\mathcal{Z}^{i,\mm}$ is:
\begin{equation} \label{DeltamZim} \Delta\Zim  = \Zim \left( \Lef^\varpi e_0, \Lef^\varpi e'_1,\ldots, \Lef^\varpi e'_n     \right)   \, \Zio     
\end{equation}
where $e_1', \ldots, e_n'$ are defined by 
\begin{equation} \label{ejprimeconjugate}
e_k'   =  \left(  \mathcal{Z}^{k,\varpi}  \right)  e_k    \left(\mathcal{Z}^{k,\varpi} \right)^{-1} \qquad \hbox{ for all } \quad  1\leq k \leq n\ .
\end{equation}
 \end{prop}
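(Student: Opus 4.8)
The plan is to derive \eqref{DeltamZim} directly from the Ihara-type action formula \eqref{gactingonZim} together with Lemma \ref{lem: two actions}, by dualising the group action into a coaction. First I would recall that $\Delta$ and the action $(2)$ of $G^\varpi_{\MT(k)}$ on the coefficients are two packagings of the same data: for every $\Q$-algebra $R$ and every $g\in G^\varpi_{\MT(k)}(R)$ one has $g\cdot \Zim=(\mathrm{id}\otimes g)\Delta\Zim$, where $\Delta\Zim$ lives in $(\Pe^\mm\otimes_\Q\Pe^\varpi)\langle\langle e_0,\ldots,e_n\rangle\rangle$ and $g$ is applied to the right-hand factor. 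Since this must hold for all $g$ functorially, it suffices to identify $\Delta\Zim$ as the \emph{universal} element whose image under $\mathrm{id}\otimes g$ is the right-hand side of \eqref{gactingonZim}. Concretely, apply \eqref{gactingonZim} with $R=\Pe^\varpi$ and $g=\mathrm{id}_{\Pe^\varpi}$ the identity element of $G^\varpi_{\MT(k)}(\Pe^\varpi)$ (i.e. the tautological point corresponding to $\mathrm{id}\colon\Pe^\varpi\to\Pe^\varpi$). For this tautological $g$ one has $\lambda_g=\Lef^\varpi$ by definition of the Lefschetz de Rham period as the coordinate function recording the action on $\Q(-1)$, and $G_k={}_01_k\cdot g=\Zko\in{}_0\Pi_k^\varpi(\Pe^\varpi)$ by the very definition (Definition \ref{def: generalised de Rham associator}) of the generalised de Rham associator as $g\mapsto g\cdot{}_01_k$. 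Substituting these values into \eqref{gactingonZim} yields exactly
$$(\mathrm{id}\otimes g)\Delta\Zim=\Zim\bigl(\Lef^\varpi e_0,\Lef^\varpi\Zs^{1,\varpi}e_1(\Zs^{1,\varpi})^{-1},\ldots,\Lef^\varpi\Zs^{n,\varpi}e_n(\Zs^{n,\varpi})^{-1}\bigr)\,\Zio,$$
and since $g$ here is the identity of $G^\varpi_{\MT(k)}(\Pe^\varpi)$, the map $\mathrm{id}\otimes g$ is the identity on $(\Pe^\mm\otimes_\Q\Pe^\varpi)\langle\langle e_0,\ldots,e_n\rangle\rangle$, so this \emph{is} $\Delta\Zim$. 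Rewriting $\Zs^{k,\varpi}e_k(\Zs^{k,\varpi})^{-1}=e_k'$ gives \eqref{DeltamZim} with \eqref{ejprimeconjugate}.

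The key steps, in order: (i) recall the general principle that a morphism of affine schemes $G^\varpi_{\MT(k)}\to{}_0\Pi_i^\varpi$ corresponds, after applying the Yoneda lemma / evaluation at the tautological point, to an element of ${}_0\Pi_i^\varpi(\Pe^\varpi)$, and that the coaction is obtained by evaluating the group action at the tautological point; (ii) invoke Lemma \ref{lem: two actions} to reduce the coaction on the coefficients to the Ihara action \eqref{gactingonZim}; (iii) identify $\lambda_g$ for the tautological $g$ with $\Lef^\varpi$, using that $\Lef^\varpi=[\Q(-1),1_\varpi^\vee,1_\varpi]^\varpi$ is precisely the function on $G^\varpi_{\MT(k)}$ recording the scalar by which $g$ acts on $\varpi(\Q(-1))$; (iv) identify $G_k={}_01_k\cdot g$ with $\Zko$ using Definition \ref{def: generalised de Rham associator} (note $g\cdot{}_01_k$ versus ${}_01_k\cdot g$ is a bookkeeping point about left/right actions that should be checked against the conventions already fixed in Proposition \ref{propactiononZ}); (v) substitute and read off the formula. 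One should also remark that the constant-coefficient part of $\Zim$ is $1$ and $\pi^{\mm,+}_\varpi\Zim=\Zio$ by \eqref{eq: Zio dRproj Zim}, which is consistent with the rightmost factor $\Zio$ in \eqref{DeltamZim} being the image of $\Zim$ under the projection induced by the counit.

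I expect the main obstacle to be purely a matter of conventions rather than mathematical content: carefully tracking whether $G^\varpi_{\MT(k)}$ acts on the left or the right, on the scheme ${}_0\Pi_i^\varpi$ or on its coordinate ring, and correspondingly whether the universal coaction formula reads $g\cdot\Zim=(\mathrm{id}\otimes g)\Delta\Zim$ or with the factors swapped. In particular one must confirm that the appearance of $\lambda_g$ (rather than $\lambda_g^{-1}$) and of $G_i$ on the \emph{right} end of \eqref{gIharaaction}, as established in the proof of Proposition \ref{propactiononZ}, is compatible with placing $\Zio$ on the right in \eqref{DeltamZim} and with $e_k'$ being $\Zko e_k(\Zko)^{-1}$ rather than the inverse conjugation. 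A secondary technical point is to make sure the substitution $e_k\mapsto\Lef^\varpi e_k'$ inside the non-commutative power series $\Zim$ is well-defined, i.e. that $\Zim$, lying in ${}_0\Pi_i^\varpi(\Pe^{\mm,+})$, can be composed with the group-like series $\Zko\in\Pe^\varpi\langle\langle e_0,\ldots,e_n\rangle\rangle$; this is exactly the kind of composition that the torsor/groupoid structure of Proposition \ref{propactiononZ} already makes sense of, so no new input is needed, but it deserves an explicit sentence. Once these conventions are pinned down the proof is essentially a one-line specialisation of \eqref{gactingonZim} to the tautological group element.
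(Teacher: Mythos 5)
Your proposal is correct and is essentially the paper's own proof: the paper likewise deduces \eqref{DeltamZim} from \eqref{gactingonZim} (via Lemma \ref{lem: two actions}) by observing that $\lambda_g$ and $G_k={}_01_k\cdot g$ are obtained by applying to $g$ the functions $\Lef^\varpi$ and $\mathcal{Z}^{k,\varpi}\in{}_0\Pi_k^\varpi(\mathcal{O}(G^\varpi_{\MT(k)}))$. Your evaluation at the tautological point of $G^\varpi_{\MT(k)}(\Pe^\varpi)$ is just the Yoneda-style phrasing of that same specialisation, so the two arguments coincide.
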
 
 
In the right-hand side of \eqref{DeltamZim}, we view $\mathcal{Z}^{i,\mm}$ inside $(\Pe^\mm\otimes \Pe^\varpi)\langle\langle e_0,e_1,\ldots,e_n\rangle\rangle$ via the natural inclusion $\Pe^\mm\langle\langle e_0,e_1,\ldots,e_n\rangle\rangle\subset (\Pe^\mm\otimes \Pe^\varpi)\langle\langle e_0,e_1,\ldots,e_n\rangle\rangle$ which replaces every coefficient $a^\mm$ with $a^\mm\otimes 1$. Similarly, the terms $\Lef^\varpi e_0$, $\Lef^\varpi e'_k$ and $\mathcal{Z}^{i,\varpi}$ are viewed in $(\Pe^\mm\otimes \Pe^\varpi)\langle\langle e_0,e_1,\ldots,e_n\rangle\rangle$ via the natural inclusion   $\Pe^\varpi\langle\langle e_0,\ldots,e_n\rangle\rangle \subset (\Pe^\mm\otimes \Pe^\varpi)\langle\langle e_0,e_1,\ldots,e_n\rangle\rangle$ which replaces every coefficient $b^\varpi$ with $1\otimes b^\varpi$. Thus we interpret the term $\Zim \left(\Lef^\varpi e_0, \Lef^\varpi e'_1,\ldots, \Lef^\varpi e'_n \right)$ as a composition of noncommutative formal power series with coefficients in the ring $\mathcal{P}^\mm\otimes \mathcal{P}^\varpi$. This composition makes sense since $e_0$ and the $e'_k$ have vanishing constant coefficient. All products in the formulae in the Proposition are given by concatentation of non-commutative formal power series.

\begin{proof} 
For all $ g \in G^{\varpi}_{\MT(k)}$, $G_k={}_01_k\cdot g$ is obtained by applying to $g$ the function (more precisely the power series of functions) $\mathcal{Z}^{k,\varpi}\in {}_0\Pi_k^\varpi(\mathcal{O}(G^\varpi_{\MT(k)}))$, and $\lambda_g$ is obtained by applying to $g$ the function $\Lef^\varpi$. The claim follows from equation \eqref{gactingonZim}. 
\end{proof}

\begin{example}  In the setting of Example \ref{ExMZV1} and Example \ref{ExMZV1 sv}, 
Proposition \ref{propCoactZim}  yields
$$\Delta \ZZ^{\mm}(e_0,e_1)  =  \ZZ^{\mm} \big(\Lef^{\dR} e_0,  \Lef^\dR \ZZ^{\dR} e_1 \left(\ZZ^{\dR}\right)^{-1} \big) \,  \ZZ^{\dR}(e_0,e_1)$$  
which is a motivic version of  Ihara's formula, and expresses the coaction on motivic multiple zeta values.  For example, reading off the coefficient of $-e_1 e_0^{n-1}$ yields 
$$\Delta\, \zetam(n) = \zetam(n) \otimes (\Lef^{\dR})^n + 1 \otimes \zeta^{\dR}(n)\ .$$
Note that this provides a very efficient method of computing the coaction.
\end{example}

\subsection{Motivic coaction on the beta quotients}\label{subsec: coaction beta quotients}

We want to use Proposition \ref{propCoactZim} to derive a formula for the motivic coaction on the series $FL_\Sigma^\mm(s_0,\ldots,s_n)$. We now make a slight modification and consider the \emph{normalised coaction}
$$\Delta_{\mathrm{nor}}:\Pe^\mm[[s_0,\ldots,s_n]] \rightarrow (\Pe^\mm\otimes \Pe^\varpi) [[ s_0,\ldots,s_n]]$$
obtained by acting via $\Delta$ on each coefficient and on the formal variables $s_k$ by
$$\Delta_{\mathrm{nor}}(s_k)=(1\otimes (\Lef^\varpi)^{-1})\,s_k\ .$$
This is equivalent to viewing the variables $s_k$ as spanning a copy of $\Q(1)_{\mathrm{dR}}$. In particular, they have motivic weight $-2$. This is the correct normalisation which makes the factors
$$y^{s_k}= \exp(s_k\log(y))=\sum_{n\geq 0} \frac{\log^n(y)}{n!}s^n$$
have total weight $0$, and  all the coefficients in the expansion of $FL_\Sigma^\mm$ have total weight $0$. Another effect of this normalisation is that it makes the coaction formulae below land in the subspace
$$\Pe^\mm[[s_0,\ldots,s_n]]\otimes_{\Q[[s_0,\ldots,s_n]]} \Pe^\varpi[[s_0,\ldots,s_n]] \;\subset \; (\Pe^\mm\otimes_\Q \Pe^\varpi)[[s_0,\ldots,s_n]]\ .$$
Note that  the tensor product on the left-hand side is an ordinary,  not a  completed, tensor product. It is a highly restrictive condition for an element in the space on the right-hand side to lie in the subspace defined by the left-hand side. It is a crucial, and non-trivial fact, that this condition is satisfied  for the image of the motivic coaction on generalised associators.  Indeed, this can already be seen from from the abelianisation of \eqref{DeltamZim}:
\begin{equation}\label{eq: coaction nor ab}
\Delta_{\mathrm{nor}}\overline{\mathcal{Z}^{i,\mm}}(s_0,\ldots,s_n) = \overline{\mathcal{Z}^{i,\mm}}(s_0,\ldots,s_n) \otimes \overline{\mathcal{Z}^{i,\varpi}}((\Lef^\varpi)^{-1}s_0,\ldots,(\Lef^\varpi)^{-1}s_n)\ .
\end{equation}

\begin{thm} \label{thmMainCoactionMmm}
The (normalised) motivic coaction,  applied to the entries of $\FL_{\Sigma}^{\mm}$,  satisfies:
\begin{equation} \label{CoactiononMmm} \Delta_{\mathrm{nor}}   \FL_{\Sigma}^{\mm} (s_0,\ldots, s_n)=  \FL_{\Sigma}^{\mm} ( s_0,\ldots, s_n)\otimes \FL_{\Sigma}^{\varpi}((\Lef^{\varpi} )^{-1}s_0,\ldots, (\Lef^{\varpi} )^{-1}s_n) \ .\end{equation} 
\end{thm}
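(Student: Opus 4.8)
The plan is to deduce the matrix coaction formula \eqref{CoactiononMmm} directly from the coaction on the generalised associators established in Proposition \ref{propCoactZim}, by abelianising and extracting the relevant beta-quotients. First I would recall that, by Definition \ref{defi: FL}, the $(i,j)$-entry of $\FL_\Sigma^\mm$ is the combination $\mathbf{1}_{i=j}\overline{\mathcal{Z}^{i,\mm}} - s_j \overline{\mathcal{Z}^{i,\mm}_j}$, so it suffices to compute $\Delta_{\mathrm{nor}}$ of each abelianisation $\overline{\mathcal{Z}^{i,\mm}}$ and of each beta-quotient $\overline{\mathcal{Z}^{i,\mm}_j}$. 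The first of these is already recorded as \eqref{eq: coaction nor ab}, so the heart of the argument is the beta-quotient $\overline{\mathcal{Z}^{i,\mm}_j}$.

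For that, I would start from formula \eqref{DeltamZim}, namely $\Delta\mathcal{Z}^{i,\mm} = \mathcal{Z}^{i,\mm}(\Lef^\varpi e_0, \Lef^\varpi e_1',\ldots,\Lef^\varpi e_n')\,\mathcal{Z}^{i,\varpi}$ with $e_k' = \mathcal{Z}^{k,\varpi} e_k (\mathcal{Z}^{k,\varpi})^{-1}$, and apply the rule \eqref{SubDerivation} for the $(\,\cdot\,)_j$ operation to the product $A\cdot B$ with $A = \mathcal{Z}^{i,\mm}(\Lef^\varpi e_0,\ldots)$ and $B = \mathcal{Z}^{i,\varpi}$. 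This gives $(\Delta\mathcal{Z}^{i,\mm})_j = A\,(B_j) + A_j\,B_\varnothing$ with $B_\varnothing = 1$, so I need to understand $A_j$ and $B_j = \mathcal{Z}^{i,\varpi}_j$. Abelianising, the term $B_j$ contributes (after the normalisation $s_k\mapsto (\Lef^\varpi)^{-1}s_k$ applied on the right-hand tensor factor) exactly $\overline{\mathcal{Z}^{i,\varpi}_j}((\Lef^\varpi)^{-1}s_\bullet)$, while $\overline{A}$ abelianises to $\overline{\mathcal{Z}^{i,\mm}}(s_\bullet)$ by \eqref{eq: coaction nor ab} (using Lemma \ref{lem: abelianisationofF} and that conjugation $e_k\mapsto \mathcal{Z}^{k,\varpi}e_k(\mathcal{Z}^{k,\varpi})^{-1}$ does not change the abelianisation). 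The remaining term $\overline{A_j}$ is where the combinatorics happens: substituting the series $\Lef^\varpi e_k' $ into $\mathcal{Z}^{i,\mm}$, deleting a trailing $e_j$ and abelianising. Here the key observation is that, by the chain rule for $(\,\cdot\,)_j$ under composition of group-like series (a consequence of \eqref{SubDerivation} applied iteratively, i.e. the fact that $F\mapsto F_j$ is essentially a derivation), the trailing-letter operation composed with the substitution $e_k\mapsto \mathcal{Z}^{k,\varpi}e_k(\mathcal{Z}^{k,\varpi})^{-1}$ produces a sum over $\ell$ of $\overline{\mathcal{Z}^{i,\mm}_\ell}$ (evaluated at $\Lef^\varpi s_\bullet$) times the $j$-th beta-quotient of the conjugating factor, and these conjugation contributions reassemble, together with the $B_j$ term, precisely into the off-diagonal pieces of $\FL_\Sigma^\varpi$.

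The cleanest way to organise this is probably \emph{not} to expand everything letter-by-letter, but to recognise that both sides of \eqref{CoactiononMmm} are matrices built from the same associator data, and to match them by testing against the period map and the Tannakian structure: one checks that $\Delta_{\mathrm{nor}}\FL_\Sigma^\mm$ and $\FL_\Sigma^\mm \otimes \FL_\Sigma^\varpi((\Lef^\varpi)^{-1}s_\bullet)$ have the same image under $\per\otimes\per$ and, more to the point, that \eqref{CoactiononMmm} is forced by the coassociativity of $\Delta$ together with the already-established formula \eqref{DeltamZim} and the abelianised version \eqref{eq: coaction nor ab}. Concretely I would: (1) fix $i$ and write out $\Delta_{\mathrm{nor}}(\FL_\Sigma^\mm)_{ij}$ using \eqref{DeltamZim} and \eqref{SubDerivation}; (2) identify the $j$ and $\ell$ indices appearing and regroup the sum as $\sum_\ell (\FL_\Sigma^\mm)_{i\ell} \otimes (\text{something})_{\ell j}$; (3) show that ``something'' is exactly $(\FL_\Sigma^\varpi)_{\ell j}$ with $s_k$ replaced by $(\Lef^\varpi)^{-1}s_k$, by comparing with Definition \ref{defi: FL} for $\FL^\varpi$ and the $\varpi$-version of \eqref{DeltamZim}; and (4) normalise the powers of $\Lef^\varpi$ correctly, noting that each $s_k$ on the right carries a factor $(\Lef^\varpi)^{-1}$ and that the explicit $\Lef^\varpi$'s in \eqref{DeltamZim} multiplying the $e_k$ cancel against these, so that all entries have total weight zero as remarked before the theorem.

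The main obstacle I anticipate is step (2)–(3): correctly tracking how the conjugating factors $\mathcal{Z}^{k,\varpi}$ in $e_k' = \mathcal{Z}^{k,\varpi}e_k(\mathcal{Z}^{k,\varpi})^{-1}$ interact with the trailing-letter operation $(\,\cdot\,)_j$ after composition, and checking that the resulting sum over intermediate indices $\ell$ closes up exactly into the matrix product $\FL_\Sigma^\mm \otimes \FL_\Sigma^\varpi$ rather than some larger expression. This is a purely combinatorial/bookkeeping point about substitution into non-commutative power series and the behaviour of beta-quotients under such substitutions; once the identity $\overline{(F(e_0, \mathcal{Z}^{1,\varpi}e_1(\mathcal{Z}^{1,\varpi})^{-1},\ldots))_j}$ is expressed in terms of the $\overline{F_\ell}$ and the $\overline{\mathcal{Z}^{k,\varpi}_j}$, the rest follows by direct comparison with the definitions. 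I would also need to verify that the normalised coaction indeed lands in the non-completed tensor product $\Pe^\mm[[s_\bullet]]\otimes_{\Q[[s_\bullet]]}\Pe^\varpi[[s_\bullet]]$, which is automatic once the formula \eqref{CoactiononMmm} is established since its right-hand side manifestly has this form.
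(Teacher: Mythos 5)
Your concrete plan (1)--(4) is essentially the paper's own proof: it applies \eqref{DeltamZim}, computes the beta-quotient of the substituted series via \eqref{Fsubscripts} and \eqref{SubDerivation}, evaluates $\overline{(e_k')_j}$ using \eqref{SubDerivation} and \eqref{SubInversion} (yielding $-s_k\overline{\mathcal{Z}^{k,\varpi}_j}+\mathbf{1}_{j=k}\overline{\mathcal{Z}^{k,\varpi}}$, which supplies exactly the off-diagonal and diagonal pieces of $\FL^{\varpi}_{\Sigma}$), and restores the powers of $\Lef^{\varpi}$ by weight homogeneity, just as in the paper. The only caveat is your suggested shortcut of matching the two sides under $\per\otimes\per$ or invoking coassociativity: that would not constitute a proof (the period map is not known to be injective, and no period homomorphism is being applied to the $\varpi$-factor), but since you fall back on the direct computation this aside does not affect the argument.
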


\begin{proof}  It is convenient  to compute modulo $\Lef^{\varpi}=1$ and restore all powers of $\Lef^{\varpi}$ at the end, since they are uniquely determined by the weight grading. 
Using formula \eqref{DeltamZim} we have 
$$\Delta_{\mathrm{nor}} \overline{\Zim_j}  =  \overline{\left(\Zim \left(e_0, e'_1,\ldots, e'_n     \right)  \, \Zio \right)_j}    \ . $$
The right-hand side reduces via \eqref{SubDerivation} to
$$\overline{\Zim}(s_0,s_1,\ldots,s_n)\otimes  \overline{\Zio_j}(s_0,s_1,\ldots,s_n)  +   \overline{\Zim \left( e_0,  e'_1,\ldots, e'_n     \right)_j}  $$
since $e'_k$ is conjugate to $e_k$  via equation \eqref{ejprimeconjugate} and so they have the same image $\overline{e_k} = \overline{e'_k} = s_k$  under abelianisation.   The previous expression can in turn be written
$$\overline{\Zim}\otimes  \overline{\Zio_j}  +   \sum_{k=1}^n \overline{\Zim_k}\otimes \overline{(e'_k)_j} \   = \   \overline{\Zim}\otimes \overline{\Zio_j}  +   \sum_{k=1}^n \overline{\Zim_k} \otimes \overline{ \left( \mathcal{Z}^{k,\varpi} e_k\left(\mathcal{Z}^{k,\varpi} \right)^{-1}\right)_j }  \ ,$$
by applying definition \eqref{Fsubscripts}  to  $\overline{\Zim \left( e'_0, e'_1,\ldots, e'_n     \right)_j}$ and using \eqref{ejprimeconjugate}.
 We have
$$\overline{ \left( \mathcal{Z}^{k,\varpi} e_k\left(\mathcal{Z}^{k,\varpi} \right)^{-1}\right)_j } 
 \ \overset{\eqref{SubDerivation}}{ = }\  \overline{ \mathcal{Z}^{k,\varpi}e_k }\overline{\left(\left(\mathcal{Z}^{k,\varpi} \right)^{-1}\right)_j } + \overline{  \mathcal{Z}^{k,\varpi} }   \mathbf{1}_{j=k}
\ \overset{\eqref{SubInversion}}{ = }\ 
  -    s_k   \,  \overline{\mathcal{Z}^{k,\varpi}_j}  + \mathbf{1}_{j=k}  \overline{  \Zjo }  $$
 Putting the pieces together and multiplying by $-s_j$ we get
 \begin{equation}\label{eq: coaction nor beta}
 \Delta_{\mathrm{nor}}\left(-s_j\overline{\mathcal{Z}^{i,\mm}_j}\right) = -s_j\overline{\mathcal{Z}^{i,\mm}} \otimes \overline{\mathcal{Z}^{i,\varpi}_j} - s_j \overline{\mathcal{Z}^{i,\mm}_j}\otimes \overline{\mathcal{Z}^{j,\varpi}} + \sum_{k=1}^n s_k\overline{\mathcal{Z}^{i,\mm}_k} \otimes s_j\overline{\mathcal{Z}^{k,\varpi}_j} \ .
 \end{equation}
 Now by \eqref{eq: coaction nor ab} we have 
 \begin{equation}\label{eq: coaction nor ab in proof} \Delta_{\mathrm{nor}}\overline{\mathcal{Z}^{i,\mm}} = \overline{\mathcal{Z}^{i,\mm}}\otimes \overline{\mathcal{Z}^{i,\varpi}}\ .
 \end{equation}
 Substituting \eqref{eq: coaction nor beta} and \eqref{eq: coaction nor ab in proof} into the definitions $(FL_\Sigma^\bullet)_{ij} = \mathbf{1}_{i=j}\overline{\mathcal{Z}^{i,\bullet}}-s_j\overline{\mathcal{Z}^{i,\bullet}_j}$ we get 
 $$ \Delta_{\mathrm{nor}}FL_\Sigma^\mm = FL_\Sigma^\mm\otimes FL_\Sigma^\varpi\ .$$
 On the other hand, homegeneity in the weight forces the right-hand side of the coaction to have weight equal to the degree in the $s_i$.  This determines the powers of $\Lef^{\varpi}$ as in  equation \eqref{CoactiononMmm}.
  \end{proof}

\begin{rem}
The (normalised) coproduct in $\Pe^\varpi[[ s_0,\ldots,s_n]]$ is given on the elements $FL^\varpi_\Sigma$ by a formula similar to \eqref{CoactiononMmm}: $\Delta_{\mathrm{nor}}FL^\varpi_\Sigma(s_0,\ldots, s_n)=  \FL_{\Sigma}^{\varpi} ( s_0,\ldots, s_n)\otimes \FL_{\Sigma}^{\varpi}((\Lef^{\varpi} )^{-1}s_0,\ldots, (\Lef^{\varpi} )^{-1}s_n)$.
\end{rem}

\section{Example: Single-valued version of \texorpdfstring{${}_2F_1$}{2F1}  and double copy formula}\label{section: 2F1}

A large family of  special functions commonly found in  the mathematical literature can be derived as special cases or limits of the Gauss hypergeometric function. As a result, one can derive single-valued versions for a range of special functions from the single-valued versions of the hypergeometric function \eqref{eq: double copy Fabc}, \eqref{eq: double copy Gabc}, which we shall prove here.

\subsection{The Gauss hypergeometric function}
We denote it by $F={}_2F_1$. It is defined for $y\in \mathbb{C}$, $|y|<1$, by the power series
\begin{equation}\label{eq: definition 2F1 series}
F(a,b,c; y) = \sum_{n=0}^{\infty}  \frac{ (a)_n (b)_n}{(c)_n}  \frac{y^n}{n!}
\end{equation}
where   $(x)_n = \prod_{i=1}^n (x+i-1)$ is  the  rising Pochhammer symbol. It is a solution of the famous hypergeometric differential equation:
    \begin{equation}\label{eq: 2F1 differential equation}
    \left(y(1-y)\frac{d^2}{d y^2} + (c-(a+b+1)y)\frac{d}{d y} -ab \right)F(a,b,c;y)=0\ .
    \end{equation}

    \subsubsection{Integral representation}\label{subsubsec: def Fcal}
    
    Traditionally, $F$ is  viewed as a function of $y$ for  fixed values of the exponents $a,b,c$.  In this case, it admits an analytic continuation to a multivalued function on $\C\backslash \{0,1\}$ via the following integral representation which is valid for $\Real(c)>\Real(b)>0$:
    \begin{equation}\label{eq: 2F1 integral} 
    F(a,b,c;y)  = \frac{1}{\beta(b,c-b) }\int_0^1 x^{b-1} (1-x)^{c-b-1} (1-yx)^{-a} dx\ ,
    \end{equation}
    where $\beta$ denotes Euler's beta function.

    To fix branches, it is convenient to assume that $y\notin \mathbb{R}_{>0}$.
    The path of integration in \eqref{eq: 2F1 integral} can then be chosen to be the line segment $(0,1)$, and the branch of $(1-yx)^{-a}=\exp(-a\log(1-yx))$ is determined by $\log(1-yx)=\int_0^x d\log(1-yu)$ for $x\in (0,1)$.

    We multiply through by the $\beta$ factor and set
    \begin{equation}\label{eq: Fabcy}
    \mathcal{F}(a,b,c;y) = \beta(b, c-b) \, F(a,b,c;y) = \int_0^1x^b(1-x)^{c-b}(1-yx)^{-a}\frac{dx}{x(1-x)}\ \cdot
    \end{equation}
    It can be expressed in terms of the Lauricella functions \eqref{IntroLSigma} for $\Sigma=\{\sigma_0,\sigma_1,\sigma_2\}$ with
    $$\sigma_0=0 \; , \quad \sigma_1=1\; ,\quad \sigma_2=y^{-1}\qquad \mbox{ and }\qquad s_0=b\; , \quad s_1=c-b\; ,\quad s_2=-a\  .$$
    Indeed, we find that
    \begin{equation}\label{eq: expression Fabcy Lauricella}
    \mathcal{F}(a,b,c;y) = \frac{c}{b(c-b)}L_{11} + \frac{1}{b} L_{12}
    \end{equation}
    where we use the shorthand notation 
    \begin{equation}\label{eq: shorthand Lij}
    L_{ij}=\left(L_{\{0,1,y^{-1}\}}(b,c-b,-a)\right)_{ij}\ .
    \end{equation}
    Equation \eqref{eq: expression Fabcy Lauricella} can be proven by  writing $\frac{dx}{x(1-x)} = \frac{dx}{x} + \frac{dx}{1-x}$, eliminating $\frac{dx}{x}$ using    $$b \,\frac{dx}{x} - (c-b) \,\frac{dx}{1-x} + a\,  \frac{y\, dx}{1-xy} = d\log\left(x^b(1-x)^{c-b}(1-yx)^{-a}\right)\ ,$$ and integrating by parts.

    \subsubsection{Contiguity relations}
    By  integrating by parts in the integral representation \eqref{eq: Fabcy} one proves the following ``contiguity relations'' for  $\mathcal{F}(a,b,c;y)$, which are valid when $\Real(c)>\Real(b)>0$. 
    \begin{equation}
    \label{eq: contiguity}
    \left\{ \begin{aligned}
    \mathcal{F}(a,b,c;y) & =\frac{c}{b}\mathcal{F}(a,b+1,c+1;y) -\frac{a}{b}y\, \mathcal{F}(a+1,b+1,c+2;y) \\
    \mathcal{F}(a,b,c;y) & = \frac{c}{c-b}\mathcal{F}(a,b,c+1;y) +\frac{a}{c-b}y\,\mathcal{F}(a+1,b+1,c+2;y) 
    \end{aligned} \right.
    \end{equation}
    One can use these relations to prove that the integral  \eqref{eq: Fabcy} can be analytically continued as a holomorphic function of  $a$, $b$, $c$ in the domain $b,c-b\notin \mathbb{Z}_{\leq 0}$ (this  also follows from the  expression \eqref{eq: definition 2F1 series} and the properties of the beta function). For instance, the first relation extends $\mathcal{F}(a,b,c;y)$ to $\Real(c-b)>0, \Real(b)>-1$ and the second extends it to $\Real(c-b)>-1, \Real(b)>0$.
    
    \subsubsection{Laurent series expansion}
    When viewed as a function of $a, b, c$, the function $\mathcal{F}(a,b,c)$ can be renormalised around zero as in Proposition \ref{proprenorm}:
    \begin{equation*}\begin{split}
    \mathcal{F}(a,b,c;y)  = \frac{1}{b}+\frac{(1-y)^{-a}}{c-b}  +\int_0^1\Omega_{a,b,c} \ . 
    \end{split}\end{equation*}
    where the following form,  denoted by   $\Omega^{\ren,1}$ in   
    Definition \ref{definition: renormalisedforms}, 
    $$\Omega_{a,b,c}=x^b(1-x)^{c-b}(1-yx)^{-a}\frac{dx}{x(1-x)} - x^b\frac{dx}{x} - (1-x)^{c-b}(1-y)^{-a}\frac{dx}{1-x}$$
    is absolutely integrable on $(0,1)$ for $\Real(c)>\Real(b)>-1$. 
    After expanding it, one obtains a Laurent series in the variables $a$, $b$, $c-b$:
    \begin{equation}\label{eq: laurent expansion Fabc}\begin{split}
    \mathcal{F}(a,&b,c;y) = \frac{1}{b}+\frac{(1-y)^{-a}}{c-b} + \sum_{\substack{i,j,k\geq 0\\ (j,k)\neq (0,0)}} \frac{b^i}{i!}  \frac{(c-b)^j}{j!} \frac{(-a)^k}{k!} \int_0^1\log^i(x)  \log^j(1-x) \log^k(1-yx) \frac{dx}{x}  \\
    & + \sum_{\substack{i,j,k\geq 0\\ (i,k)\neq (0,0)}} \frac{b^i}{i!}  \frac{(c-b)^j}{j!} \frac{(-a)^k}{k!} \int_0^1  \log^i(x)  \log^j(1-x) \left(\log^k(1-yx)-\mathbf{1}_{i=0}\log^k(1-y)\right)\frac{dx}{1-x} \ \cdot
    \end{split}
    \end{equation}
    The first integral converges at $x=0$ since at least one of the terms $\log^j(1-x)$ or $\log^k(1-yx)$ vanishes at $x=0$; the second integral converges at $x=1$ for similar reasons.
 
    \subsubsection{The companion functions $G$ and $\mathcal{G}$}\label{subsubsec: def Gcal}
    
    The formulae involving  $F$ and $\mathcal{F}$ frequently involve companion functions $G$ and $\mathcal{G}$ which  we now introduce. We first define
    \begin{equation} \label{Gabcasintegral}
      \mathcal{G}(a,b,c;y)= \int_{\infty}^{y^{-1}} x^b (1-x)^{c-b} (1-yx)^{-a} \, \frac{dx}{x(1-x)}  
    \end{equation}
    for all $a,b,c$ such that $\mathrm{Re}(c)<\mathrm{Re}(a)+1<2$. When discussing specific branches, under the assumption that $y\notin \mathbb{R}_{>0}$, we adopt the convention that the above integral is computed along the path given by $x= y^{-1}/t$ for $t\in (0,1)$.  Let us set $\log(-1)=\pi i$ for our fixed choice of $i$. This determines a branch of the complex logarithm on $\mathbb{C}\setminus [0,\infty)$, and in particular a value of $\log(y^{-1})$. (Beware that this branch satisfies $\log(y^{-1})=-\log(y)+2\pi i$.) We fix a branch of $x^b(1-x)^{c-b}(1-yx)^{-a}$ along $(\infty,y^{-1})$ by fixing branches of $\log(y^{-1}/t)$, $\log(1-y^{-1}/t)$, $\log(1-yy^{-1}/t)=\log(1-1/t)$ on $(0,1)$. We set $\log(y^{-1}/t)=\log(y^{-1})-\log(t)$; $\log(1-y^{-1}/t)=\log(-1)+\log(y^{-1})-\log(t)+\log(1-yt)$, where $\log(1-yt)$ equals $0$ if $t=0$; and finally  $\log(1-1/t)=\log(-1)+\log(1-t)-\log(t)$.
    
    We have the following expression for  $\mathcal{G}$ in terms of $\mathcal{F}$:
    $$\mathcal{G}(a,b,c;y)= e^{\pi i(c-a-b)}y(y^{-1})^c\,\mathcal{F}(1+b-c,1+a-c,2-c;y)\ ,$$
    which follows by making the change of variables $x=y^{-1}/t$ in \eqref{Gabcasintegral} and comparing with \eqref{eq: Fabcy}. This proves that $\mathcal{G}(a,b,c;y)$ extends to a holomorphic function of the variables $a,b,c$ in the domain $a,c-a\notin\mathbb{Z}_{\geq 1}$
    and a multi-valued holomorphic function of $y\in\mathbb{C}\setminus\{0,1\}$. It is a solution of the hypergeometric differential equation \eqref{eq: 2F1 differential equation}. Note that $\mathcal{G}(a,b,c;y)$ is not meromorphic at $y=0$ because of the prefactor $(y^{-1})^c$.
    
        The following expression of $\mathcal{G}$ in terms of the Lauricella functions \eqref{eq: shorthand Lij} will be proved below (\S\ref{subsubsec: pairings 2F1}) using homological intersection pairings:
    \begin{equation}\label{eq: Gcal in terms of L}
    \begin{split}
    \mathcal{G}(a,b,c;y) = \frac{c\, e^{2\pi i(c-b)}}{b(e^{2\pi ia}-e^{2\pi ic})}\Bigg(\frac{e^{2\pi ic}-e^{2\pi ib}}{c-b}L_{11}+\frac{e^{2\pi ic}-e^{2\pi ib}}{c}L_{12}& \\  - \frac{e^{2\pi ic}-1}{c-b}&L_{21}-\frac{e^{2\pi ic}-1}{c}L_{22}\Bigg)\ .
    \end{split}
    \end{equation}
    Here we choose the path $\delta_2$ to be the straight path between $0$ and $y^{-1}$. The branch of $\log(x)$ on this path is our chosen branch of the complex logarithm on $\mathbb{C}\setminus [0,\infty)$; the branches of $\log(1-x)$ and $\log(1-yx)$ are chosen so that they vanish when $x=0$.
    
    Since the integral \eqref{Gabcasintegral} is convergent near $a=b=c=0$, it  has the  Taylor expansion:
    \begin{equation}\label{eq: laurent expansion Gabc}
    \mathcal{G}(a,b,c;y) = \sum_{i,j,k\geq 0} \frac{b^i}{i!}\frac{(c-b)^j}{j!}\frac{(-a)^k}{k!}\int_\infty^{y^{-1}}\log^i(x)\log^j(1-x)\log^k(1-yx)\frac{dx}{x(1-x)}\ .
    \end{equation}
    
    We introduce the following normalisation of the function $\mathcal{G}(a,b,c;y)$:
    \begin{equation}\label{eq: G in terms of calG}
    G(a,b,c;y) =  \frac{\sin(\pi a)\sin(\pi (c-a))}{\pi \sin(\pi c)}\, \beta(b,c-b)^{-1}\, \mathcal{G}(a,b,c;y) \ .
    \end{equation}
    The prefactor is chosen so that $G(a,b,c;y)$ is symmetric in $a$ and $b$. Indeed, it can be expressed in terms of $F$ as:
   \begin{equation} \label{Gabcdef} G(a,b,c;y) = e^{\pi i(c-a-b)}  \frac{y(y^{-1})^c}{1-c}\,\frac{F(1+b-c,1+a-c,2-c;y)}{\beta(a,c-a)\,\beta(b,c-b)} \ .\end{equation} 
    This expression can be derived from the identity 
    \begin{equation}\label{eq: identity beta use 2F1}
    \beta(1+a-c,1-a) = (1-c)^{-1}\,\beta(a,c-a)^{-1}\frac{\pi \sin(\pi c)}{\sin(\pi a)\sin(\pi (c-a))}\ ,
    \end{equation}
    which easily follows from the functional equations of the gamma function.
 
\subsection{Cohomology with coefficients}\label{subsec: cohomology with coefficients 2F1}

    As in \S\ref{sec: cohomology coeff} we can view $\mathcal{F}(a,b,c;y)$ as a ``period'' of cohomology with coefficients. We work with parameters $a,b,c\in\mathbb{C}$ that are generic in the sense that 
    \begin{equation}\label{eq: generic abc} 
    a,b,c,c-a,c-b\notin\mathbb{Z}\ .
    \end{equation}
   This is the genericity condition \eqref{generic} plus the requirement that $c\notin\mathbb{Z}$ which we add in order to be able to work with preferred bases of cohomology with coefficients.
    Let $k\subset \mathbb{C}$ be a subfield and let us fix $y\in k\setminus \{0,1\}$. We consider the coefficient fields $k_{\mathrm{dR}}=k(a,b,c)$ and $\mathbb{Q}_{\mathrm{B}}=\mathbb{Q}(e^{2\pi ia}, e^{2\pi ib},e^{2\pi ic})$ and work in the corresponding category $\mathcal{T}$ as in \S\ref{subsubsec: tannakian T minimalist}. (We could work in the more refined setting of the category $\mathcal{T}_\infty$ as in \S\ref{subsubsec: tannakian infinity} without any substantial changes.) We write $M_{a,b,c}(y)$ for the object $M_{\{0,1,y^{-1}\}}(b,c-b,-a)$ in $\mathcal{T}$; it has rank $2$ and we now describe its de Rham and Betti components   along with the comparison between the two, and the intersection pairings.
    
    \subsubsection{de Rham} The de Rham component $M_{a,b,c}(y)_{\mathrm{dR}}$ is the first hypercohomology group (in the Zariski topology) of $X=\mathbb{A}^1_{k_{\mathrm{dR}}}\setminus \{0,1,y^{-1}\}$ with coefficients in the twisted de Rham complex $(\Omega^\bullet_X,\nabla_{a,b,c})$ where
    $$\nabla_{a,b,c}(1) = b \, d\log(x) + (c-b)\,d\log(1-x) - a\,d\log(1-yx)\ .$$
        Concretely, it is spanned by the (classes of) $d\log(x)$, $d\log(1-x)$, $d\log(1-yx)$, modulo the relation $\nabla_{a,b,c}(1)=0$. Since we are interested in \eqref{eq: Fabcy} we choose to work with the \emph{ad hoc} basis of $M_{a,b,c}(y)_{\mathrm{dR}}$ consisting of (the classes of) the logarithmic forms
    \begin{equation} \label{etaformsdefinitionsection9}
    \eta_1=\frac{dx}{x(1-x)} \quad \mbox{ and } \quad \eta_2=\frac{dx}{1-yx}\ \cdot
    \end{equation}
    They are expressed in terms of the basis \eqref{dRcoeffBasis} via the change of basis matrix
    \begin{equation}  \label{COBetasandomegas} \left(\begin{matrix}\eta_1 \\ \eta_2\end{matrix}\right) = \left( \begin{matrix} \frac{c}{b(c-b)}& \frac{1}{b} \\ 0 & -\frac{1}{ay} \end{matrix}\right) \left(\begin{matrix} (b-c)\,\omega_1\\ a\,\omega_2\end{matrix}\right)
    \end{equation} 
    where $\omega_1=d\log(1-x)$ and $\omega_2=d\log(1-yx)$. This matrix is invertible because $c\neq 0$ by \eqref{eq: generic abc} and therefore  $(\eta_1,\eta_2)$ indeed forms a basis of $M_{a,b,c}(y)_{\mathrm{dR}}$.

    \subsubsection{Betti} The Betti component $M_{a,b,c}(y)_{\mathrm{B}}$ is the first singular cohomology group of $\mathbb{C}\setminus \{0,1,y^{-1}\}$ with coefficients in the $\mathbb{Q}_{\mathrm{B}}$-local system $\mathcal{L}_{a,b,c}$ whose local sections are branches of the function $x^{-b}(1-x)^{-(c-b)}(1-yx)^{+a}$. It is convenient to work with locally finite homology via the isomorphism
    \begin{equation}\label{eq: lf iso 2F1} 
    M_{a,b,c}(y)_{\mathrm{B}}^\vee \stackrel{\sim}{\longrightarrow} H_1^{\mathrm{lf}}(\mathbb{C}\setminus \{0,1,y^{-1}\},\mathcal{L}_{a,b,c}^\vee)\ .
    \end{equation}
    We choose to work with a basis of $M_{a,b,c}(y)_{\mathrm{B}}^\vee$ which has the simplest possible intersection pairing and consider the two locally finite paths defined by the following parametrisations:
    $$p_1:(0,1) \to \mathbb{C}\setminus \{0,1,y^{-1}\} \;\; ,\;\;  t\mapsto t \qquad \mbox{and} \qquad  p_2:(0,1) \to \mathbb{C}\setminus \{0,1,y^{-1}\} \;\; ,\;\;  t\mapsto  \frac{y^{-1}}{t}\ \cdot$$
    Note that for this to make sense we have to assume as before that $y\notin (1,+\infty)$ so that $p_1(t)\neq y^{-1}$, $p_2(t)\neq 1$ for all $t\in (0,1)$. Note that $p_1$ is a locally finite path from $0$ to $1$, $p_2$ is a locally finite path from $\infty$ to $y^{-1}$, and the images of $p_1$ and $p_2$ have disjoint closures. We denote by $\varphi_1$ the class in $M_{a,b,c}(y)_{\mathrm{B}}^\vee$ corresponding via \eqref{eq: lf iso 2F1} to the path $p_1$ with the canonical branch of $x^b(1-x)^{c-b}(1-yx)^{-a}$ defined in \S\ref{subsubsec: def Fcal}. We denote by $\varphi_2$ the class in $M_{a,b,c}(y)_{\mathrm{B}}^\vee$ corresponding via \eqref{eq: lf iso 2F1} to the path $p_2$, together  with the branch of $x^b(1-x)^{c-b}(1-yx)^{-a}$ defined in \S\ref{subsubsec: def Gcal}.
    
    In order to justify the fact that $(\varphi_1,\varphi_2)$ indeed forms a basis of $M_{a,b,c}(y)_{\mathrm{B}}^\vee$ we use the homological Betti intersection pairing
    $$\langle\;,\,\rangle_{\mathrm{B}}:M_{-a,-b,-c}(y)_{\mathrm{B}}^\vee\otimes_{\Q_{\mathrm{B}}} M_{a,b,c}(y)_{\mathrm{B}}^\vee \longrightarrow \Q_{\mathrm{B}}\ ,$$
    discussed in \S\ref{subsubsec: pairings 2F1} below. We denote by $\varphi_i^-$ the class $\varphi_i$ viewed in $M_{-a,-b,-c}(y)_{\mathrm{B}}^\vee$, and by $\varphi_i^+$ the class $\varphi_i$ viewed in $M_{a,b,c}(y)_{\mathrm{B}}^\vee$, for $i=1,2$. We compute in the proof of Lemma \ref{lem: B pairing 2F1} below:
    $$\langle\varphi_1^-,\varphi_1^+\rangle_{\mathrm{B}} = \frac{1}{2i}\frac{\sin(\pi c)}{\sin(\pi b)\sin(\pi(c-b))} \quad \mbox{ and } \quad \langle\varphi_2^-,\varphi_2^+\rangle_{\mathrm{B}} = - \frac{1}{2i}\frac{\sin(\pi c)}{\sin(\pi a)\sin(\pi(c-a))} \ ,$$
    which are non-zero because $c\notin\mathbb{Z}$ by assumption \eqref{eq: generic abc}, and  $\langle\varphi_1^-,\varphi_2^+\rangle_{\mathrm{B}}=0$. It follows that  the classes $\varphi_1^+$ and $\varphi_2^+$ are linearly independent in $M_{a,b,c}(y)_{\mathrm{B}}^\vee$.
    
    \subsubsection{Period matrix}
    
    We let $P_{a,b,c}(y)$ denote the matrix of the natural comparison isomorphism
    $$\comp_{\mathrm{B},\mathrm{dR}}: M_{a,b,c}(y)_{\mathrm{dR}}\otimes_{k_{\mathrm{dR}}} \mathbb{C} \longrightarrow M_{a,b,c}(y)_{\mathrm{B}}\otimes_{\Q_{\mathrm{B}}} \mathbb{C}$$
    in the bases $(\eta_1,\eta_2)$ and $(\varphi_1,\varphi_2)$. It is naturally described in terms of the functions $\mathcal{F}$ and $\mathcal{G}$.
    
    \begin{prop}\label{prop: period matrix Fabcy}
    For all generic values of $a,b,c$, the period matrix of $M_{a,b,c}(y)$ reads:
    $$P_{a,b,c}(y)=\renewcommand*{\arraystretch}{1.5}\begin{pmatrix} \mathcal{F}(a,b,c;y) & \mathcal{F}(1+a,1+b,2+c;y) \\ \mathcal{G}(a,b,c;y) & \mathcal{G}(1+a,1+b,2+c;y) \end{pmatrix}\ .$$
    \end{prop}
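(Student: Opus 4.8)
The plan is to identify each of the four entries of $P_{a,b,c}(y)$ as a twisted period integral of one of the two de Rham forms $\eta_1,\eta_2$ against one of the two locally finite twisted cycles $\varphi_1,\varphi_2$, and then recognise that integral as $\mathcal{F}$ or $\mathcal{G}$ with possibly shifted arguments. First I would invoke the hypergeometric incarnation of Lemma \ref{lem:comparison equals Lauricella coefficients} (equivalently, transport the Lauricella statement through \eqref{eq: expression Fabcy Lauricella} and the change of basis \eqref{COBetasandomegas}): for $(a,b,c)$ in a suitable open polydisc where the relevant integrals converge, the $(i,j)$ entry of the matrix of $\comp_{\mathrm{B},\mathrm{dR}}$ in the bases $(\eta_1,\eta_2)$ and $(\varphi_1,\varphi_2)$ is
$$(P_{a,b,c}(y))_{ij} = \langle \varphi_i,\comp_{\mathrm{B},\mathrm{dR}}(\eta_j)\rangle = \int_{p_i} x^b(1-x)^{c-b}(1-yx)^{-a}\,\eta_j\ ,$$
where the branch of $x^b(1-x)^{c-b}(1-yx)^{-a}$ along $p_1$ (resp.\ $p_2$) is exactly the one fixed in \S\ref{subsubsec: def Fcal} (resp.\ \S\ref{subsubsec: def Gcal}) when defining $\varphi_1$ (resp.\ $\varphi_2$).

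Next I would evaluate the four integrals. For $i=1$, $j=1$ this is verbatim the defining integral \eqref{eq: Fabcy} of $\mathcal{F}(a,b,c;y)$, and for $i=2$, $j=1$ it is the defining integral \eqref{Gabcasintegral} of $\mathcal{G}(a,b,c;y)$. For $j=2$ I would use the elementary identity of integrands
$$x^b(1-x)^{c-b}(1-yx)^{-a}\,\frac{dx}{1-yx} = x^{1+b}(1-x)^{(2+c)-(1+b)}(1-yx)^{-(1+a)}\,\frac{dx}{x(1-x)}\ ,$$
which only multiplies the integrand by the rational function $x(1-x)/(1-yx)$ and hence carries the prescribed branch over without change; integrating along $p_1$ gives $\mathcal{F}(1+a,1+b,2+c;y)$ and along $p_2$ gives $\mathcal{G}(1+a,1+b,2+c;y)$, again matching the branch conventions of \S\ref{subsubsec: def Fcal} and \S\ref{subsubsec: def Gcal}.

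Finally I would promote the identity from the polydisc of convergence to the whole generic locus \eqref{eq: generic abc} by analytic continuation: the entries of $P_{a,b,c}(y)$ depend holomorphically on $(a,b,c)$ on the generic locus (the twisted comparison isomorphism varies holomorphically in the parameters), while by \S\ref{subsubsec: def Fcal}–\S\ref{subsubsec: def Gcal} (via the contiguity relations \eqref{eq: contiguity} for $\mathcal{F}$ and the expression of $\mathcal{G}$ in terms of $\mathcal{F}$) the four functions on the right-hand side extend holomorphically to domains containing the generic locus; since the two sides agree on a non-empty open set they agree everywhere by the identity principle.

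I do not expect a genuine obstacle here: the content is a short computation. The only points requiring care are (i) citing the correct "comparison equals integration" statement in the twisted setting rather than re-proving it, (ii) the branch bookkeeping — namely verifying that the branch of $x^b(1-x)^{c-b}(1-yx)^{-a}$ along $p_1,p_2$ entering $\varphi_1,\varphi_2$ is literally the one used in the definitions of $\mathcal{F}$ and $\mathcal{G}$, which is exactly why \S\ref{subsubsec: def Fcal}–\S\ref{subsubsec: def Gcal} were written so carefully — and (iii) the convergence/analytic-continuation step, since for generic $(a,b,c)$ the defining integrals need not converge.
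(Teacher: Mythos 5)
Your outline is the paper's own proof: identify each entry as a convergent twisted integral via the argument of Lemma \ref{lem:comparison equals Lauricella coefficients}, note that $\eta_2 = x(1-x)(1-yx)^{-1}\,\eta_1$ shifts $(a,b,c)\mapsto(a+1,b+1,c+2)$, and then analytically continue in the parameters. Two points in your step (iii), however, are treated more casually than they should be.

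First, there is no ``polydisc where the relevant integrals converge'' common to all four entries: the $(1,1)$ entry needs $\Real(c)>\Real(b)>0$ while the $(2,2)$ entry needs $\Real(c)<\Real(a)<0$, so the regions are disjoint. This is harmless -- the identification and the continuation just have to be carried out entry by entry, as the paper does explicitly -- but your phrasing suggests a single common domain, which does not exist.

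Second, and more substantively, you dispose of the holomorphy of the left-hand side by asserting that ``the twisted comparison isomorphism varies holomorphically in the parameters.'' That assertion is precisely the content that needs an argument here: the bases, the local systems and the regularisation isomorphism all depend on $(a,b,c)$, so parameter-holomorphy of $\langle\varphi_i,\comp_{\mathrm{B},\mathrm{dR}}(\eta_j)\rangle$ on the whole generic locus is not something you can quote for free. The paper's device is to prove it via contiguity relations for the period matrix itself, derived from cohomological identities such as $[x\,\eta_1]=\tfrac{b}{c}[\eta_1]+\tfrac{ay}{c}[\eta_2]$ (both sides differ by $\tfrac{1}{c}\nabla_{a,b,c}(1)$), which give, e.g.,
$$P_{a,b+1,c+1}(y)_{1,1} = \frac{b}{c}\,P_{a,b,c}(y)_{1,1} + \frac{ay}{c}\,P_{a,b,c}(y)_{1,2}\ ,$$
and hence extend each entry holomorphically from its convergence region to all generic $(a,b,c)$. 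You invoke the contiguity relations \eqref{eq: contiguity} only for $\mathcal{F}$ and $\mathcal{G}$ on the right-hand side; to close the argument you must either prove the analogous relations for the entries of $P_{a,b,c}(y)$ (the paper's route), or supply an independent proof of parameter-holomorphy of the twisted period pairing (for instance by a renormalisation of the integrand in the spirit of \S\ref{sect: Renorm}). With that supplied, the identity-principle conclusion is exactly as in the paper.
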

    
    \begin{proof}
    By the argument in Lemma  \ref{lem:comparison equals Lauricella coefficients}, the $(i,j)$-th  entry of the period matrix $P_{a,b,c}(y)_{i,j}$ is given explicitly by an integral of the form
    $$ \langle \varphi_i, \mathrm{comp}_{\mathrm{B},\mathrm{dR}}(\eta_j)\rangle = \int_{p_i}x^b(1-x)^{c-b}(1-yx)^{-a}\,\eta_j$$
    whenever  $a,b,c$ lie in the region where the integral converges.  
    Note that there exist no  values of $a,b,c$ for which all four entries of the period matrix are simultaneously convergent.     We first verify, therefore, that each individual entry of the period matrix is as stated for a  restricted range of values of $a,b,c$. We then explain  how to extend this for all generic values of  $a,b,c.$

    Firstly, the top left entry $(i=j=1)$ is defined for all generic $a,b,c$  with $\Real(c)>\Real(b)>0$   by $\mathcal{F}(a,b,c;y)$ by definition \eqref{eq: Fabcy}. For the bottom left entry $(i=2,j=1)$ use the integral formula
    \eqref{Gabcasintegral}, valid in the region $\mathrm{Re}(c)<\mathrm{Re}(a)+1<2$. In order to deduce formulae for the entries in  the right-hand column $j=2$, use the fact that 
    $$\eta_2= \frac{dx}{1-yx} = x(1-x) (1-xy)^{-1} \frac{dx}{x (1-x)}$$
    which amounts to shifting $(b,c-b,-a) \mapsto (b+1,c-b+1,-a-1)$ in the arguments of $\mathcal{F}$ or $\mathcal{G}$ respectively, i.e., $(a,b,c) \mapsto (a+1,b+1,c+2).$  Thus the top right entry $(i=1,j=2)$ is valid for $\Real(c)+1>\Real(b)>-1$  and the bottom right $(i=j=2)$ for  $\mathrm{Re}(c)<\mathrm{Re}(a)<0$. 
     
    Finally, to show that these formulae remain valid for generic values of $a,b,c$ we use the fact that the entries of $P_{a,b,c}(y)$, as well as the functions $\mathcal{F}(a,b,c;y)$ and $\mathcal{G}(a,b,c;y)$ satisfy contiguity relations. This follows from the fact that multiplication by $x$, $(1-x)$ and $(1-xy)$ can be expressed as cohomology relations. For example, using the fact that 
    $$\nabla_{a,b,c}(1) = b \frac{dx}{x} + (b-c) \frac{dx}{1-x} + a \frac{y \, dx}{1-xy} $$
    we find that  the following relation holds in  $M_{a,b,c}(y)_{\mathrm{dR}}$:
    $$[x \eta_1] = \left[ \frac{dx}{1-x} \right] = \frac{b}{c} [\eta_1] + \frac{ay}{c} [\eta_2] $$
    since both sides differ by  $\frac{1}{c} \nabla_{a,b,c}(1)$. Therefore 
    $$P_{a,b+1,c+1}(y)_{1,1} = \frac{b}{c} P_{a,b,c}(y)_{1,1} +\frac{ay}{c} P_{a,b,c}(y)_{1,2}  $$
    From this and similar relations for the other entries, we deduce that all entries of $P_{a,b,c}(y)$ extend to a holomorphic function of $(a,b,c)$ on the domain where they are generic. Since the same is true for $\mathcal{F}, \mathcal{G}$, we conclude by analytic continuation.
    \end{proof}

    \subsubsection{Intersection pairings and twisted period relations}\label{subsubsec: pairings 2F1}
    
    The de Rham intersection pairing
    \begin{equation}\label{eq: dR pairing 2F1}
    \langle\;,\,\rangle^{\mathrm{dR}} : M_{-a,-b,-c}(y)_{\mathrm{dR}}\otimes_{k_{\mathrm{dR}}} M_{a,b,c}(y)_{\mathrm{dR}} \longrightarrow k_{\mathrm{dR}}
    \end{equation}
    is easily computed. Let us denote by $\eta_i^-$ the class $\eta_i$ viewed in $M_{-a,-b,-c}(y)_{\mathrm{dR}}$ and by $\eta_i^+$ the class $\eta_i$ viewed in $M_{a,b,c}(y)_{\mathrm{dR}}$, for $i=1,2$. 
    
    \begin{lem}\label{lem: dR pairing 2F1}
    The matrix of the de Rham intersection pairing \eqref{eq: dR pairing 2F1} in the bases $(\eta_1^-,\eta_2^-)$ and $(\eta_1^+,\eta_2^+)$ is as follows:
    $$I^{\mathrm{dR}}_{a,b,c}(y)=\left(\begin{matrix} -\frac{c}{b(c-b)} & 0 \\ 0 & \frac{1}{y^2}\frac{c}{a(c-a)} \end{matrix}\right)\ .$$
    \end{lem}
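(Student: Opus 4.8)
The plan is to deduce the lemma from Lemma \ref{lem: omega nu dR pairing} by a straightforward change of basis, once one unwinds the identification $M_{a,b,c}(y)=M_{\{0,1,y^{-1}\}}(b,c-b,-a)$. Thus we are in the Lauricella situation with $\Sigma=\{\sigma_0,\sigma_1,\sigma_2\}=\{0,1,y^{-1}\}$ and $(s_0,s_1,s_2)=(b,c-b,-a)$, so that in particular $s_0+s_1+s_2=c-a$. By Lemma \ref{lem: omega nu dR pairing} and the discussion following it, $\{\nu_1,\nu_2\}$ is a basis of $H^1_\varpi(X_\Sigma,\nabla_{-\underline{s}})=M_{-a,-b,-c}(y)_{\mathrm{dR}}$ which is dual, for $\langle\ ,\ \rangle^{\mathrm{dR}}$, to the basis $\{-s_1\omega_1,-s_2\omega_2\}$ of $H^1_\varpi(X_\Sigma,\nabla_{\underline{s}})=M_{a,b,c}(y)_{\mathrm{dR}}$, where $\omega_i=d\log(x-\sigma_i)$ and $\nu_i=\omega_i-\omega_0$.

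First I would rewrite \eqref{COBetasandomegas} in the form $\eta_1=-\tfrac{c}{b}\,\omega_1+\tfrac{a}{b}\,\omega_2$ and $\eta_2=-\tfrac1y\,\omega_2$; note that these relations are insensitive to the flip $\underline{s}\mapsto-\underline{s}$ (the factors of $a$ and the signs cancel), so they hold both for the class $\eta_i^{+}$ in $M_{a,b,c}(y)_{\mathrm{dR}}$ and for the class $\eta_i^{-}$ in $M_{-a,-b,-c}(y)_{\mathrm{dR}}$. On the $+$ side, dividing through by $-s_j$ gives $\eta_i^{+}$ directly in the basis $\{-s_j\omega_j\}$, which is just \eqref{COBetasandomegas}: $\eta_1^{+}=\tfrac{c}{b(c-b)}(-s_1\omega_1)+\tfrac1b(-s_2\omega_2)$ and $\eta_2^{+}=-\tfrac1{ay}(-s_2\omega_2)$. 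On the $-$ side I would express $\eta_i^{-}$ in the basis $\{\nu_1,\nu_2\}$: since $\eta_1=\tfrac{dx}{x(1-x)}=-\nu_1$ as a differential form, one has $\eta_1^{-}=-\nu_1$ outright; for $\eta_2$ one writes $\eta_2=-\tfrac1y(\nu_2+\omega_0)$ and eliminates $\omega_0$ using the cohomology relation $\sum_{k=0}^2 s_k\omega_k=0$ — valid in $H^1_\varpi(X_\Sigma,\nabla_{\pm\underline{s}})$ alike, being the defining relation of each — which gives $\omega_0=-\tfrac1{c-a}\big((c-b)\nu_1-a\nu_2\big)$ and hence $\eta_2^{-}=\tfrac{c-b}{y(c-a)}\nu_1-\tfrac{c}{y(c-a)}\nu_2$.

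The last step is to compute the four numbers $\langle\eta_i^{-},\eta_j^{+}\rangle^{\mathrm{dR}}$ by bilinearity, using only the duality $\langle\nu_i,-s_j\omega_j\rangle^{\mathrm{dR}}=\mathbf{1}_{i=j}$; this yields $-\tfrac{c}{b(c-b)}$, $0$, $\tfrac{c}{by(c-a)}-\tfrac{c}{by(c-a)}=0$, and $\tfrac{c}{ay^2(c-a)}=\tfrac1{y^2}\tfrac{c}{a(c-a)}$ respectively, i.e. the stated matrix $I^{\mathrm{dR}}_{a,b,c}(y)$. (Equivalently, one could first compute the intersection matrix of $(\omega_1,\omega_2)$ — it equals $-\tfrac{\mathbf{1}_{i=j}}{s_i}+\tfrac1{s_0+s_1+s_2}$, by the same substitution $\omega_i=\nu_i+\omega_0$ or by a Stokes-theorem argument as in the proof of Lemma \ref{lem: omega nu dR pairing} — and then conjugate by the change-of-basis matrix $\left(\begin{smallmatrix}-c/b & a/b\\ 0 & -1/y\end{smallmatrix}\right)$.) There is no genuine difficulty here; the only thing requiring care is the bookkeeping — keeping track of which of the two cohomology groups each form represents a class in, and noting that the change-of-basis matrix does not depend on the sign of $\underline{s}$ — so the "main obstacle" is merely the one place where a sign slip could occur.
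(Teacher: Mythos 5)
Your proposal is correct and follows essentially the same route as the paper: express $\eta_i^{+}$ in the basis $\{(b-c)\omega_1,\,a\omega_2\}$ via \eqref{COBetasandomegas}, express $\eta_i^{-}$ in the basis $\{\nu_1,\nu_2\}$ using the relation $\sum_k s_k\omega_k=0$, and conclude by the duality of Lemma \ref{lem: omega nu dR pairing}. Your change-of-basis coefficients and the resulting four pairings agree with the matrices checked in the paper's proof, so the argument is sound.
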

    
    \begin{proof}
    We check that
    $$\begin{pmatrix}\eta^-_1 \\ \eta^-_2\end{pmatrix}= \begin{pmatrix} -1 & 0 \\ \frac{(b-c)}{y(a-c)} & \frac{c}{y(a-c)} \end{pmatrix} \begin{pmatrix} \nu_1 \\ \nu_2 \end{pmatrix}\  \quad \hbox{ and } \quad 
    \left(\begin{matrix}\eta^+_1 \\ \eta^+_2\end{matrix}\right) = \left( \begin{matrix} \frac{c}{b(c-b)}& \frac{1}{b} \\ 0 & -\frac{1}{ay} \end{matrix}\right) \left(\begin{matrix} (b-c)\,\omega_1\\ a\,\omega_2\end{matrix}\right)
        $$
    where the second equation is \eqref{COBetasandomegas}.  The result follows from  Lemma \ref{lem: omega nu dR pairing} which states in this case that $\langle \nu_1, \omega_1\rangle^{\mathrm{dR}}= (b-c)^{-1}$, $\langle \nu_1, \omega_2\rangle^{\mathrm{dR}} = \langle \nu_2, \omega_1\rangle^{\mathrm{dR}} =0$ and $\langle \nu_2, \omega_2 \rangle^{\mathrm{dR}} = a^{-1}.$
     \end{proof}

    We now turn to the (cohomological) Betti intersection pairing 
    \begin{equation}\label{eq: B pairing 2F1}
    \langle\;,\,\rangle^{\mathrm{B}} : M_{-a,-b,-c}(y)_{\mathrm{B}}\otimes_{\mathbb{Q}_{\mathrm{B}}} M_{a,b,c}(y)_{\mathrm{B}} \longrightarrow \mathbb{Q}_{\mathrm{B}}
    \end{equation}
    Let us denote by $\varphi_i^-$ the class $\varphi_i$ viewed in $M_{-a,-b,-c}(y)_{\mathrm{B}}^\vee$, and by $\varphi_i^+$ the class $\varphi_i$ viewed in $M_{a,b,c}(y)_{\mathrm{B}}^\vee$, for $i=1,2$.
    
    \begin{lem}\label{lem: B pairing 2F1}
    The matrix of the Betti intersection pairing \eqref{eq: B pairing 2F1} in the bases $(\varphi_1^-,\varphi_2^-)$ and $(\varphi_1^+,\varphi_2^+)$ is as follows:
    $$I^{\mathrm{B}}_{a,b,c}(y)=\left(\begin{matrix} 2i\frac{\sin(\pi b)\sin(\pi(c-b))}{\sin(\pi c)} & 0 \\ 0 & -2i \frac{\sin(\pi a)\sin(\pi (c-a))}{\sin(\pi c)}\end{matrix}\right)\ .$$
    \end{lem}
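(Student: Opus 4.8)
\textbf{Proof proposal for Lemma \ref{lem: B pairing 2F1}.}

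The plan is to compute the Betti intersection pairing \eqref{eq: B pairing 2F1} by the standard localisation technique for twisted cycles, exactly as in the references \cite{kitayoshida, ChoMatsumoto, mimachiyoshida}: the pairing between two locally finite twisted cycles is a finite sum of local contributions, one for each point where the (closures of the) underlying paths intersect. First I would recall that the two paths $p_1$ (the segment from $0$ to $1$) and $p_2$ (the path from $\infty$ to $y^{-1}$, parametrised by $x=1/(yt)$) have disjoint \emph{closures} in $\Pro^1(\C)$: indeed $\overline{p_1}$ joins $0$ and $1$, while $\overline{p_2}$ joins $\infty$ and $y^{-1}$, and under the standing assumption $y\notin(1,+\infty)$ none of $\{0,1\}$ coincides with $\{\infty, y^{-1}\}$ and the open arcs are disjoint. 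This immediately gives $\langle \varphi_1^-,\varphi_2^+\rangle_{\mathrm{B}}=0=\langle\varphi_2^-,\varphi_1^+\rangle_{\mathrm{B}}$, hence the matrix $I^{\mathrm{B}}_{a,b,c}(y)$ is diagonal, and it remains only to compute the two diagonal entries.

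For the diagonal entries one must regularise each locally finite cycle $\varphi_i$ into an actual (compactly supported) twisted cycle, which is done by the usual recipe: near each endpoint $\sigma$ of the path (a puncture with local monodromy $e^{-2\pi i s}$, where $s\in\{b,c-b\}$ for $p_1$ and $s\in\{a,c-a\}$ for $p_2$ after the change of variables) one replaces the dangling end of the path by the combination ``small positive loop around $\sigma$, weighted by $\tfrac{1}{e^{2\pi i s}-1}$, followed by the arc''; this is the standard regularisation $\mathrm{reg}(p)$ used to invert the map $H_1\to H_1^{\lf}$. Then $\langle\varphi_i^-,\varphi_i^+\rangle_{\mathrm{B}}$ is computed as the self-intersection of $\mathrm{reg}(p_i)$ against $p_i$ (with the dual local system), and each of the two endpoints contributes a local term of the shape $\tfrac{1}{e^{2\pi i s}-1}+\tfrac{1}{e^{-2\pi i s'}-1}+1$ type expression, coming from: the intersection of the incoming arc with the regularising circle at $\sigma$, the intersection at the other endpoint, and the transverse intersection of the two copies of the open arc. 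Concretely, for $p_1$ with monodromy exponents attached to $0$ and $1$ equal to $b$ and $c-b$ (and to $\infty$ equal to $-(b+(c-b))=-c$, though $\infty$ is not an endpoint of $p_1$ so it plays no role), the computation collapses — after using $\tfrac{1}{e^{2\pi i b}-1}+\tfrac{1}{e^{-2\pi i(c-b)}-1}+1$ and the identity $\tfrac{1}{e^{i\theta}-1}=-\tfrac12+\tfrac{1}{2i}\cot(\theta/2)$ — to
$$\langle\varphi_1^-,\varphi_1^+\rangle_{\mathrm{B}}=\frac{1}{e^{2\pi i b}-1}+\frac{1}{e^{-2\pi i(c-b)}-1}+1=\frac{1}{2i}\,\frac{\sin(\pi c)}{\sin(\pi b)\sin(\pi(c-b))}\ ,$$
which is non-zero because $c\notin\Z$. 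For $p_2$ one first performs the change of variables $x=1/(yt)$ identifying $p_2$ with a segment path on $\C\setminus\{0,1,y^{-1}\}$ whose endpoints $t=0$ and $t=1$ correspond to $x=\infty$ and $x=y^{-1}$ respectively, with local monodromy exponents $a$ at $\infty$ (since the monodromy of $\mathcal L$ around $\infty$ is $e^{2\pi i c}$ and around $y^{-1}$ is $e^{-2\pi i a}$, so around $\infty$ in the $t$-coordinate after the Möbius change the relevant exponent is $c-b-(c-b)\cdots$ — I would track this carefully, the upshot being the pair $(a, c-a)$). The same local computation then yields
$$\langle\varphi_2^-,\varphi_2^+\rangle_{\mathrm{B}}=-\frac{1}{2i}\,\frac{\sin(\pi c)}{\sin(\pi a)\sin(\pi(c-a))}\ ,$$
the overall sign coming from the orientation-reversal induced by the substitution $x\mapsto 1/(yx)$ on the ambient $\C$. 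Dualising from homology to cohomology (which is an isomorphism here since the local systems are nontrivial around every puncture and around $\infty$) and recording $\langle\;,\,\rangle^{\mathrm{B}}$ as the inverse transpose in the dual bases gives precisely the stated matrix
$$I^{\mathrm{B}}_{a,b,c}(y)=\left(\begin{matrix} 2i\dfrac{\sin(\pi b)\sin(\pi(c-b))}{\sin(\pi c)} & 0 \\[2mm] 0 & -2i \dfrac{\sin(\pi a)\sin(\pi (c-a))}{\sin(\pi c)}\end{matrix}\right)\ .$$

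The main obstacle I anticipate is purely bookkeeping rather than conceptual: correctly identifying, for the path $p_2$, which monodromy exponents are attached to its two endpoints after the Möbius substitution $x=1/(yt)$, and tracking the sign produced by that substitution (it reverses orientation on $\C$, as already noted in the proof of the earlier corollary following Proposition \ref{propSVpairingCoeffs}). A clean way to sidestep part of this is to note that $\varphi_2$ and the function $x^b(1-x)^{c-b}(1-yx)^{-a}$ restricted to $p_2$ are, by the change of variables, literally the ``$\varphi_1$-data'' for the parameters obtained from $(a,b,c)$ via $(b,c-b,-a)\mapsto(1+b-c,1+a-c,2-c)$ up to an overall scalar branch factor (this is exactly the identity $\mathcal G(a,b,c;y)=(-1)^{-a-b+c}y^{1-c}\mathcal F(1+b-c,1+a-c,2-c;y)$ used in \S\ref{subsubsec: def Gcal}); the scalar branch factors cancel between $\varphi_2^-$ and $\varphi_2^+$ since they are complex conjugate in the exponents, so $\langle\varphi_2^-,\varphi_2^+\rangle_{\mathrm{B}}$ equals, up to the orientation sign, the ``$\varphi_1$'' self-intersection with $(b,c-b)$ replaced by $(1+b-c,1+a-c)$, i.e. $\tfrac{1}{2i}\tfrac{\sin(\pi(2-c))}{\sin(\pi(1+b-c))\sin(\pi(1+a-c))}=\tfrac{1}{2i}\tfrac{\sin(\pi c)}{\sin(\pi(c-b))\sin(\pi(c-a))}$ — wait, this needs the symmetrisation, and the clean bookkeeping is still required to pin the sine arguments and the sign; nevertheless this reduction halves the work and makes the two entries manifestly of the same shape.
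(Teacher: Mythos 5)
Your overall strategy is the same as the paper's (which simply invokes the standard twisted-cycle intersection computation of Kita--Yoshida/Mimachi--Yoshida for the two diagonal entries, gets the off-diagonal vanishing from the disjoint closures of $p_1$ and $p_2$, and then obtains the cohomological matrix $I^{\mathrm{B}}_{a,b,c}(y)$ by inverting the homological one), and your off-diagonal argument and final inversion step are correct. However, the details you actually supply contain genuine errors. First, the local-contribution identity you assert for the first entry is false as written: with $u=e^{2\pi i b}$ and $v=e^{2\pi i(c-b)}$ one has
$$\frac{1}{u-1}+\frac{1}{v^{-1}-1}+1=\frac{1}{u-1}-\frac{1}{v-1}=\frac{1}{2i}\,\frac{\sin\bigl(\pi(c-2b)\bigr)}{\sin(\pi b)\sin(\pi(c-b))}\ ,$$
which is antisymmetric under $b\leftrightarrow c-b$, whereas the claimed value $\frac{1}{2i}\frac{\sin(\pi c)}{\sin(\pi b)\sin(\pi(c-b))}$ is symmetric; so your mixed-sign choice of exponents at the two endpoints cannot be right. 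The correct combination takes the same sign convention at both ends, e.g. $\frac{1}{u-1}+\frac{1}{v-1}+1=\frac{uv-1}{(u-1)(v-1)}$, so that the product of the two monodromies $uv=e^{2\pi i c}$ appears and produces $\sin(\pi c)$.

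Second, your explanation of the minus sign in the second diagonal entry is wrong: the substitution $x\mapsto 1/(yx)$ is a Möbius (holomorphic) map and is orientation-\emph{preserving}; it is not analogous to the anti-holomorphic conjugation used in the single-valued sections. The sign actually comes from the exponent bookkeeping you left unfinished: writing $t=1/(yx)$, the section $x^{b}(1-x)^{c-b}(1-yx)^{-a}$ acquires exponents $a-c$ at $t=0$ (i.e.\ at $x=\infty$) and $-a$ at $t=1$ (i.e.\ at $x=y^{-1}$), which sum to $-c$; feeding these into the (corrected) local formula gives $\frac{\sin(-\pi c)}{2i\,\sin(\pi(a-c))\sin(-\pi a)}=-\frac{1}{2i}\frac{\sin(\pi c)}{\sin(\pi a)\sin(\pi(c-a))}$, as required. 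Your alternative shortcut via the parameters $(1+b-c,1+a-c,2-c)$ also does not close as stated (you noticed this yourself): the endpoint exponents there sum to $a+b-2c$ rather than $\pm c$, so it does not reproduce the $\sin(\pi c)$ numerator without the same careful tracking you were trying to avoid. So the skeleton of the proof is fine and matches the paper, but the two computations you claim to carry out need to be redone with consistent exponent conventions before the lemma is actually established.
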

    
    \begin{proof}
    By the same computation as in \cite{kitayoshida}  one easily computes the homological Betti pairing:
    $$\langle\varphi_1^-,\varphi_1^+\rangle_{\mathrm{B}} = \frac{1}{2i}\frac{\sin(\pi c)}{\sin(\pi b)\sin(\pi(c-b))} \quad , \quad \langle\varphi_2^-,\varphi_2^+\rangle_{\mathrm{B}}=-\frac{1}{2i}\frac{\sin(\pi c)}{\sin(\pi a)\sin(\pi(c-a))} \ ,$$
    and $\langle\varphi_i^-,\varphi_j^+\rangle_{\mathrm{B}}=0$ for $i\neq j$ since  the closures of  $p_1$ and $p_2 $ do not intersect. In fact, the top left entry  reduces to a statement equivalent to \cite{mimachiyoshida}, equation (7), and the bottom right can be deduced from it by applying  $z\mapsto z/y$. One  deduces  the matrix of the cohomological Betti pairing by inverting the matrix $(\langle\varphi_i^-,\varphi_j^+\rangle_{\mathrm{B}})$ and the claim follows.
    \end{proof}
    
    The compatibility between the intersection pairings and the comparison isomorphism gives rise to the \emph{twisted period relations}:
    \begin{equation}\label{eq: twisted period relations 2F1 matrix} 
    {}^tP_{-a,-b,-c}(y) \, I^{\mathrm{B}}_{a,b,c}(y) \, P_{a,b,c}(y) = 2\pi i\, I^{\mathrm{dR}}_{a,b,c}(y)\ .
    \end{equation}
    Reinserting the beta factor \eqref{eq: 2F1 integral} leads to quadratic relations for the hypergeometric function $F$ as in \cite[\S 4]{ChoMatsumoto}. For example, the bottom left entry is equivalent to Gauss' relation
    $$F(a,b,c;y)\,F(1-a,1-b,2-c;y) = F(-a+c,-b+c,c;y)\,F(1+a-c, 1+b-c, 2-c;y)\ .$$
    
     We end this paragraph with a proof of the expression \eqref{eq: Gcal in terms of L} of $\mathcal{G}$ in terms of Lauricella functions.
    
    \begin{proof}[Proof of \eqref{eq: Gcal in terms of L}]
    We denote by $\delta_1^\pm$ (resp. $\delta_2^\pm$) the class in $M_{\pm a, \pm b,\pm c}(y)_{\mathrm{B}}^\vee$ of the path $(0,1)$ (resp. of the path $(0,y^{-1})$) together with the branch of $x^b(1-x)^{c-b}(1-yx)^{-a}$ defined in \S\ref{subsubsec: def Fcal} (resp. in \S\ref{subsubsec: def Gcal}). By the same computation as in \cite{kitayoshida} we see that the matrix of the Betti intersection pairing \eqref{eq: B pairing 2F1} in the bases $(\delta_1^-,\delta_2^-)$ and $(\delta_1^+,\delta_2^+)$ is as follows:
    $$J = \frac{1}{e^{2\pi i b}-1}\left(\begin{matrix} \frac{e^{2\pi ic}-1}{e^{2\pi i(c-b)}-1} & e^{2\pi ib} \\ 1 & \frac{e^{2\pi i(b-a)}-1}{e^{-2\pi ia}-1} \end{matrix}\right)\ .$$
    Let us denote by $\psi$ the class  in $M_{a,b,c}(y)_{\mathrm{B}}^\vee$ of the path from $\infty$ to $y^{-1}$ together with the branch of $x^b(1-x)^{c-b}(1-yx)^{-a}$ defined in \S\ref{subsubsec: def Gcal}. By the same computation as in \cite{kitayoshida} we have $\langle \delta_1^-,\psi\rangle_{\mathrm{B}} = 0$ because $(0,1)$ and $(\infty,y^{-1})$ do not intersect, and 
    $$\langle \delta_2^-,\psi\rangle_{\mathrm{B}}=\frac{e^{2\pi i(c-b-a)}}{e^{-2\pi ia}-1}\ \cdot$$
    Therefore we have $\psi=u_1\delta_1^+ + u_2\delta_2^+$ where
    $$ \left(\begin{matrix}u_1\\ u_2\end{matrix}\right)=J^{-1}\left(\begin{matrix}0 \\ \frac{e^{2\pi i(c-b-a)}}{e^{-2\pi ia}-1} \end{matrix}\right) = \frac{e^{2\pi i(c-b-a)}}{e^{2\pi ic}-e^{2\pi i a}}\left(\begin{matrix} e^{2\pi ib}-e^{2\pi i c} \\ e^{2\pi i c}-1 \end{matrix}\right)\ .$$
    On the other hand, we have the identity in $M_{a,b,c}(y)_{\mathrm{dR}}$:
    $$\eta_1 = \frac{c}{b(c-b)}(b-c)\omega_1 + \frac{1}{b}a\omega_2\ ,$$
    which follows from \eqref{COBetasandomegas}. This leads to an expression of $\mathcal{G}(a,b,c;y)=\langle\psi,\mathrm{comp}_{\mathrm{B},\mathrm{dR}}(\eta_1)\rangle$ in terms of the $L_{ij}=\langle \delta_i,\mathrm{comp}_{\mathrm{B},\mathrm{dR}} (-s_j\omega_j)\rangle$ for $1\leq i,j\leq 2$. One checks that it is given by \eqref{eq: Gcal in terms of L}.
    \end{proof}

\subsection{The single-valued period matrix and the double copy formula}

    We have the \emph{single-valued period map} 
    $$\s:M_{a,b,c}(y)_{\mathrm{dR}}\otimes_{k_{\mathrm{dR}}}\mathbb{C}\longrightarrow M_{-a,-b,-c}(y)_{\mathrm{dR}}\otimes_{k_{\mathrm{dR}}}\overline{\mathbb{C}}$$
    where $\overline{\mathbb{C}}$ denotes $\mathbb{C}$ with the conjugate structure of a $k_{\mathrm{dR}}$-algebra (it amounts to conjugating $y$). As discussed in \S\ref{subsubsec: def sv}, strictly speaking, this only makes sense if $a,b,c$ are real. In general one would need to replace the target of $\mathbf{s}$ with the analytic de Rham cohomology of $\mathbb{C}\setminus\overline{\Sigma}$ with differential $\nabla_{-a,-b,-c}$, which does not have a natural $k_{\mathrm{dR}}$-structure. However, all the formulae below make sense in this setting and we will continue to treat $a,b,c$ as complex numbers.

    Let us denote by $S_{a,b,c}(y)$ the single-valued period matrix of $M_{a,b,c}(y)$, i.e., the matrix of $\s$, in the bases $(\eta_1^+,\eta_2^+)$ in the source and $(\eta_1^-,\eta_2^-)$ in the target. From the recipe given in Remark \ref{rem: compute frobenius betti} one sees that the matrix of the real Frobenius in the bases $(\varphi_1^+,\varphi_2^+)$ and $(\varphi_1^-,\varphi_2^-)$ is the identity matrix. Therefore, the definition of $\s$ reads, in matrix form:
    \begin{equation}\label{eq: def sv 2F1 matrix}
    S_{a,b,c}(y) = P_{-a,-b,-c}(\overline{y})^{-1}\, P_{a,b,c}(y)\ .
    \end{equation}
    This leads directly to formulae for the entries of the single-valued period matrix $S_{a,b,c}(y)$, using the fact that the determinant  of the period matrix can be computed explicitly (see, e.g., \cite{loesersabbah}). In more detail, we find that in the bases used here we have
    \begin{equation} \det \left( P_{a,b,c}(y) \right)  = e^{\pi i(c-a-b)}\,y^{-c-1} (1-y)^{c-a-b}\, \beta(b,c-b)\beta(-a,-c+a) \ . 
    \end{equation}
    It follows then from \eqref{eq: def sv 2F1 matrix} that the top left entry of $S_{a,b,c}(y)$ is the single-valued function:
    \begin{multline} S_{a,b,c}(y)_{1,1}= e^{-\pi i(c-a-b)}\overline{y}^{1-c}(1-\overline{y})^{c-a-b}\,\beta(a,c-a)^{-1}\,\beta(-b,-c+b)^{-1}\\ \times \Bigg(\mathcal{F}(a,b,c;y)\, \mathcal{G}(1-a,1-b,2-c;\overline{y})  \nonumber
    - \mathcal{G}(a,b,c;y)\, \mathcal{F}(1-a,1-b,2-c;\overline{y})\Bigg)\ .
    \end{multline} 
    However, a more symmetric looking formula, called \emph{double copy formula}, is obtained by using the twisted period relations, which is the approach which we shall adopt  henceforth.

    \subsubsection{The single-valued period matrix}
    
    We provide formulae (see \eqref{eq: formula sv period matrix 2F1} below) for the single-valued periods of $M_{a,b,c}(y)$ after composing $\s$ with the isomorphism
    \begin{equation}\label{eq: intersection pairing iso 2F1}
    M_{-a,-b,-c}(y)_{\mathrm{dR}}\otimes_{k_{\mathrm{dR}}}\overline{\mathbb{C}}\longrightarrow M_{a,b,c}(y)_{\mathrm{dR}}^\vee \otimes_{k_{\mathrm{dR}}}\overline{\mathbb{C}}
    \end{equation}
    obtained from the de Rham intersection pairing \eqref{eq: dR pairing 2F1}. Note that the matrix of \eqref{eq: intersection pairing iso 2F1} is 
    \begin{equation}\label{eq: intersection pairing sign 2F1}
    I^{\mathrm{dR}}_{-a,-b,-c}(\overline{y})=-\,{}^tI^{\mathrm{dR}}_{a,b,c}(\overline{y})\ .
    \end{equation} 
    \begin{prop}\label{prop: integral formula sv 2F1}
    The  matrix $I^{\mathrm{dR}}_{-a,-b,-c}(\overline{y})\, S_{a,b,c}(y)$ has entries given by the complex integrals
    \begin{equation}\label{eq: integral formula sv 2F1}
    \langle\eta_i,\s\eta_j\rangle^{\mathrm{dR}} = -\frac{1}{2\pi i}\iint_{\mathbb{C}} |z|^{2b}|1-z|^{2(c-b)}|1-yz|^{-2a} \,\overline{\eta_i}\wedge\eta_j\ ,
    \end{equation}
    for $1\leq i,j\leq 2$, which converge  for $a,b,c$ in the domains
    $$\begin{cases} 0< \Real(b)<\Real(c)<\Real(a)+1<2 & \mbox{ for } (i,j)=(1,1)\ ;\\
    -\frac{1}{2} < \Real(b) <\Real(c)+\frac{1}{2} < \Real(a)+1 < \frac{3}{2}& \mbox{ for } (i,j)=(1,2) \mbox{ and } (i,j)=(2,1)\ ;\\
    -1<\Real(b)<\Real(c)+1<\Real(a)+1<1 & \mbox{ for } (i,j)=(2,2)\ .
    \end{cases}$$
    \end{prop}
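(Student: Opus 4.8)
The plan is to identify the matrix $I^{\mathrm{dR}}_{-a,-b,-c}(\overline y)\,S_{a,b,c}(y)$ with the matrix of single-valued de Rham pairings $\langle\eta_i,\s\,\eta_j\rangle^{\mathrm{dR}}$, and then to compute these pairings by a complex Stokes argument, re-running the proof of Proposition~\ref{propSVpairingCoeffs} in the present case $\Sigma=\{0,1,y^{-1}\}$, $(s_0,s_1,s_2)=(b,c-b,-a)$. For the first step: by \eqref{eq: def sv 2F1 matrix} and the fact --- recorded just before that formula, using Remark~\ref{rem: compute frobenius betti} --- that the real Frobenius has the identity matrix in the bases $(\varphi_1^+,\varphi_2^+)$ and $(\varphi_1^-,\varphi_2^-)$, the matrix $S_{a,b,c}(y)$ represents $\s$ in the bases $(\eta_1^+,\eta_2^+)$ and $(\eta_1^-,\eta_2^-)$; post-composing with the isomorphism \eqref{eq: intersection pairing iso 2F1}, whose matrix is $I^{\mathrm{dR}}_{-a,-b,-c}(\overline y)$ by \eqref{eq: intersection pairing sign 2F1}, one obtains the matrix of the single-valued period pairings $\langle\eta_i,\s\,\eta_j\rangle^{\mathrm{dR}}$ of \S\ref{subsubsec: def sv} and \S\ref{subsect: Ltildes}. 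This step is routine and is exactly the $M_{a,b,c}(y)$-analogue of Corollary~\ref{cor LSigma sv}.

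For the integral itself, I would first note that under the above dictionary the weight $|z|^{2s_0}\prod_k|1-z\sigma_k^{-1}|^{2s_k}$ equals $|z|^{2b}|1-z|^{2(c-b)}|1-yz|^{-2a}$, so the right-hand side of \eqref{eq: integral formula sv 2F1} is literally the integral of Proposition~\ref{propSVpairingCoeffs}. A local computation in polar coordinates around $0,1,y^{-1}$ and $\infty$ shows it converges precisely on the four stated domains: at a finite puncture $\sigma_k$ the exponent of $\rho$ in the integrand is $2\Real(s_k)-1$, $2\Real(s_k)$ or $2\Real(s_k)+1$ according to whether both, exactly one, or neither of $\eta_i,\eta_j$ has a pole there, forcing $\Real(s_k)>0$, $>-\tfrac12$ or $>-1$; at $\infty$ the exponent is $2\Real(s_0+s_1+s_2)-1$ when at least one of $\eta_i,\eta_j$ is regular at $\infty$ and $2\Real(s_0+s_1+s_2)-2$ when both are singular, forcing $\Real(c-a)<\tfrac12$ or $<0$. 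Since $\eta_1=\frac{dx}{x(1-x)}$ has poles exactly at $\{0,1\}$ and $\eta_2=\frac{dx}{1-yx}$ exactly at $\{y^{-1},\infty\}$, these inequalities reproduce the domains listed for $(1,1)$, $(1,2)$/$(2,1)$ and $(2,2)$. On this domain I would then prove \eqref{eq: integral formula sv 2F1} by the Stokes argument of Proposition~\ref{propSVpairingCoeffs}: using Lemma~\ref{lem:singlevalued analytic} and the definition of the de Rham intersection pairing, write $\langle\eta_i,\s\,\eta_j\rangle^{\mathrm{dR}}$ as $\frac{1}{2\pi i}$ times the integral over $\Pro^1(\C)$ of $(\text{weight in }w)\cdot\widetilde{(\eta_i)_{\overline\sigma}}\wedge\mathrm{conj}^*((\eta_j)_\sigma)$, where $\widetilde{(\eta_i)_{\overline\sigma}}$ is a compactly supported representative and $\widetilde{(\eta_i)_{\overline\sigma}}-(\eta_i)_{\overline\sigma}=\nabla_{\underline s}\phi$; take residues at $\infty$ to get $\phi(\infty)=-\Res_\infty((\eta_i)_{\overline\sigma})/(s_0+s_1+s_2)$; remove the tilde by Stokes, with the boundary circle contributions at $0,1,y^{-1}$ and $\infty$ all tending to $0$ under the above inequalities; and finally substitute $w=\overline z$ (which reverses the orientation of $\C$, whence the sign), use $\mathrm{conj}^*((\eta_i)_{\overline\sigma})=\overline{(\eta_i)_\sigma}$, and discard the measure-zero point $\infty$.

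The main obstacle --- and the reason Proposition~\ref{propSVpairingCoeffs} and Corollary~\ref{cor LSigma sv} cannot simply be quoted --- is the pole of $\eta_2$ at $\infty$: in Proposition~\ref{propSVpairingCoeffs} the first-slot form is always regular at $\infty$, so $\phi(\infty)=0$ there, whereas here $\phi(\infty)=-1/(\overline y(c-a))\neq0$ whenever $i=2$. One must therefore re-examine the boundary term at $\infty$ with $\phi(\infty)$ a nonzero constant; it still vanishes, but only because $\Real(c-a)<\tfrac12$ (resp. $\Real(c-a)<0$ when $\eta_j$ is also singular at $\infty$), which is precisely why the convergence domain shrinks in those cases. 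This same pole also blocks the tempting shortcut of establishing the identity on a subregion where Proposition~\ref{propSVpairingCoeffs} applies verbatim and then extending by analytic continuation: for the $(2,2)$ entry that subregion (where $\Real(b),\Real(c-b),\Real(-a)>0$ and $\Real(c-a)<\tfrac12$) is disjoint from the stated domain of convergence, so no such continuation is available and the Stokes computation has to be carried out directly.
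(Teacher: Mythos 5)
Your overall route is the paper's: the paper proves this proposition by simply citing Proposition~\ref{propSVpairingCoeffs}, and what you propose is to unwind that citation — identify $I^{\mathrm{dR}}_{-a,-b,-c}(\overline y)S_{a,b,c}(y)$ with the matrix of single-valued pairings and re-run the Stokes argument of Proposition~\ref{propSVpairingCoeffs} for $\Sigma=\{0,1,y^{-1}\}$, $(s_0,s_1,s_2)=(b,c-b,-a)$. Your observation that the citation cannot be taken verbatim for the rows with $\nu=\eta_2$ (which has a simple pole at $\infty$, so the hypothesis of Proposition~\ref{propSVpairingCoeffs} fails and $\phi(\infty)=-\Res_\infty(\nu)/(s_0+s_1+s_2)\neq0$) is correct and is genuinely useful content that the paper's one-line proof suppresses; your value of $\phi(\infty)$ and the check that the boundary contribution at $\infty$ still vanishes under $\Real(c-a)<\tfrac12$ (resp.\ $<0$) are right. (For the first row one could alternatively quote Proposition~\ref{propSVpairingCoeffs} on the region $s_k>0$, $\sum_k s_k<\tfrac12$ and extend by holomorphy in $(a,b,c)$, so the direct re-run is only forced when $i=2$; for the finite punctures one should also note, as in the proof of Lemma~\ref{lem: omega nu dR pairing}, that taking residues gives $\phi(\sigma_k)=0$ at every $\sigma_k$ where $\nu$ has no pole, which is what allows the weaker conditions $\Real(s_k)>-\tfrac12$, $>-1$ there.)

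There is, however, a concrete error in your convergence bookkeeping at $\infty$. The case analysis there must be a trichotomy, exactly parallel to the one you state at the finite punctures: in the coordinate $u=1/z$ the weight behaves like $|u|^{-2\Real(c-a)}$, so the integrand is $\rho^{1-2\Real(c-a)}$, $\rho^{-2\Real(c-a)}$ or $\rho^{-1-2\Real(c-a)}$ according to whether neither, exactly one, or both of $\eta_i,\eta_j$ has a pole at $\infty$, forcing $\Real(c-a)<1$, $<\tfrac12$ or $<0$ respectively. Your two-case rule (``at least one regular $\Rightarrow \Real(c-a)<\tfrac12$'') applied to the $(1,1)$ entry — where both forms are $\eta_1$, regular at $\infty$ — would impose $\Real(c-a)<\tfrac12$, whereas the stated domain only requires $\Real(c)<\Real(a)+1$, i.e.\ $\Real(c-a)<1$; so as written your analysis does not ``reproduce the domains listed'', contrary to your claim. (The exponents you quote are also paired with the wrong conclusions: an exponent $2\Real(c-a)-1$ in $|z|$ forces $\Real(c-a)<0$, and $2\Real(c-a)-2$ forces $<\tfrac12$, not the other way round.) This is a reparable slip rather than a structural gap — restoring the three-case count at $\infty$, and correspondingly relaxing the boundary estimate in the Stokes step for the $(1,1)$ entry (where $\phi(\infty)=0$ and $\eta_1$ decays like $|z|^{-2}$, so the circle term is $O(R^{2\Real(c-a)-2})$), makes the argument match the stated domains.
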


    \begin{proof}
    This follows from Proposition \ref{propSVpairingCoeffs}.
    \end{proof}
    
    \begin{rem}\label{rem: contiguity sv 2F1}
    By Proposition \ref{prop: period matrix Fabcy} the entries of the period matrix $P_{a,b,c}(y)$ are holomorphic functions of generic arguments $a,b,c$. This implies that the entries of the single-valued period matrix $S_{a,b,c}(y)$ have the same property and that \eqref{eq: integral formula sv 2F1} extends to a holomorphic function of generic $a,b,c$. One could also prove this directly  by using variants of the contiguity relations \eqref{eq: contiguity}.
    \end{rem}
    
    \begin{defn}
    For generic values of $a,b,c$, the single-valued versions of $\mathcal{F}(a,b,c;y)$ and $\mathcal{G}(a,b,c;y)$ are given by:
    $$\mathcal{F}^{\,\s}(a,b,c;y) = \langle -\eta_1 , \s\eta_1 \rangle^{\mathrm{dR}}\ \qquad \mbox{ and }\qquad \mathcal{G}^{\,\s}(a,b,c;y) = \langle -y\eta_2,\s\eta_1\rangle^{\mathrm{dR}}\ .
    $$
    \end{defn}
    
    The heuristic for these formulae (see Remark \ref{rem: c zero}) is that the form $-\eta_1$ has simple poles and residues $-1$ at $0$ and $1$ at $1$, and is  the image under the map $c_0^\vee$ of \cite{BD1} of the class of a path from $0$ to $1$. Similarly, the form $-y\eta_2=d\log(1-yx)$ has simple poles and residues $-1$ at $\infty$ and $1$ at $y^{-1}$ and is thus the image under $c_0^\vee$ of the class of a path from $\infty$ to $y^{-1}$.

    \begin{rem}\label{rem: why Fs is sv of F}
    By expressing $\eta_1$ in  the basis $((b-c)\omega_1,a\omega_2)$ as in \eqref{COBetasandomegas} and noting that $-\eta_1$ equals the form $\nu_1$ defined in \eqref{eq:def nui} one gets the following expression:
    \begin{equation}\label{eq: expression Fabcy sv Lauricella}
    \mathcal{F}^{\,\s}(a,b,c;y) = \frac{c}{b(c-b)}L^\s_{11} + \frac{1}{b} L^\s_{12}\ .
    \end{equation} 
    where we have used the shorthand notation 
    $$L^\s_{ij} = \left(L^\s_{\{0,1,y^{-1}\}}(b,c-b,-a)\right)_{ij}\ .$$
    In view of the similar expression \eqref{eq: expression Fabcy Lauricella}, Theorem \ref{introthmperandsv} implies that $\mathcal{F}^{\,\s}(a,b,c;y)$ has a Laurent expansion in the variables $a,b,c-b$, whose coefficients are obtained by applying the de Rham projection and the single-valued period map to (motivic lifts of) the coefficients of the Laurent expansion \eqref{eq: laurent expansion Fabc} of $\mathcal{F}(a,b,c;y)$. This justifies the fact that we call $\mathcal{F}^{\,\s}$ a \emph{single-valued version} of $\mathcal{F}$. A similar computation, using the identity
    $$-y\eta_2 = - \frac{c-b}{c-a}\,\nu_1 + \frac{c}{c-a}\nu_2 $$
    in $M_{a,b,c}(y)_{\mathrm{dR}}$, leads to the expression 
    \begin{equation}\label{eq: expression Gabcy sv Lauricella}
    \mathcal{G}^\s(a,b,c;y) = \frac{c}{b(a-c)}\left( L^\s_{11} + \frac{c-b}{c}L^\s_{12} - \frac{c}{c-b}L^\s_{21} - L^\s_{22} \right)\ .
    \end{equation}
    By comparing it with \eqref{eq: Gcal in terms of L} and applying Theorem \ref{introthmperandsv} one deduces that $\mathcal{G}^\s(a,b,c;y)$ has a Laurent expansion in the variables $a,b,c-b$ whose coefficients are obtained by applying the de Rham projection and the single-valued period map to (motivic lifts of) the coefficients of the Laurent expansion of $\mathcal{G}(a,b,c;y)$. This is because the de Rham projection sends the Lefschetz element $\mathbb{L}^{\mathfrak{m}}$ of $\MT(k)$, which is a motivic lift of $2\pi i$, to zero.
    \end{rem}
    
    \begin{rem}\label{rem: Gs not in terms of Fs}
    Note that even though $\mathcal{G}(a,b,c;y)$ is essentially $\mathcal{F}(1+b-c,1+a-c,2-c;y)$, there does not seem to be a natural way to express $\mathcal{G}^{\,\s}$ in terms of $\mathcal{F}^{\,\s}$. 
    This should not be surprising since the single-valued versions of the functions $\mathcal{F}$ and $\mathcal{G}$ are related to the original functions by looking at Laurent expansions around $a=b=c=0$, and the Laurent expansion of $\mathcal{F}(1+b-c,1+a-c,2-c;y)$ is not directly related to that of $\mathcal{F}(a,b,c;y)$.
    \end{rem}

    Proposition \ref{prop: integral formula sv 2F1} gives the integral formulae:
    \begin{equation}\label{eq: expression Fcal sv integral}
    \mathcal{F}^{\,\s}(a,b,c;y)=\frac{1}{2\pi i}\iint_{\mathbb{C}}|z|^{2b}|1-z|^{2(c-b)}|1-zy|^{-2a}\frac{d\overline{z}\wedge dz}{|z|^2|1-z|^2}
    \end{equation}
    and
    \begin{equation}\label{eq: expression Gcal sv integral}
    \mathcal{G}^{\,\s}(a,b,c;y) = \frac{1}{2\pi i}\iint_{\mathbb{C}} |z|^{2b}|1-z|^{2(c-b)}|1-zy|^{-2a}\frac{\overline{y}\,d\overline{z}\wedge dz}{(1-\overline{y}\,\overline{z})z(1-z)}\ ,
    \end{equation}
    that are valid in the domains $0<\Real(b)<\Real(c)<\Real(a)+1<2$ and $-\frac{1}{2} < \Real(b) <\Real(c)+\frac{1}{2} < \Real(a)+1 < \frac{3}{2}$ respectively.
    
    In view of Remark \ref{rem: contiguity sv 2F1}, $\mathcal{F}^{\,\s}$ and $\mathcal{G}^{\,\s}$ are holomorphic functions of the generic complex parameters $a,b,c$. This will also be apparent in the double copy formulae \eqref{eq: double copy Fabc} and \eqref{eq: double copy Gabc} below. The single-valued period matrix $S_{a,b,c}(y)$ can now be written entirely in terms of $\mathcal{F}^{\,\s}$ and $\mathcal{G}^{\,\s}$ via:
    \begin{equation}\label{eq: formula sv period matrix 2F1}
    -I^{\mathrm{dR}}_{-a,-b,-c}(\overline{y})\,S_{a,b,c}(y) = \renewcommand*{\arraystretch}{1.5}\begin{pmatrix} \mathcal{F}^{\,\s}(a,b,c;y) & y^{-1}\,\mathcal{G}^{\,\s}(a,b,c;\overline{y}) \\ \overline{y}^{-1}\,\mathcal{G}^{\,\s}(a,b,c;y) & \mathcal{F}^{\,\s}(a+1,b+1,c+2;y) \end{pmatrix}\ ,
    \end{equation}
where the de Rham intersection matrix $I^{\mathrm{dR}}$ can be found in Lemma \ref{lem: dR pairing 2F1}.

    \subsubsection{Double copy formula}

    \begin{prop}
    We have the equality, for all generic values of $a,b,c$:
    $$I^{\mathrm{dR}}_{-a,-b,-c}(\overline{y})\, S_{a,b,c}(y) = \frac{1}{2\pi i}\, {}^tP_{a,b,c}(\overline{y})\,I^{\mathrm{B}}_{-a,-b,-c}(\overline{y})\, P_{a,b,c}(y)\ .$$
    \end{prop}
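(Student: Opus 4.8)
The plan is to derive the claimed matrix identity purely formally from the twisted period relations \eqref{eq: twisted period relations 2F1 matrix} and the definition of the single-valued period matrix \eqref{eq: def sv 2F1 matrix}, exactly as in Remark \ref{rem: double copy lauricella global} for the general Lauricella case. First I would recall that, by \eqref{eq: def sv 2F1 matrix},
$$S_{a,b,c}(y) = P_{-a,-b,-c}(\overline{y})^{-1}\,P_{a,b,c}(y)\ ,$$
so that
$$I^{\mathrm{dR}}_{-a,-b,-c}(\overline{y})\,S_{a,b,c}(y) = I^{\mathrm{dR}}_{-a,-b,-c}(\overline{y})\,P_{-a,-b,-c}(\overline{y})^{-1}\,P_{a,b,c}(y)\ .$$
The point is therefore to show that $I^{\mathrm{dR}}_{-a,-b,-c}(\overline{y})\,P_{-a,-b,-c}(\overline{y})^{-1}$ equals $\tfrac{1}{2\pi i}\,{}^tP_{a,b,c}(\overline{y})\,I^{\mathrm{B}}_{-a,-b,-c}(\overline{y})$, which is a rearrangement of the twisted period relations with $(a,b,c)$ replaced by $(-a,-b,-c)$ and $y$ replaced by $\overline{y}$.

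Concretely, I would apply \eqref{eq: twisted period relations 2F1 matrix} with the substitution $(a,b,c,y)\mapsto(-a,-b,-c,\overline{y})$. Since the pairing matrices are defined so that $I^{\mathrm{B}}_{a,b,c}(y)$ and $I^{\mathrm{dR}}_{a,b,c}(y)$ depend on the sign of the parameters in a controlled way, the substituted relation reads
$${}^tP_{a,b,c}(\overline{y})\,I^{\mathrm{B}}_{-a,-b,-c}(\overline{y})\,P_{-a,-b,-c}(\overline{y}) = 2\pi i\,I^{\mathrm{dR}}_{-a,-b,-c}(\overline{y})\ .$$
Right-multiplying both sides by $P_{-a,-b,-c}(\overline{y})^{-1}$ gives
$${}^tP_{a,b,c}(\overline{y})\,I^{\mathrm{B}}_{-a,-b,-c}(\overline{y}) = 2\pi i\,I^{\mathrm{dR}}_{-a,-b,-c}(\overline{y})\,P_{-a,-b,-c}(\overline{y})^{-1}\ ,$$
hence
$$\frac{1}{2\pi i}\,{}^tP_{a,b,c}(\overline{y})\,I^{\mathrm{B}}_{-a,-b,-c}(\overline{y}) = I^{\mathrm{dR}}_{-a,-b,-c}(\overline{y})\,P_{-a,-b,-c}(\overline{y})^{-1}\ .$$
Multiplying on the right by $P_{a,b,c}(y)$ and using \eqref{eq: def sv 2F1 matrix} yields the desired formula.

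The only genuine subtlety — and the step I expect to require the most care — is bookkeeping about which instance of the twisted period relations to invoke and the precise dependence of $I^{\mathrm{B}}$ and $I^{\mathrm{dR}}$ on the parameters, since \eqref{eq: twisted period relations 2F1 matrix} as stated pairs $M_{-a,-b,-c}(y)$ with $M_{a,b,c}(y)$; one must check that applying it with arguments $(-a,-b,-c)$ and $\overline{y}$ (and using that the Betti and de Rham intersection pairing matrices in Lemmas \ref{lem: B pairing 2F1} and \ref{lem: dR pairing 2F1} transform correctly under $(a,b,c)\mapsto(-a,-b,-c)$ and $y\mapsto\overline{y}$) produces exactly the matrices appearing in the statement. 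Once the matching of realisations and signs is made explicit, the rest is the elementary matrix manipulation above. I would conclude by noting that both sides are holomorphic in the generic parameters $a,b,c$ (Proposition \ref{prop: period matrix Fabcy} and Remark \ref{rem: contiguity sv 2F1}), so it suffices to verify the identity on the open set where all integrals converge, where it follows from the convergent integral representations and Proposition \ref{prop: integral formula sv 2F1}.
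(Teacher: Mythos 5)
Your proposal is correct and is essentially the paper's own proof, which simply cites the twisted period relations \eqref{eq: twisted period relations 2F1 matrix} and the definition \eqref{eq: def sv 2F1 matrix} of $S_{a,b,c}(y)$; you spell out the substitution $(a,b,c,y)\mapsto(-a,-b,-c,\overline{y})$ and the matrix rearrangement that the paper leaves implicit. The closing remark about convergence domains is unnecessary, since the identity is a purely algebraic consequence of those two matrix relations, but it does no harm.
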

    
    \begin{proof}
    This formula follows directly from \eqref{eq: twisted period relations 2F1 matrix} and \eqref{eq: def sv 2F1 matrix}.
    \end{proof}
    
    The top left entry of the double copy formula reads:
    \begin{equation}\label{eq: double copy Fabc}\begin{split}
    \mathcal{F}^{\,\s}(a,b,c;y) = \frac{\sin(\pi b)\sin(\pi(c-b))}{\pi \sin(\pi c)}\, \mathcal{F}(a,b,&c;y)\,\mathcal{F}(a,b,c;\overline{y}) \\
    & - \frac{\sin(\pi a)\sin(\pi(c-a))}{\pi \sin(\pi c)}\, \mathcal{G}(a,b,c;y)\,\mathcal{G}(a,b,c;\overline{y})\ .
    \end{split}
    \end{equation}
    The bottom left entry is: 
    \begin{equation}\label{eq: double copy Gabc}\begin{split}
   \overline{y}^{-1}\,\mathcal{G}^{\,\s}(a,b,c;y) =   \frac{\sin(\pi b) \sin(\pi (c-b))}{\pi\sin(\pi c))} \, \mathcal{F}(a, b, c;y)\, \mathcal{F}(a&+1, b+1, c+2; \overline{y}) \\
    -\frac{\sin(\pi a) \sin(\pi (c-a))}{ \pi \sin(\pi c))} & \, \mathcal{G}(a, b, c; y)  \, \mathcal{G}(a+1, b+1, c+2; \overline{y}) \ .
    \end{split}
    \end{equation}
    The other entries are easily deduced from these two. 
   
    \begin{rem}Certain special cases of entries of  single-valued period matrix $S_{a,b,c}(y)$ were previously considered in \cite{mimachiyoshida, mimachicomplex}, along the lines of Remark \ref{re: sv is not conj of coeff}. 
    The presentation above seems to be the first systematic approach to constructing all the single-valued periods associated to the hypergeometric function.
    \end{rem}

\subsection{The single-valued hypergeometric function}
    Recall the formula:
    $$\beta^{\,\s}(s_0,s_1)=\frac{1}{2\pi i}\iint_{\mathbb{C}}|z|^{2s_0}|1-z|^{2s_1}\frac{d\overline{z}\wedge dz}{|z|^2|1-z|^2} = \frac{\Gamma(s_0)\,\Gamma(s_1)\,\Gamma(1-s_0-s_1)}{\Gamma(s_0+s_1)\,\Gamma(1-s_0)\,\Gamma(1-s_1)}$$
    for the single-valued (or `complex') version of the beta function \cite[\S 1.1]{BD2}. In view of \eqref{eq: 2F1 integral} we propose the following definition of a \emph{single-valued hypergeometric function}. We also define a single-valued of the function $G(a,b,c;y)$.
    
    \begin{defn}\label{def: F G sv}
    For generic values of $a,b,c$, the single-valued versions of $F$ and $G$    are given by:
    $$F^{\,\s}(a,b,c;y)=  \beta^{\,\s}(b,c-b)^{-1}\,\mathcal{F}^{\,\s}(a,b,c;y) \;\; \mbox{ and }\;\; G^{\,\s}(a,b,c;y) = \frac{a(c-a)}{c}\,\beta^{\,\s}(b,c-b)^{-1}\,\mathcal{G}^{\,\s}(a,b,c;y)\ .$$
    \end{defn}
    
    As explained in Remark \ref{rem: why Fs is sv of F}, the coefficients of the Laurent expansion of $F^{\,\s}(a,b,c;y)$ around $a=b=c=0$ are obtained by applying the de Rham projection and the single-valued period map to (motivic lifts of) the coefficients of the Laurent expansion of $F(a,b,c;y)$. The same statement is true for $G^{\,\s}$ and $G$ in view of \eqref{eq: G in terms of calG} (again because the Lefschetz element $\Lef^{\mathfrak{m}}$ of $\MT(k)$, which is a motivic lift of $2\pi i$, is sent to zero by the de Rham projection.) 
    There is no natural expression for $G^{\,\s}$ in terms of $F^{\,\s}$ for the same reasons as in Remark \ref{rem: Gs not in terms of Fs}.
    
    The next proposition expresses single-valued hypergeometric functions as a double copy of the classical hypergeometric functions. It shows in particular that $F^{\,\s}$ and $G^{\,\s}$ are holomorphic functions of generic complex numbers $a,b,c$ and (single-valued) analytic functions of $y\in\mathbb{C}\setminus \{0,1\}$.

    \begin{prop}   \label{prop: doublecopyfor2F1}
    For all generic $a,b,c$ we have the double copy formulae:
    \begin{equation*}
    F^{\,\s}(a,b,c;y) \, = \, F(a,b,c;y)\,F(a,b,c;\overline{y})  -  w_{a,c-a}\,w_{b,c-b} \, G(a,b,c;y)\,G(a,b,c;\overline{y})
    \end{equation*}
    and
    \begin{equation*} \begin{split} G^{\,\s}(a,b,c;y) =  \frac{a (c-a) b (c-b) \,  \overline{y}}{c^2(1+c)} \,
   \Big( F(a,b,c;y)\,  F(b+1,&a+1,c+2;\overline{y})  \\  - w_{a,c-a}& w_{b,c-b} G(a,b,c;y) \,G(b+1,a+1,c+2; \overline{y} \Big)\end{split}
    \end{equation*}
    where we have set
    $$w_{s,t}=\frac{\pi\sin(\pi(s+t))}{\sin(\pi s)\sin(\pi t)}\ \cdot$$
    \end{prop}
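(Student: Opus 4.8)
The plan is to deduce Proposition \ref{prop: doublecopyfor2F1} from the matrix double copy formula of the preceding proposition by reading off the top-left and bottom-left entries and inserting the beta-function prefactors according to Definition \ref{def: F G sv}. Concretely, I would start from the entrywise consequences \eqref{eq: double copy Fabc} and \eqref{eq: double copy Gabc}, which express $\mathcal{F}^{\,\s}$ and $\overline{y}^{-1}\mathcal{G}^{\,\s}$ as quadratic combinations of the $\mathcal{F}$ and $\mathcal{G}$ functions with the trigonometric coefficients $\frac{\sin(\pi b)\sin(\pi(c-b))}{\pi\sin(\pi c)}$ and $\frac{\sin(\pi a)\sin(\pi(c-a))}{\pi\sin(\pi c)}$. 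The whole content of the proposition is then just a bookkeeping exercise: rewrite everything in terms of $F$, $G$, $F^{\,\s}$, $G^{\,\s}$ using the relations $\mathcal{F}=\beta(b,c-b)F$, $\mathcal{G}=\frac{\pi\sin(\pi c)}{\sin(\pi a)\sin(\pi(c-a))}\beta(b,c-b)G$ (from \eqref{eq: G in terms of calG}), $\mathcal{F}^{\,\s}=\beta^{\,\s}(b,c-b)F^{\,\s}$ and $\mathcal{G}^{\,\s}=\frac{c}{a(c-a)}\beta^{\,\s}(b,c-b)G^{\,\s}$ (from Definition \ref{def: F G sv}), and check that all the $\beta$- and $\beta^{\,\s}$-factors, together with the trigonometric prefactors, collapse to the claimed $w_{s,t}=\frac{\pi\sin(\pi(s+t))}{\sin(\pi s)\sin(\pi t)}$ coefficients.

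First I would handle the top-left entry. Dividing \eqref{eq: double copy Fabc} by $\beta^{\,\s}(b,c-b)$ and using the $\Gamma$-function formula for $\beta^{\,\s}$ recalled at the start of \S8.4, together with the reflection formula $\Gamma(s)\Gamma(1-s)=\pi/\sin(\pi s)$, one computes $\beta(b,c-b)^2/\beta^{\,\s}(b,c-b)=\frac{\pi\sin(\pi c)}{\sin(\pi b)\sin(\pi(c-b))}\cdot\frac{\sin(\pi b)\sin(\pi(c-b))}{\pi\sin(\pi c)}\cdot(\text{check})$ — more precisely one finds $\frac{\sin(\pi b)\sin(\pi(c-b))}{\pi\sin(\pi c)}\,\beta(b,c-b)^2/\beta^{\,\s}(b,c-b)=1$, so that the first term of \eqref{eq: double copy Fabc} becomes simply $F(a,b,c;y)F(a,b,c;\overline{y})$. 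For the second term, substituting $\mathcal{G}=\frac{\pi\sin(\pi c)}{\sin(\pi a)\sin(\pi(c-a))}\beta(b,c-b)G$ and simplifying the resulting product of $\sin$ and $\beta$ factors against $\beta^{\,\s}(b,c-b)$ produces exactly the coefficient $w_{a,c-a}w_{b,c-b}$; this is the routine core of the computation and amounts to repeated use of $\Gamma(s)\Gamma(1-s)=\pi/\sin(\pi s)$ and $\Gamma(s+t)/(\Gamma(s)\Gamma(t))=\beta(s,t)^{-1}$. The bottom-left entry is treated identically, starting from \eqref{eq: double copy Gabc} and the relation $\overline{y}^{-1}\mathcal{G}^{\,\s}(a,b,c;y)$ combined with $\mathcal{G}^{\,\s}=\frac{c}{a(c-a)}\beta^{\,\s}(b,c-b)G^{\,\s}$; here one must also keep track of the arguments $(a+1,b+1,c+2)$ versus $(b+1,a+1,c+2)$, which agree because $F$ and $G$ (but not $\mathcal{F}$, $\mathcal{G}$) are symmetric in their first two arguments, and of the extra prefactor $\frac{ab(c-a)(c-b)}{c^2(1+c)}$, which comes from the combination of the normalising constant $\frac{c}{a(c-a)}$ in $\mathcal{G}^{\,\s}$ and the shift $\beta(b+1,c-b+1)=\frac{b(c-b)}{c(1+c)}\beta(b,c-b)$.

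Finally, the analyticity claims at the end of the proposition follow for free: the left-hand sides $F^{\,\s}$, $G^{\,\s}$ are now exhibited as finite $\mathbb{C}$-bilinear combinations of the $F$ and $G$ functions (in the variables $y$ and $\overline{y}$) with coefficients $w_{s,t}$ that are holomorphic in $a,b,c$ wherever the denominators $\sin(\pi s)$ do not vanish, i.e.\ on the generic locus \eqref{eq: generic abc}; and $F$, $G$ are themselves holomorphic in generic $a,b,c$ by the discussion of \S\ref{section: 2F1} and (single-valued) real-analytic in $y\in\mathbb{C}\setminus\{0,1\}$. I do not expect any genuine obstacle: the only thing to be careful about is the precise matching of $\sin$-products and $\Gamma$-shift identities, in particular that $\frac{\sin(\pi a)\sin(\pi(c-a))}{\pi\sin(\pi c)}\cdot\frac{\pi^2\sin^2(\pi c)}{\sin^2(\pi a)\sin^2(\pi(c-a))}$ combines with $\beta(b,c-b)^2/\beta^{\,\s}(b,c-b)$ to give exactly $w_{a,c-a}w_{b,c-b}$, which is a one-line reflection-formula check once written out. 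The main (very mild) subtlety is simply sign- and argument-tracking in the $\mathcal{G}$ versus $G$ conversion via \eqref{eq: identity beta use 2F1}.
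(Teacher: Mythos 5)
Your proposal is correct and follows essentially the same route as the paper: one multiplies the entrywise double copy formulae \eqref{eq: double copy Fabc} and \eqref{eq: double copy Gabc} by $\beta^{\,\s}(b,c-b)^{-1}$ and simplifies via \eqref{eq: identity beta use 2F1} together with the identity $\beta^{\,\s}(b,c-b)=\frac{\sin(\pi b)\sin(\pi(c-b))}{\pi\sin(\pi c)}\,\beta(b,c-b)^2$ (your reflection-formula computation), keeping track of the shift $\beta(b+1,c-b+1)=\frac{b(c-b)}{c(1+c)}\beta(b,c-b)$ and the symmetry of $F$ and $G$ in their first two arguments. Your trigonometric and beta bookkeeping checks out, so there is nothing to add.
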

    
    \begin{proof}
    This follows from multiplying \eqref{eq: double copy Fabc} by $\beta^{\,\s}(b,c-b)^{-1}$ and using the identities \eqref{eq: identity beta use 2F1} and 
    $$\beta^{\,\s}(b,c-b) = \frac{\sin(\pi b)\sin(\pi(c-b))}{\pi\sin(\pi c)}\beta(b,c-b)^2\ .$$
    \end{proof}

    \begin{rem}
    It is obvious from the double copy formula  that both $F^{\,\s}$ and $G^{\, \s}$ satisfy the ``holomorphic part'' of the hypergeometric differential equation \eqref{eq: 2F1 differential equation}, namely:
    $$\left(y(1-y)\frac{\partial^2}{\partial y^2} + (c-(a+b+1)y)\frac{\partial}{\partial y} -ab \right)  f(y) =0\ ,$$
    for $f(y)  = F^{\,\s}(a,b,c;y)$ or $f(y)= G^{\, \s}(a,b,c;y)$. This  can also be derived from first principles. 
    \end{rem}

\section{Example: motivic coaction of \texorpdfstring{${}_2F_1$}{2F1}}
We derive formulae for the motivic coaction of the hypergeometric function $F={}_2F_1$ both in the global and in the local setting (Proposition \ref{prop: coaction 2F1} and Theorem \ref{thm: coaction local 2F1} respectively). As in the case of  Lauricella functions, these formulae turn out to be formally identical, even though the contexts in which they appear are very different.

\subsection{Motivic coaction of the hypergeometric function: the global point of view}
 As in the previous section, let $k\subset \mathbb{C}$ be a subfield and let us fix $y\in k\setminus \{0,1\}$. We consider the coefficient fields $k_{\mathrm{dR}}=k(a,b,c)$ and $\mathbb{Q}_{\mathrm{B}}=\mathbb{Q}(e^{2\pi ia}, e^{2\pi ib},e^{2\pi ic})$ and work in the Tannakian category $\mathcal{T}$ defined in  \S\ref{subsubsec: tannakian T minimalist}.
    We define the (global) motivic and canonical de Rham variants of $\mathcal{F}(a,b,c;y)$:
    $$\mathcal{F}^\mm(a,b,c;y) = [M_{a,b,c}(y),\varphi_1,\eta_1]^\mm \;\; \in \Pe_{\mathcal{T}}^\mm\ ,$$
    $$\mathcal{F}^\dR(a,b,c;y)=[M_{a,b,c}(y),-\eta_1,\eta_1]^\dR \;\; \in\Pe_{\mathcal{T}}^\dR\ .$$
    We will also need the (global) de Rham variant of $\mathcal{G}(a,b,c;y)$:
    $$\mathcal{G}^\dR(a,b,c;y)=[M_{a,b,c}(y),-y\eta_2,\eta_1]^\dR \;\; \in \Pe_{\mathcal{T}}^\dR\ , $$
   where $\eta_i$ and $\varphi_i$ were defined in \S\ref{subsec: cohomology with coefficients 2F1}.
        Applying the period map $\per:\Pe_{\mathcal{T}}^\mm\to \mathbb{C}$ to $\mathcal{F}^\mm(a,b,c;y)$ gives back $\mathcal{F}(a,b,c;y)$. If one works in the more refined Tannakian formalism of \S\ref{subsubsec: tannakian infinity} one can replace $\mathcal{F}^\dR(a,b,c;y)$ and $\mathcal{G}^\dR(a,b,c,;y)$ with elements in the ring $\mathcal{P}^{\mathrm{dR}^+,\mathrm{dR}^-}_{\mathcal{T}_\infty}$ whose single-valued periods are $\mathcal{F}^{\,\s}(a,b,c;y)$ and $\mathcal{G}^{\,\s}(a,b,c;y)$ respectively. We now compute the (global) motivic coaction $\Delta:\Pe^\mm_{\mathcal{T}}\to \Pe^\mm_{\mathcal{T}}\otimes_{k_{\mathrm{dR}}} \Pe^\dR_{\mathcal{T}}$.
    \begin{prop}\label{prop: coaction Fabc}
    We have the following (global) motivic coaction formula:
    \begin{equation*}\label{eq: coaction Fabc global}
    \begin{split}
    \Delta\,\mathcal{F}^\mm(a,b,c;y) = \frac{b(c-b)}{c}\, \mathcal{F}^\mm(a,b,c;y)\,\otimes\,  &\mathcal{F}^\dR(a,b,c;y) \\ & -\,  y\, \frac{a(c-a)}{c}\, \mathcal{F}^\mm(1+a,1+b,2+c;y)\otimes \mathcal{G}^\dR(a,b,c;y)\ . 
    \end{split}
    \end{equation*}
    \end{prop}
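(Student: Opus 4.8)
The plan is to apply the universal coaction formula \eqref{Deltaformula} to $\mathcal{F}^\mm(a,b,c;y)=[M_{a,b,c}(y),\varphi_1,\eta_1]^\mm$ using the \emph{ad hoc} de Rham basis $(\eta_1,\eta_2)$ of $M_{a,b,c}(y)_{\mathrm{dR}}$ from \S\ref{subsec: cohomology with coefficients 2F1}. This gives at once
$$\Delta\,\mathcal{F}^\mm(a,b,c;y)=[M_{a,b,c}(y),\varphi_1,\eta_1]^\mm\otimes[M_{a,b,c}(y),\eta_1^\vee,\eta_1]^\dR+[M_{a,b,c}(y),\varphi_1,\eta_2]^\mm\otimes[M_{a,b,c}(y),\eta_2^\vee,\eta_1]^\dR,$$
where $\{\eta_1^\vee,\eta_2^\vee\}\subset M_{a,b,c}(y)_{\mathrm{dR}}^\vee$ denotes the dual basis. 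It then suffices to identify each of the four matrix coefficients on the right-hand side.

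The term $[M_{a,b,c}(y),\varphi_1,\eta_1]^\mm$ is $\mathcal{F}^\mm(a,b,c;y)$ by definition. For $[M_{a,b,c}(y),\varphi_1,\eta_2]^\mm$ I would use the fact that multiplication by the invertible regular function $x(1-x)(1-xy)^{-1}$ on $X_{\Sigma}$, $\Sigma=\{0,1,y^{-1}\}$, induces an isomorphism $M_{a,b,c}(y)\overset{\sim}{\to}M_{1+a,1+b,2+c}(y)$ in $\mathcal{T}$ which, on the de Rham side, carries the class of $\eta_2=\frac{dx}{1-yx}$ to the class of $\eta_1=\frac{dx}{x(1-x)}$ for the shifted parameters (this is the cohomological incarnation of the remark following \eqref{eq: Fabcy}), and which carries $\varphi_1$, with its canonical branch (\S\ref{subsubsec: def Fcal}), to $\varphi_1$ for the shifted parameters. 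By naturality of matrix coefficients under morphisms in $\mathcal{T}$ this yields $[M_{a,b,c}(y),\varphi_1,\eta_2]^\mm=\mathcal{F}^\mm(1+a,1+b,2+c;y)$.

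For the two canonical de Rham coefficients I would compute the dual basis $\eta_j^\vee$ using the de Rham intersection pairing $M_{-a,-b,-c}(y)_{\mathrm{dR}}\otimes M_{a,b,c}(y)_{\mathrm{dR}}\to k_{\mathrm{dR}}$, whose matrix in the bases $(\eta_1^-,\eta_2^-)$, $(\eta_1^+,\eta_2^+)$ is the \emph{diagonal} matrix $I^{\mathrm{dR}}_{a,b,c}(y)$ of Lemma \ref{lem: dR pairing 2F1}. Diagonality gives $\eta_1^\vee=-\tfrac{b(c-b)}{c}\,\eta_1^-$ and $\eta_2^\vee=y^2\,\tfrac{a(c-a)}{c}\,\eta_2^-$ as elements of $M_{a,b,c}(y)_{\mathrm{dR}}^\vee\cong M_{-a,-b,-c}(y)_{\mathrm{dR}}$. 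Recalling that $-\eta_1=\nu_1$ (Remark \ref{rem: why Fs is sv of F}, cf. \eqref{eq:def nui}), so that by definition $\mathcal{F}^\dR(a,b,c;y)=[M_{a,b,c}(y),-\eta_1,\eta_1]^\dR$ and $\mathcal{G}^\dR(a,b,c;y)=[M_{a,b,c}(y),-y\eta_2,\eta_1]^\dR$, bilinearity of matrix coefficients then gives $[M_{a,b,c}(y),\eta_1^\vee,\eta_1]^\dR=\tfrac{b(c-b)}{c}\,\mathcal{F}^\dR(a,b,c;y)$ and $[M_{a,b,c}(y),\eta_2^\vee,\eta_1]^\dR=-y\,\tfrac{a(c-a)}{c}\,\mathcal{G}^\dR(a,b,c;y)$. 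Substituting the four identities into the coaction formula above produces exactly the claimed expression.

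The only nontrivial step is the identification $[M_{a,b,c}(y),\varphi_1,\eta_2]^\mm=\mathcal{F}^\mm(1+a,1+b,2+c;y)$: one must verify that the twist by $x(1-x)(1-xy)^{-1}$ really defines a morphism in $\mathcal{T}$, i.e. that it is compatible with the comparison isomorphism and matches the $\Q^{\mathrm{dR}}$- and $\Q^{\mathrm{B}}$-structures of the two objects, and that it transports the prescribed branches correctly, which under the standing hypothesis $y\notin(1,+\infty)$ reduces to the observation that $x(1-x)(1-xy)^{-1}$ admits a well-defined logarithm along the segment $(0,1)$. An alternative that avoids the explicit twist would be to obtain this identity from the motivic lifts of the contiguity relations already used in the proof of Proposition \ref{prop: period matrix Fabcy}. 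Everything else is bookkeeping of signs and normalisation factors.
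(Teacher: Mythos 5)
Your proposal is correct and follows essentially the same route as the paper's proof: the general coaction formula applied in the basis $(\eta_1,\eta_2)$, the dual basis read off from the diagonal de Rham intersection matrix of Lemma \ref{lem: dR pairing 2F1}, and the identification $[M_{a,b,c}(y),\varphi_1,\eta_2]^\mm=\mathcal{F}^\mm(1+a,1+b,2+c;y)$ via the twist by $x(1-x)(1-yx)^{-1}$, which the paper realises as an isomorphism $\alpha\colon M_{1+a,1+b,2+c}(y)\to M_{a,b,c}(y)$ in $\mathcal{T}$ with $\alpha_{\mathrm{dR}}(\eta_1)=\eta_2$ and $\alpha_{\mathrm{B}}^\vee(\varphi_1)=\varphi_1$. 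The only slip is directional and immaterial: multiplication by $x(1-x)(1-yx)^{-1}$ induces the map in that direction (your stated isomorphism $M_{a,b,c}(y)\to M_{1+a,1+b,2+c}(y)$ sending $\eta_2\mapsto\eta_1$ is given by multiplying by its inverse), and either description yields the same equality of matrix coefficients.
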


    \begin{proof}
    By the general formula for the motivic coaction we have, writing $M$ for $M_{a,b,c}(y)$:
    $$\Delta\,\mathcal{F}^\mm(a,b,c;y) = [M,\varphi_1,\eta_1]^\mm\otimes [M,\eta_1^\vee,\eta_1]^\dR + [M,\varphi_1, \eta_2]^\mm\otimes [M,\eta_2^\vee,\eta_1]^\dR\ .$$
    By Lemma \ref{lem: dR pairing 2F1}, the dual basis elements $\eta_1^\vee$ and $\eta_2^\vee$ are represented in $M_{-a,-b,-c}(y)_{\mathrm{dR}}$ by the elements 
    $$\eta_1^\vee = -\frac{b(c-b)}{c}\,\eta_1\quad \mbox{ and }\quad \eta_2^\vee = y^2\,\frac{a(c-a)}{c}\,\eta_2\ .$$
    Thus what remains is to prove the equality $[M,\varphi_1,\eta_2]^\mm=\mathcal{F}^\mm(1+a,1+b,2+c)$.
    We describe an isomorphism
    $$\alpha:M_{1+a,1+b,2+c}(y)\stackrel{\sim}{\longrightarrow} M$$
    in the category $\mathcal{T}$. At the level of de Rham components it is induced by multiplication of (smooth) differential forms by $x(1-x)(1-yx)^{-1}$. The equality 
    $$\nabla_{a,b,c}(x(1-x)(1-yx)^{-1}f)=x(1-x)(1-yx)^{-1}\nabla_{1+a,1+b,2+c}(f)$$
    proves that it induces an isomorphism of de Rham complexes. On the level of Betti components it is induced by multiplication of sections by $x(1-x)(1-yx)^{-1}$. On easily checks that this gives rise to an isomorphism $\alpha$ in the category $\mathcal{T}$. It satisfies $\alpha_{\mathrm{B}}^\vee(\varphi_1)=\varphi_1$ and $\alpha_{\mathrm{dR}}(\eta_1)=\eta_2$, and therefore induces the desired equality of motivic periods.
    \end{proof}

    We now define a motivic version of the function $F$ and compute its motivic coaction. We first need to record a few facts about the motivic and de Rham versions of the beta function. For complex numbers $s_0,s_1\in k_{\mathrm{dR}}$ such that $s_0,s_1,s_0+s_1\notin\mathbb{Z}$ we let $M_{s_0,s_1}=M_{\{0,1\}}(s_0,s_1)\in\mathcal{T}$ and let $\varphi$ denote the class of $(0,1)\otimes x^{s_0}(1-x)^{s_1}\in (M_{s_0,s_1})_{\mathrm{B}}^\vee$ and $\eta$ denote the class of $\frac{dx}{x(1-x)}$ in $(M_{s_0,s_1})_{\mathrm{dR}}$. We thus have the (global) motivic and de Rham beta functions $$\beta^\mm(s_0,s_1)=\left[M_{s_0,s_1},\varphi,\eta\right]^\mm \; \;\in\Pe^\mm_{\mathcal{T}} \qquad \mbox{ and } \qquad \beta^\dR(s_0,s_1)=\left[M_{s_0,s_1},-\eta,\eta\right]^\dR\;\;\in\Pe^\dR_{\mathcal{T}}\ .$$
    They are related to the Lauricella function via $$\beta^\bullet(s_0,s_1)=\frac{s_0+s_1}{s_0s_1}L_{\{0,1\}}^\bullet \qquad (\bullet\in\{\mm,\dR\})$$
    and thus we have the coaction formula:
    \begin{equation}\label{eq: coaction beta in 2F1}
    \Delta\,\beta^\mm(s_0,s_1)=\frac{s_0s_1}{s_0+s_1}\,\beta^\mm(s_0,s_1)\otimes \beta^\dR(s_0,s_1)\ .
    \end{equation}
    
    \begin{lem}\label{lem: beta invertible}
    The matrix coefficients $\beta^\mm(s_0,s_1)\in \mathcal{P}^\mm_{\mathcal{T}}$ and $\beta^\dR(s_0,s_1)\in \mathcal{P}^\dR_{\mathcal{T}}$ are invertible.
    \end{lem}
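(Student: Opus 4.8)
The plan is to reduce the statement to the elementary fact that, for a \emph{rank one} object of $\mathcal{T}$, any matrix coefficient taken with respect to bases is invertible. The relevant object here is $M_{s_0,s_1}=M_{\{0,1\}}(s_0,s_1)$, and by the rank computations of §\ref{sec: cohomology coeff} (see also Example \ref{sect: introBeta}) both $(M_{s_0,s_1})_{\mathrm{B}}=H^1_{\mathrm{B}}(X_{\{0,1\}},\mathcal{L}_{(s_0,s_1)})$ and $(M_{s_0,s_1})_{\mathrm{dR}}=H^1_\varpi(X_{\{0,1\}},\nabla_{(s_0,s_1)})$ are one-dimensional over $\Q^{\mathrm{B}}_{(s_0,s_1)}$ and $\Q^{\mathrm{dR}}_{(s_0,s_1)}$ respectively, using the genericity hypothesis $s_0,s_1,s_0+s_1\notin\mathbb{Z}$. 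So it is enough to check that the four classes entering the definitions of $\beta^\mm$ and $\beta^\dR$ are non-zero, hence bases of the relevant one-dimensional spaces, and then to invoke the general fact.

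For the first point: the chain $\varphi$ is by construction the standard basis element \eqref{BcoeffBasis} of $H_1^{\lf}(X_{\{0,1\}}(\mathbb{C}),\mathcal{L}_{(s_0,s_1)}^\vee)=(M_{s_0,s_1})_{\mathrm{B}}^\vee$. The form $\eta=\frac{dx}{x(1-x)}$ equals $-\tfrac{s_0+s_1}{s_0}\,\frac{dx}{x-1}$ in $H^1_\varpi(X_{\{0,1\}},\nabla_{(s_0,s_1)})$ modulo the relation $s_0\tfrac{dx}{x}+s_1\tfrac{dx}{x-1}=0$, so it is a basis since $s_0+s_1\neq 0$. Finally $-\eta$ coincides with the class $\nu_1=\frac{dx}{x-1}-\frac{dx}{x}$ of \eqref{eq:def nui}, viewed in $H^1_\varpi(X_{\{0,1\}},\nabla_{-(s_0,s_1)})\cong(M_{s_0,s_1})_{\mathrm{dR}}^\vee$, and Lemma \ref{lem: omega nu dR pairing} gives $\langle\nu_1,\omega_1\rangle^{\mathrm{dR}}=-1/s_1\neq 0$, so $\nu_1$ is a basis of $(M_{s_0,s_1})_{\mathrm{dR}}^\vee$.

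For the general fact, I would use multiplicativity of matrix coefficients under tensor product together with the canonical trivialisation $V\otimes V^\vee\cong\mathbf{1}$ valid for $V$ of rank one. Choosing compatible (dual) bases,
$$[V,\sigma,\omega]^\mm\cdot[V^\vee,\sigma^\vee,\omega^\vee]^\mm=[V\otimes V^\vee,\sigma\otimes\sigma^\vee,\omega\otimes\omega^\vee]^\mm=[\mathbf{1},1,1]^\mm=1\ ,$$
and the same computation in the de Rham Hopf algebra $\Pe^\dR_{\mathcal{T}}$ shows $[V,f,\omega]^\dR$ is a unit. Applying this to $V=M_{s_0,s_1}$ proves the lemma. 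Equivalently, one may simply observe that $\beta^\bullet(s_0,s_1)=\tfrac{s_0+s_1}{s_0s_1}\,L^\bullet_{\{0,1\}}$ is, up to the unit $\tfrac{s_0+s_1}{s_0s_1}\in k_{\mathrm{dR}}^\times$, the single entry of the $(1\times1)$ (de Rham) period matrix of the rank one object $M_{\{0,1\}}$, which is invertible for the same reason. There is no real obstacle in this argument; the only point needing care is the verification that the chosen forms and chains are non-zero in the one-dimensional (co)homology groups, which is precisely where genericity of $s_0,s_1$ enters.
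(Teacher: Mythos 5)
Your argument is correct and takes essentially the same route as the paper: the paper's proof likewise notes that $M_{s_0,s_1}$ has rank one, so the evaluation map $M_{s_0,s_1}^\vee\otimes M_{s_0,s_1}\to\mathbbm{1}$ is an isomorphism and matrix coefficients attached to non-zero classes are invertible, with inverses given by matrix coefficients of $M_{s_0,s_1}^\vee$. Your explicit verification that $\varphi$, $\eta$ and $\nu_1$ are non-zero under the genericity hypothesis merely spells out the paper's phrase ``defined by non-zero classes''.
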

    
    \begin{proof}
    The object $M=M_{s_0,s_1}\in\mathcal{T}$ has rank one. Thus, the evaluation map $M^\vee\otimes M\stackrel{\sim}{\rightarrow} \mathbbm{1}$ is an isomorphism, where $\mathbbm{1}$ denotes the unit object $(\mathbb{Q}_{\mathrm{B}},k_{\mathrm{dR}},\mathrm{id}_{\mathbb{C}})$ in the Tannakian category $\mathcal{T}$. Since the matrix coefficients $\beta^\mm(s_0,s_1)\in \mathcal{P}^\mm_{\mathcal{T}}$ and $\beta^\dR(s_0,s_1)\in \mathcal{P}^\dR_{\mathcal{T}}$ are defined by non-zero classes, they are invertible and their inverses are matrix coefficients of $M^\vee$.
    \end{proof}
    
    We can thus mimic \eqref{eq: Fabcy} to define (global) motivic and de Rham lifts of $F(a,b,c;y)$.
     
    \begin{defn} \label{globalmot2F1} 
    We define the (global) motivic and de Rham hypergeometric functions
    $$F^\bullet(a,b,c;y) = \beta^\bullet(b,c-b)^{-1}\mathcal{F}^\bullet(a,b,c;y) \;\; \in \mathcal{P}^\bullet_{\mathcal{T}} \qquad (\bullet\in\{\mm,\dR\})\ .$$
    In view of Definition \ref{def: F G sv} we also define:
    $$G^\dR(a,b,c;y) = \frac{a(c-a)}{c}\,\beta^\dR(b,c-b)^{-1}\mathcal{G}^\dR(a,b,c;y) \;\; \in\mathcal{P}^\dR_{\mathcal{T}} .$$
    \end{defn}
    
    Before turning to the motivic coaction on  the motivic hypergeometric function, let us prove a motivic lift of the functional equation of the beta function.
    
    \begin{lem}\label{lem: functional equation beta}
    We have the identities:
    $$\beta^\mm(s_0+1,s_1)=\frac{s_0}{s_0+s_1}\,\beta^\mm(s_0,s_1) \quad \mbox{ and } \quad \beta^\mm(s_0,s_1+1)=\frac{s_1}{s_0+s_1}\,\beta^\mm(s_0,s_1)\ .$$
    \end{lem}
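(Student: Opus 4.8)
The plan is to derive these identities from a rank-one isomorphism in the category $\mathcal{T}$, in exactly the same spirit as the construction of the map $\alpha$ in the proof of Proposition \ref{prop: coaction Fabc}. Concretely, I would exhibit an isomorphism
$$\beta_{s_0}: M_{s_0+1,s_1} \overset{\sim}{\To} M_{s_0,s_1}$$
in $\mathcal{T}$, whose de Rham component is induced by multiplication of (smooth) differential forms by the function $x$, and whose Betti component is induced by multiplication of sections by $x$. The key algebraic point is the identity
$$\nabla_{s_0,s_1}(x\, f) = x\,\nabla_{s_0+1,s_1}(f)\ ,$$
which shows that multiplication by $x$ sends the twisted de Rham complex with connection $\nabla_{s_0+1,s_1}$ to the one with connection $\nabla_{s_0,s_1}$, hence induces a map on de Rham cohomology; on the Betti side, multiplication by $x$ visibly identifies $\mathcal{L}_{s_0+1,s_1}$ with $\mathcal{L}_{s_0,s_1}$ (up to the chosen branches), and the two are compatible with the comparison isomorphism. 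Since both objects have rank one, the induced map is automatically an isomorphism.

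The second step is to track the classes $\varphi$ and $\eta$ under this isomorphism. On the de Rham side, $(\beta_{s_0})_{\mathrm{dR}}$ sends the class of $\frac{dx}{x(1-x)}$ (for the parameters $s_0+1,s_1$) to the class of $\frac{dx}{1-x}$ (for the parameters $s_0,s_1$), and one must re-express the latter in terms of $\eta = \frac{dx}{x(1-x)}$ in $H^1_{\varpi}(X_{\{0,1\}},\nabla_{s_0,s_1})$. Using the cohomology relation coming from $\nabla_{s_0,s_1}(1) = s_0\frac{dx}{x} + s_1\frac{dx}{1-x}$ — equivalently $\frac{dx}{x(1-x)} = \frac{dx}{x}+\frac{dx}{1-x}$ and $s_0\frac{dx}{x} = -s_1\frac{dx}{1-x}$ in cohomology — one finds $\left[\frac{dx}{1-x}\right] = \frac{s_0}{s_0+s_1}\left[\frac{dx}{x(1-x)}\right]$, i.e. $(\beta_{s_0})_{\mathrm{dR}}(\eta) = \frac{s_0}{s_0+s_1}\eta$. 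On the Betti side, $(\beta_{s_0})^\vee_{\mathrm{B}}$ sends $\varphi$ to $\varphi$, since multiplication by $x$ on sections carries the branch $x^{s_0+1}(1-x)^{s_1}$ along $(0,1)$ to the branch $x^{s_0}(1-x)^{s_1}$ along the same path. Functoriality of matrix coefficients then gives
$$\beta^\mm(s_0+1,s_1) = [M_{s_0+1,s_1},\varphi,\eta]^\mm = [M_{s_0,s_1},\varphi,(\beta_{s_0})_{\mathrm{dR}}\eta]^\mm = \frac{s_0}{s_0+s_1}\,[M_{s_0,s_1},\varphi,\eta]^\mm = \frac{s_0}{s_0+s_1}\,\beta^\mm(s_0,s_1)\ .$$
The second identity is obtained symmetrically, using multiplication by $(1-x)$ and the isomorphism $M_{s_0,s_1+1}\overset{\sim}{\To} M_{s_0,s_1}$, together with $\left[\frac{dx}{x}\right] = \frac{s_1}{s_0+s_1}\left[\frac{dx}{x(1-x)}\right]$ in cohomology.

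The main obstacle — really the only subtle point — is the careful bookkeeping of branches and tangential/finite basepoints in the Betti component, to make sure that multiplication by $x$ (resp. $1-x$) genuinely induces a morphism of objects in $\mathcal{T}$ and not merely an isomorphism of the underlying vector spaces with some discrepancy factor. This is entirely analogous to the verification already carried out for the map $\alpha$ in Proposition \ref{prop: coaction Fabc}, so I would simply remark that "one easily checks that this gives rise to an isomorphism in the category $\mathcal{T}$" and point to that argument. Everything else is the routine rank-one cohomology computation above. I would also note as a sanity check that applying $\per$ recovers the classical functional equation $\beta(s_0+1,s_1) = \frac{s_0}{s_0+s_1}\beta(s_0,s_1)$, and that the identities are compatible with the coaction formula \eqref{eq: coaction beta in 2F1} and with Lemma \ref{lem: beta invertible} (so that $\beta^\mm(s_0+1,s_1)$ is again invertible).
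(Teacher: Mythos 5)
Your proposal is correct and is essentially the paper's own proof: one exhibits the isomorphism $M_{s_0+1,s_1}\stackrel{\sim}{\longrightarrow} M_{s_0,s_1}$ in $\mathcal{T}$ induced by multiplication by $x$ (using $\nabla_{s_0,s_1}(xf)=x\,\nabla_{s_0+1,s_1}(f)$), checks that it sends $\varphi\mapsto\varphi$ and $\eta\mapsto \frac{s_0}{s_0+s_1}\,\eta$, and argues symmetrically with $1-x$ for the second identity. One small sign slip to fix: with the paper's convention $\omega_1=\frac{dx}{x-1}$, the cohomological relation is $s_0\frac{dx}{x}=s_1\frac{dx}{1-x}$ (not $s_0\frac{dx}{x}=-s_1\frac{dx}{1-x}$), and it is this relation that yields your (correct) conclusion $\left[\frac{dx}{1-x}\right]=\frac{s_0}{s_0+s_1}\left[\frac{dx}{x(1-x)}\right]$.
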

    
    \begin{proof}
    We describe an isomorphism
    $$\alpha:M_{s_0+1,s_1}\stackrel{\sim}{\longrightarrow} M_{s_0,s_1}\ .$$
    At the level of de Rham components it is induced by multiplication of (smooth) differential forms by $x$. The equality $\nabla_{s_0,s_1}(xf)=x\nabla_{s_0+1,s_1}(f)$ proves that this induces an isomorphism of de Rham complexes. At the level of Betti components, it is induced by multiplication by $x$. One easily checks that this gives rise to an isomorphism $\alpha$ in the category $\mathcal{T}$. One checks that it satisfies 
    $$\alpha_{\mathrm{B}}^\vee(\varphi)=\varphi \quad , \;\; \alpha_{\mathrm{dR}}(\eta)=\frac{s_0}{s_0+s_1}\eta \quad\ ,$$
    and the first identity follows. The second is proved in a similar way.
    \end{proof}

    \begin{prop}\label{prop: coaction 2F1}
    The (global) motivic coaction on the motivic hypergeometric function is:
    $$\Delta\,F^\mm(a,b,c;y) = F^\mm(a,b,c;y)\otimes F^\dR(a,b,c;y)  - \frac{y}{1+c}\,F^\mm(1+a,1+b,2+c;y)\otimes  \,  G^\dR(a,b,c;y)\ , 
$$
where the terms $F^{\bullet}(a,b,c;y)$ and $G^\dR(a,b,c;y)$ are as in definition \ref{globalmot2F1}. 
    \end{prop}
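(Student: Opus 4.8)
The plan is to deduce this from Proposition \ref{prop: coaction Fabc} together with the coaction formula \eqref{eq: coaction beta in 2F1} on the motivic beta function. By Definition \ref{globalmot2F1} we have $F^\mm(a,b,c;y)=\beta^\mm(b,c-b)^{-1}\mathcal{F}^\mm(a,b,c;y)$, so the first step is to apply $\Delta$ to this product, using that $\Delta$ is a ring homomorphism and that $\beta^\mm(b,c-b)$ is invertible by Lemma \ref{lem: beta invertible} (so that $\Delta(\beta^\mm(b,c-b)^{-1})=\Delta(\beta^\mm(b,c-b))^{-1}$). Plugging in \eqref{eq: coaction beta in 2F1} with $s_0=b$, $s_1=c-b$ gives
$$\Delta(\beta^\mm(b,c-b)^{-1}) = \frac{c}{b(c-b)}\,\beta^\mm(b,c-b)^{-1}\otimes \beta^\dR(b,c-b)^{-1}\ ,$$
and multiplying this by the formula of Proposition \ref{prop: coaction Fabc} for $\Delta\,\mathcal{F}^\mm(a,b,c;y)$, the prefactors $\frac{b(c-b)}{c}$ and $\frac{c}{b(c-b)}$ cancel in the first term, leaving $F^\mm(a,b,c;y)\otimes \beta^\dR(b,c-b)^{-1}\mathcal{F}^\dR(a,b,c;y) = F^\mm(a,b,c;y)\otimes F^\dR(a,b,c;y)$ on the nose.

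The second step is to massage the second term. After multiplication we obtain
$$-\,y\,\frac{a(c-a)}{c}\cdot\frac{c}{b(c-b)}\;\beta^\mm(b,c-b)^{-1}\mathcal{F}^\mm(1+a,1+b,2+c;y)\;\otimes\;\beta^\dR(b,c-b)^{-1}\mathcal{G}^\dR(a,b,c;y)\ .$$
On the de Rham side, $\frac{a(c-a)}{c}\beta^\dR(b,c-b)^{-1}\mathcal{G}^\dR(a,b,c;y)$ is exactly $G^\dR(a,b,c;y)$ by Definition \ref{globalmot2F1}, so I can already recognise the $G^\dR$ factor — but then I must be careful not to double count the $\frac{a(c-a)}{c}$ factor; it is cleaner to keep $\mathcal{G}^\dR$ and $\mathcal{F}^\mm(1+a,1+b,2+c;y)$ explicit until the end. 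The motivic side is where Lemma \ref{lem: functional equation beta} enters: $\mathcal{F}^\mm(1+a,1+b,2+c;y)=\beta^\mm(1+b,1+c-b)F^\mm(1+a,1+b,2+c;y)$, and applying the functional equation twice (first $\beta^\mm(b+1,c-b)=\frac{b}{c}\beta^\mm(b,c-b)$, then $\beta^\mm(b+1,c-b+1)=\frac{c-b}{c+1}\beta^\mm(b+1,c-b)$) gives $\beta^\mm(1+b,1+c-b)=\frac{b(c-b)}{c(c+1)}\beta^\mm(b,c-b)$. Hence $\beta^\mm(b,c-b)^{-1}\mathcal{F}^\mm(1+a,1+b,2+c;y)=\frac{b(c-b)}{c(c+1)}F^\mm(1+a,1+b,2+c;y)$.

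The final step is bookkeeping of the scalar constants. Collecting everything, the second term becomes
$$-\,y\,\frac{a(c-a)}{b(c-b)}\cdot\frac{b(c-b)}{c(c+1)}\;F^\mm(1+a,1+b,2+c;y)\otimes \beta^\dR(b,c-b)^{-1}\mathcal{G}^\dR(a,b,c;y) = -\,\frac{y}{1+c}\,F^\mm(1+a,1+b,2+c;y)\otimes\frac{a(c-a)}{c}\beta^\dR(b,c-b)^{-1}\mathcal{G}^\dR(a,b,c;y)\ ,$$
and the de Rham factor is $G^\dR(a,b,c;y)$ by Definition \ref{globalmot2F1}, which is the claimed formula. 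I expect no genuine obstacle here — the only thing to watch is the correct placement of the $\frac{a(c-a)}{c}$ normalisation of $G^\dR$ versus $\mathcal{G}^\dR$, and tracking which scalars live on the motivic versus de Rham side of the tensor (recall that for $a,b\in k_{\mathrm{dR}}$, scalars pass freely across $\otimes_{k_{\mathrm{dR}}}$, so there is no subtlety there). The whole argument is a short computation chaining Propositions \ref{prop: coaction Fabc}, the coaction \eqref{eq: coaction beta in 2F1}, and Lemmas \ref{lem: beta invertible} and \ref{lem: functional equation beta}.
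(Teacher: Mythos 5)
Your proposal is correct and follows essentially the same route as the paper: apply the multiplicativity of $\Delta$ to $F^\mm=\beta^\mm(b,c-b)^{-1}\mathcal{F}^\mm$, invoke Proposition \ref{prop: coaction Fabc} and the beta coaction \eqref{eq: coaction beta in 2F1} (using invertibility from Lemma \ref{lem: beta invertible}), and absorb the leftover beta factor on the motivic side via the functional equation of Lemma \ref{lem: functional equation beta}, exactly as in the paper's proof. Your bookkeeping of the $\frac{a(c-a)}{c}$ normalisation and of scalars crossing $\otimes_{k_{\mathrm{dR}}}$ is accurate; you have merely written out the details the paper leaves implicit.
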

    
    \begin{proof}
    The coaction is multiplicative and thus satisfies
    $$\Delta \, F^\mm(a,b,c;y) = (\Delta\,\beta^\mm(b,c-b))^{-1} \, \Delta\,\mathcal{F}^\mm(a,b,c;y)\ .$$
    The result follows from Proposition \ref{prop: coaction Fabc} and \eqref{eq: coaction beta in 2F1} and the equality
    $$\beta^\mm(c-b,b)^{-1} = \frac{b(c-b)}{c(c+1)}\,\beta^\mm(c-b+1,b+1)^{-1}\ ,$$
    which follows from Lemma \ref{lem: functional equation beta}.
    \end{proof}

    \subsection{Motivic coaction for the hypergeometric function: the local point of view}
   
    We now study motivic lifts of the functions $\mathcal{F}(a,b,c;y)$ and $F(a,b,c;y)$ in the local setting, i.e., by lifting the coefficients of their Laurent expansions around $a=b=c=0$ to motivic periods of mixed Tate motives over $k$. Our main theorem is that the global coaction formulae of Proposition \ref{prop: coaction Fabc} and Proposition \ref{prop: coaction 2F1} admit local counterparts. We use the shorthand notation
    $$FL_{i,j}^{\bullet} = (FL_{\{0,1,y^{-1}\}}^\bullet(b,c-b,-a))_{i,j}$$
    for $1\leq i,j\leq 2$ and $\bullet\in \{\mm,\varpi\}$. Here we assume that the class $\gamma_1$ corresponds to the line segment $(0,1)$ and $\gamma_2$ is the line segment $(0,y^{-1})$, assuming that $y\notin \mathbb{R}_{>0}$. 
    We set
    \begin{equation} \label{Flocabcdefn}
    \mathcal{F}^\bullet_{\mathrm{loc}}(a,b,c;y) = \frac{c}{b(c-b)}\,FL^\bullet_{1,1} + \frac{1}{b} \,FL^\bullet_{1,2} \;\;\; \in\;\; \Pe^\bullet_{\MT(k)}[[a,b,c]][b^{-1}(c-b)^{-1}] 
    \end{equation}
    for $\bullet\in\{\mm,\varpi\}$. They are motivic lifts of the Laurent expansions of the functions $\mathcal{F}$ and $\mathcal{F}^{\,\s}$ around $a=b=c=0$, respectively; more precisely, we get from \eqref{eq: expression Fabcy Lauricella} and \eqref{eq: expression Fabcy sv Lauricella} and from Theorems \ref{thm: FL equals L} and \ref{thm: FL equals L sv} the equalities between Laurent series:
    $$\mathrm{per}\,\mathcal{F}^\mm_{\mathrm{loc}}(a,b,c;y) = \mathcal{F}(a,b,c;y) \quad \mbox{ and } \quad \s\,\mathcal{F}^\varpi_{\mathrm{loc}}(a,b,c;y) = \mathcal{F}^{\,\s}(a,b,c;y)\ ,$$
    where the period map $\mathrm{per}:\Pe^\mm_{\MT(k)}\to \mathbb{C}$ and the single-valued period map $\s:\Pe^\varpi_{\MT(k)}\to \mathbb{C}$ are applied term by term to the Laurent series. One can also obtain $\mathcal{F}_{\mathrm{loc}}^\varpi(a,b,c;y)$ by applying the projection $\pi^{\mm,+}_\varpi$ term by term to $\mathcal{F}_{\mathrm{loc}}^\mm(a,b,c;y)$:
    $$\pi^{\mm,+}_\varpi\,\mathcal{F}^\mm_{\mathrm{loc}}(a,b,c;y) = \mathcal{F}^\varpi_{\mathrm{loc}}(a,b,c;y)\ .$$
    We also introduce
    $$\mathcal{F}^\mm_{\mathrm{loc}}(1+a,1+b,2+c;y) = -\frac{1}{y a}\,FL^\mm_{1,2}\ .$$

    \begin{rem}\label{rem: rational structure y}
    Strictly speaking, it does not make sense to multiply a motivic period by $\frac{1}{y}$ since the ring $\Pe^\mm_{\MT(k)}$ is only $\Q$-linear. This is because  the Betti and (canonical) de Rham fiber functors take values in $\Q$-vector spaces. This prefactor comes about in the above definition because the form $\eta_2=\frac{dx}{1-yx}$ is not in fact in the $\Q$-structure $H^1_\varpi(X_\Sigma)$, having residue $-y^{-1}$ at $x=y^{-1}$. However, the form $y \eta_2$ is $\Q$-rational, and therefore the term $y\,\mathcal{F}^\mm_{\mathrm{loc}}(1+a,1+b,2+c;y)$ , which is the one which appears in the coaction formula \eqref{eq: coaction Fabc local} below, is a well-defined series of motivic periods in $\Pe^\mm_{\MT(k)}$, although $\mathcal{F}^\mm_{\mathrm{loc}}(1+a,1+b,2+c;y)$ itself is not.  Alternatively, one could extend scalars and work with a (canonical) de Rham fiber functor valued in $\Q(y)$-vector spaces.
    \end{rem}

    \begin{rem}\label{rem: warning notation shift formal variables}
    A warning about the notation is in order. Note that $\mathcal{F}^\mm_{\mathrm{loc}}(1+a,1+b,2+c;y)$ is \emph{not} obtained from $\mathcal{F}^\mm_{\mathrm{loc}}(a,b,c;y)$ by shifting the formal variables via $(a,b,c)\mapsto (1+a,1+b,2+c)$, which does not make sense in the setting of power series. Rather, the notation is justified by the fact that we have the equality between Laurent series:
    $$\mathrm{per}\,\mathcal{F}^\mm_{\mathrm{loc}}(1+a,1+b,2+c;y) = \mathcal{F}(1+a,1+b,2+c;y)\ ,$$
    which follows from Theorem \ref{thm: FL equals L} and the equality $\frac{dx}{1-yx}=-\frac{1}{ya}(a\omega_2)$. 
    Note that the function $\mathcal{F}(1+a,1+b,2+c;y)$ can also be expressed as a combination of  $\mathcal{F}(a,b,c;y)$ and its derivative  with respect to $y$.
    \end{rem}
   
    We will also need the following definition:
    $$\mathcal{G}^\varpi_{\mathrm{loc}}(a,b,c;y) = \frac{-c}{b(c-a)} \left(  FL^\varpi_{1,1} + \frac{c-b}{c} \, FL^\varpi_{1,2} -  \frac{c}{c-b}\, FL^\varpi_{2,1}  -  \,FL^\varpi_{2,2} \right)\ .$$
    By \eqref{eq: expression Gabcy sv Lauricella} and Theorem \ref{thm: FL equals L sv} we get the equality between Laurent series:
    $$\s\,\mathcal{G}^\varpi_{\mathrm{loc}}(a,b,c;y) = \mathcal{G}^{\,\s}(a,b,c;y)\ .$$

    We now turn to the local coaction formulae. As in \S\ref{subsec: coaction beta quotients} we consider the normalised motivic coaction
    $$\Delta_{\mathrm{nor}}: \Pe^\mm_{\MT(k)}[[a,b,c]][(bc(c-b))^{-1}] \longrightarrow (\Pe^\mm_{\MT(k)}\otimes \Pe^\varpi_{\MT(k)})[[a,b,c]][(bc(c-b))^{-1}]$$
    which consists in applying the coaction of $\MT(k)$ on each coefficient and coacting on the formal variables via
    $$\Delta_{\mathrm{nor}}(a)=a\,(1\otimes (\Lef^\varpi)^{-1}) \;\;\; ,\;\; \Delta_{\mathrm{nor}}(b)=b\,(1\otimes (\Lef^\varpi)^{-1})  \;\;\; ,\;\; \Delta_{\mathrm{nor}}(c)=c\,(1\otimes (\Lef^\varpi)^{-1})\ .$$

    \begin{thm}\label{thm: coaction Fcal}
    We have the following (local) coaction formula:
    \begin{equation}\label{eq: coaction Fabc local}
    \begin{split}
    \Delta_{\mathrm{nor}}\mathcal{F}^\mm_{\mathrm{loc}}&(a,b, c;y) \; = \; \frac{b(c-b)}{c}\; \mathcal{F}^\mm_{\mathrm{loc}}(a,b,c;y)\;\otimes\;  \mathcal{F}^\varpi_{\mathrm{loc}}((\Lef^\varpi)^{-1}a,(\Lef^\varpi)^{-1}b,(\Lef^\varpi)^{-1}c;y) \\ & - y\, \frac{a(c-a)}{c}\; \mathcal{F}^\mm_{\mathrm{loc}}(1+a,1+b,2+c;y)\;\otimes\;  \mathcal{G}^\varpi_{\mathrm{loc}}((\Lef^\varpi)^{-1}a,(\Lef^\varpi)^{-1}b,(\Lef^\varpi)^{-1}c;y)\ . 
    \end{split}
    \end{equation}
    \end{thm}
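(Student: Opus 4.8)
The plan is to deduce Theorem~\ref{thm: coaction Fcal} from Theorem~\ref{thmMainCoactionMmm} by exactly the linear-algebra manipulation that turns the general coaction on matrix coefficients into the global formula of Proposition~\ref{prop: coaction Fabc}. I would begin by applying Theorem~\ref{thmMainCoactionMmm} to the configuration $\Sigma=\{0,1,y^{-1}\}$ with parameters $(s_0,s_1,s_2)=(b,c-b,-a)$. Since $\Delta_{\mathrm{nor}}$ acts on $a,b,c$ by multiplication by $1\otimes(\Lef^\varpi)^{-1}$, it acts the same way on $s_0,s_1,s_2$, so the theorem yields the entrywise matrix identity
\[
\Delta_{\mathrm{nor}}\,FL^\mm_{i,j}=\sum_{k=1}^{2}FL^\mm_{i,k}\otimes \widetilde{FL}^{\,\varpi}_{k,j}\ ,
\]
where $\widetilde{FL}^{\,\varpi}_{k,j}$ denotes $FL^\varpi_{k,j}$ with $a,b,c$ rescaled by $(\Lef^\varpi)^{-1}$. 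Exactly as in the proof of Theorem~\ref{thmMainCoactionMmm}, I would carry out the whole computation modulo $\Lef^\varpi=1$ and restore the uniquely determined powers of $\Lef^\varpi$ at the end by homogeneity in the weight; this also absorbs the $\Lef^\varpi$-factors produced by $\Delta_{\mathrm{nor}}$ acting on the rational prefactors $\tfrac{c}{b(c-b)}$ and $\tfrac1b$, which have weight $+2$. One should note at the outset that $\Delta_{\mathrm{nor}}$ is well defined on the localised ring $\Pe^\mm_{\MT(k)}[[a,b,c]][(bc(c-b))^{-1}]$, because $\Lef^\varpi$ is invertible and hence $\Delta_{\mathrm{nor}}$ sends $b$, $c$ and $c-b$ to unit multiples of themselves.

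Next I would substitute the definition \eqref{Flocabcdefn} of $\mathcal{F}^\mm_{\mathrm{loc}}$, apply the matrix coaction to $FL^\mm_{1,1}$ and $FL^\mm_{1,2}$, and regroup the four resulting terms according to their left tensor factor. Modulo $\Lef^\varpi$ this gives
\[
\Delta_{\mathrm{nor}}\mathcal{F}^\mm_{\mathrm{loc}}\equiv FL^\mm_{1,1}\otimes\Big(\tfrac{c}{b(c-b)}FL^\varpi_{1,1}+\tfrac1b FL^\varpi_{1,2}\Big)+FL^\mm_{1,2}\otimes\Big(\tfrac{c}{b(c-b)}FL^\varpi_{2,1}+\tfrac1b FL^\varpi_{2,2}\Big)\ .
\]
The first bracket is $\mathcal{F}^\varpi_{\mathrm{loc}}$ by \eqref{Flocabcdefn}. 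The only remaining point is the purely formal identity in the rational functions of $a,b,c$,
\[
\tfrac{c}{b(c-b)}FL^\varpi_{2,1}+\tfrac1b FL^\varpi_{2,2}=\tfrac{c-b}{c}\,\mathcal{F}^\varpi_{\mathrm{loc}}+\tfrac{c-a}{c}\,\mathcal{G}^\varpi_{\mathrm{loc}}\ ,
\]
which is checked by inserting the definitions of $\mathcal{F}^\varpi_{\mathrm{loc}}$ and $\mathcal{G}^\varpi_{\mathrm{loc}}$ in terms of the $FL^\varpi_{i,j}$ and cancelling the coefficients of $FL^\varpi_{1,1}$ and $FL^\varpi_{1,2}$; this is precisely the computation for which $\mathcal{G}^\varpi_{\mathrm{loc}}$ was normalised as it was. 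Finally, using the elementary identities $\tfrac{b(c-b)}{c}\mathcal{F}^\mm_{\mathrm{loc}}=FL^\mm_{1,1}+\tfrac{c-b}{c}FL^\mm_{1,2}$ and $-y\,\tfrac{a(c-a)}{c}\mathcal{F}^\mm_{\mathrm{loc}}(1+a,1+b,2+c;y)=\tfrac{c-a}{c}FL^\mm_{1,2}$, which follow at once from \eqref{Flocabcdefn} and the definition of $\mathcal{F}^\mm_{\mathrm{loc}}(1+a,1+b,2+c;y)$, one recognises the right-hand side of \eqref{eq: coaction Fabc local} modulo $\Lef^\varpi$, and the weight grading pins down all the $\Lef^\varpi$-powers.

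I do not expect a genuine obstacle: the substantive work is contained in Theorem~\ref{thmMainCoactionMmm}, and what is left is bookkeeping. The only delicate point is tracking simultaneously the formal-variable rescaling in the second tensor factor and the $\Lef^\varpi$-weights coming from the Laurent-series prefactors; as in \S\ref{subsec: coaction beta quotients}, reducing modulo $\Lef^\varpi=1$ and invoking homogeneity makes this entirely transparent. An essentially equivalent route would be to observe that $\mathcal{F}^\mm_{\mathrm{loc}}$, $\mathcal{F}^\varpi_{\mathrm{loc}}$, $\mathcal{G}^\varpi_{\mathrm{loc}}$ and $\mathcal{F}^\mm_{\mathrm{loc}}(1+a,1+b,2+c;y)$ are given by formally the same linear combinations of the entries of $FL_\Sigma$ as are $\mathcal{F}^\mm$, $\mathcal{F}^\dR$, $\mathcal{G}^\dR$ and $\mathcal{F}^\mm(1+a,1+b,2+c;y)$ in terms of the entries of $L_\Sigma$, so that Theorem~\ref{thm: coaction Fcal} follows from Proposition~\ref{prop: coaction Fabc} by the same formal identity together with Theorem~\ref{thmMainCoactionMmm}.
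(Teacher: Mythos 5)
Your proposal is correct and follows essentially the same route as the paper: both deduce the formula by applying Theorem \ref{thmMainCoactionMmm} to $\Sigma=\{0,1,y^{-1}\}$ with $(s_0,s_1,s_2)=(b,c-b,-a)$, working modulo $\Lef^\varpi=1$ and restoring the powers by weight homogeneity, and then checking a purely rational identity among the entries $FL^\bullet_{i,j}$ (your intermediate identity $\tfrac{c}{b(c-b)}FL^\varpi_{2,1}+\tfrac1b FL^\varpi_{2,2}=\tfrac{c-b}{c}\mathcal{F}^\varpi_{\mathrm{loc}}+\tfrac{c-a}{c}\mathcal{G}^\varpi_{\mathrm{loc}}$ is exactly the simplification the paper performs by expanding both sides). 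The only difference is organisational — you regroup by the left tensor factor rather than expanding both sides in full — and your identities check out.
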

    
    \begin{proof}
    We drop the Lefschetz de Rham periods $\Lef^\varpi$ from the notation; they can be taken care of by weight considerations as in the proof of Theorem \ref{thmMainCoactionMmm}. On the one hand, by  \eqref{Flocabcdefn} and Theorem \ref{thmMainCoactionMmm} the left-hand side equals:
    $$\frac{c}{b(c-b)}\left(FL^\mm_{1,1}\otimes FL^\varpi_{1,1} + FL^\mm_{1,2}\otimes FL^\varpi_{2,1}\right) + \frac{1}{b} \left(FL^\mm_{1,2}\otimes FL^\varpi_{2,2} + FL^\mm_{1,1} \otimes FL^\varpi_{1,2}\right)\ .$$
    On the other hand the right-hand side is expressed as:
    \begin{equation*}
    \begin{split}
    \frac{b(c-b)}{c}\left(\frac{c}{b(c-b)}FL^\mm_{1,1}+\frac{1}{b}FL^\mm_{1,2}\right) &\otimes\left(\frac{c}{b(c-b)}FL^\varpi_{1,1}+\frac{1}{b}FL^\varpi_{1,2}\right) \\
    -  y\,\frac{a(c-a)}{c} \left(-\frac{1}{ya}FL^\mm_{1,2}\right)\otimes  \left(-\frac{c}{b(c-a)}\right) &\left(  FL^\varpi_{1,1} + \frac{c-b}{c} \, FL^\varpi_{1,2} -  \frac{c}{c-b}\, FL^\varpi_{2,1}  -  \,FL^\varpi_{2,2} \right)\ .
    \end{split}
    \end{equation*}
    By expanding it one sees that simplications occur and that both sides are equal.
    \end{proof}

    It is notable, and \emph{a priori} not at all obvious, that this \emph{local} motivic coaction formula is formally identical to the \emph{global} motivic coaction formula obtained in a different context in Proposition \ref{prop: coaction Fabc}. \medskip
    
    We now turn to the hypergeometric function $F$ and reinsert the beta factors. We recall the  definition of the local motivic beta functions
    $$\beta^\bullet_{\mathrm{loc}}(s_0,s_1) = \frac{s_0+s_1}{s_0s_1}\,\exp\Bigg(\sum_{n\geq 2}\frac{(-1)^{n-1}\zeta^\bullet(n)}{n} ((s_0+s_1)^n-s_0^n-s_1^n)\Bigg) \;\;\in\; \Pe^\bullet_{\MT(k)}[[s_0,s_1]][(s_0s_1)^{-1}]\ ,$$
    for $\bullet\in\{\mm,\varpi\}$. We have the local motivic coaction formula:
    \begin{equation}\label{eq: coaction beta loc nor} 
    \Delta_{\mathrm{nor}}\beta^\mm_{\mathrm{loc}}(s_0,s_1) = \frac{s_0s_1}{s_0+s_1}\beta^\mm_{\mathrm{loc}}(s_0,s_1)\otimes \beta^\varpi_{\mathrm{loc}}((\Lef^\varpi)^{-1}s_0,(\Lef^\varpi)^{-1}s_1)\ .
    \end{equation}
    In view of \eqref{eq: Fabcy} we define
    $$F_{\mathrm{loc}}^\bullet(a,b,c;y) = (\beta^\bullet_{\mathrm{loc}}(b,c-b))^{-1}\,\mathcal{F}^\bullet_{\mathrm{loc}}(a,b,c;y) \;\; \in \; \Pe^\bullet_{\MT(k)}[[a,b,c]][(bc(c-b))^{-1}]\ , $$
    for $\bullet\in\{\mm,\varpi\}$. (The term $\beta^\bullet_{\mathrm{loc}}(b,c-b)$ is invertible since we have inverted $c$ in the ring of power series.) They satisfy the equalities between Laurent series:
    $$\mathrm{per}\,F_{\mathrm{loc}}^\mm(a,b,c;y) = F(a,b,c;y) \qquad \mbox{ and } \qquad \s\,F_{\mathrm{loc}}^\varpi(a,b,c;y) = F^{\,\s}(a,b,c;y)\ ,$$
    where the period map $\mathrm{per}:\Pe^\mm_{\MT(k)}\to \mathbb{C}$ and the single-valued period map $\s:\Pe^\varpi_{\MT(k)}\to \mathbb{C}$ are applied term by term to the Laurent series. One can also obtain $F_{\mathrm{loc}}^\varpi(a,b,c;y)$ by applying the projection $\pi^{\mm,+}_\varpi$ term by term to $F_{\mathrm{loc}}^\mm(a,b,c;y)$:
    $$\pi^{\mm,+}_\varpi\,F_{\mathrm{loc}}^\mm(a,b,c;y) = F_{\mathrm{loc}}^\varpi(a,b,c;y)\ .$$
    
    We also introduce:
    $$F_{\mathrm{loc}}^\mm(1+a,1+b,2+c;y) = \frac{c(c+1)}{b(c-b)}\, (\beta^\mm_{\mathrm{loc}}(b,c-b))^{-1}\,\mathcal{F}^\mm_{\mathrm{loc}}(1+a,1+b,2+c;y)\ .$$
    The same warning as in Remark \ref{rem: warning notation shift formal variables} is in order: since we are working with formal power series, $F_{\mathrm{loc}}^\mm(1+a,1+b,2+c;y)$ is \emph{not} obtained from $F_{\mathrm{loc}}^\mm(a,b,c;y)$ by a shift of  variables, which would not make sense. Rather, the notation is justified by the equality between formal Laurent series:
    $$\mathrm{per}\,F_{\mathrm{loc}}^\mm(1+a,1+b,2+c;y) = F(1+a,1+b,2+c;y)\ ,$$
    which follows from the functional equation $\beta(b+1,c-b+1)=\frac{b(c-b)}{c(c+1)}\beta(b,c-b)$ for the beta function.
    
    Lastly, we introduce
    $$G_{\mathrm{loc}}^\varpi(a,b,c;y) = \frac{a(c-a)}{c}\,(\beta^\varpi_{\mathrm{loc}}(b,c-b))^{-1}\,\mathcal{G}^\varpi_{\mathrm{loc}}(a,b,c;y)\ .$$
    It satisfies
    $$\s\,G_{\mathrm{loc}}^\varpi(a,b,c;y) = G^{\,\s}(a,b,c;y)\ .$$ 
    where $G^{\,\s}(a,b,c;y)$ was defined in Definition \ref{def: F G sv}.

    \begin{thm}\label{thm: coaction local 2F1}
    We have the following (local) motivic coaction  formula for the (local) motivic hypergeometric function:
    \begin{equation}\label{eq: coaction local 2F1}
    \begin{split}
    \Delta_{\mathrm{nor}}\,F_{\mathrm{loc}}^\mm(&a,b,c;y) \;=  \;F_{\mathrm{loc}}^\mm(a,b,c;y) \; \otimes \; F_{\mathrm{loc}}^\varpi\left((\Lef^\varpi)^{-1}a,(\Lef^\varpi)^{-1}b,(\Lef^\varpi)^{-1}c;y\right)\\ & - \frac{y}{1+c}\; F_{\mathrm{loc}}^\mm(1+a,1+b,2+c;y) \; \otimes \;   G^\varpi_{\mathrm{loc}}\left((\Lef^\varpi)^{-1}a, (\Lef^\varpi)^{-1}b, (\Lef^\varpi)^{-1}c; y\right)\ . 
    \end{split}
    \end{equation}
    \end{thm}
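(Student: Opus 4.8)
The plan is to mimic the proof of Proposition~\ref{prop: coaction 2F1}, replacing each global object by its local (power series) counterpart and the global coaction inputs by their local versions, namely Theorem~\ref{thm: coaction Fcal} (the local analogue of Proposition~\ref{prop: coaction Fabc}) and the local beta coaction \eqref{eq: coaction beta loc nor}. The normalised coaction $\Delta_{\mathrm{nor}}$ is an algebra homomorphism: it is the composite of the coaction of $\MT(k)$ applied coefficientwise (which is multiplicative) with the substitution $a,b,c\mapsto a\,(1\otimes(\Lef^\varpi)^{-1}),\dots$ (also multiplicative). Since $\beta^\mm_{\mathrm{loc}}(b,c-b)$ is invertible in the ring $\Pe^\mm_{\MT(k)}[[a,b,c]][(bc(c-b))^{-1}]$, I would start from
$$\Delta_{\mathrm{nor}}F_{\mathrm{loc}}^\mm(a,b,c;y)=\big(\Delta_{\mathrm{nor}}\beta^\mm_{\mathrm{loc}}(b,c-b)\big)^{-1}\,\Delta_{\mathrm{nor}}\mathcal{F}^\mm_{\mathrm{loc}}(a,b,c;y)\ ,$$
and insert \eqref{eq: coaction beta loc nor} (with $(s_0,s_1)=(b,c-b)$) and Theorem~\ref{thm: coaction Fcal} into the right-hand side.

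Concretely, \eqref{eq: coaction beta loc nor} gives $\big(\Delta_{\mathrm{nor}}\beta^\mm_{\mathrm{loc}}(b,c-b)\big)^{-1}=\frac{c}{b(c-b)}\,\beta^\mm_{\mathrm{loc}}(b,c-b)^{-1}\otimes\beta^\varpi_{\mathrm{loc}}((\Lef^\varpi)^{-1}b,(\Lef^\varpi)^{-1}(c-b))^{-1}$, which I would multiply against the two-term right-hand side of Theorem~\ref{thm: coaction Fcal}. In the first term the prefactors $\frac{c}{b(c-b)}$ and $\frac{b(c-b)}{c}$ cancel and, using $F_{\mathrm{loc}}^\bullet=(\beta^\bullet_{\mathrm{loc}}(b,c-b))^{-1}\mathcal{F}^\bullet_{\mathrm{loc}}$, one reads off $F_{\mathrm{loc}}^\mm(a,b,c;y)\otimes F_{\mathrm{loc}}^\varpi((\Lef^\varpi)^{-1}a,(\Lef^\varpi)^{-1}b,(\Lef^\varpi)^{-1}c;y)$. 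In the second term I would substitute the definitions $F_{\mathrm{loc}}^\mm(1+a,1+b,2+c;y)=\frac{c(c+1)}{b(c-b)}(\beta^\mm_{\mathrm{loc}}(b,c-b))^{-1}\mathcal{F}^\mm_{\mathrm{loc}}(1+a,1+b,2+c;y)$ and $G_{\mathrm{loc}}^\varpi=\frac{a(c-a)}{c}(\beta^\varpi_{\mathrm{loc}}(b,c-b))^{-1}\mathcal{G}^\varpi_{\mathrm{loc}}$; the accumulated scalar then collapses to $\frac{c}{b(c-b)}\cdot\big(-y\,\frac{a(c-a)}{c}\big)\cdot\frac{b(c-b)}{c(c+1)}\cdot\frac{c}{a(c-a)}=-\frac{y}{1+c}$, yielding the second term of \eqref{eq: coaction local 2F1}.

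As in the proof of Theorem~\ref{thmMainCoactionMmm}, it is cleanest to run this computation modulo $\Lef^\varpi=1$ and to restore the powers of $\Lef^\varpi$ at the very end using homogeneity of the weight grading (the formal variables $a,b,c$ carry weight $-2$); this removes the need to transport the $(\Lef^\varpi)^{-1}$ factors through the algebraic manipulations. The only point needing a word of care, though no real work (cf.\ Remark~\ref{rem: rational structure y}), is that $\mathcal{F}^\mm_{\mathrm{loc}}(1+a,1+b,2+c;y)$ and $\mathcal{G}^\varpi_{\mathrm{loc}}$ individually carry a spurious factor $\frac{1}{y}$ coming from the residue $-y$ of $\eta_2$ at $x=y^{-1}$; it is the combinations $y\,F_{\mathrm{loc}}^\mm(1+a,1+b,2+c;y)$ and $y\,G_{\mathrm{loc}}^\varpi$ that lie in the relevant rings of motivic, resp.\ de Rham, periods, and these are precisely the combinations appearing in \eqref{eq: coaction local 2F1}, so the formula is well posed. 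I do not expect any genuine obstacle: all the substance is already in Theorem~\ref{thm: coaction Fcal} (hence in Theorem~\ref{thmMainCoactionMmm} and Proposition~\ref{propCoactZim}), and what remains is the same elementary bookkeeping with beta-function prefactors carried out in the global setting in Proposition~\ref{prop: coaction 2F1}.
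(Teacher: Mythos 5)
Your proposal is correct and follows exactly the paper's own argument: the paper proves Theorem \ref{thm: coaction local 2F1} by invoking the multiplicativity of $\Delta_{\mathrm{nor}}$ together with Theorem \ref{thm: coaction Fcal} and the local beta coaction \eqref{eq: coaction beta loc nor}, which is precisely your route. Your explicit prefactor bookkeeping (collapsing to $-\tfrac{y}{1+c}$) and the remark on the $\tfrac{1}{y}$ rationality issue merely spell out details the paper leaves implicit.
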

    
    \begin{proof}
    It follows from Theorem \ref{thm: coaction Fcal} by using the multiplicativity of the coaction and \eqref{eq: coaction beta loc nor}.
    \end{proof}
    
    Again, it is \emph{a priori} not at all obvious, but certainly true, that this \emph{local} motivic coaction formula is comparable with the \emph{global} motivic coaction formula for the hypergeometric function obtained  in Proposition \ref{prop: coaction 2F1}.
    
    \begin{rem} Up until this point we have fixed a point $y\in k \backslash \{0,1\}$. It is also possible to view $y$ as a variable by working with families; this  will enable us to derive the hypergeometric differential equation from the coaction, which we presently explain. The local motivic and (canonical) de Rham hypergeometric functions can be viewed as power series with coefficients in a ring of families of motivic periods over the base scheme $S= \mathbb{P}^1 \backslash \{0,1,\infty\}$ with coordinate $y$. For this we need to work with a Tannakian category $\mathcal{M}(S)$ of `motives' over $S$, such as the category $\MT(S)$ of mixed Tate motives over $S$ \cite{esnaultlevine} or, for the following application,  the category $\mathcal{H}(S)$ considered in  \cite[\S 7]{brownnotesmot} is sufficient. We regard 
    $F^\bullet_{\mathrm{loc}}$,  and  $G^{\varpi}_{\mathrm{loc}}$ as Laurent series with coefficients in $\Pe^\bullet_{\mathcal{M}(S)}$, where $\bullet = \{\mm, \varpi\}$ relative to suitable fiber functors (whose definition is not important for the following discussion).
    To the lowest two orders, we find that
    $$F_{\mathrm{loc}}^\varpi(a,b,c;y) =\beta^{\varpi}_{\mathrm{loc}}(b,c-b)^{-1}\mathcal{F}^{\varpi}_{\mathrm{loc}}(a,b,c;y)  \ = \ 1  -  \frac{ab}{c} \log^{\varpi} (1-y)  + \cdots   $$ 
    $$G^{\varpi}_{\mathrm{loc}}(a,b,c;y) =\frac{a(c-a)}{c}(\beta^{\varpi}_{\mathrm{loc}}(b,c-b))^{-1}\mathcal{G}^{\varpi}_{\mathrm{loc}}(a,b,c;y) \ =  \ -  \frac{ab(c-a)(c-b)}{c^2} 
    \log^{\varpi}(1-y)   +\cdots  $$ 
    where the $\cdots$ denote iterated integrals of length $\geq 2$, and $\log^{\varpi}$ is the (canonical) de Rham logarithm \cite[\S 5.3, \S 7]{brownnotesmot}.  
    This is consistent with the Laurent expansions \eqref{eq: laurent expansion Fabc}, \eqref{eq: laurent expansion Gabc}. The rings of families of motivic periods $\Pe^\bullet_{\mathcal{M}(S)}$ are equipped with a canonical differential operator  $\nabla_{\partial/\partial y}$ which is compatible with the coaction $\Delta: \Pe^{\mm}_{\mathcal{M}(S)} \rightarrow  \Pe^{\mm}_{\mathcal{M}(S)}  \otimes \Pe^{\varpi}_{\mathcal{M}(S)} $ in the sense that the following equation holds:  
    $$\Delta \circ \nabla_{\partial/\partial y}   =   (\id \otimes \nabla_{\partial/\partial y})\circ \Delta\ . $$
    Furthermore, $\nabla_{\partial/\partial y}$ decreases the length of (canonical) de Rham iterated integrals  by one,   and one checks that 
    $$\nabla_{\partial/\partial y}\log^{\varpi}(1-y) =  -\frac{1}{1-y} \qquad \hbox{ and } \qquad \nabla_{\partial/\partial y}(1)=0 \ . $$ 
     The  ring of (canonical) de Rham periods $\Pe^{\varpi}_{\mathcal{M}(S)}$ is a Hopf algebra with counit $\mathrm{ev}$, 
     such that $(\id \otimes \mathrm{ev}) \Delta=\id$.
It   satisfies $\mathrm{ev}(\mathbb{L}^{\dR})=1$ and annihilates all (canonical) de Rham iterated integrals of length $\geq 1$ which occur in the expansions above. 
       Let us apply $\nabla_{\partial/\partial y}$ to the (unnormalised version of the)  coaction formula  \eqref{eq: coaction local 2F1}. 
        Therefore, by combining \eqref{eq: coaction local 2F1} with the  formulae stated above, we find that 
    \begin{equation*}\begin{split}
    \nabla_{\partial/\partial y} &F^\mm_{\mathrm{loc}}(a,b,c;y)  =  \left( ( \id \otimes \mathrm{ev})   \Delta \right)  \left(\nabla_{\partial/\partial y} F^{\mm}_{\mathrm{loc}}(a,b,c;y)\right) = (\id \otimes \mathrm{ev}\,  \nabla_{\partial/\partial y}) \Delta   F^\mm_{\mathrm{loc}}(a,b,c;y) \\
    &= \frac{ab}{c}\, F^{\mm}_{\mathrm{loc}}(a,b,c;y) \otimes  \frac{1}{1-y} - \frac{y}{1+c} \, F^{\mm}_{\mathrm{loc}}(1+a,1+b,2+c;y) \otimes \frac{ab(c-a)(c-b)}{c^2} \frac{1}{1-y}\ .   
    \end{split}
    \end{equation*}
 Since the connexion is compatible with the period homomorphism, we deduce that 
  $$\frac{\partial}{\partial y}F(a,b,c;y) = \frac{ab}{c}\frac{1}{1-y}\,F(a,b,c;y) - \frac{ab(c-a)(c-b)}{c^2(1+c)}\frac{y}{1-y} \, F(1+a,1+b,2+c;y)\ \cdot$$
as formal power series  near $a=b=c=0$. One verifies that this is equivalent, by contiguity relations, to
 $$\frac{\partial}{\partial y}F(a,b,c;y) = \frac{ab}{c}\,F(a+1,b+1,c+1;y)\ ,$$
 or to the hypergeometric differential equation \eqref{eq: 2F1 differential equation}. 
 
\end{rem}

\begin{rem}
It is also a general fact that the (local) motivic coaction is compatible with the monodromy of the (local) hypergeometric function. This, as well as the connection above, provides \emph{a priori} constraints on the shape of the motivic coaction.
\end{rem}

\bibliographystyle{alpha}

\bibliography{biblio}

\end{document}